\newtheorem{thm}{Theorem}[section]
\theoremstyle{definition}
\newtheorem{ques}{Question}
\newtheorem{cor}[thm]{Corollary}
\newtheorem{lem}[thm]{Lemma}
\newtheorem{prop}[thm]{Proposition}
\theoremstyle{definition}
\newtheorem{defn}[thm]{Definition}
\theoremstyle{definition}
\newtheorem{example}[thm]{Example}
\theoremstyle{remark}
\newtheorem{rem}[thm]{Remark}
\numberwithin{equation}{section}
\newcommand{\mcK}{\mathcal{K}}
\DeclareMathOperator{\Aut}{Aut}
\DeclareMathOperator{\Iso}{Iso}
\DeclareMathOperator{\rng}{rng}
\DeclareMathOperator{\dom}{dom}
\DeclareMathOperator{\Sym}{Sym}
\DeclareMathOperator{\Age}{Age}
\DeclareMathOperator{\diam}{diam}
\newcommand{\Q}{{\mathbb Q}}
\newcommand{\U}{{\mathbb U}}
\newcommand{\Z}{{\mathbb Z}}
\newcommand{\actson}{\curvearrowright}
\newcommand{\N}{{\mathbb N}}
\newcommand{\sL}{\mathcal{L}}
\begin{document}

\title[Automorphism Groups of Ultraextensive Spaces]{Dense locally finite 
subgroups of Automorphism Groups of Ultraextensive Spaces} 

\author[M. Etedadialiabadi]{Mahmood Etedadialiabadi}

\address{Department of Mathematics, University of North Texas, 1155 Union Circle \#311430, Denton, TX 76203, USA}

\email{mahmood.etedadialiabadi@unt.edu}

\author[S. Gao]{Su Gao}

\address{Department of Mathematics, University of North Texas, 1155 Union Circle \#311430, Denton, TX 76203, USA}\email{sgao@unt.edu}

\author[F. Le Maître]{François Le Maître}
\address{Université Paris Diderot, Sorbonne Université, CNRS, Institut de 
Mathématiques de Jussieu-Paris Rive Gauche, IMJ-PRG, F-75013, Paris, France.}
\email{f.lemaitre@math.univ-paris-diderot.fr}

\author[J. Melleray]{Julien Melleray}
\address{Université de Lyon, Université Claude Bernard - Lyon 1, Institut 
Camille Jordan, CNRS UMR 5208, 43 Boulevard du 11 novembre 1928, 69622 
Villeurbanne Cedex, France}
  \email{melleray@math.univ-lyon1.fr}

\thanks{The second author's research was partially supported by the NSF grant DMS-1800323.}

\subjclass[2010]{Primary  03C13,03C55; Secondary 20E06,20E26}

\keywords{Hrushovski property, extension property for partial automorphisms (EPPA), partial isomorphism, HL-extension, HL-map, coherent, ultraextensive, ultrahomogeneous, locally finite, Henson Graph, MIF, $\infty$-MIF, omnigenous}

\begin{abstract}
We verify a conjecture of Vershik by showing that Hall's universal countable 
locally finite group can be embedded as a dense subgroup in the isometry group 
of the Urysohn space  and in the automorphism group of the random graph. In 
fact, we show the same for all automorphism groups of known infinite 
ultraextensive spaces. These include, in addition, the isometry group of the 
rational Urysohn space, the isometry group of the ultrametric Urysohn spaces, 
and the automorphism group of the universal $K_n$-free graph for all $n\geq 3$. 
Furthermore, we show that finite group actions on finite metric spaces or 
finite relational structures form a Fraïssé class, where Hall's group appears 
as the acting group of the Fraïssé limit. We also embed continuum many 
non-isomorphic countable universal locally finite groups into the isometry 
groups of various Urysohn spaces, and show that all dense countable subgroups 
of these groups are mixed identity free (MIF). Finally, we give a 
characterization of the isomorphism type of the isometry group of the Urysohn 
$\Delta$-metric spaces in terms of the distance value set $\Delta$.
\end{abstract}

\maketitle


\section{Introduction}
The concepts of ultraextensive metric spaces and ultraextensive relational 
structures were introduced in \cite{EG} and \cite{EG2}, respectively, to 
capture some common properties possessed by many Fraïssé limits. 
One of the main properties of an ultraextensive space is 
that its automorphism group contains a countable dense locally finite subgroup. 
Known examples of ultraextensive spaces include the universal Urysohn space 
$\mathbb{U}$, the universal rational Urysohn space $\mathbb{QU}$, the universal 
ultrametric Urysohn spaces, the random graph $\mathcal{R}$, and the universal 
$K_n$-free graphs $H_n$ (also known as Henson graphs). That the automorphism 
groups (or isometry groups) of these spaces contain a countable dense locally 
finite subgroup was proved in Bhattacharjee--Macpherson \cite{BM}, Pestov 
\cite{P}, Rosendal \cite{R}, and Siniora--Solecki \cite{S2}. In this paper we 
consider a concept of universal $\Delta$-metric spaces that unifies the study 
of the Urysohn space $\mathbb{U}$, the rational Urysohn space $\mathbb{QU}$ and 
the random graph $\mathcal{R}$. Given a distance value set $\Delta$, the 
universal $\Delta$-metric space $\mathbb{U}_\Delta$ is an ultraextensive metric 
space. 

Vershik \cite{V} conjectured that $\Iso(\mathbb{U})$, the isometry group of the 
Urysohn space, contains Hall's universal countable locally finite group 
$\mathbb{H}$ as a dense subgroup. He made the same conjecture for the 
automorphism group of the countable random graph. Our first main result of this 
paper is to confirm Vershik's conjecture. In fact, we establish this for all 
known examples of infinite ultraextensive spaces.

\begin{thm}\label{mainthm}
The following groups contain Hall's universal countable locally finite group $\mathbb{H}$ as a dense subgroup:
\begin{enumerate}
\item $\Iso(\mathbb{U})$, the isometry group of the Urysohn space;
\item $\Iso(\mathbb{QU})$, the isometry group of the rational Urysohn space;
\item $\Iso(\mathbb{U}_\Delta)$, the isometry group of the universal $\Delta$-metric space, for any distance value set $\Delta$;
\item Isometry groups of ultrametric Urysohn spaces;
\item $\Aut(\mathcal{R})$, the automorphism group of the random graph; and
\item $\Aut(H_n)$, the automorphism group of the universal $K_n$-free graph, for any $n\geq 3$.
\end{enumerate}
\end{thm}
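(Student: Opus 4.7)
My plan is to embed Hall's universal countable locally finite group $\mathbb{H}$ simultaneously into all the listed automorphism groups by constructing a single auxiliary Fra\"iss\'e class whose limit packages both the ambient space and the desired dense subgroup. For each of the Fra\"iss\'e classes $\mathcal{K}$ whose limits are the ultraextensive spaces in the theorem -- finite (rational) metric spaces, finite $\Delta$-metric spaces, finite ultrametric spaces, finite graphs, finite $K_n$-free graphs -- consider the class $\mathcal{F}(\mathcal{K})$ whose objects are pairs $(G,X)$ with $X\in\mathcal{K}$ and $G\leq \Aut(X)$, and whose morphisms $(G,X)\to (H,Y)$ are equivariant embeddings, that is, pairs $(\varphi,\iota)$ with $\varphi\colon G\hookrightarrow H$ an injection of groups, $\iota\colon X\hookrightarrow Y$ an embedding in $\mathcal{K}$, and $\iota(g\cdot x)=\varphi(g)\cdot \iota(x)$. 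The goal is to prove that $\mathcal{F}(\mathcal{K})$ is a Fra\"iss\'e class whose limit is a pair $(G_\infty,X_\infty)$ where $X_\infty$ is the original ultraextensive space and $G_\infty\cong\mathbb{H}$ acts densely.

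The crux of the verification is the amalgamation property for $\mathcal{F}(\mathcal{K})$, the other axioms being essentially routine. Given an amalgamation diagram $(G_0,X_0)\hookrightarrow (G_i,X_i)$ for $i=1,2$, I would first amalgamate $X_1$ and $X_2$ over $X_0$ inside $\mathcal{K}$ to obtain a common $Y\in\mathcal{K}$; at this stage, the elements of $G_1$ and $G_2$ survive only as partial automorphisms of $Y$. I would then invoke the \emph{coherent} form of the Hrushovski/EPPA property -- available for all the classes $\mathcal{K}$ under consideration by results of Solecki and Siniora--Solecki -- to produce a finite extension $Y\subseteq Z$ in $\mathcal{K}$ together with a map sending each partial automorphism of $Y$ to a full automorphism of $Z$ extending it, in a manner compatible with composition on composable pairs. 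This upgrade turns the partial actions into honest group embeddings $G_1,G_2\hookrightarrow \Aut(Z)$ that agree on $G_0$ by coherence; the finite subgroup $G:=\langle G_1,G_2\rangle\leq \Aut(Z)$ then furnishes the required amalgam $(G,Z)$ in $\mathcal{F}(\mathcal{K})$.

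Assuming the Fra\"iss\'e class structure of $\mathcal{F}(\mathcal{K})$ is established, let $(G_\infty,X_\infty)$ be its Fra\"iss\'e limit. Since the projection $\mathcal{F}(\mathcal{K})\to\mathcal{K}$ on the second coordinate is essentially surjective, the age of $X_\infty$ is exactly $\mathcal{K}$, so $X_\infty$ is isomorphic to the ultraextensive space being studied. The group $G_\infty$ is countable and locally finite by construction. To identify $G_\infty\cong\mathbb{H}$, use the characterization of Hall's group: it suffices to show that every finite group embeds into $G_\infty$ and that any isomorphism between two finite subgroups of $G_\infty$ extends to an inner automorphism. The first follows from joint embedding (using e.g.\ a regular action of the given finite group on a sufficiently large $X\in\mathcal{K}$) and the second from ultrahomogeneity of $(G_\infty,X_\infty)$.

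Finally, I must show that the induced homomorphism $G_\infty\to \Aut(X_\infty)$ has dense image. Since basic neighborhoods in $\Aut(X_\infty)$ are specified by a finite partial automorphism $f\colon A\to B$ of $X_\infty$, it suffices to find, for every such $f$, an element $g\in G_\infty$ with $g|_A = f$. By EPPA applied to $A\cup B\subseteq X_\infty$, there exists a finite $X\in\mathcal{K}$ containing $A\cup B$ on which $f$ extends to an automorphism $\bar f$, necessarily of finite order; then $(\langle \bar f\rangle, X)\in\mathcal{F}(\mathcal{K})$ extends the sub-object $(\{\id\}, A\cup B)$ already embedded in $(G_\infty,X_\infty)$, and ultrahomogeneity of the Fra\"iss\'e limit provides the compatible extension, yielding the desired $g$. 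The principal obstacle is the amalgamation step: combining structural amalgamation with coherent EPPA so that two partial group actions \emph{simultaneously} extend to compatible global actions requires the coherence of the EPPA extensions -- the mere existence of extensions is insufficient, since incoherent extensions destroy the group law -- which is why the work of Solecki and Siniora--Solecki on coherent EPPA is central to this approach.
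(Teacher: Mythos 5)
Your overall architecture---a Fra\"iss\'e class of finite group actions on members of $\mathcal{K}$, whose limit is a pair $(G_\infty,X_\infty)$ with $X_\infty$ the ultraextensive space and $G_\infty\cong\mathbb{H}$ dense in $\Aut(X_\infty)$---is exactly the paper's strategy (Theorems \ref{FraisseTheorem1} and \ref{FraisseTheorem2}, proved in Sections \ref{VC} and \ref{sec:structures}), and your identification of $G_\infty$ with $\mathbb H$ and your density argument are essentially the paper's. But the crux, the amalgamation property, has a genuine gap as you have set it up. After amalgamating $X_1$ and $X_2$ over $X_0$ into $Y$, an element $g_0\in G_0$ gives rise to \emph{two} partial automorphisms of $Y$: one with domain $X_1$ (its image in $G_1$) and one with domain $X_2$ (its image in $G_2$). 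Coherent EPPA in the sense of Solecki and Siniora--Solecki only guarantees $\phi(p\circ q)=\phi(p)\circ\phi(q)$ for \emph{composable} pairs, i.e.\ when $\rng(q)=\dom(p)$; it says nothing about two partial automorphisms with different domains that merely agree on $X_0$. So while $\phi$ does give group embeddings $G_1\hookrightarrow\Aut(Z)$ and $G_2\hookrightarrow\Aut(Z)$ (each $G_i$ consists of pairwise composable maps with domain $X_i$), there is no reason these embeddings agree on $G_0$, and without that you do not get an amalgam of $(G_1,X_1)$ and $(G_2,X_2)$ over $(G_0,X_0)$. Coherence is the right tool for turning a \emph{single} finite group of partial automorphisms into a group of automorphisms (this is how the paper uses Lemma \ref{lem:strongcoherence}), but it does not glue two such groups along a common subgroup.

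The paper closes this gap with machinery you have not invoked: it forms the amalgamated free product $\Gamma=\Gamma_1*_{\Lambda}\Gamma_2$, builds an \emph{infinite} $\Gamma$-space $Z_\infty$ by alternately applying Rosendal's induced-action lemma (Lemma \ref{l:extension}) to the $\Gamma_1$- and $\Gamma_2$-actions, and then cuts back down to a finite amalgam using Rosendal's theorem that groups with the RZ-property have finitely approximable isometric actions (Theorem \ref{RosendalTheorem}); the RZ-property of $\Gamma$ comes from Ribes--Zalesski\u{\i} via the fact that an amalgamated free product of finite groups is virtually free. For the $K_n$-free graphs the same scheme runs with the HL-property in place of the RZ-property, and the paper has to prove a new closure result (HL passes to and from finite-index subgroups) to get it for $\Gamma_1*_\Lambda\Gamma_2$. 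Some group-theoretic input of this kind is unavoidable: the obstruction your argument skips over is precisely whether the two given finite actions extend to a common finite action of a group generated by both, which is a statement about separability properties of $\Gamma_1*_\Lambda\Gamma_2$, not about EPPA for $\mathcal{K}$. A secondary, more easily repaired issue is that requiring $G\le\Aut(X)$ (faithful actions, no empty space) complicates the hereditary property and the verification of Hall's extension property ({\sc E}); the paper deliberately allows non-faithful actions and the empty space, and proves faithfulness of the limit action as a separate lemma.
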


In fact, we show that $\mathbb{H}$ appears canonically as a dense subgroup in 
these automorphism groups via the following theorems.

\begin{thm}\label{FraisseTheorem1} Let $\mcK_\Delta$ be the class of all 
structures $(X, G)$ such that $X$ is a finite $\Delta$-metric space, $G$ is a 
finite group, and $G$ acts on $X$ by isometries. Then $\mcK_\Delta$ is a 
Fraïssé class. Letting $(X_\Delta, H_\Delta)$ be the Fraïssé limit of 
$\mcK_\Delta$, then $X_\Delta$ is isometric to $\U_\Delta$, $H_\Delta$ is 
isomorphic to $\mathbb{H}$, and $H_\Delta$ is dense in $\Iso(X_\Delta)$.
\end{thm}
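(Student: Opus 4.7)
The plan is to verify the Fraïssé axioms for $\mcK_\Delta$ and then identify the two coordinates of the resulting limit with $\U_\Delta$ and $\mathbb{H}$. Declaring a substructure of $(X, G)$ to be a pair $(Y, H)$ with $H \leq G$ a subgroup and $Y \subseteq X$ an $H$-invariant subset makes the hereditary property and essential countability automatic, and the joint embedding property follows from amalgamation over the trivial structure. The heart of the argument is therefore the amalgamation property.

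Given embeddings $(X_0, G_0) \hookrightarrow (X_i, G_i)$ for $i = 1, 2$, I would amalgamate in two stages. First, amalgamate the groups in the class of finite groups: by Neumann's classical amalgamation theorem (equivalently, by the fact that finite groups form a Fraïssé class with limit $\mathbb{H}$), there is a finite group $G$ containing $G_1, G_2$ with $G_1 \cap G_2 = G_0$. Second, realize the $G$-set $X$ as a pushout of the induced $G$-sets: let $X$ be the quotient of $(G \times_{G_1} X_1) \sqcup (G \times_{G_2} X_2)$ by the identifications $[g, \iota_1(x_0)] \sim [g, \iota_2(x_0)]$ for $g \in G$ and $x_0 \in X_0$, so that $X$ is a finite $G$-set containing isomorphic copies of $(X_1, G_1)$ and $(X_2, G_2)$ agreeing on $(X_0, G_0)$. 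Finally, equip $X$ with the $G$-invariant shortest-path metric whose edge weights are given by $d_{X_i}$ on each $G$-translate of an embedded $X_i$ (truncated to lie in $\Delta$ as in the standard $\Delta$-metric amalgamation).

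The main technical obstacle I anticipate is verifying that this path metric restricts to the original $d_{X_i}$ on each embedded $X_i$, i.e., that no shortcut path through other $G$-translates shrinks distances inside a single piece. This is the equivariant analogue of the classical Urysohn amalgamation lemma; the extra pieces produced by inducing from $G_i$ up to $G$ must be analyzed to ensure no unwanted shortcuts appear. I expect this to follow from a standard path-metric and triangle-inequality argument once the pushout is set up carefully, but it is the only genuinely nontrivial check in establishing AP.

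With AP in hand, Fraïssé theory produces $(X_\Delta, H_\Delta)$. Its metric coordinate $X_\Delta$ is a countable ultrahomogeneous $\Delta$-metric space whose age consists of all finite $\Delta$-metric spaces, so $X_\Delta \cong \U_\Delta$. The group coordinate $H_\Delta$ is countable and locally finite, realizes every finite group, and is ultrahomogeneous as an abstract group (by applying ultrahomogeneity of the limit to substructures with trivial metric part); hence $H_\Delta \cong \mathbb{H}$ by Hall's characterization. Density of $H_\Delta$ in $\Iso(X_\Delta)$ then follows by combining the extension property of the Fraïssé limit with EPPA for $\Delta$-metric spaces: any partial isometry of a finite subset $F \subseteq X_\Delta$ extends to a full automorphism $g$ of some finite $(X', \langle g \rangle) \in \mcK_\Delta$, and this object embeds into $(X_\Delta, H_\Delta)$ over $(F, \{e\})$, producing an element $g \in H_\Delta$ that realizes the given partial isometry on $F$.
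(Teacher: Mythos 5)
Your overall architecture is the right one, and matches the paper's: hereditarity and JEP are routine, AP is the crux, $X_\Delta\cong\U_\Delta$ follows from the age and ultrahomogeneity, $H_\Delta\cong\mathbb H$ follows by applying the extension property of the limit to structures with empty metric part (this is property ({\sc E}) of Proposition~\ref{propertyE}), and density follows from an EPPA-type extension of $g\restriction F$ to a finite structure in $\mcK_\Delta$ realized over $(F,\{1\})$. Two small omissions there: you must also check that $H_\Delta$ acts \emph{faithfully} on $X_\Delta$ (otherwise it is not a subgroup of $\Iso(X_\Delta)$; the paper devotes a short lemma to this, embedding $(\Lambda,\text{discrete metric})$ with the left regular action), and your EPPA step should be quantified over all of $\Iso(X_\Delta)$, not just partial isometries, though that reduction is immediate.

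The genuine gap is in the amalgamation property, exactly at the step you defer as ``a standard path-metric and triangle-inequality argument.'' Your plan first embeds $G_1,G_2$ into an \emph{arbitrary} finite group $G$ with $G_1\cap G_2=G_0$, then forms the pushout of the induced $G$-sets with a $G$-invariant shortest-path metric. But any finite $G$ satisfies many relations absent from the amalgamated free product $G_1*_{G_0}G_2$: alternating products $h_1h_2\cdots h_n$ with $h_i\in G_{i}\setminus G_0$ can equal $1$ in $G$. Each such relation closes a chain of translates $gX_1,\,g'X_2,\,g''X_1,\dots$ into a loop in your pushout, and these loops can identify two distinct points of a single embedded copy of $X_i$, or produce a path through other translates that is strictly shorter than $d_{X_i}$ between two points of one copy. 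Whether this happens depends entirely on \emph{which} finite quotient of $G_1*_{G_0}G_2$ your $G$ is, so no choice-free triangle-inequality argument can close the gap; the assertion that \emph{some} finite $G$ admits a shortcut-free finite amalgam is essentially the finite-approximability statement that constitutes the hard part of the theorem. The paper handles this by working with the infinite amalgam: it builds an increasing union $Z_\infty$ of $\Delta$-metric spaces carrying an action of $\Gamma_1*_{\Lambda}\Gamma_2$ by iterating Lemma~\ref{l:extension} (this is the correct, tree-like, shortcut-free analogue of your pushout), and then finitizes using the Ribes--Zalesski\u{\i} property of $\Gamma_1*_{\Lambda}\Gamma_2$ (Proposition~\ref{amalgamRZ}, via virtual freeness of amalgams of finite groups) together with Rosendal's Theorem~\ref{RosendalTheorem}. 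Without an input of this kind --- some profinite-topology control on $G_1*_{G_0}G_2$ governing the choice of $G$ --- your construction does not go through.
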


We note here that this result is related to some recent work of Doucha \cite{D}, see the comments at the end of Section \ref{VC}.

\begin{thm}\label{FraisseTheorem2} Let $\sL$ be a finite relational language. 
Let $\mathcal{T}$ be a finite set of finite $\sL$-structures each of which is a 
Gaifman clique. Let $\mcK$ be the class of all pairs $(M, G)$ such that $M$ is 
a finite $\mathcal{T}$-free $\sL$-structure, $G$ is a finite group, and $G$ 
acts on $X$ by isomorphisms. Then $\mcK$ is a Fraïssé class. Letting 
$(N_\infty, H_\infty)$ be the Fraïssé limit of $\mcK$, then $N_\infty$ is 
isomorphic to the universal $\mathcal{T}$-free $\sL$-structure, $H_\infty$ is 
isomorphic to $\mathbb{H}$, and $H_\infty$ is dense in $\Aut(N_\infty)$.
\end{thm}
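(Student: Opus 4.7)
The plan is to verify that $\mcK$ satisfies Fraïssé's axioms, identify its limit, and then deduce density from the extension property of the limit. A morphism $(M, G) \to (M', G')$ in $\mcK$ is a pair $(f, \phi)$ consisting of an $\sL$-embedding $f: M \hookrightarrow M'$ and an injective group homomorphism $\phi: G \hookrightarrow G'$ intertwining the actions via $f(g \cdot x) = \phi(g) \cdot f(x)$. The hereditary property is immediate, since $\mathcal{T}$-freeness passes to substructures and subgroups of finite groups are finite; joint embedding follows from amalgamation applied to the trivial common subobject $(\emptyset, \{e\})$. Thus the main work concentrates on the amalgamation property.

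For AP, given embeddings $(M_0, G_0) \hookrightarrow (M_i, G_i)$ for $i = 1, 2$, I would proceed in two stages, paralleling the metric case of Theorem \ref{FraisseTheorem1}. First amalgamate the groups: the amalgamated free product $G_1 *_{G_0} G_2$ of finite groups over a common finite subgroup is virtually free, hence residually finite, so I can choose a finite-index normal subgroup $N$ with $N \cap G_1 = N \cap G_2 = \{e\}$, making $G := (G_1 *_{G_0} G_2)/N$ a finite group containing $G_1, G_2$ with intersection $G_0$. Next, construct the structure: form the tree of structures $\mathcal{M}$ over the Bass-Serre tree $T$ of $G_1 *_{G_0} G_2$, attaching a copy of $M_i$ at each vertex of type $gG_i$ and gluing along $M_0$ at each edge $gG_0$. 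Because each point of $\mathcal{M}$ belongs to a subtree of vertices (a single vertex, or the two endpoints of an edge if the point is in a translate of $M_0$), the Helly property for trees ensures that every Gaifman clique in $\mathcal{M}$ is contained in a single vertex structure; in particular $\mathcal{M}$ is $\mathcal{T}$-free. Setting $M := \mathcal{M}/N$ then yields a finite $\sL$-structure on which $G$ acts by isomorphisms and which contains $M_1$ and $M_2$ over $M_0$ (faithfully, since $N$ avoids the vertex stabilizers $G_i$).

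The main technical obstacle will be ensuring that $\mathcal{T}$-freeness survives the quotient to $M$: the quotient graph $T/N$ is in general a finite graph rather than a tree, so Gaifman cliques in $M$ might a priori span different $G$-translates of the $M_i$ by exploiting short loops. To handle this I would impose that $T/N$ has girth exceeding the maximum size of any structure in $\mathcal{T}$, so that a ball in $T/N$ of radius comparable to $\max_{T_0 \in \mathcal{T}} |T_0|$ still lifts isometrically to a subtree of $T$; then the Helly argument applies locally and forces every Gaifman clique of $M$ into a single $G$-translate of $M_1$ or $M_2$. Such an $N$ exists by another application of residual finiteness of the virtually free group $G_1 *_{G_0} G_2$, this time separating the finitely many loops of bounded length through the base vertices.

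Once $\mcK$ is shown to be a Fraïssé class, the identification of the limit $(N_\infty, H_\infty)$ is routine. The $\sL$-reduct $N_\infty$ has age equal to all finite $\mathcal{T}$-free $\sL$-structures (every such $M$ appears as $(M, \{e\}) \in \mcK$) and is $\sL$-ultrahomogeneous, since any partial $\sL$-isomorphism $M \to M'$ between finite substructures of $N_\infty$ lifts to a morphism $(M, \{e\}) \to (M', \{e\})$ in $\mcK$, hence extends to an automorphism of $(N_\infty, H_\infty)$; thus $N_\infty$ is isomorphic to the universal $\mathcal{T}$-free $\sL$-structure. The same argument projected to the group component identifies $H_\infty$ as the Fraïssé limit of all finite groups, i.e., Hall's group $\mathbb{H}$. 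For density, given $f \in \Aut(N_\infty)$ and a finite $A \subseteq N_\infty$, I would invoke the Herwig-Lascar theorem (EPPA for $\mathcal{T}$-free structures, available in this setting because $\mathcal{T}$ consists of Gaifman cliques) to extend the partial isomorphism $f|_A: A \to f(A)$ to an automorphism $\alpha$ of some finite $\mathcal{T}$-free $B^* \supseteq A \cup f(A)$. Then $(B^*, \langle \alpha \rangle) \in \mcK$, and the extension property of the Fraïssé limit embeds this pair into $(N_\infty, H_\infty)$ over the inclusion of $A \cup f(A)$, producing an element $h \in H_\infty$ with $h|_A = f|_A$, which establishes density.
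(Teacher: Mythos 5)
Your overall architecture (amalgamating the acting groups via $G_1 *_{G_0} G_2$, identifying the limit via property ({\sc E}), density via extending a single partial automorphism and embedding the resulting finite pair) matches the paper's, and your treatment of the limit and of density is essentially correct, modulo the omitted check that $H_\infty$ acts faithfully on $N_\infty$. The paper, however, proves amalgamation very differently: it shows that the HL-property passes to and from finite-index subgroups, deduces that $\Gamma_1 *_{\Lambda}\Gamma_2$ has the HL-property (being virtually free, by Herwig--Lascar for free groups), builds an infinite $\mathcal{T}$-free structure carrying a $\Gamma_1 *_{\Lambda}\Gamma_2$-action by iterating an induced-action lemma, and then invokes the characterization of the HL-property from \cite{EG2} (the analogue of Rosendal's theorem) to replace that infinite structure by a finite $\mathcal{T}$-free one. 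Your direct Bass--Serre construction is an attractive alternative, but it contains a genuine gap.

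The gap is in the quotient step, and it originates in the false claim that a point of the tree of structures $\mathcal{M}$ lies in at most the two vertex structures at the endpoints of one edge. A point $[g,x]$ with $x \in M_0 \subseteq M_1$ also lies in the vertex structure at $ghG_2$ for every $h \in G_1$ with $h^{-1}\cdot x \in M_0$, and from there it can propagate further through $G_2$; the support $S_a=\{v : a \in \mathcal{M}_v\}$ is a subtree whose diameter is unbounded whenever the stabilizer of $a$ in $G_1 *_{G_0} G_2$ is infinite (already when $M_0=M_1=M_2$). The Helly argument for $\mathcal{T}$-freeness of $\mathcal{M}$ itself survives, since Helly does not care about the size of the subtrees, but your girth argument for the quotient does not: no finite lower bound on the girth of $T/N$ forces a Gaifman clique of $\mathcal{M}/N$ to lift into a single vertex structure once the supports $S_a$ can have diameter exceeding the girth. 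You would need either to bound the supports (which does not hold in general) or to treat the points with infinite stabilizer separately. You also never verify that passing to $\mathcal{M}/N$ adds no new relations on the images of $M_1$ and $M_2$, which is needed for your maps $(M_i,G_i)\to(\mathcal{M}/N,G)$ to be embeddings rather than mere injective homomorphisms. As it stands, the amalgamation property---the heart of the theorem---is not established.
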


In the proof of Theorem~\ref{FraisseTheorem1} we use a result of Rosendal 
\cite{R} that characterizes the RZ-property (after Ribes--Zalesski\u{\i}) by an 
extension property for finite metric spaces. Then, for 
Theorem~\ref{FraisseTheorem2} we need to use a concept of HL-property (after 
Herwig--Lascar) and a characterization of the HL-property in the spirit of 
Rosendal's result, both developed in \cite{EG2}. For both the RZ-property and 
the HL-property, we do need to establish some new results about their closure 
under finite-index extensions.

Next we turn to the problem of constructing many non-isomorphic dense locally 
finite subgroups of $\Iso(\U_\Delta)$, the isometry group of the Urysohn 
$\Delta$-metric spaces. We define a notion of omnigenous groups, which can be 
viewed as a natural generalization of Hall's group. We then show that there are 
many such groups, and that they are all densely embeddable.

\begin{thm}\label{omnigenousconstruction}
	There are continuum many non-isomorphic countable omnigenous groups each of 
	which is universal for countable locally finite groups.
\end{thm}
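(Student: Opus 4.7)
The plan is to construct, for each $s\in 2^{\N}$, a countable locally finite group $G_s$ that is omnigenous and universal, and then to distinguish continuum many isomorphism classes among these groups via a carefully chosen invariant. The construction will be a Fraïssé-style back-and-forth that permits controlled freedom at prescribed stages.

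First I would enumerate in a list $(A_k,B_k,\iota_k)_{k\in\N}$ all requirements needed for omnigenousness: triples with $A_k\le B_k$ finite groups and $\iota_k:A_k\to G_{n(k)}$ a partial embedding into the group built at some earlier stage. At each "forced" stage I would satisfy the next outstanding requirement by amalgamating $B_k$ over $\iota_k(A_k)$ into the current group. Interleaved with these are "free" stages indexed by $\N$: fix in advance, for each $n$, two finite groups $E_n^0,E_n^1$ that are distinguishable by some robust isomorphism-invariant feature (for instance by associating to stage $n$ a dedicated prime $p_n$ and letting $E_n^i$ encode $i$ in its $p_n$-local structure), and at the $n$-th free stage form $G_{n+1}$ by adjoining $E_n^{s(n)}$ to $G_n$ via a suitable amalgamation. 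Set $G_s=\varinjlim_n G_n$. By construction $G_s$ realizes every extension of every one of its finite subgroups and is therefore omnigenous, and applying the extension property starting from the trivial subgroup yields universality for countable locally finite groups.

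To obtain continuum many isomorphism types, I would design an invariant $I(G)\subseteq\N$ capturing, for each $n$, a feature that distinguishes $E_n^0$ from $E_n^1$ and is preserved by every abstract isomorphism of groups — for example the maximum order of a cyclic $p_n$-subgroup, or the isomorphism type of the centralizer of a suitable element of order $p_n$. If the $E_n^i$ and the primes $p_n$ are chosen so that no forced amalgamation introduces the spurious feature at prime $p_n$, then $I(G_s)$ will recover $s$ and so $s\mapsto G_s$ will hit continuum many isomorphism types. The main difficulty is precisely this control problem: omnigenousness forces vast numbers of new finite subgroups into $G_s$, and one must ensure that the chosen invariant at each $p_n$ remains a genuine function of $s(n)$ alone. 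A natural safeguard is to work prime by prime, using pairwise disjoint primes at the free stages and arranging the forced amalgamations to be trivial at those primes; verifying that this can be carried through without sacrificing omnigenousness or universality is the technical heart of the argument.
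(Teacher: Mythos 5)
There is a genuine gap, and it is located exactly where you flag ``the technical heart of the argument'' --- but the problem is more fundamental than a control issue to be checked later. If your forced stages satisfy each requirement $(A_k,B_k,\iota_k)$ by \emph{amalgamating $B_k$ over $\iota_k(A_k)$ into the current group}, i.e.\ by genuinely embedding the overgroup $B_k$ extending the given embedding of $A_k$, then the limit $G_s$ satisfies property ({\sc E}) of Proposition~\ref{propertyE}, and that property characterizes Hall's group up to isomorphism. So every $G_s$ you build this way is isomorphic to $\mathbb{H}$, the free stages change nothing (since $\mathbb{H}$ already contains every finite group), and you obtain exactly one isomorphism type rather than continuum many. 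The whole point of the definition of omnigenous is that it only demands a finite subgroup $G_2\leq G$ with an \emph{onto homomorphism} $\Psi_2:G_2\to\Gamma_2$ extending $\Psi_1$, not an embedding of $\Gamma_2$; any construction of non-$\mathbb{H}$ omnigenous groups must exploit this relaxation by producing surjections with nontrivial, controlled kernels. Your proposal never says how those surjections are produced, and that is precisely the content of the proof, not a verification to be deferred.

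Relatedly, your candidate invariants cannot work. Every group universal for countable locally finite groups contains every finite group as a subgroup, so ``the maximum order of a cyclic $p_n$-subgroup'' is unbounded for every prime in every such group, and centralizer types are equally uncontrollable. The paper's invariant is instead \emph{conjugacy}: $P$ is recovered from $H_P$ as the set of primes $p$ for which there exist two non-conjugate elements of order $p$ (Lemma~\ref{HP}). This is realized by a concrete construction: one lets $\lambda_T(g)$ act on a copy of $\mathbb{H}$ by left multiplication and additionally act by $+1$ on an auxiliary cyclic group $\Z/p_n\Z$ exactly when $g$ belongs to a distinguished set $T$ of elements of order $p_n$; the auxiliary action obstructs conjugacy between $\lambda_T(t)$ for $t\in T$ and order-$p_n$ elements outside $\lambda_T(T)$, while the projection $\lambda_T(b)\mapsto b$ supplies the required surjections for omnigenousness, and universality is preserved by arranging that $\mathbb{H}\setminus T$ still generates a universal subgroup. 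None of this machinery, or a substitute for it, appears in your proposal.
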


\begin{thm}\label{omnigenousmainthm}
Every countable omnigenous group is embeddable into $\Iso(\U_\Delta)$ as a 
dense subgroup.
\end{thm}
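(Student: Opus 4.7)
The plan is to run a Fraïssé-type construction producing an isometric action of $G$ on a copy of $\U_\Delta$, in direct analogy with Theorem \ref{FraisseTheorem1}, but with the countable omnigenous group $G$ playing the role of Hall's group $\mathbb{H}$. Since any omnigenous group is locally finite, I would first fix an exhaustion $G=\bigcup_n F_n$ of $G$ by finite subgroups; all amalgamation will be performed inside this tower.

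I would then consider the class $\mcK_{G,\Delta}$ whose objects are pairs $(X,F)$ where $F$ is a finite subgroup of $G$, $X$ is a finite $\Delta$-metric space, and $F\actson X$ by isometries; a morphism $(X,F)\to(Y,F')$ is an inclusion $F\leq F'$ of subgroups of $G$ together with an $F$-equivariant isometric embedding $X\hookrightarrow Y$. The hereditary property for $\mcK_{G,\Delta}$ is automatic, while joint embedding and amalgamation are precisely what the definition of omnigenous is designed to supply. For amalgamation, starting from $(X_0,F_0)\hookrightarrow(X_i,F_i)$ for $i=1,2$, I would use omnigenity of $G$ to realise a common finite overgroup inside $G$ containing $F_1\cup F_2$ and to produce an equivariant metric amalgam of $X_1$ and $X_2$ over $X_0$, appealing for the metric extension step to the same Rosendal-type RZ-extension mechanism from \cite{R} that underlies Theorem \ref{FraisseTheorem1}.

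Passing to the Fraïssé limit of $\mcK_{G,\Delta}$ yields a countable $\Delta$-metric space $X_G$ on which $G=\bigcup_n F_n$ acts by isometries. Forgetting the group action, the age of $X_G$ is the class of all finite $\Delta$-metric spaces and $X_G$ is ultrahomogeneous, so by uniqueness of $\U_\Delta$ there is an isometry $X_G\cong\U_\Delta$, giving a group embedding $G\hookrightarrow\Iso(\U_\Delta)$. Density then follows from the standard back-and-forth associated with a Fraïssé limit: a basic open neighbourhood in $\Iso(\U_\Delta)$ is determined by a finite partial isometry $\phi$ between finite subsets of $X_G$, and ultrahomogeneity of $X_G$ as a $\mcK_{G,\Delta}$-limit allows one to extend $\phi$ to a total isometry realised by some $g\in F_n\leq G$.

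The hard part will be extracting the amalgamation property for $\mcK_{G,\Delta}$ from the omnigenous hypothesis. Concretely, given $F_0\leq F_i\leq G$ and finite $F_i$-spaces $X_i$ that agree on a common $F_0$-subspace $X_0$, one must simultaneously (i) find a single finite subgroup $H\leq G$ containing $F_1\cup F_2$, and (ii) equip a finite $\Delta$-metric superspace $Y\supseteq X_1\cup_{X_0} X_2$ with an $H$-action by isometries extending both given actions. The existence of $H$ alone is local finiteness, and the metric amalgamation alone is the RZ-extension tool of \cite{R}; the whole point of omnigenity is to guarantee that these two amalgamations can be chosen coherently, and this is where essentially all of the combinatorial content of Theorem \ref{omnigenousmainthm} will be concentrated.
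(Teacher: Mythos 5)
There is a genuine gap, and it sits exactly where you predicted: the amalgamation property for $\mcK_{G,\Delta}$ is not supplied by omnigenity, and in the form you need it is not available. If the group components of your objects are honest subgroups of $G$ and the morphisms are inclusions, then an amalgam of $(X_1,F_1)$ and $(X_2,F_2)$ over $(X_0,F_0)$ must in particular carry an isometric action of the \emph{specific} finite group $L=\langle F_1,F_2\rangle\leq G$ extending both given actions. The tools underlying Theorem \ref{FraisseTheorem1} (Lemma \ref{l:extension} plus Rosendal's Theorem \ref{RosendalTheorem} applied to $F_1*_{F_0}F_2$) only produce a finite space on which the amalgamated free product acts; that action factors through \emph{some} finite quotient, and nothing forces the kernel of $F_1*_{F_0}F_2\twoheadrightarrow L$ to act trivially, so the action need not descend to $L$ or to any overgroup of $L$ inside $G$. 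Omnigenity points the wrong way to fix this: it realizes an \emph{externally prescribed} overgroup $\Gamma_2\supseteq\Gamma_1\cong F_1$ as a quotient of \emph{some} finite $G_2\leq G$, with no control over which subgroup of $G$ one lands in, and the surjection $\Psi_2\colon G_2\to\Gamma_2$ need not admit a section over the image of $F_2$ -- so the original $F_2\leq G$, with its given action, cannot in general be placed inside the amalgam. This is precisely the difference between omnigenity and property ({\sc E}) of Proposition \ref{propertyE}: for $\mathbb H$ the homomorphism can be taken to be an isomorphism, which is why the Fraïssé-class route works there and the limit group can only ever be $\mathbb H$, never a general omnigenous $G$. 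Two further points your sketch leaves open: since omnigenity yields only surjections, a limit action of $G$ would still have to be shown faithful before you get an embedding $G\hookrightarrow\Iso(\U_\Delta)$; and your density step needs the extension property for pairs $(B,\langle h\rangle)$ whose group part is an abstract cyclic group generated by a finite-order isometry, which is not an object of your class, so another invocation of omnigenity is required there as well.

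The paper's proof of Theorem \ref{omnigenous} avoids the class altogether and runs a direct recursion tailored to what omnigenity can actually deliver, namely absorbing \emph{one new isometry at a time}. One enumerates the partial isometries $q_n$ of $\U_\Delta$ and the elements of $H$, and builds finite sets $D_n\subseteq\U_\Delta$, finite subgroups $H_n\leq H$ and group embeddings $\pi_n\colon H_n\to\Iso(D_n)$. At each stage a new group element is absorbed via Lemma \ref{1extension}; then a strongly coherent S-extension $(Y,\phi)$ (Theorem \ref{SoleckiTheorem}) turns $q_n$ into a genuine isometry $\phi(q_n)$ of a finite space, and omnigenity is applied with $\Gamma_2=\langle\Lambda_1,\phi(q_n)\rangle$, i.e.\ the overgroup is the current group extended by a single new isometry. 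The resulting $\Psi_2$ may fail to be injective, and this is repaired by adjoining a discretely metrized copy of $G_2$ on which $G_2$ acts by left translation, which restores faithfulness of $\pi_{n+1}$. If you wanted to salvage a Fraïssé-style write-up you would have to build all of these devices (one-element extensions, surjections in place of inclusions, and a faithfulness gadget) into the category, at which point you would essentially have reproduced the paper's recursion.
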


In an effort to characterize all countable dense subgroups of 
$\Iso(\U_\Delta)$, we consider some notions from the point of view of model 
theory and combinatorial group theory. In particular, we consider the property 
of being ``mixed-identity free'' (MIF) recently studied by Hull--Osin \cite{HO} 
and prove 
that any countable dense subgroup of $\Iso(\U_\Delta)$, when $|\Delta|\geq 2$, 
must be MIF. 

\begin{thm}
For any countable distance value set $\Delta$ with $|\Delta|\geq 2$, 
$\Iso(\mathbb{U}_\Delta)$ as well as any of its dense subgroup must be MIF.
\end{thm}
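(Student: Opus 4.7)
The plan is to prove that $G := \Iso(\U_\Delta)$ is itself MIF; the conclusion for any dense subgroup then follows from a short topological argument. Fix a non-trivial element $w \in G * F_n$ in reduced form $w = g_0\, x_{i_1}^{\epsilon_1}\, g_1\, x_{i_2}^{\epsilon_2}\, \cdots\, x_{i_k}^{\epsilon_k}\, g_k$ with $g_j \in G$ and $k \geq 1$, and let $U_w := \{(f_1,\ldots,f_n) \in G^n : w(f_1,\ldots,f_n) \neq \id\}$. The set $U_w$ is automatically open since $\{\id\}$ is closed in $G$. Showing $U_w \neq \emptyset$ will establish MIF for $G$. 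For a dense subgroup $H \leq G$, any non-trivial mixed identity over $H$ remains non-trivial in $G * F_n$ (since free products embed), so the corresponding non-empty open $U_w \subseteq G^n$ meets the dense product $H^n$, yielding the required witness inside $H$.

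The core construction of $f_1, \ldots, f_n \in G$ with $w(f_1,\ldots,f_n) \neq \id$ is a back-and-forth extension. First pick $p_0 \in \U_\Delta$ with a genericity condition: $p_0$ should lie outside a certain finite exceptional subset of $\U_\Delta$ determined by the $g_j^{\pm 1}$-orbits of an auxiliary finite set. Process $w$ from right to left, setting $z_0 := g_k(p_0)$. At step $t = 1, \ldots, k$, choose a fresh $y_t \in \U_\Delta$ so that declaring $f_{i_{k-t+1}}^{\epsilon_{k-t+1}}(z_{t-1}) = y_t$ preserves the partial-isometry property of $f_{i_{k-t+1}}$ built so far, then set $z_t := g_{k-t}(y_t)$ for $t < k$. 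The distance of $y_t$ to each $f_{i_{k-t+1}}(x)$ (for $x$ in the current domain) is forced by partial-isometry coherence, while the remaining distances from $y_t$ to the ambient finite set are assigned generically in $\Delta$. The hypothesis $|\Delta| \geq 2$ guarantees enough combinatorial freedom to avoid the finitely many forbidden values at each stage: those that would collapse $y_t$ onto an existing point or clash with an earlier occurrence of the same variable. Existence of $y_t \in \U_\Delta$ with the prescribed distances follows from the defining one-point Katětov extension property of $\U_\Delta$. At the last step additionally impose $y_k \neq g_0^{-1}(p_0)$, a single-point exclusion, so that $g_0(y_k) \neq p_0$ and hence $w(f_1, \ldots, f_n)(p_0) \neq p_0$. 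Ultrahomogeneity of $\U_\Delta$ then lets each partial isometry $f_i$ (defined on a finite subset) be extended to a full isometry in $G$, completing the argument.

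The hard part is carrying out the inductive distance assignments so that the partial isometries $f_i$ remain globally consistent when a variable appears several times in $w$, all assigned distances lie in $\Delta$, and triangle inequalities hold throughout. The hypothesis $|\Delta| \geq 2$ is essential: with a single distance value the metric on $\U_\Delta$ would be too rigid to avoid unwanted identifications. Some additional care is needed in choosing the free distance value when $\Delta$ has special structure (small diameter, unusual order type, etc.), but this is a routine adaptation of the standard Urysohn amalgamation arguments used elsewhere in the paper. Once these details are arranged, $U_w \neq \emptyset$, and the MIF conclusion follows for both $G$ and each of its dense subgroups.
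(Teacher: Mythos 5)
Your overall architecture is the same as the paper's: trace the orbit of a single base point through the word from right to left, keep every new image fresh via one-point Kat\v{e}tov extensions, close off with ultrahomogeneity, and transfer MIF to dense subgroups by observing that $\{\,\bar f : w(\bar f)=1\,\}$ is closed (the paper's Proposition~\ref{subgroupMIF} is exactly your last paragraph in contrapositive form). The one cosmetic difference is that you keep all $n$ variables, whereas the paper first reduces to words in a single variable; that is harmless.

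There is, however, a genuine gap at the central step, namely where a \emph{constant} $g_j\neq 1$ of the word is applied to the freshly chosen point. You need $z_t=g_{k-t}(y_t)$ to be new, and in particular $g_{k-t}(y_t)\neq y_t$ whenever the adjacent variable letters would otherwise cancel. You file this under ``avoid the finitely many forbidden values,'' but it is not a finite exclusion on $y_t$: the condition $g_{k-t}(y_t)=y_t$ is not of the form ``$y_t$ equals one of finitely many points,'' and a priori \emph{every} realization of the Kat\v{e}tov type forced on $y_t$ could be fixed by $g_{k-t}$ (this is precisely what happens in $\U_{\{1\}}=S_\infty$ for finitely supported permutations, which is why the theorem fails for $|\Delta|=1$). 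Ruling this out is exactly the statement that every nontrivial $1$-type over a finite set is \emph{discerning}, which the paper isolates as Proposition~\ref{UDeltadiscerning} and proves by an explicit construction: one adjoins an auxiliary point $y$ whose prescribed distances to $x$ and $g(x)$ differ (using a second distance value $s$, integer multiples $ks$ squeezed against $r$, and truncation at $\sup(\Delta)$ to keep the triangle inequalities inside $\Delta$), and then realizes the given type with different distances to $y$ and $g(y)$. This is where $|\Delta|\geq 2$ does its real work, and it is the heart of the proof rather than a ``routine adaptation of standard Urysohn amalgamation arguments.'' Your sketch could be completed along these lines --- the free distances you reserve to ``the ambient finite set'' can be used to separate $y_t$ from $g_{k-t}^{-1}$-translates of that set --- but as written the proposal neither identifies this mechanism nor supplies the distance bookkeeping that makes it legal in an arbitrary $\Delta$, so the key step is missing.
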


This theorem is false when $|\Delta|=1$. In that case $\Iso(\U_\Delta)$ is just 
the permutation group 
$S_\infty$ and it is known that $S_\infty$ is not MIF since it contains the non 
MIF group of finitely supported permutations as a dense subgroup (cf. Theorem 5.9 of
Hull--Osin 
\cite{HO}). Note that our result also provides an elementary proof of 
the fact 
that the group of finitely supported permutations
cannot arise as a dense subgroup of $\Iso(\mathbb{U}_\Delta)$ as soon as 
$|\Delta|\geq 2$ (cf. Theorem 8.1 of \cite{FLMMM}).
\begin{rem}
 The theorem also yields a continuum family of universal countable locally 
 finite 
groups that are not embeddable as dense subgroups of $\Iso(\U_\Delta)$, namely 
groups of the form $\mathbb{H}\oplus A$, where $A$ is a 
nontrivial abelian $p$-group. This can also be seen using the fact that 
$\Iso(\U_\Delta)$ is always a topologically simple non abelian group (see Thm. 
\ref{t:topsimple}), and no such 
group can contain a dense subgroup which decomposes as a nontrivial direct 
product.
\end{rem}

Furthermore, we introduce a new notion for locally finite groups which we  call 
$\infty$-MIF, and show that it is actually
equivalent to being omnigenous.\\ 

It would be very interesting to be able to distinguish topological groups of 
the form $\Iso(\U_\Delta)$ by looking at their list of countable dense 
subgroups. As a first step towards this, it is natural to ask which 
$\Iso(\U_\Delta)$ can be densely embedded into another $\Iso(\U_{\Lambda})$. 
Indeed if 
so then $\Iso(\U_{\Lambda})$ will contain at least as many countable dense 
subgroups as $\Iso(\U_\Delta)$. Our next result shows that these dense 
embeddings only occur in the obvious case, namely when $\Iso(\U_\Delta)$ and 
$\Iso(\U_{\Lambda})$ are isomorphic, and provides a natural characterization in 
terms of the distance sets $\Delta$ and $\Lambda$ for this to happen.
 This uses the following notion: we call $(d_1, d_2, d_3)\in\Delta^3$ a 
 \emph{$\Delta$-triangle} if there is a 
 metric space $(X, d)$ with $X=\{x, y, z\}$ such that $d_1=d(x, y)$, 
 $d_2=d(y,z)$ and $d_3=d(z, x)$.

\begin{thm}\label{isoIso} Let $\Delta$ and $\Lambda$ be countable distance value sets. Then the following are equivalent:
\begin{enumerate}[(i)]
\item There is a continuous homomorphism $\Iso(\U_\Delta)\to \Iso(\U_\Lambda)$ 
with dense range; 
\item $\Iso(U_\Delta)$ and $\Iso(\U_\Lambda)$ are isomorphic as abstract groups;
\item $\Iso(U_\Delta)$ and $\Iso(\U_\Lambda)$ are isomorphic as topological 
groups;
\item There exists a bijection $\theta: \Delta\to \Lambda$ such that
for any triple $(d_1, d_2, d_3)\in \Delta^3$, $(d_1, d_2, d_3)$ is a $\Delta$-triangle iff $(\theta(d_1), \theta(d_2), \theta(d_3))$ is a $\Lambda$-triangle.
\end{enumerate}
\end{thm}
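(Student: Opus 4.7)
The plan is to establish the cycle (iv)$\Rightarrow$(iii)$\Rightarrow$(i)$\Rightarrow$(iv) together with (iii)$\Rightarrow$(ii)$\Rightarrow$(i); the implications (iii)$\Rightarrow$(i) and (iii)$\Rightarrow$(ii) are tautological, so the actual content lives in the construction (iv)$\Rightarrow$(iii), the automatic continuity step (ii)$\Rightarrow$(i), and the reconstruction (i)$\Rightarrow$(iv).

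For (iv)$\Rightarrow$(iii), given a triangle-preserving bijection $\theta:\Delta\to\Lambda$, I would relabel the metric on the underlying set of $\U_\Delta$ by $d'(x,y):=\theta(d_\Delta(x,y))$. The triangle-preserving hypothesis is exactly what makes $d'$ a genuine $\Lambda$-metric; moreover $(\U_\Delta,d')$ is ultrahomogeneous with the same age as $\U_\Lambda$, so Fraïssé uniqueness yields an isometry to $\U_\Lambda$. Conjugation by this isometry is then a topological group isomorphism $\Iso(\U_\Delta)\to\Iso(\U_\Lambda)$.

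For (ii)$\Rightarrow$(i), I would invoke automatic continuity: the group $\Iso(\U_\Delta)$ admits ample generics as a consequence of the coherent EPPA/HL-property framework underlying Theorems~\ref{FraisseTheorem1} and~\ref{FraisseTheorem2}, so by the Kechris--Rosendal theorem every abstract homomorphism from $\Iso(\U_\Delta)$ into a separable topological group is continuous, and any abstract isomorphism is in particular a continuous homomorphism with full (hence dense) range.

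The main work is (i)$\Rightarrow$(iv). Let $\pi:\Iso(\U_\Delta)\to\Iso(\U_\Lambda)$ be continuous with dense range. Its kernel is a closed normal subgroup, so by topological simplicity of $\Iso(\U_\Delta)$ (Theorem~\ref{t:topsimple}) and density of the image, $\pi$ is injective. For each $y\in\U_\Lambda$ the preimage $\pi^{-1}(\Iso(\U_\Lambda)_y)$ is an open subgroup of $\Iso(\U_\Delta)$, and therefore contains a pointwise stabilizer of some finite $F_y\subset\U_\Delta$. The plan is to use density of the image, injectivity of $\pi$, and the amalgamation in $\mcK_\Delta$ and $\mcK_\Lambda$ to show that $F_y$ can be canonically chosen as a singleton $\{\phi(y)\}$, producing a $\pi$-equivariant bijection $\phi:\U_\Lambda\to\U_\Delta$. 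Equivariance then transports $\Iso$-orbits on pairs to $\Iso$-orbits on pairs, and since such orbits are indexed by distances one extracts a bijection $\theta:\Delta\to\Lambda$; the analogous matching of orbits on triples forces $\theta$ to preserve triangles. The hardest step will be the canonical reduction of $F_y$ to a singleton: a priori $\pi^{-1}(\Iso(\U_\Lambda)_y)$ could come from the stabilizer of a larger finite set, and pinning down a single-point correspondence requires exploiting the fine lattice structure of open subgroups of $\Iso(\U_\Delta)$ together with the ultraextensive amalgamation properties of both spaces.
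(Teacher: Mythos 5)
Your treatment of the easy implications matches the paper: (iv)$\Rightarrow$(iii) by relabelling the metric through $\theta$ and invoking Fra\"iss\'e uniqueness, and the passage from abstract to topological isomorphism via automatic continuity from ample generics. The problem is that your (i)$\Rightarrow$(iv) is not a proof but a description of what a proof would have to accomplish, and the step you yourself flag as ``the hardest'' --- canonically reducing the finite set $F_y$ to a singleton --- is exactly the point where the real mathematical content lies, and you supply no mechanism for it. Appealing to ``the fine lattice structure of open subgroups together with the ultraextensive amalgamation properties'' does not identify the invariant that singles out point stabilizers among open subgroups, and without such an invariant there is no reason the preimage of $\Iso(\U_\Lambda)_y$ should correspond to the stabilizer of a single point rather than of a larger finite set (or of a finite set setwise).

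The paper closes this gap in two stages that are both absent from your plan. First, it shows that a continuous homomorphism with dense range is automatically a topological group isomorphism \emph{onto} $\Iso(\U_\Lambda)$: injectivity comes from topological simplicity as you say, but surjectivity and openness come from a rigidity lemma asserting that any Hausdorff separable group topology on $\Iso(\U_\Delta)$ admitting a proper open subgroup must coincide with the usual one. That lemma in turn rests on two ingredients you do not invoke: every proper open subgroup has an element of $\U_\Delta$ with a finite orbit (proved via Neumann's lemma), and Slutsky's theorem that $\langle G_A, G_B\rangle$ is dense in $G_{A\cap B}$. Second, with a genuine topological isomorphism in hand, the paper characterizes point stabilizers purely group-theoretically: $V=G^\Delta_x$ for some $x$ if and only if $V$ is open, topologically simple, and maximal among proper closed subgroups (again using Theorem~\ref{t:topsimple} and Slutsky's theorem). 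This intrinsic characterization is preserved by any topological isomorphism and is precisely what produces the bijection $\psi:\U_\Delta\to\U_\Lambda$ and hence $\theta$; the matching of distances then comes from conjugacy of two-point stabilizers rather than from orbit counting on pairs. To repair your argument you would need to either prove this characterization of point stabilizers or find a substitute for it; as written, the central implication is unproven.
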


The rest of the paper is organized as follows. In Section~\ref{sec: ultraextensive} we cover some preliminaries and verify that $\U_\Delta$ is ultraextensive for any countable distance value set $\Delta$. In Section~\ref{VC} we prove Theorem~\ref{FraisseTheorem1}. In Section~\ref{sec:structures} we develop results about the HL-property of groups and prove Theorem~\ref{FraisseTheorem2}. In Section~\ref{sec:many} we study the notion of omnigenous groups and prove Theorems~\ref{omnigenousmainthm} and \ref{omnigenousconstruction}. We apply these results also to the isometry groups of ultrametric Urysohn spaces. In Section~\ref{sec:all} we study the notions of discerning types, discerning structures, MIF groups, and $\infty$-MIF groups. In Section~\ref{sec:cha} we prove that $\Iso(\U_\Delta)$, as well as pointwise stabilizers on $\U_\Delta$, are topologically simple; this is used to establish Theorem~\ref{isoIso}. Finally, in Section~\ref{sec:open} we pose some open problems.

\section{Ultraextensive Metric Spaces\label{sec: ultraextensive}}

\subsection{Basics of Fraïssé theory}
We briefly recall the basic concepts of Fraïssé theory. Throughout this paper 
let $\mathcal{L}$ be a countable language.

\begin{defn} Let $M$ be a countable $\mathcal{L}$-structure. A \emph{partial automorphism} of $M$ is an isomorphism $g \colon A \to B$, where $A$ and $B$ are finitely generated substructures of $M$.

The structure $M$ is said to be \emph{ultrahomogeneous} if every partial automorphism of $M$ extends to an automorphism of $M$.
\end{defn}

In the cases considered in this paper, finitely generated substructures are always finite. For example, this happens when $\mathcal{L}$ is a relational language. Another case we will consider is when $\mathcal{L}$ is the (finite) language of group theory, and $M$ is a countable locally finite group. We will assume this property tacitly in all of our discussions.

\begin{defn}
Let $M$ be a countable $\mathcal{L}$-structure. The \emph{age} of $M$, denoted $\Age(M)$, is the class of all finite substructures of $M$ (considered up to isomorphism).
\end{defn}

The age of any countable $\mathcal{L}$-structure contains only countably many members up to isomorphism; also, any two members of the age embed in a third one (the \emph{joint embedding property}) and whenever $A \in \text{Age}(M)$ and $B$ is a substructure of $A$ then also $B \in \Age(M)$ (the \emph{hereditary property}). Ages of ultrahomogeneous structures are characterized by an additional condition.

\begin{defn}
Let $\mcK$ be a class of $\mathcal{L}$-structures. We say that $\mcK$ has the \emph{amalgamation property} if, for any $A,B,C \in \mcK$ and any embedding $\beta \colon A \to B$, $\gamma \colon A \to C$, there exists $D \in \mcK$ and embeddings $\beta' \colon B \to D$ and $\gamma' \colon C \to D$ such that $\beta'\circ \beta(a)= \gamma' \circ \gamma(a)$ for all $a \in A$.

$\mcK$ has the \emph{strong amalgamation property} if in the above definition we have in addition $\beta'(B)\cap \gamma'(C)=\beta'\circ\beta(A)$.
\end{defn}

\begin{thm}[Fraïssé]
The age of any ultrahomogeneous $\mathcal{L}$-structure satisfies the amalgamation property. Conversely, if $\mcK$ is a countable (up to isomorphism) class of finite $\mathcal{L}$-structures which has the joint embedding, hereditary and amalgamation properties then there exists a unique (up to isomorphism), ultrahomogeneous countable $\mathcal{L}$-structure $M$ such that $\Age(M)=\mcK$. 
\end{thm}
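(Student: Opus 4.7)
The plan is to prove the two directions separately, both via classical arguments. For the forward direction, suppose $M$ is ultrahomogeneous and $A,B,C \in \Age(M)$ with embeddings $\beta\colon A\to B$ and $\gamma\colon A\to C$. By definition of age we may realize $B$ and $C$ as finite substructures $B',C' \subseteq M$, with associated isomorphisms $i_B\colon B\to B'$ and $i_C\colon C\to C'$. The composition $i_C\circ\gamma\circ\beta\inv\circ i_B\inv$, restricted to $i_B(\beta(A))$, is a partial automorphism of $M$; by ultrahomogeneity it extends to some $\sigma\in\Aut(M)$. Then $\sigma(B')\cup C'$ generates a finite substructure $D\subseteq M$, and with $\beta':=\sigma\circ i_B$ and $\gamma':=i_C$ one verifies $\beta'\circ\beta = \gamma'\circ\gamma$, so $\mcK=\Age(M)$ has AP.

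For the backward direction I would construct $M$ as the union of an increasing chain $A_0 \subseteq A_1 \subseteq \cdots$ of structures in $\mcK$ using a standard bookkeeping argument. Fix an enumeration of all triples $(A,B,\iota)$ where $A \subseteq A_n$ for some $n$, $B\in\mcK$, and $\iota\colon A\to B$ is an embedding (there are only countably many up to isomorphism since $\mcK$ is countable and each $A_n$ is finite). At stage $n$, apply AP to the embeddings $A\hookrightarrow A_n$ and $\iota\colon A\to B$ dictated by the bookkeeping to obtain some $A_{n+1}\in\mcK$ containing $A_n$ into which $B$ also embeds over $A$. Using JEP to seed the construction with $A_0$, and processing each bookkeeping task cofinally often, the union $M:=\bigcup_n A_n$ is then a countable $\sL$-structure, and by the hereditary property together with the bookkeeping one gets $\Age(M)=\mcK$.

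To verify ultrahomogeneity, let $f\colon A\to A'$ be a partial automorphism between finite substructures of $M$. Then $A,A' \subseteq A_n$ for some large $n$, and $f$ amounts to an embedding of $A$ into $A_n$ agreeing with a previously considered amalgamation task; the bookkeeping ensures the relevant amalgam was carried out at some later stage, which after iterating a back-and-forth between $M$ and itself extends $f$ to an automorphism of $M$. Uniqueness of $M$ is a separate back-and-forth argument: given two such ultrahomogeneous structures $M_1,M_2$ with $\Age(M_1)=\Age(M_2)=\mcK$, enumerate generators of each and alternately extend a partial isomorphism between finite substructures, using amalgamation and ultrahomogeneity of each side at each step to absorb the next generator; the union is an isomorphism $M_1\cong M_2$.

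The main obstacle is purely organizational: arranging the bookkeeping so that every amalgamation task over every finite substructure is addressed cofinally often, while keeping the enumeration well-defined despite the fact that the chain $(A_n)$ is being built simultaneously with the list of tasks. This is handled by a diagonal enumeration (at stage $n$ consider only tasks indexed by $m \le n$ over substructures of $A_n$, and revisit earlier tasks cofinally), and by invoking the strong form of AP iteratively. No genuinely new combinatorics enter; the amalgamation property is used at every stage of both the construction and the ultrahomogeneity/uniqueness back-and-forth.
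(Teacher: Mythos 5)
Your argument is the standard textbook proof of Fraïssé's theorem (ultrahomogeneity gives AP by extending the partial automorphism $i_C\circ\gamma\circ\beta^{-1}\circ i_B^{-1}$; conversely, a bookkeeping chain construction plus back-and-forth gives existence, ultrahomogeneity, and uniqueness), and it is correct; the paper cites this result without proof, so there is nothing different to compare it against. One small remark: your closing reference to ``the strong form of AP'' is unnecessary --- ordinary amalgamation suffices at every stage, and strong AP is not among the hypotheses.
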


A class $\mcK$ satisfying the assumptions of the theorem is called a 
\emph{Fraïssé class}, and the unique structure $M$ above is called the 
\emph{Fraïssé limit} of $\mcK$. It is also characterized by the statement that $\Age(M) = \mcK$, and for any $A \subseteq M$, any $B \in \mcK$ and any embedding $\varphi \colon A \to B$ there exists an embedding $\psi \colon B \to M$ such that $\psi(\varphi(a))=a$ for all $a \in A$.

\subsection{$\Delta$-metric spaces\label{subsectionDelta}}

\begin{defn}
A \emph{distance value set} is a nonempty subset $\Delta$ of the open interval $(0,+\infty)$, such that 
$$\forall x,y \in \Delta \quad \min(x+y,\sup(\Delta)) \in \Delta\ . $$
A \emph{$\Delta$-metric space} is a metric space whose nonzero distances belong to $\Delta$. 
\end{defn}

In particular, when $\Delta$ is bounded the definition implies that $\sup(\Delta) \in \Delta$. The definition above is a particular case of what Conant \cite{C17} calls a \emph{distance monoid}, and our constructions could work for some more general distance monoids. 
For simplicity, we choose to work only in this more restricted case.

In general, for a metric space $(M,d)$, the isometry group $\Iso(M, d)$ is endowed with the pointwise convergence topology, i.e. $g_n\to g$ iff $d(g_n(x),g(x))\to 0$ for all $x \in M$. When $(M, d)$ is a separable complete metric space, $\Iso(M, d)$ becomes a Polish group, and we often write $\Iso(M)$ instead of $\Iso(M, d)$.

In case $(M, d)$ is \emph{countable}, we use the following important convention. We will use $\Iso(M)$ to denote the group $\Iso(M, d)$ but we will view it as a subset of the permutation group $\Sym(M)$ on $M$. As such it will become a closed subgroup of $\Sym(M)$, and $\Iso(M)$ will be endowed with the subspace topology of $\Sym(M)$, which we will refer to as the \emph{permutation group topology}. This again makes $\Iso(M)$ a Polish group. 
A basis of neighborhoods of $1$ for this topology is given by pointwise stabilizers of finite tuples of elements of $M$.

To apply Fraïssé theory, we will assume throughout this paper that $\Delta$ is 
a \emph{countable} distance value set. 

Now any $\Delta$-metric space $(M,d)$ may be viewed as a first-order structure, in a countable relational language with a binary relational symbol $R_s$ for each element $s$ of $\Delta$, by declaring that
$$  M \models R_s(x,y)  \mbox{\ iff\ }  d(x,y)=s. $$ 
Since there should be no risk of confusion, we will be using the distance function instead of those binary relational symbols. 

\begin{lem} For any distance value set $\Delta$, the class of finite $\Delta$-metric spaces has the strong amalgamation property. 
\end{lem}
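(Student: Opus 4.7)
The plan is to use the standard ``shortest path'' construction for amalgamation of finite metric spaces, truncated by $\sup(\Delta)$ so that the resulting distances stay in $\Delta$. First I would identify $A$ with its images $\beta(A) = \gamma(A)$ so that $A \subseteq B$ and $A \subseteq C$, and rename points so that $B \cap C = A$ as sets. Then I set $D = B \cup C$ and take $\beta' \colon B \to D$, $\gamma' \colon C \to D$ to be the inclusion maps; the metric $d_D$ on $D$ will extend those of $B$ and $C$, and the strong amalgamation condition $\beta'(B) \cap \gamma'(C) = \beta'(\beta(A))$ will hold by construction, provided $d_D$ is genuinely a metric (not merely a pseudometric).

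For $b \in B \setminus A$ and $c \in C \setminus A$, I define
$$d_D(b,c) = \min\Bigl(\sup(\Delta),\ \min_{a \in A}\bigl(d_B(b,a) + d_C(a,c)\bigr)\Bigr).$$
If $A = \emptyset$, I instead pick any $s \in \Delta$ with $s \geq \max(\diam(B), \diam(C))$ (such an $s$ exists, namely $\sup(\Delta)$ if $\Delta$ is bounded and any sufficiently large element otherwise), and declare all cross-distances to equal $s$; if one of $B \setminus A$, $C \setminus A$ is empty the statement is trivial.

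There are three things to check: that $d_D$ takes values in $\Delta \cup \{0\}$, that it is strictly positive off the diagonal, and that it satisfies the triangle inequality. The first uses precisely the defining axiom of a distance value set: for each fixed $a \in A$, $\min(\sup(\Delta), d_B(b,a)+d_C(a,c)) \in \Delta$, and the minimum of a finite nonempty subset of $\Delta$ again belongs to $\Delta$. Strict positivity is immediate: since $b \notin A$ we have $d_B(b,a) > 0$ for every $a \in A$, so $d_D(b,c) \geq \min_{a \in A} d_B(b,a) > 0$.

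The only nontrivial case of the triangle inequality is a triple mixing points of $B$ and $C$, say $b_1, b_2 \in B$ and $c \in C$; this reduces to
$$\min_{a \in A}(d_B(b_1,a) + d_C(a,c)) \leq d_B(b_1,b_2) + \min_{a \in A}(d_B(b_2,a) + d_C(a,c)),$$
which follows from the triangle inequality in $B$ applied termwise before taking the minimum. A short case split then shows that capping at $\sup(\Delta)$ preserves the inequality: if the right-hand side is untruncated this is clear, and otherwise $d_D(b_1,b_2) + d_D(b_2,c) \geq \sup(\Delta) \geq d_D(b_1,c)$ trivially. I do not anticipate any real obstacle; the point is simply that the axiom $\min(x+y, \sup(\Delta)) \in \Delta$ was chosen to make exactly this truncated shortest-path construction work, and strong (rather than ordinary) amalgamation comes for free from the positivity check.
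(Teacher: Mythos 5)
Your proof is correct and takes essentially the same approach as the paper, which defines the identical truncated shortest-path amalgam on $D=B\cup C$ (handling $A=\emptyset$ by a single large constant) and, unlike you, does not spell out the verification that the result is a $\Delta$-metric space. The only point you gloss over is that a mixed triple $b_1,b_2\in B$, $c\in C$ also requires $d_D(b_1,b_2)\le d_D(b_1,c)+d_D(c,b_2)$, which is not an instance of your displayed inequality but follows just as easily by routing each cross-distance through a point of $A$ and applying the triangle inequality in $C$ between those two points (the truncated case being trivial since $d_D(b_1,b_2)\le\sup(\Delta)$).
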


\begin{proof}
Assume that $A,B,C$ are finite $\Delta$-metric spaces and $A$ is a subspace of both $B$ and $C$. Let $D$ denote the union of $B$ and $C$, where both copies of $A$ are identified and the values of the metric on $B$ and $C$ are imposed (and coincide on $A$). We need to define $d(b,c)$ for $b \in B \setminus A$ and $c \in C \setminus A$. If $A$ is empty, then we let $m$ be the maximum value taken by $d$ on either $B$ or $C$, and set $d(b,c)=m$ for any $b$ and $c$. If $A$ is nonempty, then we set 
\[d(b,c)= \min \{ \min \{d(b,a)+d(a,c) \colon a \in A\}, \sup(\Delta)\}. \]
Then $D$ is a $\Delta$-metric space.
\end{proof}

Thus if $\Delta$ is a countable distance value set, then the class of finite 
$\Delta$-metric spaces is a Fraïssé class. We denote by $\U_\Delta$ the Fraïssé 
limit of this class, which is itself a countable $\Delta$-metric space. We 
emphasize that $G_\Delta=\Iso(\U_\Delta)$ is endowed with the permutation group 
topology.

We will need the following characterization of $\U_\Delta$. The property in the proposition is called the \emph{Urysohn property}.

\begin{prop}\label{UrysohnProperty}
Let $\Delta$ be any countable distance value set. The space $\U_\Delta$ is the unique countable $\Delta$-metric space $X$, up to isometry, that satisfies the following 
property: 
\begin{enumerate}
\item[] Given any finite subset $A$ of $X$ and function $f: A\to \Delta$ 
satisfying
$$ |f(a)-f(b)|\leq d(a, b)\leq f(a)+f(b), \forall a, b\in A, $$
there is an $x\in X$ such that $d(x,a)=f(a)$ for all $a\in A$.
\end{enumerate}
\end{prop}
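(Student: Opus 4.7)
The plan is to prove the two directions separately: (1) $\U_\Delta$ itself satisfies the Urysohn property, and (2) any countable $\Delta$-metric space satisfying the Urysohn property is isometric to $\U_\Delta$, which I would establish via a standard back-and-forth argument.

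For (1), given a finite $A \subseteq \U_\Delta$ and a function $f \colon A \to \Delta$ satisfying the compatibility condition, I would build the one-point extension $B := A \sqcup \{x_0\}$, keeping the existing distances on $A$ and setting $d(x_0, a) := f(a)$ for $a \in A$. The hypothesis $|f(a) - f(b)| \leq d(a,b) \leq f(a) + f(b)$ is precisely what is needed to verify the triangle inequality for every triple involving $x_0$, and since $f$ takes values in $\Delta$ and the distances within $A$ already lie in $\Delta$, $B$ is a finite $\Delta$-metric space, hence a member of $\Age(\U_\Delta)$. The Fraïssé extension property applied to the inclusions $A \hookrightarrow B$ and $A \hookrightarrow \U_\Delta$ yields an isometric embedding $\psi \colon B \to \U_\Delta$ fixing $A$ pointwise, and the point $x := \psi(x_0)$ satisfies $d(x, a) = f(a)$ for every $a \in A$.

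For (2), let $X$ be any countable $\Delta$-metric space with the Urysohn property and enumerate $X = \{x_n : n \in \N\}$ and $\U_\Delta = \{y_n : n \in \N\}$. I would construct an increasing chain of finite partial isometries $\phi_n \colon A_n \to B_n$ with $A_n \subseteq X$ and $B_n \subseteq \U_\Delta$, starting with $\phi_0 = \emptyset$, by a back-and-forth recursion. At a forth step, choose the first $x_k \notin A_n$ and define $f \colon B_n \to \Delta$ by $f(b) := d_X(x_k, \phi_n^{-1}(b))$; the compatibility condition for $f$ is automatic from the metric properties of $X$, so by part (1) applied to $\U_\Delta$ there is $y \in \U_\Delta$ with $d(y, b) = f(b)$ for all $b \in B_n$, and I set $\phi_{n+1}(x_k) := y$. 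A back step is symmetric: choose the first $y_k \notin B_n$, define $g \colon A_n \to \Delta$ by $g(a) := d(y_k, \phi_n(a))$, verify its compatibility, and invoke the Urysohn property of $X$ itself to produce a preimage. Taking the union $\bigcup_n \phi_n$ yields a bijective isometry $X \to \U_\Delta$.

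No step in this argument is truly an obstacle — it is a routine combination of the Fraïssé extension property and back-and-forth. The only detail that requires a moment's care is checking that the one-point extension in part (1) genuinely lies in $\Age(\U_\Delta)$, which reduces entirely to verifying the triangle inequality at $x_0$; both inequalities in the hypothesis are tailored exactly for this purpose.
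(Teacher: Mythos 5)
Your proposal is correct: the one-point extension argument for the extension property and the back-and-forth argument for uniqueness are both carried out properly, and the key verification (that the two inequalities in the Kat\v{e}tov condition are exactly the triangle inequalities at the new point, so the extension stays a finite $\Delta$-metric space in $\Age(\U_\Delta)$) is correctly identified. The paper states this proposition without proof as a standard fact of Fra\"iss\'e theory, and your argument is precisely the standard one it implicitly relies on.
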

Functions $f:A\to \Delta$ as above are called \emph{Kat\u{e}tov functions} over 
$A$.
We now mention some well-known examples of spaces having the Urysohn property. 
\begin{example} 
\begin{enumerate} 
\item[(1)] $\Delta$ is a singleton. For instance let $\Delta=\{1\}$. Then $\U_{\{1\}}$ is a countable space with the discrete metric $\delta$, where $\delta(x,y)=1$ iff $x\neq y$, and $G_{\{1\}}$ is isomorphic to $\Sym(\N)$ (also denoted $S_\infty$). Here $\U_{\{1\}}$ can also be viewed as the complete graph $K_\N$, while $\Aut(K_\N)=\Sym(\N)$.
\item[(2)] $\Delta=\{0,1,2\}$. In this case $\U_{\{1,2\}}$ is essentially the random graph $\mathcal{R}$. In fact, if we define in $\mathcal{R}$ the metric $d$ by $d(x,y)=1$ iff there is an edge between $x$ and $y$, then $(\mathcal{R},d)$ is isometric with $\U_{\{1,2\}}$. In this case $G_{\{1,2\}}$ is isomorphic to $\Aut(\mathcal{R})$.
\item[(3)] $\Delta=\Q$. Then $\U_\Q$ is the universal rational Urysohn space $\Q\U$, and $G_\Q=\Iso(\mathbb{QU})$.
\end{enumerate}
\end{example}

\subsection{S-extensions}

We recall the notion of S-extension from \cite{EG}. 

Let $(X, d_X)$ and $(Y,d_Y)$ be metric spaces. When there is no danger of confusion, we simply write $X$ for $(X, d_X)$ and $Y$ for $(Y, d_Y)$. We say that $Y$ is an {\it extension} of $X$ if $(X,d_X)$ is a subspace of $(Y, d_Y)$. Interchangeably, we use the same terminology when $Y$ contains an isometric copy of $X$, i.e. when there is an (obvious) isometric embedding from $X$ into $Y$.

A {\it partial isometry} of $X$ is an isometry between two finite subspaces of $X$. The set of all partial isometries of $X$ is denoted as $\mathcal{P}(X)$. $\mathcal{P}(X)$ is a groupoid with the composition $(p,q)\mapsto p\circ q$, where $p\circ q$ is only defined when $\rng(q)=\dom(p)$, and the inverse $p\mapsto p^{-1}$. 

If $Y$ is an extension of $X$, then every partial isometry of $X$ is also a partial isometry of $Y$. In symbols, we have $\mathcal{P}(X)\subseteq \mathcal{P}(Y)$ if $X\subseteq Y$.

If $p, q\in \mathcal{P}(X)$, we say that $q$ {\it extends} $p$, and write $p\subseteq q$, if 
$$\{ (x, p(x))\,:\, x\in\mbox{dom}(p)\}\subseteq \{ (x, q(x))\,:\, x\in\mbox{dom}(q)\}.$$

We let $1_X$ denote the identity isometry on $X$, i.e., $1_X(x)=x$ for all $x\in X$. Let $\mathcal{P}_X$ denote the set of all $p\in \mathcal{P}(X)$ such that $p\not\subseteq 1_X$. We refer to elements of $\mathcal{P}_X$ as {\it nonidentity partial isometries} of $X$. Note that if $p\in \mathcal{P}_X$ then $p^{-1}\in \mathcal{P}_X$.

\begin{defn} Let $X$ be a metric space and $P\subseteq \mathcal{P}_X$ such that $P=P^{-1}$. An \emph{S-extension} of $X$ with respect to $P$  is a pair $(Y, \phi)$, where $Y\supseteq X$ is an extension of $X$, and $\phi: P\to \mbox{Iso}(Y)$ is such that $\phi(p)$ extends $p$ for all $p\in P$. We also require that $\phi(p^{-1})=\phi(p)^{-1}$ for all $p \in P$. When $P=\mathcal{P}_X$ we call $(Y, \phi)$ an \emph{S-extension} of $X$. 
\end{defn}

The following strong notion of coherence was introduced by Solecki (cf. \cite{R} and \cite{S2}). We use a terminology different from Solecki's since we will have to deal with a weaker notion of coherence in the next subsection.

\begin{defn}[Solecki] Let $X$ be a metric space. An S-extension $(Y,\phi)$ of $X$ is \emph{strongly coherent} if for every triple $(p,q,r)$ of partial isometries of $X$ such that $p\circ q=r$, we have $\phi(p)\circ \phi(q)=\phi(r)$. 
\end{defn}

\begin{thm}[Solecki \cite{S09} \cite{R} \cite{S2}]\label{SoleckiTheorem}
Let $\Delta$ be any distance value set and $X$ be a finite $\Delta$-metric space. Then, $X$ has a finite, strongly coherent S-extension $(Y, \phi)$ where $Y$ is a $\Delta$-metric space.
\end{thm}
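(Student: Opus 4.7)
My plan is to reduce Solecki's theorem to a group-theoretic construction: build a finite group enveloping the groupoid $\mathcal{P}(X)$ of partial isometries of $X$, and geometrically realize its action as isometries of a finite $\Delta$-metric extension $Y$ of $X$. Since $X$ is finite, $\mathcal{P}(X)$ is a finite groupoid. First, I would introduce a finitely presented group $G_0$ with one generator $t_p$ for each $p \in \mathcal{P}_X$, subject to the relations $t_{p^{-1}} = t_p^{-1}$ and $t_p t_q = t_{p \circ q}$ whenever the composition is defined in $\mathcal{P}(X)$. By construction, any homomorphism from $G_0$ into $\Iso(Y)$ for some $Y \supseteq X$ that extends the partial isometries on the nose automatically yields a coherent assignment $\phi$, so coherence has been absorbed into the group law of $G_0$.

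The main step is then to produce a finite quotient $\pi \colon G_0 \twoheadrightarrow \bar G$ together with a finite set $Y \supseteq X$ carrying a $\bar G$-action, such that each image $\pi(t_p)$ acts on $Y$ extending $p$. The natural candidate for $Y$ is a single orbit of $X$ under $\bar G$, built from cosets of (the preimages in $G_0$ of) the point-stabilizers associated to the partial action on $X$. For this to work, the relevant point-stabilizers must be separated by a finite-index normal subgroup of $G_0$, which is where the Ribes--Zalesski\u{\i} theorem on closedness of products of finitely generated subgroups in the profinite topology of a free group enters (this is the route taken by Rosendal); alternatively one can invoke the Herwig--Lascar theorem.

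With $\bar G$ and $Y$ in hand, the last step is to extend the metric on $X$ to a $\bar G$-invariant $\Delta$-metric on $Y$ by iterated strong amalgamation along $\bar G$-translates of $X$, using the amalgamation lemma proved earlier; the closure of $\Delta$ under capped sums keeps distances in $\Delta$ at every amalgamation step. The main obstacle is the middle step: extending one partial isometry at a time is easy via strong amalgamation, but realizing \emph{all} extensions simultaneously and compatibly is what requires the profinite-topology input. Without it, a naive finite quotient of $G_0$ would collapse essential distinctions between partial isometries, or even identify distinct points of $X$, and the S-extension structure would be destroyed.
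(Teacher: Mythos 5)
First, a point of comparison: the paper does not prove Theorem~\ref{SoleckiTheorem} at all --- it is quoted as a black box with attributions to Solecki, Rosendal, Siniora--Solecki and Hubi\v{c}ka--Kone\v{c}n\`{y}--Ne\v{s}et\v{r}il --- so there is no in-paper argument to measure you against. Judged on its own, your outline is a reasonable reconstruction of the Herwig--Lascar/Rosendal route with the extra twist of imposing the groupoid relations at the level of the group, which is indeed the right mechanism for forcing strong coherence. But as written it has two genuine gaps. The first is the appeal to Ribes--Zalesski\u{\i}: that theorem is about \emph{free} groups, and your $G_0$ is emphatically not free, since you have imposed $t_pt_q=t_{p\circ q}$ (and you also need $t_pt_q=1$ whenever $p\circ q\subseteq 1_X$, a case your presentation omits since such compositions are not in $\mathcal{P}_X$). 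To use the profinite machinery you must first identify $G_0$ as the universal group of the finite groupoid $\mathcal{P}(X)$, observe that this is a free product of groups of the form $\Iso(B)*F_k$ with $\Iso(B)$ finite, conclude that $G_0$ is virtually free, and then invoke the stability of the RZ-property under finite-index extensions (as in Proposition~\ref{amalgamRZ}). None of this is in your sketch, and without it the sentence ``this is where Ribes--Zalesski\u{\i} enters'' does not apply to the group you built.

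The second and more serious gap is that your middle step --- producing the finite quotient $\bar G$ and the finite set $Y\supseteq X$ on which each $\pi(t_p)$ acts extending $p$ --- is the entire content of the theorem, and the coset description does not constitute a proof. Concretely, you must show (a) that the tautological partial action of $G_0$ on $X$ is consistent, (b) that in the (finite quotient of the) globalization distinct points of $X$ stay distinct, (c) that $\pi(t_p)$ extends $p$ on \emph{all} of $\dom(p)$, which forces you to separate products of several point-stabilizers simultaneously rather than a single one, and (d) that the $\bar G$-invariant $\Delta$-metric you build by amalgamation restricts to $d_X$ on $X$ without creating shortcuts. Each of (b)--(d) is exactly the technical heart of the Herwig--Lascar/Rosendal argument. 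A workable way to close the gap within the toolkit of this paper would be: first globalize the partial $G_0$-action to an isometric action on an infinite $\Delta$-metric space $Z_\infty\supseteq X$ (by the coset-with-chain-metric construction underlying Lemma~\ref{l:extension}), check that the metric on $X$ is preserved, and then apply Theorem~\ref{RosendalTheorem} to $G_0$ (legitimate once virtual freeness is established) with $A=X$ and $F=\{t_p: p\in\mathcal{P}_X\}$; setting $\phi(p)=\pi'(t_p)$ then yields strong coherence for free because $\pi'$ is a homomorphism on $G_0$. Note also that the published proofs you cite do not actually take your route: Siniora--Solecki track coherence through the Herwig--Lascar construction itself, and Hubi\v{c}ka--Kone\v{c}n\`{y}--Ne\v{s}et\v{r}il get coherence automatically from a canonical combinatorial extension; so your proposal, if completed as above, would be a genuinely third argument rather than a paraphrase of either.
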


The observation that finite, strongly coherent S-extensions can be constructed as $\Delta$-metric spaces was explicit in Solecki's unpublished notes \cite{S09} but follows implicitly from all proofs of existence of finite, strongly coherent S-extensions, e.g. in Siniora--Solecki \cite{S2} or Hubi\v{c}ka--Kone\v{c}n\`{y}--Ne\v{s}et\v{r}il \cite{HKN}.

The following lemma highlights the importance of strongly coherent S-extensions. 
\begin{lem}\label{lem:strongcoherence}
Let $X$ be a metric space and $(Y,\phi)$ be a strongly coherent S-extension of $X$. For every $D\subseteq X$, the map $p\mapsto \phi(p)$ gives a group embedding from $\Iso(D)$ into $\Iso(Y)$. 
\end{lem}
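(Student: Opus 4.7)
The plan is to construct the embedding by extending $\phi$ to all of $\Iso(D)$ in the obvious way: define $\Phi:\Iso(D)\to\Iso(Y)$ by $\Phi(p)=\phi(p)$ whenever $p\neq \id_D$, and $\Phi(\id_D)=1_Y$. To see that this is well defined, I would note that any $p\in\Iso(D)\setminus\{\id_D\}$, viewed as a partial isometry of $X$ (with $\dom(p)=\rng(p)=D$), is not contained in $1_X$ since there exists $x\in D$ with $p(x)\neq x$; hence $p\in\mathcal{P}_X$ and $\phi(p)$ is defined. Moreover, in every case $\Phi(p)$ is an isometry of $Y$ whose restriction to $D$ equals $p$: trivially when $p=\id_D$, and because $\phi(p)$ extends $p$ otherwise.

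Injectivity falls out immediately. If $p\neq q$ in $\Iso(D)$, pick $x\in D$ with $p(x)\neq q(x)$; then $\Phi(p)(x)=p(x)\neq q(x)=\Phi(q)(x)$, so $\Phi(p)\neq \Phi(q)$. For the homomorphism property $\Phi(pq)=\Phi(p)\Phi(q)$, I would do a short case analysis. The main case is when $p$, $q$, and $pq$ are all nonidentity elements of $\Iso(D)$; viewing them as partial isometries of $X$ in $\mathcal{P}_X$, strong coherence of the S-extension gives $\phi(pq)=\phi(p)\phi(q)$, i.e. $\Phi(pq)=\Phi(p)\Phi(q)$. The cases $p=\id_D$ or $q=\id_D$ are trivial, since both sides then reduce to $\Phi(q)$ or $\Phi(p)$. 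The remaining case is $p,q\neq\id_D$ with $pq=\id_D$, i.e. $q=p^{-1}$; here $\Phi(p)\Phi(q)=\phi(p)\phi(p^{-1})=\phi(p)\phi(p)^{-1}=1_Y=\Phi(\id_D)$, where the second equality uses the axiom $\phi(p^{-1})=\phi(p)^{-1}$ built into the definition of an S-extension.

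The only real obstacle is the bookkeeping around the identity element, since $\phi$ is by definition undefined on restrictions of $1_X$; this is neutralized by the convention $\Phi(\id_D)=1_Y$, and then strong coherence carries all the substantive content of the argument.
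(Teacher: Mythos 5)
Your proof is correct. The paper states this lemma without proof, treating it as immediate from the definitions, and your argument --- extending $\phi$ by sending $\id_D$ to $1_Y$, then checking injectivity via the extension property and multiplicativity via strong coherence together with the S-extension axiom $\phi(p^{-1})=\phi(p)^{-1}$ --- is exactly the routine verification intended (noting only that $D$ is implicitly finite here, so that elements of $\Iso(D)$ are indeed partial isometries of $X$).
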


\subsection{Ultraextensive $\Delta$-metric spaces}
We recall more notions from \cite{EG}.

For any metric space $X$ and $P\subseteq \mathcal{P}_X$ such that $P=P^{-1}$, we let $\mathbb{F}(P)$ denote the free group generated by $P$, where for any $p\in P$, the inverse of $p$ in $\mathbb{F}(P)$ is $p^{-1}$. If $(Y, \phi)$ is an S-extension of $X$ with respect to $P$, then $\phi$ can be naturally extended to a homomorphism from $\mathbb{F}(P)$ to $\Iso(Y)$. We still use $\phi$ to denote this group homomorphism, i.e., for any $p_1,\dots, p_n\in P$, 
$$ \phi(p_1\cdots p_n)=\phi(p_1)\circ \cdots \circ \phi(p_n). $$

\begin{defn}\label{minimal}
Let $X$ be a metric space and $P\subseteq \mathcal{P}_X$ such that $P=P^{-1}$. An S-extension $(Y,\phi)$ of $X$ with respect to $P$ is \emph{minimal} if for any $y\in Y$ there is $g\in \mathbb{F}(P)$ and $x\in X$ such that 
$y=\phi(g)(x)$.
\end{defn}

\begin{defn}\label{coherence}
Let $X_1\subseteq X_2$ be metric spaces and $(Y_i,\phi_i)$ be an S-extension of $X_i$ for $i=1,2$. We say that $(Y_1,\phi_1)$ and $(Y_2, \phi_2)$ are \emph{coherent} if 
\begin{enumerate}
\item[(i)] $Y_2$ extends $Y_1$, 
\item[(ii)] $\phi_2(p)$ extends $\phi_1(p)$ for all $p\in\mathcal{P}_{X_1}\subseteq \mathcal{P}_{X_2}$, and
\item[(iii)] letting $K_i=\phi_i(\mathcal{P}_{X_i})\subseteq \mbox{Iso}(Y_i)$ for $i=1,2$, and letting $\kappa:K_1\to K_2$ be the map $\kappa(\phi_1(p))=\phi_2(p)$ for all $p\in\mathcal{P}_{X_1}$, then $\kappa$ extends uniquely to a group embedding from $\langle K_1\rangle$ into $\langle K_2\rangle$.
\end{enumerate}
\end{defn}

This notion of coherence is weaker than Solecki's strong coherence, as witnessed by the following lemma.

\begin{lem}\label{lem:weakcoherence} Let $X_1\subseteq X_2$ be metric spaces, $(Y_1, \phi_1)$ be an S-extension of $X_1$, and $(Y_2, \psi)$ be a strongly coherent S-extension of $X_2 \cup Y_1$. Let $\phi_2: \mathcal{P}_{X_2} \to \Iso(Y_2)$ be defined as
$$ \phi_2(p)=\left\{\begin{array}{ll} \psi(\phi_1(p)), & \mbox{ if $p\in \mathcal{P}_{X_1}$,} \\
\psi(p), & \mbox{ if $p\in \mathcal{P}_{X_2}\setminus\mathcal{P}_{X_1}$.}
\end{array}\right. $$
Then $(Y_1, \phi_1)$ and $(Y_2, \phi_2)$ are coherent.
\end{lem}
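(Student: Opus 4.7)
The plan is to verify the three conditions (i)--(iii) of Definition \ref{coherence} in turn. Condition (i) is immediate since $Y_2\supseteq X_2\cup Y_1\supseteq Y_1$. For condition (ii), fix $p\in\mathcal{P}_{X_1}$. Then $\phi_1(p)\in\Iso(Y_1)$ extends the nonidentity partial isometry $p$ of $X_1$, so $\phi_1(p)$ itself is a nonidentity element of $\Iso(Y_1)$; viewed as a partial isometry of $X_2\cup Y_1$ with domain $Y_1$, it therefore belongs to $\mathcal{P}_{X_2\cup Y_1}$, and by the defining property of the S-extension $\psi$, $\phi_2(p)=\psi(\phi_1(p))$ extends $\phi_1(p)$. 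Along the way one should sanity-check that $\phi_2$ really is an S-extension of $X_2$: in both branches of its definition $\phi_2(p)$ extends $p$, and $\phi_2(p^{-1})=\phi_2(p)^{-1}$ follows from the analogous properties of $\phi_1$ and $\psi$.

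The heart of the argument is condition (iii). My plan is to use strong coherence of $\psi$ to promote the partial map $g\mapsto\psi(g)$ on nonidentity elements of $\Iso(Y_1)$ to an honest group homomorphism. Concretely, define $\tilde\psi\colon \Iso(Y_1)\to \Iso(Y_2)$ by $\tilde\psi(g)=\psi(g)$ for $g\neq 1_{Y_1}$ (viewed as a partial isometry of $X_2\cup Y_1$ with domain $Y_1$) and $\tilde\psi(1_{Y_1})=1_{Y_2}$, then check that $\tilde\psi(g_1 g_2)=\tilde\psi(g_1)\tilde\psi(g_2)$ by a short case split on whether any of $g_1$, $g_2$, $g_1 g_2$ equals the identity. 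The only substantive case is when all three are nonidentity, where strong coherence of $\psi$ applied to the composable triple $(g_1,g_2,g_1\circ g_2)$ inside $\mathcal{P}_{X_2\cup Y_1}$ yields $\psi(g_1)\circ\psi(g_2)=\psi(g_1\circ g_2)$. The case $g_1 g_2=1_{Y_1}$ reduces to the S-extension relation $\psi(g^{-1})=\psi(g)^{-1}$. Restricting $\tilde\psi$ to $\langle K_1\rangle$ then produces the required homomorphism $\bar\kappa$: it extends $\kappa$ because $\tilde\psi(\phi_1(p))=\psi(\phi_1(p))=\phi_2(p)$ for each $p\in\mathcal{P}_{X_1}$, and its image lies in $\langle K_2\rangle$ since it sends generators of $\langle K_1\rangle$ into $\phi_2(\mathcal{P}_{X_1})\subseteq K_2$. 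Uniqueness is automatic because $K_1$ generates $\langle K_1\rangle$, and injectivity follows because if $\bar\kappa(g)=1_{Y_2}$ with $g\neq 1_{Y_1}$, then $\psi(g)$ would simultaneously extend the nonidentity $g$ and equal $1_{Y_2}$, a contradiction.

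The main obstacle, such as it is, lies in the case analysis above: the weaker notion of coherence in Definition \ref{coherence} amounts to saying precisely that strong coherence of $\psi$ on the enlarged space $X_2\cup Y_1$ encodes the group-theoretic compatibility of $\phi_1$ with $\psi$, but one must take care that identity partial isometries, which are excluded from the domain of $\psi$, do not create a gap. Once that bookkeeping is handled, everything else is a direct unwinding of the definitions.
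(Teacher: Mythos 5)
Your proof is correct and follows essentially the same route as the paper's: conditions (i) and (ii) by unwinding the definitions, and condition (iii) by using strong coherence of $\psi$ to turn $g\mapsto\psi(g)$ into a group embedding of $\Iso(Y_1)$ into $\Iso(Y_2)$ that agrees with $\phi_1(p)\mapsto\phi_2(p)$ on generators. The only difference is that the paper simply invokes Lemma~\ref{lem:strongcoherence} (with $D=Y_1$) for that step, whereas you re-derive the needed special case by hand, including the bookkeeping for identity elements.
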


\begin{proof} From the definition of $\phi_2$ it is clear that for any $p\in\mathcal{P}_{X_1}$, 
$$ p\subseteq \phi_1(p)\subseteq \psi(\phi_1(p))=\phi_2(p). $$
By Lemma~\ref{lem:strongcoherence}, $\psi$ gives a group embedding from $\Iso(Y_1)$ to $\Iso(Y_2)$. On the other hand, $\psi$ coincides with the map $\phi_1(p)\mapsto \phi_2(p)$ for all $p\in\mathcal{P}_{X_1}$. Thus this map extends uniquely to a group embedding from $\langle \mathcal{P}_{X_1}\rangle\leq \Iso(X_1)$ into $\langle \mathcal{P}_{X_2}\rangle\leq\Iso(Y_2)$.
\end{proof}

\begin{defn}\label{uedef} A metric space $U$ is {\it ultraextensive} if 
\begin{enumerate}
\item[(i)] $U$ is ultrahomogeneous, i.e., there is a $\phi$ such that $(U,\phi)$ is an S-extension of $U$;
\item[(ii)] Every finite $X\subseteq U$ has a finite S-extension $(Y,\phi)$ where $Y\subseteq U$;
\item[(iii)] If $X_1\subseteq X_2\subseteq U$ are finite and $(Y_1, \phi_1)$ is a finite minimal S-extension of $X_1$ with $Y_1\subseteq U$, then there is a finite minimal S-extension $(Y_2,\phi_2)$ of $X_2$ such that $Y_2\subseteq U$ and $(Y_1,\phi_1)$ and $(Y_2,\phi_2)$ are coherent.
\end{enumerate}
\end{defn}

One of the main properties of an ultraextensive metric space $U$ is that $\Iso(U)$ always contains a countable dense locally finite subgroup when $U$ is separable (Theorem 1.4 of \cite{EG}). Moreover, the weaker notion of coherence is sufficient for constructing ultraextensive metric spaces. 

\begin{thm} \label{thm:ultraextensiveDelta} Let $\Delta$ be a countable distance value set. Then $\U_\Delta$ is ultraextensive. In particular, $G_\Delta$ contains a countable dense locally finite subgroup.
\end{thm}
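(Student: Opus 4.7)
The plan is to verify conditions (i)--(iii) of Definition \ref{uedef} for $U = \U_\Delta$; the ``in particular'' clause is then immediate from Theorem 1.4 of \cite{EG}.

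Condition (i) is essentially built into the Fraïssé construction: any partial isometry between finite subspaces of $\U_\Delta$ extends to a full isometry, so enumerating $\mathcal{P}_{\U_\Delta}$ and making one choice per pair $\{p, p^{-1}\}$ produces a map $\phi \colon \mathcal{P}_{\U_\Delta} \to \Iso(\U_\Delta)$ with $\phi(p^{-1}) = \phi(p)^{-1}$. For (ii), given a finite $X \subseteq \U_\Delta$, I would apply Solecki's Theorem \ref{SoleckiTheorem} to obtain a finite, strongly coherent, $\Delta$-metric S-extension $(Y', \phi')$ of $X$; universality and ultrahomogeneity of $\U_\Delta$ then let us extend the inclusion $X \hookrightarrow \U_\Delta$ to an isometric embedding $Y' \hookrightarrow \U_\Delta$, and transporting $\phi'$ along this embedding yields the required $(Y, \phi)$ with $Y \subseteq \U_\Delta$.

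The substantive step is (iii). Given $X_1 \subseteq X_2 \subseteq \U_\Delta$ finite and a finite minimal S-extension $(Y_1, \phi_1)$ of $X_1$ with $Y_1 \subseteq \U_\Delta$, set $W := X_2 \cup Y_1$, a finite $\Delta$-subspace of $\U_\Delta$. Apply Solecki's theorem to $W$ to obtain a finite, strongly coherent, $\Delta$-metric S-extension $(Z, \psi)$, and use the extension property of $\U_\Delta$ to realize $Z$ inside $\U_\Delta$ extending $W$. Define $\phi_2 \colon \mathcal{P}_{X_2} \to \Iso(Z)$ exactly as in Lemma \ref{lem:weakcoherence}; that lemma delivers coherence between $(Y_1, \phi_1)$ and $(Z, \phi_2)$. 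To obtain the required \emph{minimal} S-extension, restrict to $Y_2 := \{\phi_2(g)(x) : g \in \mathbb{F}(\mathcal{P}_{X_2}),\ x \in X_2\} \subseteq Z$; this set is finite, invariant under $\phi_2(\mathbb{F}(\mathcal{P}_{X_2}))$, and contains both $X_2$ and $Y_1$ (the latter because minimality of $(Y_1, \phi_1)$ combined with $\phi_2(p) \supseteq \phi_1(p)$ for $p \in \mathcal{P}_{X_1}$ forces each element of $Y_1$ to be of the form $\phi_2(g)(x)$ for some $g \in \mathbb{F}(\mathcal{P}_{X_1}) \subseteq \mathbb{F}(\mathcal{P}_{X_2})$ and $x \in X_1 \subseteq X_2$). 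Restricting $\phi_2$ to $Y_2$ preserves the three clauses of Definition \ref{coherence}: clauses (i) and (ii) are direct, and clause (iii), the group embedding $\langle K_1 \rangle \hookrightarrow \langle K_2 \rangle$, is inherited from Lemma \ref{lem:weakcoherence} via restriction, since $\phi_1$ already acts faithfully on $Y_1 \subseteq Y_2$.

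I expect the main bookkeeping hurdle to be clause (iii) of Definition \ref{coherence}, because one must juggle three ambient isometry groups, $\Iso(Y_1)$, $\Iso(Y_2)$, and $\Iso(Z)$. The reconciliation is handled by Lemma \ref{lem:strongcoherence} (applied with $D = Y_1 \subseteq W$), which shows that $\psi$ restricts to an injective homomorphism $\Iso(Y_1) \hookrightarrow \Iso(Z)$ sending $\phi_1(p)$ to $\phi_2(p)$; composing with the restriction map to $\Iso(Y_2)$ produces the required embedding $\langle K_1 \rangle \hookrightarrow \langle K_2 \rangle$. Once (i)--(iii) of Definition \ref{uedef} are in place, Theorem 1.4 of \cite{EG} produces the countable dense locally finite subgroup of $G_\Delta$ with no extra work.
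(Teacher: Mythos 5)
Your proposal is correct and follows essentially the same route as the paper, which likewise deduces (ii) from Solecki's Theorem~\ref{SoleckiTheorem} and (iii) from that theorem combined with Lemma~\ref{lem:weakcoherence}. The only difference is that you explicitly carry out the passage to the minimal sub-extension $Y_2=\{\phi_2(g)(x): g\in\mathbb{F}(\mathcal{P}_{X_2}),\ x\in X_2\}$ and check that coherence survives the restriction, a detail the paper leaves implicit; your treatment of it is sound.
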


\begin{proof} Recall that $\Age(\U_\Delta)$ is the class of all finite $\Delta$-metric spaces. Since $\U_\Delta$ is ultrahomogeneous, Solecki's Theorem~\ref{SoleckiTheorem} gives (ii) of Definition~\ref{uedef}. Similarly, (iii) of Definition~\ref{uedef} follows from Solecki's Theorem~\ref{SoleckiTheorem} and Lemma~\ref{lem:weakcoherence}.
\end{proof}

We remark that Theorem~\ref{thm:ultraextensiveDelta} can be proved without using Solecki's construction of strongly coherent S-extensions. For instance, condition (ii) of Definition~\ref{uedef} for $\Delta$-metric spaces follows implicitly from all proofs of the existence of S-extensions, including Solecki's original proof in \cite{S}. Condition (iii) of Definition~\ref{uedef} for $\Delta$-metric spaces follows implicitly from the proof of Theorem 4.5 of \cite{EG}.

\section{Hall's Group and Vershik's Conjecture\label{VC}}

\subsection{Hall's universal countable locally finite group\label{HallGroup}}
Recall that a locally finite group is a group in which every finitely generated subgroup is finite. The following theorem is due to P. Hall \cite{H}.

\begin{thm}[Hall \cite{H}]\label{HallTheorem}
There exists a countable locally finite group $\mathbb{H}$ that is determined up to isomorphism by the following properties:
\begin{enumerate}
\item[({\sc A})] any finite group can be embedded in $\mathbb{H}$, and 
\item[({\sc B})] any two isomorphic finite subgroups of $\mathbb{H}$ are conjugate by an element of $\mathbb{H}$.
\end{enumerate}
\end{thm}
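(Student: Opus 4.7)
The plan is to realize $\mathbb{H}$ as the Fraïssé limit of the class $\mathcal{F}$ of finite groups in the language of group theory, verify that this limit satisfies (A) and (B), and derive uniqueness from a standard back-and-forth argument. This fits naturally with the Fraïssé framework already used in the previous section to construct $\U_\Delta$.

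First I would check that $\mathcal{F}$ is a Fraïssé class. There are only countably many isomorphism types of finite groups, the hereditary property is automatic (finitely generated subgroups of a finite group are finite), and the joint embedding property follows from direct products. The main work is amalgamation: given finite groups $A \hookrightarrow B$ and $A \hookrightarrow C$, produce a finite group $D$ containing $B$ and $C$ amalgamated over $A$. I expect this to be the main obstacle. I would appeal to the classical fact that the amalgamated free product $B *_A C$ of two finite groups over a common finite subgroup is residually finite (a theorem of Baumslag, also obtainable via Neumann's permutational products); taking a finite quotient that is injective on the finite set $B \cup C$ then gives the required $D \in \mathcal{F}$. Applying Fraïssé's theorem produces an ultrahomogeneous countable group $\mathbb{H}$ with $\Age(\mathbb{H}) = \mathcal{F}$, and this is automatically locally finite since each finitely generated subgroup lies in $\mathcal{F}$.

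Next I would verify properties (A) and (B). Property (A) is immediate from $\Age(\mathbb{H}) = \mathcal{F}$. For (B), given isomorphic finite subgroups $A, B \leq \mathbb{H}$ via some isomorphism $\varphi \colon A \to B$, I want an element $g \in \mathbb{H}$ satisfying $g a g^{-1} = \varphi(a)$ for all $a \in A$. Let $F = \langle A, B \rangle \leq \mathbb{H}$, which is finite. I would build a finite overgroup $F^+ \supseteq F$ containing an element $t$ with $t a t^{-1} = \varphi(a)$ by taking a suitable finite quotient of the HNN extension $\langle F, t \mid t a t^{-1} = \varphi(a),\ a \in A \rangle$, whose residual finiteness is another classical fact about HNN extensions with finite associated subgroups. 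The universal property of the Fraïssé limit then embeds $F^+$ into $\mathbb{H}$ fixing $F$ pointwise, and the image of $t$ is the required $g$. A small additional argument, using that every automorphism of a finite group $B$ is inner inside $B \rtimes \Aut(B)$, upgrades mere subgroup-conjugacy to the sharper statement that the prescribed isomorphism $\varphi$ itself is realized by conjugation.

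Finally, uniqueness follows from a standard back-and-forth construction. Given two countable locally finite groups $\mathbb{H}_1, \mathbb{H}_2$ satisfying (A) and (B), I would enumerate both, maintain a partial isomorphism between finite subgroups, and at each step extend its domain or range: (A) lets me embed the next needed finite subgroup into the other side, and (B) lets me conjugate that embedding so that it agrees with the existing partial isomorphism. The main obstacle throughout the argument is amalgamation in $\mathcal{F}$; once that is in hand, the rest is formal Fraïssé machinery.
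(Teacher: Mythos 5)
The paper does not prove this theorem: it cites Hall's 1959 paper, where $\mathbb{H}$ is built as a nested union of symmetric groups ($G_{n+1}=\Sym(G_n)$ with $G_n$ embedded by its regular representation), and existence plus (A), (B) and uniqueness are verified directly for that tower. Your route --- realizing $\mathbb{H}$ as the Fra\"iss\'e limit of the class of finite groups --- is genuinely different but sound, and is in fact the point of view the paper itself adopts retroactively (see the remark after Proposition~\ref{cong} that $\mathbb{H}$ is the Fra\"iss\'e limit of the class of finite groups). You correctly isolate the two nontrivial inputs: amalgamation of finite groups inside a finite group, which does follow from residual finiteness of $B*_A C$ (virtually free by Serre, or use Neumann's permutational products, both already in the paper's bibliography), and residual finiteness of HNN extensions with finite base, which gives the conjugating element for (B). What Hall's construction buys is explicitness and self-containedness; what yours buys is that universality, ultrahomogeneity and uniqueness come for free from the general machinery, and it meshes with the rest of the paper.

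One point needs more care than you give it: in the uniqueness back-and-forth, property (B) as literally stated only says the two finite subgroups are conjugate as sets, whereas the extension step requires the \emph{prescribed} isomorphism $\phi\circ(\theta|_{F_1})^{-1}$ to be induced by conjugation --- this is the strengthening recorded as Proposition~\ref{cong}, and it does not follow from (B) without an argument. Your observation that every automorphism of a finite group $B$ becomes inner in $B\rtimes\Aut(B)$ is exactly the missing ingredient: combined with (A) (to embed a copy of $B\rtimes\Aut(B)$) and (B) (to conjugate that copy of $B$ onto the given one), it shows that every automorphism of a finite subgroup, hence every isomorphism between finite subgroups, is realized by conjugation in \emph{any} countable locally finite group satisfying (A) and (B). You should invoke this upgrade inside the back-and-forth for the abstract groups $\mathbb{H}_1,\mathbb{H}_2$, not only when verifying (B) for the Fra\"iss\'e limit, where you happen to have the stronger HNN argument available. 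With that relocation the proof is complete.
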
 

It follows easily from the characterizing properties of $\mathbb{H}$ that every countable locally finite group is a subgroup of $\mathbb{H}$. Thus $\mathbb{H}$ is called \emph{Hall's universal countable locally finite group}. For simplicity, we refer to it as \emph{Hall's group}. 

Hall \cite{H} also established the following strengthening of condition ({\sc B}) above.

\begin{prop}[Hall \cite{H}]\label{cong} For every triple $(G_1,G_2,\Psi)$, where $G_1,G_2$ are finite subgroups of $\mathbb{H}$ and $\Psi:G_1\rightarrow G_2$ is a group isomorphism, there exists $h\in \mathbb{H}$ such that for every $g\in G_1$ we have $\Psi(g)=hgh^{-1}$.
\end{prop}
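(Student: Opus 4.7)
My plan is to exploit the fact that the symmetric group on a finite set realizes every automorphism by conjugation; this lets me absorb the ambiguity that property ({\sc B}) leaves open. First, set $F = \langle G_1, G_2\rangle$, which is finite since $\mathbb{H}$ is locally finite, and consider $K = \Sym(F)$ with $F$ embedded via the left regular representation $\lambda \colon f \mapsto (x \mapsto fx)$. Inside $K$ I identify two kinds of useful elements. Picking right-coset representatives $\{x_i\}$ of $G_1$ in $F$ and $\{y_i\}$ of $G_2$ in $F$ (the same number since $|G_1|=|G_2|$), the formula $\sigma(gx_i) := \Psi(g) y_i$ defines a permutation $\sigma \in K$, and a direct check gives $\sigma \lambda(g) \sigma^{-1} = \lambda(\Psi(g))$ for every $g \in G_1$. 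Separately, for each $\alpha \in \Aut(F)$, viewing $\alpha$ itself as a permutation of $F$ yields an element $\tau_\alpha \in K$ satisfying $\tau_\alpha \lambda(f) \tau_\alpha^{-1} = \lambda(\alpha(f))$ for every $f \in F$.

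Next, by property ({\sc A}) of $\mathbb{H}$ there is an embedding $\rho \colon K \hookrightarrow \mathbb{H}$. The image $\rho(\lambda(F))$ is a finite subgroup of $\mathbb{H}$ isomorphic to $F$, so property ({\sc B}) produces $a \in \mathbb{H}$ with $a\, \rho(\lambda(F))\, a^{-1} = F$. This conjugation realizes, through $\lambda$, some a priori uncontrolled automorphism $\alpha \in \Aut(F)$, meaning $a\, \rho(\lambda(f))\, a^{-1} = \alpha(f)$ for all $f \in F$. To correct for this $\alpha$, I pre-compose $\rho$ with the inner automorphism of $K$ determined by $\tau_{\alpha^{-1}}$ and then conjugate by $a$: define
$$ \rho''(x) := a\, \rho\bigl(\tau_{\alpha^{-1}} x \tau_{\alpha^{-1}}^{-1}\bigr)\, a^{-1}. $$
Since $\rho''$ is a composition of $\rho$ with inner/outer conjugations, it is a group embedding $K \hookrightarrow \mathbb{H}$, and
$$ \rho''(\lambda(f)) = a\, \rho\bigl(\lambda(\alpha^{-1}(f))\bigr)\, a^{-1} = \alpha(\alpha^{-1}(f)) = f $$
for every $f \in F$, so $\rho''$ restricts to the identity on $F$ (via $\lambda$).

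Finally, set $h := \rho''(\sigma) \in \mathbb{H}$. For every $g \in G_1$, using that $\rho''$ fixes $F$ pointwise and the defining relation of $\sigma$,
\[
h\, g\, h^{-1} = \rho''(\sigma)\, \rho''(\lambda(g))\, \rho''(\sigma)^{-1} = \rho''(\sigma \lambda(g) \sigma^{-1}) = \rho''(\lambda(\Psi(g))) = \Psi(g),
\]
which is the required identity.

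The main obstacle is the correction step. Property ({\sc B}) only guarantees \emph{some} conjugator between isomorphic finite subgroups, producing an unspecified automorphism $\alpha$ rather than a prescribed one; naively this looks circular, since cancelling $\alpha$ by another appeal to ({\sc B}) would itself introduce a new unknown automorphism. The key idea is to absorb this ambiguity inside the finite group $K$ by ensuring that every automorphism of $F$ is already inner in $K$, which comes for free when $K = \Sym(F)$. One can then cancel the undesired $\alpha$ by an inner automorphism of $K$ \emph{before} invoking property ({\sc A}), rather than chasing it externally in $\mathbb{H}$.
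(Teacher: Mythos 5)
Your proof is correct. Note that the paper does not actually prove Proposition~\ref{cong}: it cites it directly to Hall's original article, so there is no in-text argument to compare against. Your derivation is a clean, self-contained deduction of the strengthened conjugacy statement from properties ({\sc A}) and ({\sc B}) alone, plus local finiteness. All the key steps check out: the permutation $\sigma(gx_i)=\Psi(g)y_i$ is well defined because $|G_1|=|G_2|$ forces equal numbers of cosets, and the verification $\sigma\lambda(g)\sigma^{-1}=\lambda(\Psi(g))$ goes through since every $z\in F$ is uniquely $\Psi(g')y_i$; the elements $\tau_\alpha$ do satisfy $\tau_\alpha\lambda(f)\tau_\alpha^{-1}=\lambda(\alpha(f))$; and the correction $\rho''=(\mathrm{conj}_a)\circ\rho\circ(\mathrm{conj}_{\tau_{\alpha^{-1}}})$ is a genuine embedding of $K=\Sym(F)$ restricting (via $\lambda$) to the identity on $F$, so $h=\rho''(\sigma)$ conjugates as required. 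The one idea that does the real work --- embedding $F$ into a finite overgroup in which every automorphism of $F$ becomes inner, so that the uncontrolled automorphism produced by ({\sc B}) can be cancelled \emph{before} appealing to ({\sc A}) --- is exactly the right way to avoid the circularity you identify, and it is in the spirit of Hall's original construction of $\mathbb{H}$ as a limit of symmetric groups under regular representations. Your version has the advantage of being purely axiomatic in ({\sc A}) and ({\sc B}), so it applies verbatim to any group satisfying those two properties together with local finiteness.
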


In particular, we see that $\mathbb{H}$ is ultrahomogeneous and universal for 
finite groups: as such, it the Fraïssé limit of the class of finite groups. 
Thus it can also be characterized as follows.

\begin{prop}\label{propertyE}
Let $H$ be a countable locally finite group with the following property: 
\begin{enumerate}
\item[({\sc E})] for every triple $(G_1,G_2,\Psi_1)$, where $G_1\leq G_2$ are finite groups and $\Psi_1:G_1\rightarrow H$ is a group embedding, there exists a group embedding $\Psi_2:G_2\rightarrow H$ such that $\Psi_2\upharpoonright G_1=\Psi_1$.
\end{enumerate}
Then $H$ is isomorphic to $\mathbb{H}$.
\end{prop}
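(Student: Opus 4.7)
The plan is to recognize that property (E) is exactly the Fraïssé extension property for the class of finite groups (with finite subgroups playing the role of finitely generated substructures), and to deduce $H\cong\mathbb{H}$ by a standard back-and-forth. Before running the back-and-forth, I would first verify that $H$ satisfies property (A): applying (E) with $G_1=\{1\}$, the trivial subgroup of any finite $G_2$, and $\Psi_1$ the trivial embedding into $H$, we obtain an embedding $\Psi_2:G_2\to H$. Hence every finite group embeds in $H$, so $\Age(H)$ is the class of all finite groups.

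Next I would check that $\mathbb{H}$ itself satisfies property (E): given finite $G_1\leq G_2$ and an embedding $\Psi_1:G_1\to\mathbb{H}$, use (A) to pick any embedding $\Phi:G_2\to\mathbb{H}$; then $\Phi(G_1)$ and $\Psi_1(G_1)$ are isomorphic finite subgroups of $\mathbb{H}$ via the isomorphism $\Psi_1\circ(\Phi\!\upharpoonright\! G_1)^{-1}$, so Proposition~\ref{cong} produces $h\in\mathbb{H}$ with $h\Phi(g)h^{-1}=\Psi_1(g)$ for all $g\in G_1$; the map $\Psi_2:=h\Phi(\cdot)h^{-1}$ then extends $\Psi_1$ to $G_2$.

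Now enumerate $\mathbb{H}=\{a_n:n\in\mathbb{N}\}$ and $H=\{b_n:n\in\mathbb{N}\}$. I would inductively construct finite subgroups $A_0\leq A_1\leq\cdots$ of $\mathbb{H}$, finite subgroups $B_0\leq B_1\leq\cdots$ of $H$, and a coherent chain of group isomorphisms $\varphi_n:A_n\to B_n$ (with $\varphi_{n+1}$ extending $\varphi_n$) such that $a_n\in A_{2n+1}$ and $b_n\in B_{2n+2}$. At an odd stage, set $A':=\langle A_{2n},a_n\rangle$, which is finite since $\mathbb{H}$ is locally finite, and apply property (E) for $H$ to the inclusion $A_{2n}\leq A'$ and the embedding $\varphi_{2n}:A_{2n}\to H$ to obtain an extension $\Psi:A'\to H$; set $A_{2n+1}:=A'$, $\varphi_{2n+1}:=\Psi$, $B_{2n+1}:=\Psi(A')$. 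At an even stage, set $B':=\langle B_{2n+1},b_n\rangle\leq H$ and apply (E) for $\mathbb{H}$ to the inclusion $B_{2n+1}\leq B'$ and the embedding $\varphi_{2n+1}^{-1}:B_{2n+1}\to\mathbb{H}$ to extend it to some $\Psi:B'\to\mathbb{H}$; set $B_{2n+2}:=B'$, $A_{2n+2}:=\Psi(B')$, $\varphi_{2n+2}:=\Psi^{-1}$. The union $\varphi:=\bigcup_n\varphi_n$ is then a well-defined group isomorphism from $\mathbb{H}$ onto $H$, because the enumeration conditions force $\dom(\varphi)=\mathbb{H}$ and $\rng(\varphi)=H$.

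The only genuinely non-routine step is verifying that $\mathbb{H}$ satisfies (E), but this is immediate from the strong homogeneity statement of Proposition~\ref{cong} together with universality (A). Everything else is a textbook back-and-forth, so I do not anticipate real obstacles.
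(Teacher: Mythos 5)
Your proposal is correct and follows essentially the same route as the paper, which states Proposition~\ref{propertyE} as an instance of the standard characterization of Fraïssé limits (here, of the class of finite groups); your argument simply makes that back-and-forth explicit, with the verification that $\mathbb{H}$ itself satisfies ({\sc E}) via Proposition~\ref{cong} being exactly the observation the paper relies on when it notes that $\mathbb{H}$ is ultrahomogeneous and universal for finite groups. The only (harmless) omission is the base case $A_0=B_0=\{1\}$ of the induction.
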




Hall \cite{H} also proved that the commutator group of $\mathbb{H}$ is 
$\mathbb{H}$, and therefore $\mathbb{H}$ has a trivial abelianization. Consider 
the collection of all groups of the form $\mathbb{H}\oplus A$, where $A$ is an 
abelian $p$-group. The abelianization of $\mathbb{H}\oplus A$ is isomorphic to 
$A$. This implies that $\mathbb{H}\oplus A\cong \mathbb{H}\oplus A'$ iff 
$A\cong A'$. Thus there are continuum many non-isomorphic countable locally 
finite groups which are universal for all countable locally finite groups.

\subsection{A proof of Vershik's conjecture}
Vershik's conjecture \cite{V} states that the isometry group of the universal Urysohn space and the automorphism group of the countable random graph each contain a dense subgroup that is isomorphic to $\mathbb{H}$. We will show that Hall's group $\mathbb{H}$ actually arises in some sense as a canonical dense subgroup of $\Iso(\U_\Delta)$.




We will be using the following lemma due to Rosendal (Lemma 16 of \cite{R2}). 

\begin{lem}[Rosendal \cite{R2} ]\label{l:extension}
Let $\Delta$ be a countable distance value set. Let $\Gamma$ be a group, $\Lambda\le \Gamma$ a subgroup. Assume that $X \subseteq Y$ are $\Delta$-metric spaces, and that $\Lambda \actson Y$, $\Gamma \actson X$ are compatible isometric actions. Then there exists a $\Delta$-metric space $Z$ containing $Y$, and an isometric action $\Gamma \actson Z$ compatible with the $\Lambda$-action on $Y$.

Moreover, if $\Gamma$ and $Y$ are both finite then one can find a finite $Z$ as above.
\end{lem}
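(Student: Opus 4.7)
The plan is to build $Z$ by inducing the $\Lambda$-space $Y$ up to $\Gamma$ while keeping $X$ as a shared subset of all $\Gamma$-translates. Concretely, I set
\[
\widetilde{Z} \;=\; X \,\sqcup\, \bigl(\Gamma \times_\Lambda (Y \setminus X)\bigr),
\]
where $\Gamma \times_\Lambda (Y \setminus X)$ is the quotient of $\Gamma \times (Y\setminus X)$ by $(\gamma\lambda^{-1}, \lambda y') \sim (\gamma, y')$ for $\lambda \in \Lambda$; this makes sense because compatibility forces $\Lambda$ to preserve $X$, hence also $Y\setminus X$. The $\Gamma$-action is the given one on $X$ and $\gamma'\cdot[\gamma, y'] = [\gamma'\gamma, y']$ on the induced piece, and $Y$ embeds $\Lambda$-equivariantly into $\widetilde Z$ via $\iota: x\mapsto x$ for $x\in X$ and $y'\mapsto [1, y']$ for $y'\in Y\setminus X$. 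I then endow $\widetilde Z$ with a $\Delta$-valued pseudo-metric: put $d(x,x')=d_X(x,x')$ on $X$, $d(x,[\gamma,y'])=d_Y(\gamma^{-1}x, y')$ on mixed pairs, and, for two points in the induced piece, reduce by $\Gamma$-equivariance to distances from $[1,y_1]$ to $[\alpha,y_2]$: if $\alpha\in\Lambda$ put $d([1,y_1],[\alpha,y_2])=d_Y(y_1,\alpha y_2)$, and otherwise set
\[
d([1,y_1],[\alpha,y_2]) \;=\; \min\Bigl(\min_{x\in X}\bigl[d_Y(y_1,x)+d_Y(\alpha^{-1}x,y_2)\bigr],\;D\Bigr),
\]
where $D\in\Delta$ is chosen large enough ($D=\sup\Delta$ if $\Delta$ is bounded; in the finite case any value exceeding $\diam(Y)$ works). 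The fact that $\Lambda$ acts by isometries on $Y$ makes all these formulas well-defined on $\sim$-classes, and one checks directly that $d$ restricts to $d_Y$ on $\iota(Y)$.

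The main technical step, and the one I expect to be most delicate, is the triangle inequality for $d$. I would proceed by case split based on which piece of $\widetilde Z$ each of the three points lies in, and for points in the induced piece on which $\Lambda$-cosets their labels belong to. All cases with at least one endpoint in $X$, and the cases where some of the three induced-piece labels share a coset, reduce directly to the triangle inequality in $Y$ together with the $\Gamma$-invariance of $d_X$. The one substantive case is three points $[1,y_1],[\alpha,y_2],[\beta,y_3]$ with $\alpha,\beta,\alpha^{-1}\beta\notin\Lambda$: pick near-optimal intermediaries $x^*,x'^*\in X$ witnessing $d([1,y_1],[\alpha,y_2])$ and $d([\alpha,y_2],[\beta,y_3])$ respectively, and test the minimum defining $d([1,y_1],[\beta,y_3])$ at $\alpha x'^*$. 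After one triangle inequality in $Y$ the needed bound reduces to
\[
d_Y(x^*,\alpha x'^*) \;\leq\; d_Y(\alpha^{-1}x^*, y_2) + d_Y(y_2, x'^*),
\]
which in turn follows from the identity $d_Y(x^*,\alpha x'^*)=d_Y(\alpha^{-1}x^*,x'^*)$ (using that $X\subseteq Y$ is isometric and that $\Gamma$ acts on $X$ by isometries) and a second triangle inequality in $Y$. The cap by $D$ is preserved automatically since $D$ dominates every capped sum.

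With the pseudo-metric verified, I quotient $\widetilde Z$ by the null-distance relation to get a genuine metric space $Z$; this is harmless on $\iota(Y)$ because $d_Y$ is non-degenerate, and it descends the $\Gamma$-action because $d$ is $\Gamma$-invariant. Every distance in $Z$ is either a $d_Y$-distance (hence in $\Delta\cup\{0\}$) or a capped sum $\min(a+b,\sup\Delta)\in\Delta$ produced by the distance monoid axiom, so $Z$ is a $\Delta$-metric space; the $\Gamma$-action is by isometries and its restriction to $\Lambda$ on $\iota(Y)$ coincides with the original $\Lambda$-action. For the moreover statement, when $\Gamma$ and $Y$ are both finite one has $|\widetilde Z|\leq |X|+[\Gamma:\Lambda]\cdot|Y\setminus X|<\infty$ and every minimum in the definition of $d$ is achieved at an actual point of $X$, so $Z$ is finite as required.
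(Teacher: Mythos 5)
The paper gives no argument of its own here: it simply asserts that the proof of Lemma 16 of \cite{R2} goes through and that the space $Z$ built there is a $\Delta$-metric space. Your proposal supplies exactly that construction — the induced space $X\sqcup\bigl(\Gamma\times_\Lambda(Y\setminus X)\bigr)$ with the ``one step through $X$'' metric on cross-coset pairs — so the route is the same as the cited source, but you actually carry out the verification the paper omits. For finite $\Gamma$ and $Y$ (the ``moreover'' clause, which is the only case the paper ever uses) your argument is correct: the well-definedness on $\sim$-classes, the identification of $d$ with $d_Y$ on $\iota(Y)$, and the key triangle-inequality case via $d_Y(x^*,\alpha x'^*)=d_Y(\alpha^{-1}x^*,x'^*)$ all check out. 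One small correction: the constraint on $D$ is $D\geq\diam(Y)$ (needed, e.g., for the triangle $x,[\gamma_1,y_1],[\gamma_2,y_2]$ when the cross-coset distance is capped, and for $d_Y(y_1,y_1')\leq 2D$), and $D=\diam(Y)$ itself works and lies in $\Delta$ for finite $Y$; ``any value exceeding $\diam(Y)$'' need not exist in $\Delta$.

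Two points in your write-up do not cover the lemma as stated, which has no finiteness hypothesis in its first paragraph. First, when $X$ is infinite your $\min_{x\in X}$ is an infimum: it need not be attained and need not lie in $\Delta$ (take $\Delta=\Q\cap(0,\infty)$ with the infimum irrational), so the resulting $Z$ need not be a $\Delta$-metric space. Second, your prescription for $D$ addresses only bounded $\Delta$ and finite $Y$; if $\Delta$ is unbounded and $\diam(Y)=\infty$ (already possible with $X=\emptyset$), no constant default distance can work, and the formula must be modified (for instance by routing cross-coset distances through a fixed basepoint of $Y$, using that an unbounded distance value set is closed under addition) or the general case reduced to the finite one by a direct limit. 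Since the paper only invokes the finite case, these are gaps in generality rather than in the application, but they should be flagged if the lemma is to be proved in the form in which it is stated.
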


\begin{proof}
The proof goes exactly like that of Lemma 16 of \cite{R2}. We only note that the space $Z$ defined in the proof in \cite{R2} is a $\Delta$-metric space.
\end{proof}

\begin{defn}
A countable group $\Gamma$ has the \emph{RZ-property} (standing for Ribes--Zalesski\u{\i}) if any finite product $\Gamma_1\cdots\Gamma_n$ of finitely generated subgroups of $\Gamma$ is closed in the profinite topology. 
\end{defn}

It was proved by Ribes--Zalesski\u{\i} \cite{RZ} that countable free groups have the RZ-property. Moreover, they essentially showed in \cite{RZ} that, if $\Lambda\le \Gamma$ has finite index then $\Lambda$ has the RZ-property iff $\Gamma$ has the RZ-property. This gives the following fact we will need in our proof.

\begin{prop}\label{amalgamRZ}
Let $\Gamma_1,\Gamma_2$ be two finite groups, and $\Lambda$ be a common subgroup of $\Gamma_1,\Gamma_2$. Then $\Gamma_1*_{\Lambda}\Gamma_2$, the amalgamated free product of 
$\Gamma_1$ and $\Gamma_2$ over $\Lambda$, has the RZ-property.
\end{prop}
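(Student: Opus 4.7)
The plan is to reduce to the Ribes--Zalesski\u{\i} theorem for countable free groups via the finite-index transfer principle just recalled by the authors. The whole argument hinges on one classical fact: the amalgamated free product $\Gamma := \Gamma_1 *_\Lambda \Gamma_2$ of two finite groups over a common subgroup is \emph{virtually free}, i.e., it contains a finitely generated free subgroup of finite index.

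First I would establish this virtual freeness by a Bass--Serre argument. The group $\Gamma$ acts on its Bass--Serre tree $T$ (whose quotient graph is a single edge) with vertex stabilizers conjugate to $\Gamma_1$ or $\Gamma_2$ and edge stabilizers conjugate to $\Lambda$, all of which are finite. Amalgams of two finite groups are well known to be residually finite (a theorem of Baumslag), so by intersecting finitely many finite-index normal subgroups one obtains a single finite-index normal subgroup $N \trianglelefteq \Gamma$ which meets both $\Gamma_1$ and $\Gamma_2$ trivially. Because $N$ is normal, it then meets every $\Gamma$-conjugate of $\Gamma_1$ and $\Gamma_2$ trivially, and the standard fact that every finite subgroup of an amalgam of two groups is conjugate into a vertex group ensures that $N$ is torsion-free. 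A torsion-free group acting on a tree with finite vertex and edge stabilizers must act freely, and by the Bass--Serre correspondence $N$ is then the fundamental group of the finite quotient graph $N\backslash T$, hence finitely generated and free.

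Once virtual freeness is in place, the conclusion is immediate: the finitely generated free group $N$ has the RZ-property by Ribes--Zalesski\u{\i}, and the finite-index transfer statement attributed to \cite{RZ} in the paragraph preceding the proposition upgrades this to $\Gamma$ itself.

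The main obstacle is really Step~1, the virtual freeness of $\Gamma$. It is classical (a special case of Karrass--Pietrowski--Solitar), but writing it cleanly requires assembling the Bass--Serre picture together with Baumslag's residual finiteness. Once those ingredients are in hand, the rest of the proof is cite-and-apply; in particular no direct manipulation of products of finitely generated subgroups in the profinite topology of $\Gamma$ is needed.
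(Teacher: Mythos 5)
Your proof is correct and follows essentially the same route as the paper: reduce to virtual freeness of $\Gamma_1 *_\Lambda \Gamma_2$, then combine the Ribes--Zalesski\u{\i} theorem for free groups with the finite-index transfer of the RZ-property. The only difference is that the paper simply cites Serre for the virtual freeness, whereas you supply a (correct) Bass--Serre/residual-finiteness argument for it.
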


\begin{proof}
It is known that the amalgamated free product of finite groups is virtually free, i.e., it contains a free group as a subgroup of finite index (cf., e.g., Serre \cite{STrees}, Corollary to Proposition 11 on p. 120). By the above results of Ribes--Zalesski\u{\i} \cite{RZ}, virtually free groups have the RZ-property.
\end{proof}

Although not needed in our proof here, we note that Coulbois \cite{C} showed that the RZ-property is preserved under taking free products. The RZ-property will play an important role in our proof because of the following theorem, due to Rosendal.

\begin{thm}[Rosendal \cite{R}]\label{RosendalTheorem}
Let $\Delta$ be a countable distance value set. Let $\Gamma$ be a countable group with the RZ-property. Assume that $\pi \colon \Gamma \actson X$ is an isometric action of $\Gamma$ on a $\Delta$-metric space $X$. Then, for any finite $A \subseteq X$ and $F \subseteq \Gamma$ there exists a finite $\Delta$-metric space $Y$ containing $A$, and an isometric action $\pi' \colon \Gamma \actson Y$ such that for all $\gamma \in F$ and all $a \in A$ one has $\pi'(\gamma)a=\pi(\gamma)a$.
\end{thm}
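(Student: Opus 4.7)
Since $\Gamma$ is countable and $Y$ is required to be finite, any action $\pi'\colon\Gamma\actson Y$ must factor through some finite quotient $Q=\Gamma/N$. The plan is to use the RZ-property of $\Gamma$ to locate an appropriate finite-index normal subgroup $N\lhd\Gamma$, and then to realize $Y$ as a finite $Q$-set containing $A$ with a compatible $\Delta$-metric.

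Without loss of generality assume $F$ is finite, symmetric, and contains $1$. Let $\Gamma_0=\langle F\rangle$ (a finitely generated subgroup of $\Gamma$), let $B=F\cdot A\subseteq X$, and for each $a\in A$ let $K_a=\mathrm{Stab}_\Gamma(a)$. The combinatorial data to be preserved in the quotient is: for each $\gamma_1,\gamma_2\in F$ and $a,a'\in A$, whether $\pi(\gamma_1)a=\pi(\gamma_2)a'$ (equivalently, whether $\gamma_1^{-1}\gamma_2$ lies in a specific coset of $K_{a'}$), together with the pairwise distances $d_X(\pi(\gamma_1)a,\pi(\gamma_2)a')$. For each of the finitely many required inequalities $\gamma_1^{-1}\gamma_2\notin g\cdot K_{a'}$, the RZ-property yields a finite-index normal subgroup $N_i\lhd\Gamma$ in whose quotient this separation persists; intersecting finitely many such $N_i$ gives the sought $N$, and we set $Q=\Gamma/N$.

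With $N$ in hand, define $Y=\bigsqcup_{a\in A}Q/(K_aN)/{\sim}$, where $\sim$ identifies cosets whose $\Gamma$-representatives map to the same point in $X$; the choice of $N$ ensures this is well-defined and gives a finite set. Equip $Y$ with the metric obtained by transporting $d_X$ from $B$ and extending $Q$-equivariantly to all of $Y$. The left $Q$-action on $Y$ is well-defined; pulling back along $\Gamma\to Q$ yields $\pi'$, and the embedding $a\mapsto[1,a]$ realizes $A$ as a subspace with $\pi'(\gamma)a=\pi(\gamma)a$ for $\gamma\in F$ by construction.

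The main obstacle is the RZ-based separation: $K_{a'}$ need not be finitely generated, whereas the RZ-property is stated for products of finitely generated subgroups. The crux is therefore to reduce each required separation ``$\gamma_1^{-1}\gamma_2\notin g\cdot K_{a'}$'' to a separation from a product of finitely generated subgroups of $\Gamma$, exploiting the fact that $\gamma_1^{-1}\gamma_2$ lies in the fixed finitely generated group $\Gamma_0$. Once this reduction is carried out and $N$ is obtained, the construction of $Y$ and the verification that $\pi'$ agrees with $\pi$ on $F\cdot A$ are essentially bookkeeping.
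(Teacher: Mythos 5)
First, a point of reference: the paper does not prove this statement. It is quoted from Rosendal \cite{R} (the paper even remarks that Rosendal proves an equivalence, of which only this implication is used), so there is no in-paper argument to compare yours against. Judged on its own terms, your plan has the right overall shape --- factor the action through a finite quotient $Q=\Gamma/N$, choose $N$ by profinite separation so that the finitely many identifications and non-identifications among the points $\pi(\gamma)a$ ($\gamma\in F$, $a\in A$) survive, and build $Y$ as a quotient of $Q\times A$ --- but the two places where the real work happens are left open, and neither is ``bookkeeping''.

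The first gap is the one you flag yourself: $K_{a'}=\mathrm{Stab}_\Gamma(a')$ need not be finitely generated, and an arbitrary subgroup of an RZ group need not be closed in the profinite topology (a normal subgroup of $F_2$ with infinite simple quotient is profinitely dense), so ``separate $\gamma_1^{-1}\gamma_2$ from $gK_{a'}$'' is in general impossible, and your space $Q/(K_{a'}N)$ can collapse to a single point no matter which finite-index $N$ you take, destroying the required non-identifications. The remedy is to replace $K_{a'}$ by the finitely generated subgroup $\langle K_{a'}\cap F^{-1}F\rangle$, which contains every element of $K_{a'}$ relevant to the finitely many constraints; but then distinct points of $Y$ may represent the same point of $X$, and your prescription ``transport $d_X$ from $B$ and extend $Q$-equivariantly'' would have to assign distance $0$ to distinct points. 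The second, more serious gap is the metric itself. A $Q$-invariant metric on the quotient restricting to $d_X$ on $F\cdot A$ need not exist for an arbitrary admissible $N$: the identifications create chains in $Y$ whose total length undercuts $d_X$ on $F\cdot A$, and excluding every such shortcut amounts to separating elements of $\Gamma$ from products $C_1C_2\cdots C_k$ of (double) cosets of several stabilizers simultaneously. This is exactly where the full RZ property --- closedness of products $\Gamma_1\cdots\Gamma_n$ of finitely generated subgroups for every $n$, not merely separability of a single coset --- is consumed; your sketch only ever invokes the case $n=1$. Since Rosendal's theorem is an equivalence whose converse direction encodes an arbitrary product $\Gamma_1\cdots\Gamma_n$ into a metric space, a proof of the forward direction using only single-coset separation would show that subgroup separability alone implies the RZ property, which you cannot take for granted. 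So the architecture is reasonable, but the proposal omits the core of the proof rather than deferring routine details.
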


Actually, Rosendal's theorem is an equivalence (the RZ-property is equivalent to the so-called finite approximability of actions on metric spaces; cf. \cite{R}) but we only need the implication mentioned above.

\begin{defn}
Let $\mcK_\Delta$ be the class of all structures $(X,G)$ such that 
\begin{itemize}
\item $X$ is a finite $\Delta$-metric space.
\item $G$ is a finite group.
\item $G$ acts isometrically on $X$.
\end{itemize}
\end{defn}

Note that we accept the case where $X$ is empty, considering that any group acts isometrically on the empty set. Also, $G$ is allowed to be equal to $\{1\}$.

\begin{thm}\label{KDeltaFraisse}
$\mcK_\Delta$ is a Fraïssé class.
\end{thm}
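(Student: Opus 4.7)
The plan is to verify the three defining properties of a Fraïssé class. Countability of isomorphism types is immediate since $\Delta$ is countable and members are finite; the hereditary property is immediate since $H$-invariant subsets of finite $(X,G)$ remain in $\mathcal{K}_\Delta$; and joint embedding follows from amalgamation over the trivial base $(\emptyset, \{e\}) \in \mathcal{K}_\Delta$. Hence the real content lies in the amalgamation property: given a span $(A_0, H_0) \hookrightarrow (B_i, G_i)$ for $i=1,2$ in $\mathcal{K}_\Delta$, one must produce a finite $(C, \Gamma) \in \mathcal{K}_\Delta$ together with commuting embeddings of both $(B_i, G_i)$.

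My strategy is to work first in an infinite model and then descend to a finite one via Rosendal's theorem. Form $\Gamma_0 := G_1 *_{H_0} G_2$, which by Proposition \ref{amalgamRZ} has the RZ-property, together with the strong $\Delta$-metric amalgam $D := B_1 \cup_{A_0} B_2$. The key step is to construct an isometric action of $\Gamma_0$ on a $\Delta$-metric space $W \supseteq D$ whose restrictions to $G_1$ and $G_2$ recover the given actions on $B_1$ and $B_2$. I would produce $W$ by a Bass-Serre-style orbit construction: take $\bigl(\Gamma_0 \times_{G_1} B_1\bigr) \sqcup \bigl(\Gamma_0 \times_{G_2} B_2\bigr)$ modulo the identifications $[g,a]_1 \sim [g,a]_2$ for $a \in A_0$ and $g \in \Gamma_0$, with $\Gamma_0$ acting by left translation on the first coordinate. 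Distances within a single orbit-piece come from the given $B_i$-metric; between distinct pieces they are defined as shortest-path sums along the Bass-Serre tree of $\Gamma_0$, transitioning between adjacent pieces through their shared copy of $A_0$, and capped at $\sup\Delta$ when $\Delta$ is bounded. The closure property $\min(x+y, \sup\Delta) \in \Delta$ ensures this yields a genuine $\Delta$-metric, and $\Gamma_0$ acts isometrically by construction.

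I then invoke Rosendal's Theorem \ref{RosendalTheorem} with the action $\Gamma_0 \actson W$, the finite set $A = D \subseteq W$, and the finite set $F = G_1 \cup G_2 \subseteq \Gamma_0$. This yields a finite $\Delta$-metric space $C \supseteq D$ together with a $\Gamma_0$-action $\pi'$ agreeing with the original on $F \cdot D$. Since $C$ is finite, $\pi'$ factors through a finite quotient $\Gamma := \Gamma_0 / \ker(\pi')$. Assuming without loss of generality that each $G_i$-action on $B_i$ is faithful (otherwise first quotient $G_i$ by its kernel on $B_i$ to reduce to this case), the compositions $G_i \hookrightarrow \Gamma_0 \twoheadrightarrow \Gamma$ remain injective: for $g \in G_i \setminus \{e\}$ pick $b \in B_i$ with $g \cdot b \ne b$; since $g \in F$ and $b \in D$, $\pi'(g) \cdot b = g \cdot b \ne b$, so $g \notin \ker(\pi')$. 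Hence $(C, \Gamma) \in \mathcal{K}_\Delta$ with the natural embeddings is the required amalgam.

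The main obstacle is step two, the construction of the $\Gamma_0$-action on $W$: one must carefully verify that the Bass-Serre path metric is genuinely a $\Delta$-metric (triangle inequality and values in $\Delta$), handle potential subtleties in the well-definedness of the identifications $[g,a]_1 \sim [g,a]_2$ (which may require first replacing $A_0$ by its setwise $G_i$-stabilizers inside $B_i$ to ensure $H_0 = \mathrm{Stab}_{G_i}(A_0)$), and confirm that $\Gamma_0$ acts isometrically. Once that infinite object is in place, Rosendal's theorem automatically produces the required finite witness.
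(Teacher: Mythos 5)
Your proposal follows the same skeleton as the paper's proof: reduce everything to amalgamation, pass to the amalgamated free product $\Gamma=G_1*_{H_0}G_2$, use Proposition~\ref{amalgamRZ} to get the RZ-property, build an isometric action of $\Gamma$ on an infinite $\Delta$-metric space extending the given data, and finally apply Rosendal's Theorem~\ref{RosendalTheorem} with $A=D$ and $F=G_1\cup G_2$ to descend to a finite amalgam. The one genuine divergence is the middle step. You propose a one-shot Bass--Serre tree-of-spaces construction $\bigl(\Gamma\times_{G_1}B_1\bigr)\sqcup\bigl(\Gamma\times_{G_2}B_2\bigr)/\!\sim$ with a path metric; the paper instead builds the space as an increasing union $Z_\infty=\bigcup_n Z_n$, alternately extending the $G_1$- and $G_2$-actions one induction step at a time via Lemma~\ref{l:extension}. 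The iterative route buys exactly what you flag as your ``main obstacle'': each application of Lemma~\ref{l:extension} handles the metric and the identifications for a single subgroup-to-overgroup induction, so one never verifies a global path metric, and one never confronts the well-definedness problem you correctly sense in the gluing $[g,a]_1\sim[g,a]_2$ --- if some $h\in G_i\setminus H_0$ maps a point of $A_0$ back into $A_0$, the generated equivalence relation identifies points in distinct $G_{3-i}$-pieces and can cascade; your suggested remedy of modifying $A_0$ or $H_0$ by stabilizer data changes the amalgamation problem and is not available. The tree-of-spaces construction can be made to work (closed walks in the Bass--Serre tree reduce, so vertex spaces do embed), but this is a nontrivial verification that the paper's choice of Lemma~\ref{l:extension} packages away. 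You also need the paper's separate convention for the case $A_0=\emptyset$, where the path metric between the two sides is undefined.

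One step in your writeup is actually wrong rather than merely unverified: the reduction ``assume WLOG each $G_i$ acts faithfully on $B_i$, otherwise quotient $G_i$ by the kernel.'' An amalgam must receive an \emph{embedding} of the structure $(B_i,G_i)$, hence an injective homomorphism of $G_i$ itself; replacing $G_i$ by a quotient changes what is to be embedded (amalgamating $(\emptyset,G_1)$ and $(\emptyset,G_2)$ over $(\emptyset,H_0)$, your reduction collapses everything to the trivial group and produces nothing). A correct repair is to enlarge the set fed to Theorem~\ref{RosendalTheorem}: e.g.\ adjoin to $W$ a copy of a finite quotient $Q$ of $\Gamma$ that is injective on $G_1\cup G_2$ (such $Q$ exists since $\Gamma$ is residually finite), acted on by left translation and placed at constant distance, and include it in $A$; then $\pi'$ is automatically injective on $G_1\cup G_2$. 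Your explicit injectivity argument via witnesses in $D$ is good, but it only covers the faithful case. (The paper's own proof is terse on this point as well, simply taking $G=\langle \pi'(G_1\cup G_2)\rangle\le\Iso(Y)$, so this is a shared loose end rather than a defect unique to your argument --- but your proposed fix for it does not work.)
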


\begin{proof} The hereditary property is obvious. The joint embedding property is also easily witnessed by the product action. We only need to prove the amalgamation property. Assume that $X$ is a subspace contained in two finite $\Delta$-metric spaces $Y_1,Y_2$, and that $\Lambda$ is a  subgroup of two finite groups $\Gamma_1,\Gamma_2$. Assume further that $\Gamma_1 \actson Y_1$, $\Gamma_2 \actson Y_2$ isometrically, in such a way that $X$ is $\Lambda$-invariant for both actions, and the two $\Lambda$-actions coincide on $X$. Let $\Gamma=\Gamma_1*_{\Lambda}\Gamma_2$.

We first define a $\Delta$-metric space $Z_0$ amalgamating $Y_1$ and $Y_2$ over $X$, in such a way that $Z_0=Y_1 \cup Y_2$, $Y_1 \cap Y_2=X$ and the action of $\Lambda$ on $Y$ induced by the actions of $\Gamma_1,\Gamma_2$ on $Y_1,Y_2$ is by isometries. To ensure $Z_0$ is a $\Delta$-metric space, we only need to define, for $y_1\in Y_1\setminus{X}$ and $y_2\in Y_2\setminus{X}$, 
$$ d_{Z_0}(y_1, y_2)=\min\left\{ \inf_{x\in X} \{ d_{Y_1}(y_1, x)+d_{Y_2}(x, y_2)\}, \sup(\Delta)\right\}. $$ 
Then we define by induction an increasing sequence $\{Z_n\}_{n\geq 1}$ of $\Delta$-metric spaces, as well as $\Gamma_1$ actions on each $Z_{2n-1}$ and $\Gamma_2$ actions on each $Z_{2n}$, so that all actions are compatible with each other and with the original actions of $\Gamma_1$ on $Y_1$ and $\Gamma_2$ on $Y_2$. To define $Z_1$, apply Lemma \ref{l:extension} to the $\Gamma_1$ action on $Y_1$ and the $\Lambda$ action on $Z_0$. To define $Z_2$, apply Lemma \ref{l:extension} to the $\Gamma_2$ action on $Y_2$ and the induced $\Lambda$ action on $Z_1$. In general, obtain $Z_{2n+1}$ by applying Lemma~\ref{l:extension} to the $\Gamma_1$ action on $Z_{2n-1}$ and the induced $\Lambda$ action on $Z_{2n}$, and obtain $Z_{2n+2}$ by applying Lemma~\ref{l:extension} to the $\Gamma_2$ action on $Z_{2n}$ and the induced $\Lambda$ action on $Z_{2n+1}$. 
Let $Z_\infty=\bigcup_{n\geq 0}Z_n$. Then our construction gives an action of $\Gamma$ on $Z_\infty$. 

Now, using the fact that $\Gamma$ has the RZ-property from Proposition~\ref{amalgamRZ}, we apply Rosendal's theorem \ref{RosendalTheorem} to the action of $\Gamma$ on $Z_\infty$, with $A=Z_0$ and $F=\Gamma_1\cup \Gamma_2$, to find a finite $\Delta$-metric space $Y$ containing $Z_0$, and an isometric action $\Gamma$ on $Y$ that extends the original actions of $\Gamma_1,\Gamma_2$ on $Y_1,Y_2$ respectively. Then, let $G$ be the subgroup of $\Iso(Y)$ generated by $F=\Gamma_1\cup \Gamma_2$. The actions of elements of $F$ extends to an action of $G$ on $Y$, which gives an amalgam $(Y, G)\in \mathcal{K}_\Delta$ of $(Y_1, \Gamma_1)$ and $(Y_2,\Gamma_2)$ over $(X,\Lambda)$. 
\end{proof}

Denote the Fraïssé limit of $\mcK_\Delta$ by $(X_\Delta,H_\Delta)$, where 
$X_\Delta$ is a $\Delta$-metric space and $H_\Delta$ is a locally finite group 
acting isometrically on $X_\Delta$. The following lemmas will give us the main 
result of this section.

\begin{lem}
$X_\Delta$ is isometric to $\U_\Delta$.
\end{lem}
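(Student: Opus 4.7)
The plan is to show that $X_\Delta$ satisfies the defining properties of $\U_\Delta$, namely that it is a countable ultrahomogeneous $\Delta$-metric space whose age (as a metric space) consists of all finite $\Delta$-metric spaces. By Fraïssé's theorem (or equivalently by Proposition~\ref{UrysohnProperty}), these properties determine $\U_\Delta$ up to isometry, so establishing them for $X_\Delta$ will complete the argument.

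The central observation is that any finite $\Delta$-metric space $X$ can be promoted to a member of $\mcK_\Delta$ by equipping it with the trivial group action, i.e.\ $(X,\{1\})\in \mcK_\Delta$. First I would use this to identify the age of $X_\Delta$: since $\Age(X_\Delta,H_\Delta)=\mcK_\Delta$, every finite $\Delta$-metric space $X$ embeds into $X_\Delta$ via the embedding of $(X,\{1\})$ into $(X_\Delta,H_\Delta)$; conversely any finite subset of $X_\Delta$ is a finite $\Delta$-metric space. Thus the age of $X_\Delta$ coincides with the class of all finite $\Delta$-metric spaces. Since $X_\Delta$ is also countable (as a sort of the countable Fraïssé limit), these two facts are already enough.

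Next I would verify ultrahomogeneity of $X_\Delta$ in the same manner: given a partial isometry $p\colon A\to B$ between two finite subsets of $X_\Delta$, regard it as an isomorphism between the finite substructures $(A,\{1\})$ and $(B,\{1\})$ of $(X_\Delta,H_\Delta)$ in the class $\mcK_\Delta$; the ultrahomogeneity of the Fraïssé limit then produces an automorphism of $(X_\Delta,H_\Delta)$ extending it, whose restriction to $X_\Delta$ is the desired isometric extension of $p$.

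There is no real obstacle beyond unpacking what a substructure means in the two-sorted setting: one must only note that $(A,\{1\})$ is a bona fide substructure of $(X_\Delta,H_\Delta)$ (the trivial subgroup $\{1\}\le H_\Delta$ acts trivially on any $A\subseteq X_\Delta$, so the invariance condition is vacuous), and that an isomorphism $(A,\{1\})\to(B,\{1\})$ in $\mcK_\Delta$ is nothing more than an isometry $A\to B$. Once these observations are in place, uniqueness of the Fraïssé limit of the class of finite $\Delta$-metric spaces gives $X_\Delta\cong \U_\Delta$.
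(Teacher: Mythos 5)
Your proposal is correct. The paper takes a slightly shorter route: it invokes Proposition~\ref{UrysohnProperty} and verifies the one-point extension (Urysohn) property directly, by observing that a Kat\u{e}tov function $f$ over a finite $A\subseteq X_\Delta$ gives an embedding of $(A,\{1\})$ into $(A\cup\{f\},\{1\})$ in $\mcK_\Delta$, which the extension property of the Fraïssé limit realizes inside $X_\Delta$. You instead verify the two data that Fraïssé's uniqueness theorem requires --- that $\Age(X_\Delta)$ is the class of all finite $\Delta$-metric spaces and that $X_\Delta$ is ultrahomogeneous as a metric space --- by transferring both from $(X_\Delta,H_\Delta)$ via the same trivial-group device $(X,\{1\})\in\mcK_\Delta$. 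The two arguments rest on the identical key observation and are equally valid; the paper's version needs only one application of the limit's extension property, while yours requires the extra (harmless) unpacking that an automorphism of the two-sorted structure restricts to an isometry of the metric sort, and that an isomorphism $(A,\{1\})\to(B,\{1\})$ is exactly a partial isometry. Your remark that the invariance condition for the trivial subgroup is vacuous correctly disposes of the only point where the substructure bookkeeping could go wrong.
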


\begin{proof} We verify the Urysohn property from Proposition~\ref{UrysohnProperty} for $X_\Delta$. Let $A$ be a finite subset of $X_\Delta$, and $f: A\to \Delta$ satisfying
$|f(a)-f(b)|\leq d(a,b)\leq f(a)+f(b)$ for all $a, b\in A$. Then, viewing 
$A\cup\{f\}$ as a metric space, $(A,\{1\})\in\mcK_\Delta$ is a substructure 
embedded in $(A \cup \{f\},\{1\}) \in \mcK_\Delta$. By the universality and 
ultrahomogeneity of a Fraïssé limit we may find $x \in X_\Delta$ such that 
$d(x,a)=f(a)$ for all $a \in A$. 
\end{proof}

\begin{lem}
$H_\Delta$ acts faithfully on $X_\Delta$, i.e., if $g\in H_\Delta\setminus\{1\}$ there is $x\in X_\Delta$ such that $g\cdot x\neq x$. 
\end{lem}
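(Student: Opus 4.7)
The plan is to argue by contrapositive: suppose $g \in H_\Delta$ fixes every point of $X_\Delta$, and show $g=1$. Set $K=\langle g\rangle$, a finite subgroup of $H_\Delta$, so that $(\emptyset, K)$ is a finite substructure of the Fraïssé limit $(X_\Delta, H_\Delta)$. I will derive a contradiction from the Fraïssé extension property by exhibiting an extension of $(\emptyset,K)$ in $\mcK_\Delta$ on which $K$ already acts faithfully.

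The heart of the proof is the following one-line construction. Take $B=K$ as a set, fix any $d_0\in\Delta$, and equip $B$ with the uniform metric $d(h_1,h_2)=d_0$ for $h_1\neq h_2$. Every triple of points of $B$ has pairwise distances in $\{0,d_0\}$, so the triangle inequality reduces to $d_0\leq 2d_0$; hence $(B,d)$ is a $\Delta$-metric space. Letting $K$ act on $B$ by left translation clearly gives an isometric, faithful action, so $(B,K)\in \mcK_\Delta$ and $(\emptyset,K)\leq (B,K)$.

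Applying the extension property of the Fraïssé limit to this inclusion yields an embedding $\iota \colon (B,K)\hookrightarrow (X_\Delta, H_\Delta)$ whose restriction to the group factor is the inclusion $K\hookrightarrow H_\Delta$. Set $x=\iota(1_B)\in X_\Delta$. Equivariance of $\iota$, combined with the fact that $\iota$ is the identity on $K$, gives
\[ g\cdot x \;=\; g\cdot \iota(1_B) \;=\; \iota(g\cdot 1_B) \;=\; \iota(g), \]
and since $\iota$ is injective and $g\neq 1_B$ in $B$, we conclude $g\cdot x=\iota(g)\neq \iota(1_B)=x$. This contradicts the hypothesis that $g$ fixes every point of $X_\Delta$, so $g=1$.

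I do not foresee a real obstacle in this plan; the only point worth double-checking is that the Fraïssé extension property is legitimately applied with empty $A$-part, which is standard here because $\mcK_\Delta$ explicitly allows the empty $\Delta$-metric space and $(\emptyset,K)$ sits inside $(X_\Delta,H_\Delta)$ as a substructure via the inclusion $K\hookrightarrow H_\Delta$. If one prefers to avoid the empty case entirely, one may replace $A=\emptyset$ by any finite $K$-invariant subset of $X_\Delta$ (for instance a singleton, which is $K$-invariant by the standing assumption that $g$ acts trivially), and run the same argument, at the cost of first amalgamating $B$ with this $A$ inside $\mcK_\Delta$ — which is possible by Theorem~\ref{KDeltaFraisse}.
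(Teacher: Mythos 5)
Your proposal is correct and follows essentially the same route as the paper: take $\Lambda=\langle g\rangle$, equip it with the uniform $\Delta$-metric so that left translation is a faithful isometric action giving an element of $\mcK_\Delta$, and use the universality/ultrahomogeneity of the Fraïssé limit to realize this action inside $(X_\Delta,H_\Delta)$ compatibly with the inclusion $\Lambda\leq H_\Delta$. The contrapositive phrasing and the explicit equivariance computation $g\cdot\iota(1)=\iota(g)\neq\iota(1)$ are just a more detailed write-up of the paper's argument.
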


\begin{proof} To see that $H_\Delta$ acts faithfully, let $g \in H_\Delta \setminus\{1\}$ and let $\Lambda$ be the finite subgroup of $H_\Delta$ generated by $g$. Let $c\in \Delta$ and let $X=(\Lambda,d)$ be a $\Delta$-metric space with $d(a, b)=c$ for any distinct $a, b\in \Lambda$. Then the left multiplication of $\Lambda$ is a faithful action of $\Lambda$ on $X$ by isometries. Thus $(X, \Lambda)\in \mcK_\Delta$. By the universality of $X_\Delta$, $X$ can be realized as a subset of $X_\Delta$. By the ultrahomogeneity of $(X_\Delta, H_\Delta)$,  the action of $\Lambda$ on $X_\Delta$ extends the $\Lambda$ action on $X$. Since the action of $\Lambda$ on $X$ is faithful, it follows that the action of $\Lambda$ on $X_\Delta$ is faithful.
\end{proof}

\begin{lem} $H_\Delta$ is isomorphic to $\mathbb{H}$.
\end{lem}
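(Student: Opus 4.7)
The plan is to verify property $(\textsc{E})$ of Proposition~\ref{propertyE} for $H_\Delta$ and then invoke that proposition. First, observe that $H_\Delta$ is countable (it is the group-sort of the countable Fraïssé limit $(X_\Delta, H_\Delta)$) and locally finite: any finite tuple $h_1,\dots,h_n\in H_\Delta$ is contained in some finite substructure $(X',G')\in\mcK_\Delta$ of $(X_\Delta,H_\Delta)$, and $G'$ is finite by definition of $\mcK_\Delta$, so $\langle h_1,\dots,h_n\rangle\le G'$ is finite.

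To verify $(\textsc{E})$, let $G_1\le G_2$ be finite groups and let $\Psi_1\colon G_1\to H_\Delta$ be a group embedding. Since the definition of $\mcK_\Delta$ explicitly allows $X=\emptyset$ (with any group acting trivially), both $(\emptyset,G_1)$ and $(\emptyset,G_2)$ belong to $\mcK_\Delta$. Write $G_1'=\Psi_1(G_1)\le H_\Delta$, so that $(\emptyset,G_1')$ is a substructure of $(X_\Delta,H_\Delta)$, and let $\varphi\colon(\emptyset,G_1')\to(\emptyset,G_2)$ be the embedding whose group component is the composition $G_1'\xrightarrow{\Psi_1^{-1}}G_1\hookrightarrow G_2$. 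By the universal extension property of the Fraïssé limit (recalled right after the Fraïssé Theorem in the excerpt), there is an embedding $\psi\colon(\emptyset,G_2)\to(X_\Delta,H_\Delta)$ such that $\psi\circ\varphi$ is the inclusion of $(\emptyset,G_1')$ into $(X_\Delta,H_\Delta)$. Setting $\Psi_2\colon G_2\to H_\Delta$ equal to the group component of $\psi$, a short unwinding of the commuting triangle yields $\Psi_2(h)=\Psi_1(h)$ for every $h\in G_1$, i.e.\ $\Psi_2\upharpoonright G_1=\Psi_1$. This is precisely $(\textsc{E})$, so Proposition~\ref{propertyE} gives $H_\Delta\cong\mathbb{H}$.

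I expect no serious obstacle: the heavy lifting has already been carried out in Theorem~\ref{KDeltaFraisse} (establishing that $\mcK_\Delta$ is a Fraïssé class) and in the preceding two lemmas (ensuring $X_\Delta\cong\U_\Delta$ and that the action of $H_\Delta$ is faithful). The only point that might look fragile is the use of the ``trivial'' pairs $(\emptyset,G)$, but these are legitimate objects of $\mcK_\Delta$ by the convention explicitly stated in the definition, so property $(\textsc{E})$ for $H_\Delta$ falls out directly from the defining extension property of a Fraïssé limit, with no further combinatorial or metric argument needed.
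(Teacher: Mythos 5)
Your proposal is correct and follows essentially the same route as the paper: verify property ({\sc E}) of Proposition~\ref{propertyE} by viewing finite groups as the ``empty-space'' structures $(\emptyset,G)\in\mcK_\Delta$ and invoking the universality and ultrahomogeneity (extension property) of the Fraïssé limit $(X_\Delta,H_\Delta)$. The additional remarks on countability and local finiteness of $H_\Delta$ are a harmless (and correct) elaboration of what the paper leaves implicit.
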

\begin{proof} We verify the property ({\sc E}) from Proposition~\ref{propertyE} for $H_\Delta$. 
Let $\Lambda$ be a finite subgroup of $H_\Delta$, and $i \colon \Lambda \to \Gamma$ be a group embedding of $\Lambda$ into a finite group $\Gamma$. 
Then $i$ induces an embedding from the structure $(\emptyset, \Lambda)\in\mcK_\Delta$ into $(\emptyset,\Gamma)\in\mcK_\Delta$. By the universality and ultrahomogeneity of $(X_\Delta, H_\Delta)$ we see that there is a group embedding $j \colon \Gamma \to H_\Delta$ such that 
$j \circ i(g)=g$ for all $g \in \Lambda$.
\end{proof}

\begin{lem}
$H_\Delta$ is dense in $\Iso(X_\Delta)$.
\end{lem}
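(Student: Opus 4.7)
The plan is to show density via the permutation group topology: given any $g\in\Iso(X_\Delta)$ and any finite $A\subseteq X_\Delta$, we will produce $h\in H_\Delta$ with $h\upharpoonright A=g\upharpoonright A$. Since basic neighborhoods of $g$ in $\Iso(X_\Delta)$ are exactly pointwise stabilizers of finite tuples, this suffices.

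First, set $p:=g\upharpoonright A$, which is a partial isometry of $X_\Delta$, and let $B:=A\cup g(A)$, a finite $\Delta$-metric subspace of $X_\Delta$ containing $\dom(p)\cup \rng(p)$. Apply Solecki's Theorem~\ref{SoleckiTheorem} to $B$ to obtain a finite strongly coherent S-extension $(Y,\phi)$, where $Y\supseteq B$ is a finite $\Delta$-metric space and $\phi\colon\mathcal{P}_B\to \Iso(Y)$ extends every partial isometry of $B$ to a total one. In particular $\phi(p)\in\Iso(Y)$ is defined and $\phi(p)\upharpoonright A=p=g\upharpoonright A$. Let $G$ be the (finite, since $Y$ is finite) subgroup of $\Iso(Y)$ generated by $\phi(\mathcal{P}_B)$, so $(Y,G)\in\mcK_\Delta$.

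Now consider the trivial-group structure $(B,\{1\})$, which embeds into $(Y,G)$ (isometric inclusion $B\hookrightarrow Y$ together with the inclusion $\{1\}\hookrightarrow G$; action compatibility is automatic) and also embeds into $(X_\Delta,H_\Delta)$ via the identity on $B$. Because $(X_\Delta,H_\Delta)$ is the Fraïssé limit of $\mcK_\Delta$, it satisfies the extension property: the inclusion $(B,\{1\})\hookrightarrow (Y,G)$ must factor through an embedding $(\psi,j)\colon (Y,G)\hookrightarrow (X_\Delta,H_\Delta)$, where $\psi\colon Y\to X_\Delta$ is an isometric embedding that is the identity on $B$, $j\colon G\to H_\Delta$ is a group embedding, and $\psi(\gamma\cdot y)=j(\gamma)\cdot \psi(y)$ for all $\gamma\in G$, $y\in Y$.

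Finally, take $h:=j(\phi(p))\in H_\Delta$. For every $a\in A$ we have $a\in B$, so $\psi(a)=a$, and also $\phi(p)(a)=g(a)\in B$, so $\psi(\phi(p)(a))=g(a)$. By equivariance, $h\cdot a=j(\phi(p))\cdot \psi(a)=\psi(\phi(p)(a))=g(a)$, as required. The main ingredients are Solecki's theorem (to realize an arbitrary partial isometry as the restriction of an element of a finite isometry group) and the standard extension property of the Fraïssé limit; the one subtle point to check is merely that $(B,\{1\})\hookrightarrow (Y,G)$ is a genuine morphism in $\mcK_\Delta$, which is immediate since the trivial group acts trivially.
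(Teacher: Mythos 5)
Your proof is correct, but it reaches the key intermediate step by a different route than the paper. Both arguments reduce to the same final move: once you have a finite $\Delta$-metric space containing $A$ together with a \emph{total} isometry of it agreeing with $g$ on $A$, you package this as a structure in $\mcK_\Delta$ with a finite cyclic (or finite) group acting, and invoke the extension property of the Fraïssé limit $(X_\Delta,H_\Delta)$ over $(B,\{1\})$ to transport that isometry into $H_\Delta$. Where you differ is in how you produce that total isometry. The paper considers the subgroup $\Gamma=\langle g\rangle\leq\Iso(X_\Delta)$, splits into cases according to whether $\Gamma$ is finite or infinite (isomorphic to $\Z$), and in the infinite case applies Rosendal's Theorem~\ref{RosendalTheorem} using the RZ-property of $\Z$ to finitely approximate the action. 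You instead apply Solecki's Theorem~\ref{SoleckiTheorem} to $B=A\cup g(A)$ to extend the single partial isometry $p=g\upharpoonright A$ to a full isometry $\phi(p)$ of a finite $\Delta$-metric space $Y$. Your route avoids the case distinction and the RZ machinery entirely; on the other hand it imports Solecki's coherent extension theorem, which is strictly stronger than what you need (plain EPPA for $\Delta$-metric spaces, with no coherence requirement, already suffices since you only extend one partial isometry --- you never use that $\phi$ is defined on all of $\mathcal{P}_B$ or that it is strongly coherent, except to observe that $\langle\phi(\mathcal{P}_B)\rangle$ is finite, and you could just as well take $G=\langle\phi(p)\rangle$). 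One trivial point you should add: if $g$ fixes $A$ pointwise then $p\subseteq 1_B$, so $p\notin\mathcal{P}_B$ and $\phi(p)$ is undefined; in that case $1\in H_\Delta$ already agrees with $g$ on $A$ and there is nothing to prove.
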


\begin{proof}
Let $g \in \Iso(X_\Delta)$, and let $A$ be a finite subset of $X_\Delta$. We need to find an element of $H_\Delta$ coinciding with $g$ on $A$. Let $\Gamma$ be the subgroup of $\Iso(X_\Delta)$ generated by $g$. Then $\Gamma$ acts on $X_\Delta$ by isometries. We claim that there is a finite subset $B$ of $X_\Delta$ containing $A$ and an isometry $h$ of $B$ which coincides with $g$ on $A$. Indeed, if $\Gamma$ is finite, then we can let $B=\Gamma\cdot A$ and $h=g$. If $\Gamma$ is infinite, then it is a free group (isomorphic to $\mathbb{Z}$), and it has the RZ-property. We can then apply Rosendal's theorem \ref{RosendalTheorem} to find a finite $B$ containing $A$ and an isometry $h$ of $B$ which coincides with $g$ on $A$. 

Letting $H$ denote the finite group generated by $h$, we see that $(B,H) \in \mcK_\Delta$. We may realize the embedding from $(B,\{1\})$ into $(B,H)$ inside $(X_\Delta, H_\Delta)$, which gives us a finite subgroup of $H_\Delta$ isomorphic to $H$ and acting like $H$ on $B$. In particular there exists an element of $H_\Delta$ coinciding with $g$ on $A$.
\end{proof}

We have thus proved the following result. 

\begin{thm} \label{DeltaMain}
Let $\Delta$ be any countable distance value set. Then $\Iso(\U_\Delta)$ contains a dense subgroup that is isomorphic to $\mathbb{H}$.
\end{thm}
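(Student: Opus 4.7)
The plan is to construct $\mathbb{H}$ as a dense subgroup of $\Iso(\U_\Delta)$ via a Fra\"iss\'e-theoretic approach, using the class $\mcK_\Delta$ of pairs $(X, G)$ where $X$ is a finite $\Delta$-metric space and $G$ a finite group acting isometrically on $X$. First I would verify that $\mcK_\Delta$ is a Fra\"iss\'e class (Theorem~\ref{KDeltaFraisse}). The hereditary property is trivial and the joint embedding property follows from the product action. The amalgamation property is the crucial and most delicate step, and I expect it to be the main obstacle.

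For amalgamation, given $(Y_1, \Gamma_1)$ and $(Y_2, \Gamma_2)$ extending $(X, \Lambda)$, I would first form the metric amalgam $Z_0 = Y_1 \cup_X Y_2$ with cross distances defined via shortest paths through $X$, truncated by $\sup(\Delta)$. Then I would iteratively extend $Z_0$ to an increasing chain $(Z_n)$ alternately supporting $\Gamma_1$-actions at odd indices and $\Gamma_2$-actions at even indices, by repeated applications of Lemma~\ref{l:extension}. The direct limit $Z_\infty = \bigcup_n Z_n$ then carries a compatible action of the amalgamated free product $\Gamma = \Gamma_1 *_\Lambda \Gamma_2$. Since amalgamated free products of finite groups over common subgroups are virtually free, $\Gamma$ has the RZ-property by Proposition~\ref{amalgamRZ}, so Rosendal's Theorem~\ref{RosendalTheorem} applied to $A = Z_0$ and $F = \Gamma_1 \cup \Gamma_2$ yields a finite $\Delta$-metric space $Y \supseteq Z_0$ on which the set $F$ acts in a way that still extends the given actions on $Y_1$ and $Y_2$. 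Taking $G = \langle F \rangle \leq \Iso(Y)$ produces the required amalgam $(Y, G) \in \mcK_\Delta$.

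Denoting the Fra\"iss\'e limit by $(X_\Delta, H_\Delta)$, I would then verify four properties that together yield the theorem: $X_\Delta$ is isometric to $\U_\Delta$, $H_\Delta$ acts faithfully on $X_\Delta$, $H_\Delta$ is isomorphic to $\mathbb{H}$, and $H_\Delta$ is dense in $\Iso(X_\Delta)$. For the first, I would verify the Urysohn property of Proposition~\ref{UrysohnProperty}: any Kat\u{e}tov function $f$ on a finite $A \subseteq X_\Delta$ yields an embedding $(A, \{1\}) \hookrightarrow (A \cup \{f\}, \{1\})$ inside $\mcK_\Delta$, which by ultrahomogeneity is realized in $X_\Delta$. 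Faithfulness follows by taking any nontrivial $g \in H_\Delta$, setting $\Lambda = \langle g \rangle$, and realizing $\Lambda$ via its regular left action on itself viewed as a constant-distance $\Delta$-metric space (any $c \in \Delta$ will do). For $H_\Delta \cong \mathbb{H}$, I would verify property ({\sc E}) of Proposition~\ref{propertyE}: given a finite $\Lambda \leq H_\Delta$ and an embedding $i: \Lambda \hookrightarrow \Gamma$ into a finite group, the embedding $(\emptyset, \Lambda) \hookrightarrow (\emptyset, \Gamma)$ in $\mcK_\Delta$ extends inside $(X_\Delta, H_\Delta)$.

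Finally, for density, given $g \in \Iso(X_\Delta)$ and a finite $A \subseteq X_\Delta$, I would consider $\Gamma = \langle g \rangle$. If $\Gamma$ is finite, I take $B = \Gamma \cdot A$ and $h = g$; if $\Gamma$ is infinite, it is isomorphic to $\mathbb{Z}$ hence free and has the RZ-property, so Rosendal's Theorem~\ref{RosendalTheorem} produces a finite $B \supseteq A$ and an isometry $h$ of $B$ agreeing with $g$ on $A$. Then $(B, \langle h \rangle) \in \mcK_\Delta$, and embedding $(B, \{1\}) \hookrightarrow (B, \langle h \rangle)$ into $(X_\Delta, H_\Delta)$ provides the desired element of $H_\Delta$ approximating $g$ on $A$. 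As noted, the main obstacle throughout is the amalgamation step, since it requires the combined machinery of virtual-freeness, the Ribes--Zalesski\u{\i} RZ-property, Rosendal's finite approximation theorem, and the iterative metric-and-action construction; once amalgamation is in hand, the remaining four verifications are essentially automatic consequences of the universal and ultrahomogeneous character of the Fra\"iss\'e limit.
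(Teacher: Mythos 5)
Your proposal is correct and follows essentially the same route as the paper: the same Fra\"iss\'e class $\mcK_\Delta$, the same amalgamation argument via $Z_0$, the iterated applications of Lemma~\ref{l:extension}, the RZ-property of $\Gamma_1 *_\Lambda \Gamma_2$ and Rosendal's Theorem~\ref{RosendalTheorem}, and the same four verifications (Urysohn property, faithfulness, property ({\sc E}), and density) for the Fra\"iss\'e limit $(X_\Delta, H_\Delta)$. No substantive differences to report.
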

Theorem~\ref{DeltaMain} immediately gives (2), (3), (5) of Theorem~\ref{mainthm}. 

\begin{cor} $\Iso(\mathbb{QU})$, $\Aut(\mathcal{R})$, and $S_\infty$ all contain $\mathbb{H}$ as a dense subgroup.
\end{cor}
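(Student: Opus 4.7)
The plan is to deduce the corollary directly from Theorem~\ref{DeltaMain} by specializing the countable distance value set $\Delta$ to three concrete choices, and then invoking the identifications of $\U_\Delta$ and $\Iso(\U_\Delta)$ recorded in the examples of Subsection~\ref{subsectionDelta}. No new argument is needed; the content is entirely a matter of reading off the theorem at three distinguished values of $\Delta$.

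First I would verify that each of the three sets in question is a distance value set in the sense of Subsection~\ref{subsectionDelta}. For $\Delta=\{1\}$ one has $\sup(\Delta)=1$ and $\min(1+1,1)=1\in\Delta$; by Example~(1), $\U_{\{1\}}$ is a countable discrete space and $\Iso(\U_{\{1\}})=S_\infty$. For $\Delta=\{1,2\}$, the relevant sums are $1+1$, $1+2$, $2+2$, and all of them, once capped by $\sup(\Delta)=2$, lie in $\Delta$; by Example~(2), $\Iso(\U_{\{1,2\}})=\Aut(\mathcal{R})$. For $\Delta=\Q\cap(0,+\infty)$, the condition reduces to closure under addition since $\sup(\Delta)=+\infty$, which is immediate; by Example~(3), $\Iso(\U_\Q)=\Iso(\mathbb{QU})$.

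Applying Theorem~\ref{DeltaMain} to each of these three choices of $\Delta$ yields a dense copy of Hall's group $\mathbb{H}$ in $S_\infty$, $\Aut(\mathcal{R})$, and $\Iso(\mathbb{QU})$, respectively, which is the content of the corollary. There is no substantive obstacle: all the real work has already been performed in Theorem~\ref{KDeltaFraisse} and Theorem~\ref{DeltaMain}, and the corollary is simply the unpacking of those results in the most classical special cases.
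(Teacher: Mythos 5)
Your proposal is correct and is exactly the paper's argument: the corollary is obtained by specializing Theorem~\ref{DeltaMain} to $\Delta=\Q$, $\Delta=\{1,2\}$, and $\Delta=\{1\}$ and invoking the identifications $\Iso(\U_\Q)=\Iso(\mathbb{QU})$, $\Iso(\U_{\{1,2\}})\cong\Aut(\mathcal{R})$, and $\Iso(\U_{\{1\}})\cong S_\infty$ from the examples in Subsection~\ref{subsectionDelta}. Your additional verification that each of the three sets satisfies the distance value set axiom is a harmless (and correct) extra check that the paper leaves implicit.
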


The other part of Vershik's conjecture, Theorem~\ref{mainthm} (1),  is a corollary of Theorem~\ref{mainthm} (2) from a standard argument.

\begin{thm}\label{IsoU}
$\Iso(\U)$ contains $\mathbb{H}$ as a dense subgroup.
\end{thm}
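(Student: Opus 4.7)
The plan is to deduce the result from Theorem~\ref{mainthm}(2) by transferring the dense copy of $\mathbb{H}$ from $\Iso(\mathbb{QU})$ to $\Iso(\U)$, using the fact that $\U$ is the metric completion of $\mathbb{QU}$. Every isometry of $\mathbb{QU}$ extends uniquely by uniform continuity to an isometry of $\U$, yielding an injective group homomorphism $\iota : \Iso(\mathbb{QU}) \to \Iso(\U)$. This $\iota$ is continuous: any basic neighborhood of $\iota(g_0)$ in $\Iso(\U)$, given by tolerance $\varepsilon$ at finitely many points $x_j \in \U$, contains $\iota(g)$ whenever $g$ agrees with $g_0$ on rational approximants $q_j \in \mathbb{QU}$ chosen with $d(x_j, q_j) < \varepsilon/3$, since $d(\iota(g)(x_j), \iota(g_0)(x_j)) \le d(x_j, q_j) + d(g(q_j), g_0(q_j)) + d(q_j, x_j)$.

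Let $H \le \Iso(\mathbb{QU})$ be the dense copy of $\mathbb{H}$ furnished by Theorem~\ref{mainthm}(2). I will show that $\iota(H)$ is dense in $\Iso(\U)$; since $\iota$ is injective, this gives the desired dense copy of $\mathbb{H}$ in $\Iso(\U)$. Given $g \in \Iso(\U)$, a finite tuple $(x_1,\dots,x_n) \in \U^n$, and $\varepsilon > 0$, pick rational approximants $q_i \in \mathbb{QU}$ with $d(x_i, q_i) < \varepsilon/4$. The heart of the argument is to produce $r_1,\dots,r_n \in \mathbb{QU}$ satisfying $d(r_i, r_j) = d(q_i, q_j)$ for all $i,j$ and $d(r_i, g(q_i)) < \varepsilon/4$ for each $i$. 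Given such $r_i$, the partial isometry $q_i \mapsto r_i$ extends by ultrahomogeneity of $\mathbb{QU}$ to some $g' \in \Iso(\mathbb{QU})$; by density of $H$ in the permutation group topology, pick $h \in H$ with $h(q_i) = r_i$ for all $i$. The triangle inequality then yields $d(\iota(h)(x_i), g(x_i)) \le d(x_i, q_i) + d(r_i, g(q_i)) + d(g(q_i), g(x_i)) < 3\varepsilon/4 < \varepsilon$, as required.

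The main obstacle is the construction of the points $r_i$, which I would carry out by induction on $i$. Having chosen $r_1,\dots,r_{k-1} \in \mathbb{QU}$, the rational-valued map $f(r_i) = d(q_k, q_i)$ is a Kat\u{e}tov function on $\{r_1,\dots,r_{k-1}\}$ (the triangle inequalities are inherited from those among the $q_i$), and the point $g(q_k) \in \U$ approximately realizes $f$ with error at most $d(r_i, g(q_i)) < \varepsilon/4$ on each $r_i$. The real content is to produce a realizer $r_k$ of $f$ inside $\mathbb{QU}$ lying within $\varepsilon/4$ of the approximate realizer $g(q_k)$. This perturbation step exploits the ``largeness'' of the space of realizers of a finite rational Kat\u{e}tov function in the Urysohn construction, namely that realizers in $\mathbb{QU}$ are dense among realizers in $\U$; it can be proved by a Kat\u{e}tov extension that simultaneously fixes the $r_i$ and pins the new point close to $g(q_k)$, or equivalently by a small back-and-forth between $\U$ and $\mathbb{QU}$.
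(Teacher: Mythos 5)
Your proposal is correct and follows the same route as the paper: transfer the dense copy of $\mathbb{H}$ from $\Iso(\mathbb{QU})$ (Theorem~\ref{mainthm}(2)) to $\Iso(\U)$ via the continuous embedding given by extending isometries to the completion. The paper dismisses the density-transfer step as a ``standard argument,'' whereas you actually carry it out (rational approximation of the orbit points, exact realization of the rational distance configuration in $\mathbb{QU}$ near the approximate realizers, then ultrahomogeneity of $\mathbb{QU}$ and density of $H$ in the permutation group topology); your perturbation step is the standard fact that realizers in $\mathbb{QU}$ of a rational Kat\u{e}tov function over a finite subset of $\mathbb{QU}$ are dense among its realizers in $\U$, and the only thing to tidy up is the bookkeeping of tolerances across the induction, which is routine.
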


\begin{proof} Since $\mathbb{QU}$ is a countable dense subset of $\U$, the map $h\mapsto \overline{h}$ sending $h\in\Iso(\mathbb{QU})$ to its completion $\overline{h}\in \Iso(\U)$ is a well-defined group embedding. Since $\Iso(\mathbb{QU})$ has the permutation group topology and $\Iso(\U)$ has the pointwise convergence topology, this map is continuous. If $H$ is a dense subgroup of $\Iso(\mathbb{QU})$ then $\overline{H}=\{\overline{h}: h\in H\}$ is a dense subgroup of $\Iso(\U)$ isomorphic to $H$.
\end{proof}

To conclude this section, we note that the idea of considering finite metric spaces, with finite groups acting on them, as forming a Fra\"iss\'e class has already  been considered in Doucha's paper \cite{D} (though his formalism is different from ours). In particular, Theorem 0.2 in \cite{D} is closely related to the results in this section, though the proof is different. One may think of Theorem 0.2 of \cite{D} as a precursor to Theorem \ref{KDeltaFraisse}. We also note that our approach gives an answer to Question 3.7 from \cite{D}: it follows from our results that the group $\mathbb H$ appearing in \cite{D} is equal to $\Iso(\mathbb U)$.

\section{Automorphism Groups of Relational Structures\label{sec:structures}}
In this section we show some analogous results to the main results of the preceding section for ultraextensive relational structures. As a corollary, we will obtain Theorem~\ref{mainthm} (6). We first recall the concept of the HL-property defined in \cite{EG2} and develop some results necessary for our proof.

\subsection{The HL-property of a group}

\begin{defn}[Herwig--Lascar \cite{HL}]
Let $G$ be a group and let $H_1,\dots,H_n\leq G$. A \textit{left system} of equations on $H_1,\dots,H_n$ is a finite set of equations with variables $x_1,\dots,x_m$ and constants $g_1,\dots,g_l$, where each equation is of the form 
\[
x_iH_j=g_kH_j \text{ or } x_iH_j=x_rg_kH_j
\]
for $1\leq i,r\leq m$, $1\leq k\leq l$ and $1\leq j\leq n$. 
\end{defn} 

\begin{defn}
Let $G$ be a group. We say that $G$ has the \textit{HL-property} (standing for Herwig--Lascar) if for every finitely generated $H_1,\dots,H_n\leq G$ and left system $\Sigma$ of equations on $H_1,\dots,H_n$, if $\Sigma$ does not have a solution, then there exist normal subgroups of finite index $N_1,\dots,N_n\unlhd G$ such that, letting $\Sigma({\vec{N}})$ be the left system of equations obtained from $\Sigma$ by replacing all occurrences of $H_1, \dots, H_n$ by $N_1H_1,\dots,N_nH_n$ respectively, $\Sigma({\vec{N}})$ does not have a solution either.
\end{defn}

Herwig--Lascar \cite{HL} proved that the HL-property implies the RZ-property for groups. They also essentially showed in \cite{HL} that finitely generated free groups have the HL-property. As a strengthening of Coulbois's result on the preservation of the RZ-property under taking free products \cite{C}, it was shown in \cite{EG2} that the HL-property is also preserved under taking free products.



\begin{lem}
Let $G$ be a group and $H\leq G$ be a subgroup of finite index. Then $G$ has the HL-property iff $H$ has the HL-property.
\end{lem}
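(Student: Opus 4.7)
The proof splits into two directions, following the classical pattern for the analogous RZ-property result (cf.\ Proposition~\ref{amalgamRZ}); only the converse uses the finite-index hypothesis. For ``$G$ has HL $\Rightarrow$ $H$ has HL'' (which does not require finite index), the key observation is that for any left system $\Sigma$ on finitely generated $H_1,\dots,H_n\le H$ with constants in $H$, $\Sigma$ has no solution in $H$ if and only if it has no solution in $G$. Indeed, given $(x_1,\dots,x_m)\in G^m$ solving $\Sigma$, type-1 equations $x_iH_j=g_kH_j$ force $x_i\in g_kH_j\subseteq H$, while type-2 equations $x_iH_j=x_rg_kH_j$ give $x_r^{-1}x_i\in H$, forcing $x_i$ and $x_r$ into a common left $H$-coset. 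Partitioning the variables by the equivalence relation generated by these type-2 pairings and translating each non-$H$ class back into $H$ by left multiplication by a suitable representative preserves the equations within the class, and since no equation crosses classes, one obtains an $H$-solution. Given this, apply the HL-property of $G$ to find normal finite-index $M_j\unlhd G$ making $\Sigma(\vec{M})$ unsolvable in $G$, and set $N_j:=M_j\cap H$: each $N_j$ is normal of finite index in $H$, and any $H$-solution of $\Sigma(\vec{N})$ would be a $G$-solution of $\Sigma(\vec{M})$ since $N_jH_j\subseteq M_jH_j$.

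For the converse ``$H$ has HL $\Rightarrow$ $G$ has HL'', let $\Sigma$ be a left system on finitely generated $H_1,\dots,H_n\le G$ with no solution in $G$, and fix left coset representatives $g_1=1,g_2,\dots,g_d$ for $H$ in $G$. For each function $\sigma\colon\{1,\dots,m\}\to\{1,\dots,d\}$ (assigning each variable $x_i$ to a coset $g_{\sigma(i)}H$), substitute $x_i=g_{\sigma(i)}y_i$ with $y_i\in H$ and rewrite the resulting conditions on the $y_i$ as a left system $\Sigma_\sigma$ in $H$ over finitely generated subgroups of the form $H\cap\beta H_j\beta^{-1}$, where $\beta$ ranges through representatives of the (finitely many, using $[G:H]<\infty$) double cosets in $H\backslash G/H_j$; each such subgroup is finitely generated as a finite-index subgroup of the finitely generated $\beta H_j\beta^{-1}$. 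Since $\Sigma$ has no $G$-solution, each $\Sigma_\sigma$ has no $H$-solution. Apply the HL-property of $H$ to each $\Sigma_\sigma$ to obtain normal finite-index witnesses $N_{j,\beta}^\sigma\unlhd H$, let $N$ be the intersection of all of them (still normal of finite index in $H$), and let $M$ be the normal core of $N$ in $G$. Then $M\unlhd G$ has finite index, and setting $M_j:=M$ for every $j$, a $G$-solution of $\Sigma(\vec{M})$ would decompose via some $\sigma$ into an $H$-solution of $\Sigma_\sigma(\vec{N^\sigma})$, a contradiction.

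The main obstacle is the rewriting step in the converse direction: one must verify that the substituted conditions on the $y_i\in H$ really can be arranged as a genuine left system on finitely generated subgroups of $H$ with constants in $H$. For type-1 equations this is a direct coset-intersection computation, but for type-2 equations $x_iH_j=x_rg_kH_j$ with $g_k$ possibly outside $H$, one exploits the decomposition of $G/H_j$ as an $H$-set into finitely many orbits of the form $H/(H\cap\beta H_j\beta^{-1})$ and case-splits on which orbit contains $y_rg_kH_j$; each case either is incompatible with the chosen $\sigma$ or contributes an admissible left-system equation in $H$ with constants in $H$.
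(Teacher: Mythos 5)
Your first direction ($G$ has the HL-property $\Rightarrow$ $H$ does) is correct and is essentially the paper's argument: a $G$-solution of a system with subgroups and constants in $H$ is translated class-by-class into an $H$-solution, and the witnesses $M_j$ are intersected with $H$. You are also right that this half does not use the finite-index hypothesis, and your handling of the translation (one left translation per equivalence class of variables) is if anything slightly cleaner than the paper's.

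The converse direction has a genuine gap, sitting exactly where you flag ``the main obstacle.'' After the left-coset substitution $x_i=g_{\sigma(i)}y_i$, a type-2 equation $x_iH_j=x_rg_kH_j$ becomes $y_r^{-1}\,b\,y_i\in g_kH_j$ with $b=g_{\sigma(r)}^{-1}g_{\sigma(i)}\in G$ and $g_k$ possibly outside $H$. A left-system equation $y_iK=y_rhK$ over a subgroup $K\le H$ has solution set invariant under the simultaneous left translation $(y_i,y_r)\mapsto(uy_i,uy_r)$ for $u\in H$, whereas such a translation carries $\{(y_i,y_r)\in H^2 : y_r^{-1}by_i\in g_kH_j\}$ to $\{(y_i,y_r) : y_r^{-1}(u^{-1}bu)y_i\in g_kH_j\}$, a different set in general. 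So this ``twisted'' relation between $y_i$ and $y_r$ cannot be encoded by left-system equations over subgroups of $H$, no matter how one case-splits. The case-split you propose is moreover vacuous: $y_rg_kH_j$ lies in the single $H$-orbit of $g_kH_j$ for every $y_r\in H$, so the orbit decomposition of $G/H_j$ gives nothing to branch on.

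The paper avoids this with two reductions you omit. It first reduces to the case $H\unlhd G$ (pass to a normal finite-index subgroup of $G$ contained in $H$ and apply the statement twice), and then reduces to the case $H_1,\dots,H_n\le H$ by replacing each $H_j$ with $L_j=H_j\cap H$ and case-splitting each equation over the finitely many cosets $h_{j,s}L_j$ of $L_j$ in $H_j$. Only then does it decompose $G$ into \emph{right} cosets $Hg_t$ and substitute $x=yg_t$: since $xH_j=y\,(g_tH_jg_t^{-1})\,g_t$, each equation becomes a genuine left-system equation over the conjugate $g_tH_jg_t^{-1}$, which lies in $H$ precisely because $H_j\le H$ and $H$ is normal. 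Your subgroups $H\cap\beta H_j\beta^{-1}$ are the natural stabilizers for $G/H_j$ as an $H$-set, but they do not linearize the type-2 equations; switching to right cosets and adding the two reductions turns your argument into the paper's.
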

\begin{proof} Recall the fact that if $H\leq G$ is a subgroup of finite index, then $G$ has a normal subgroup $N$ of finite index such that $N\leq H$. Thus to prove the lemma, we may assume without loss of generality that $H$ is normal in $G$.

First assume $G$ has the HL-property. Let $\Sigma$ be a left system on finitely generated $H_1,\dots,H_n\leq H$. We claim that if $\Sigma$ has a solution in $G$ then it also has a solution in $H$. Let $V_0$ be the set of all variables $x$ such that $x$ appears in an equation in $\Sigma$ of the form $xH_j=gH_j$. Since the constants in $\Sigma$ are in $H$, any solution for an $x\in V_0$ must be in $H$. Now let $V_1$ be the set of all variables which appear in an equation in $\Sigma$ of the form $x_iH_j=x_rgH_j$, where at least one of $x_i$ and $x_r$ is in $V_0$. We see that any solution for an $x\in V_1$ must also be in $H$. Repeat this construction and define  $V_2, V_3$, etc. Since $\Sigma$ is finite, we obtain a maximal set of variables $V=V_0\cup V_1\cup V_2\cup\dots$ so that any solution for an $x\in V$ must be in $H$. Let $\Sigma'\subseteq \Sigma$ be the subsystem of all equations that contain (only) variables in $V$. Then the subsystem $\Sigma\setminus\Sigma'$ contains only equations of the form $x_iH_j=x_rgH_j$ where both $x_i, x_r\not\in V$. Now if $\Sigma$ has a solution in $G$, say $x_1=\gamma_1, \dots, x_m=\gamma_m$, then $\gamma_{i_1},\dots, \gamma_{i_k}$ are in $H$ where $V=\{x_{i_1},\dots, x_{i_k}\}$ and all the other variables are from the same coset of $H$. Let $gH$ be this coset. Then $x_{i_j}=\gamma_{i_j}$ for $j=1,2,\dots,k$ and $x_i=g^{-1}\gamma_i$ for $i\notin \{i_1,i_2,\dots,i_k\}$ is a solution of $\Sigma$ that consists of only elements in $H$. We have thus shown the claim. Now assume $\Sigma$ does not have a solution in $H$, then by the claim it does not have a solution in $G$. Since $G$ has the HL-property, there are $N_1, \dots, N_n\unlhd G$  of finite index such that $\Sigma({\vec{N}})$ on $N_1H_1, \dots, N_nH_n$ does not have a solution in $G$. Let $K_j=N_j\cap H$ for $1\leq j\leq n$. Then $K_j\unlhd H$ is of finite index, and $\Sigma({\vec{K}})$ on $K_1H_1,\dots, K_nH_n$ does not have a solution in $H$, since any solution of $\Sigma({\vec{K}})$ is also a solution of $\Sigma({\vec{N}})$.

For the converse, assume $H$ has the HL-property and $H\unlhd G$ is of finite index. Let $\Sigma$ be a left system on finitely generated $H_1, \dots, H_n\leq G$. Let $L_j=H_j\cap H$ for $1\leq j\leq n$. Then for each $j$, $L_j\leq H$ is finitely generated and has finite index in $H_j$. For each $j$, let $h_{j,1}L_j, \dots, h_{j, S_j}L_j$ enumerate all the left cosets of $L_j$ in $H_j$. Then each equation of the form $xH_j=ygH_j$ (here $y$ could be $1$ or a variable) is equivalent to $xL_j=ygh_{j,s}L_j$ for some $1\leq s\leq S_j$. Now consider the collection $\mathcal{S}$ of all left systems $\Sigma'$ where each $\Sigma'$ is obtained from $\Sigma$ by replacing each equation in $\Sigma$ of the form $xH_j=ygH_j$ by an equation of the form $xL_j=ygh_{j,s}L_j$ for some $1\leq s\leq S_j$. There are only finitely many left systems in $\mathcal{S}$, and $\Sigma$ has a solution in $G$ iff a $\Sigma'\in \mathcal{S}$ has a solution in $G$. To verify the HL-property for $G$, suppose $\Sigma$ does not have a solution in $G$. Then none of $\Sigma'\in\mathcal{S}$ has a solution. Assuming that the HL-property holds for $L_1, \dots, L_n$ for $G$, then for each $\Sigma'\in\mathcal{S}$, there exist normal subgroups $N^{\Sigma'}_1, \dots, N^{\Sigma'}_n\unlhd G$ of finite index such that $\Sigma'({\vec{N^{\Sigma'}}})$ does not have a solution. Let $N_j=\bigcap_{\Sigma'\in\mathcal{S}}N_j^{\Sigma'}$. Then $N_j\unlhd G$ is still of finite index, and for each $\Sigma'\in\mathcal{S}$, $\Sigma'({\vec{N}})$ still does not have a solution, since a solution for $\Sigma'({\vec{N}})$ would be a solution for $\Sigma'({\vec{N^{\Sigma'}}})$. This implies that $\Sigma({\vec{N}})$ does not have a solution, since a solution for it would be a solution of $\Sigma'({\vec{N}})$ for some $\Sigma'\in\mathcal{S}$. To finish the proof, it suffices to check that the HL-property holds for $L_1, \dots, L_n$ for $G$.

The above argument shows that it suffices to prove the HL-property for $H_1, \dots, H_n$ in $G$ when $H_1,\dots, H_n\leq H\unlhd G$, which we demonstrate below. Let $Hg_1,\dots, Hg_T$ be the right cosets of $H$ in $G$. Then $G=Hg_1\cup\dots\cup Hg_T$. First, suppose $\Sigma$ has a solution $x_1=\gamma_1,\dots, x_m=\gamma_m$ in $G$. Then there are $1\leq t_1, \dots, t_m\leq T$ such that $\gamma_i\in Hg_{t_i}$ for all $1\leq i\leq m$. If $x_lH_j=gH_j$ is an equation in $\Sigma$, then the solution $x_l=\gamma_l\in Hg_{t_l}$ witnesses that 
$$y_lg_{t_l}H_jg_{t_l}^{-1}=gg_{t_l}^{-1}g_{t_l}H_jg^{-1}_{t_l}$$
has a solution $y_l=\lambda_l=\gamma_lg_{t_l}^{-1}\in H$. If we let $H_j^l=g_{t_l}H_jg_{t_l}^{-1}$, then $H_j^l\leq H$ since $H$ is normal, and the above equation becomes $y_lH_j^l=(gg_{t_l}^{-1})H_j^l$. Similarly, if $x_lH_j=x_kgH_j$ is an equation in $\Sigma$, then the solution $x_l=\gamma_l, x_k=\gamma_k$ witnesses that
$$ y_lH_j^l=y_k(g_{t_k}gg_{t_l}^{-1})H_j^l $$
has a solution $y_l=\lambda_l=\gamma_lg_{t_l}^{-1}, 
y_k=\lambda_k=\gamma_kg_{t_k}^{-1}$ in $H$. Now for each $\vec{t}=(t_1, \dots, 
t_m)$ where $1\leq t_1, \dots, t_m\leq T$, we obtain a left system 
$\Sigma_{\vec{t}}$ from $\Sigma$ by replacing each equation in $\Sigma$ by an 
equation of the above form. Note that all the constants appeared in 
$\Sigma_{\vec{t}}$ are elements of $H$. Let $\mathcal{S}$ be the collection of 
all such $\Sigma_{\vec{t}}$. By our construction, $\Sigma$ has a solution in 
$G$ iff a $\Sigma_{\vec{t}}\in \mathcal{S}$ has a solution in $H$. To verify 
the HL-property for $H_1, \dots, H_n$ in $G$, suppose $\Sigma$ does not have a 
solution in $G$. Then none of $\Sigma_{\vec{t}}\in \mathcal{S}$ has a solution 
in $H$. By the HL-property of $H$, for each $\Sigma_{\vec{t}}\in\mathcal{S}$, 
there exist normal subgroups $N_1^{\Sigma_{\vec{t}}}, \dots, 
N_n^{\Sigma_{\vec{t}}}\leq H$ of finite index such that 
$\Sigma_{\vec{t}}({\vec{N^{\Sigma_t}}})$ does not have a solution in $H$. Let 
$N_j=\bigcap_{\Sigma_{\vec{t}}\in\mathcal{S}}N_j^{\Sigma_{\vec{t}}}$. Then 
$N_j\unlhd H$ is still of finite index, and for each 
$\Sigma_{\vec{t}}\in\mathcal{S}$, $\Sigma_{\vec{t}}(\vec{N})$ still does not 
have a solution in $H$. Now each $N_j$ is of finite index in $G$ since $H$ is 
of finite index in $G$. Let $M_j\unlhd G$ be of finite index such that $M_j\leq 
N_j$. It follows from our construction of $\mathcal{S}$ that $\Sigma(\vec{M})$ 
does not have a solution in $G$, since any solution of $\Sigma(\vec{M})$ would 
give rise to a solution for some $\Sigma_{\vec{t}}(\vec{N})$ where 
$\Sigma_{\vec{t}}\in\mathcal{S}$.
\end{proof}

Similar to Proposition~\ref{amalgamRZ} we obtain the following proposition from the above lemma and the Herwig--Lascar theorem on the HL-property of finitely generated free groups.

\begin{prop}\label{amalgamHL}
Let $\Gamma_1,\Gamma_2$ be two finite groups, and $\Lambda$ be a common subgroup of $\Gamma_1,\Gamma_2$. Then $\Gamma_1*_{\Lambda}\Gamma_2$ has the HL-property.
\end{prop}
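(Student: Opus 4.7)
The plan is to follow the exact same template as the proof of Proposition~\ref{amalgamRZ}, only swapping out the Ribes--Zalesski\u{\i} input for the Herwig--Lascar input. Concretely, I would proceed as follows.

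First, I would invoke the structure theorem for amalgamated free products of finite groups: by the corollary to Proposition~11 on p.~120 of Serre's \emph{Trees}, $\Gamma_1 *_\Lambda \Gamma_2$ is virtually free, i.e.\ it contains a free subgroup $F$ of finite index. Since $\Gamma_1 *_\Lambda \Gamma_2$ is finitely generated (being generated by the finite sets $\Gamma_1$ and $\Gamma_2$), any finite-index subgroup is also finitely generated, so $F$ is a finitely generated free group.

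Next, I would cite the Herwig--Lascar theorem stating that finitely generated free groups have the HL-property. This is exactly the input that replaces Ribes--Zalesski\u{\i}'s theorem in the proof of Proposition~\ref{amalgamRZ}. Combining this with the preceding observation, $F$ has the HL-property.

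Finally, I would apply the lemma just proved (that a group $G$ has the HL-property if and only if any finite-index subgroup of $G$ does), in the direction ``finite-index subgroup HL-property $\Rightarrow$ whole group HL-property''. Applied to $F \leq \Gamma_1 *_\Lambda \Gamma_2$, this gives that $\Gamma_1 *_\Lambda \Gamma_2$ has the HL-property, which is the desired conclusion. There is no real obstacle here: all the substantive work has already been done in the preceding lemma (the passage of the HL-property to and from finite-index subgroups) and in the cited Herwig--Lascar theorem; the proof is essentially a one-line assembly of these ingredients, fully parallel to the RZ-version.
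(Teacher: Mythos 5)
Your proposal is correct and is exactly the argument the paper intends: it deduces the proposition from Serre's virtual freeness of amalgamated products of finite groups, the Herwig--Lascar theorem that finitely generated free groups have the HL-property, and the preceding lemma on passing the HL-property across finite-index subgroups. The paper merely states this assembly in one sentence, and your write-up (including the observation that the finite-index free subgroup is finitely generated) fills in the same steps.
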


\subsection{The Fraïssé class of actions by automorphisms} 

Let $\mathcal{L}$ be a finite relational language. If $C$ and $D$ are $\mathcal{L}$-structures, a {\it homomorphism} from $C$ to $D$ is a map $\pi: C\to D$ such that for every $n$-ary relation $R\in\mathcal{L}$ and every $a_1,\dots, a_n\in C$,
$$ R^C(a_1,\dots, a_n)\Rightarrow R^D(\pi(a_1),\dots,\pi(a_n)). $$
If $\mathcal{T}$ is a set of $\mathcal{L}$-structures and $D$ is an $L$-structure, then $D$ is \emph{$\mathcal{T}$-free} if there is no $C\in\mathcal{T}$ and homomorphism $\pi: C\to D$.

An $\mathcal{L}$-structure $C$ is called a \textit{Gaifman clique} if for every $a, b\in C$ there is a relation symbol $R\in \mathcal{L}$ with arity $m\geq 2$ and $c_1,\dots, c_m\in C$ with $a, b\in \{c_1, \dots, c_m\}$ and $R^C(c_1,\dots, c_m)$. It is clear that if $C$ is a Gaifman clique and $D$ is a homomorphic image of $C$ (i.e., there is a surjective homomorphism $\pi: C\to D$), then $D$ is also a Gaifman clique. Moreover, if $C$ is a finite Gaifman clique, then it has only finitely many homomorphic images, up to isomorphism. 

\begin{defn} Let $\mathcal{T}$ be a finite set of finite $\mathcal{L}$-structures each of which is a Gaifman clique. 
Let $\mcK$ be the class of all pairs $(M,G)$ such that 
\begin{itemize}
\item $M$ is a finite $\mathcal{T}$-free $\mathcal{L}$-structure,
\item $G$ is a finite group, and
\item $G$ acts on $X$ by isomorphisms.
\end{itemize}
\end{defn}

Suppose $\mathcal{T}$ is a finite set of finite $\mathcal{L}$-structures each 
of which is a Gaifman clique. Let $\tilde{\mathcal{T}}$ be the set of all 
homomorphic images of structures in $\mathcal{T}$. Then $\tilde{\mathcal{T}}$ 
is also a finite set of $\mathcal{L}$-structures each of which is a Gaifman 
clique. The class of all finite $\mathcal{T}$-free $\mathcal{L}$-structures 
coincides with the class of all finite $\mathcal{L}$-structures that do not 
allow an isomorphic embedding from any $\tilde{T}\in \tilde{\mathcal{T}}$, and 
by Lemma 4.5 of Siniora--Solecki \cite{S2}, this is a Fraïssé class closed 
under taking free amalgams.

The following characterization of the HL-property was proved in \cite{EG2} as an analog of Rosendal's theorem~\ref{RosendalTheorem}. 

\begin{thm}\label{HLproperty}
Let $G$ be a group. Then the following are equivalent:
\begin{enumerate}
\item[\rm (i)] $G$ has the HL-property;
\item[\rm (ii)] Let $\mathcal{L}$ be a finite relational language with unary relation symbols $S_1,\dots, S_n\in\mathcal{L}$. Let $\mathcal{T}$ be a finite set of finite $\mathcal{L}$-structures. Let $D$ be a $\mathcal{T}$-free $\mathcal{L}$-structure such that $\{S_1^D, \dots, S_n^D\}$ is a partition of the domain of $D$. Let $C$ be a finite substructure of $D$. Let $F$ be a finite subset of $G$. Suppose that $\pi: G\actson D$ is a faithful action by isomorphisms and that $\pi$ is transitive on each $S_i^D$ for $i=1,\dots, n$. Then there exists a finite $\mathcal{T}$-free $\mathcal{L}$-structure $D'$ extending $C$, and an action $\pi':G\actson D'$ by isomorphisms such that for all $\gamma\in F$ and $a\in C$ one has $\pi'(\gamma)a=\pi(\gamma)a$.
\item[\rm (iii)] Clause {\rm (ii)} with the additional assumption that every structure $T\in\mathcal{T}$ is a Gaifman clique.
\end{enumerate}
\end{thm}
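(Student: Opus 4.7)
The proof establishes a three-way equivalence in which (ii) $\Rightarrow$ (iii) is immediate, since (iii) is simply (ii) restricted to the smaller family of $\mathcal{T}$ consisting of Gaifman cliques. The substance is in (i) $\Rightarrow$ (ii) and (iii) $\Rightarrow$ (i), and I would follow the strategy of Rosendal's proof of the analogous characterization of the RZ-property (Theorem~\ref{RosendalTheorem}), modified to track the extra combinatorial data imposed by $\mathcal{T}$-freeness.

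For (i) $\Rightarrow$ (ii): Given the action $\pi\colon G\actson D$, the finite substructure $C\subseteq D$, and the finite $F\subseteq G$, I pick one representative $a_i\in S_i^D$ for each $i$ and set $H_i=\mathrm{Stab}_G(a_i)$. Each element of $C$ has the form $g_{i,k}\cdot a_i$, and the demand that a target action $\pi'$ agrees with $\pi$ on $F\cdot C$ translates into a finite collection of equations of the form $x_j H_i=g_k H_i$ and $x_j H_i=x_r g_k H_i$. To these I append, for each hypothetical homomorphic image of some $T\in\mathcal{T}$ whose vertices are placed at cosets of the $H_i$, the dual requirement that no such placement is realized. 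Combined, these produce a single left system $\Sigma$ that has no solution in $G$ (since $D$ is $\mathcal{T}$-free). Applying the HL-property to $\Sigma$ yields finite-index normal subgroups $N_i\unlhd G$ such that the thickened system $\Sigma(\vec N)$ is still unsolvable. The $G$-equivariant quotient of (the relevant part of) $D$ by the equivalence $g\cdot a_i\sim g'\cdot a_i$ iff $N_i g H_i=N_i g' H_i$ is then a finite $\mathcal{T}$-free structure $D'$ on which $G$ acts via the desired $\pi'$.

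For (iii) $\Rightarrow$ (i): Given finitely generated $H_1,\dots,H_n\leq G$ and a left system $\Sigma$ without solution, I build a (possibly infinite) $\mathcal{L}$-structure $D$ whose sorts $S_i^D$ are $G/H_i$ with $G$ acting by left multiplication, and whose relations are chosen so that a hypothetical solution of $\Sigma$ corresponds exactly to a homomorphic image of one specific Gaifman clique $T_\Sigma$ landing inside $D$. Taking $\mathcal{T}=\{T_\Sigma\}$ (augmented by any Gaifman cliques needed to enforce that the relations really encode $\Sigma$), the structure $D$ is $\mathcal{T}$-free. Let $C$ consist of the cosets containing the constants of $\Sigma$ together with a base point $H_i\in G/H_i$ in each orbit, and let $F$ contain the constants of $\Sigma$. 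Applying (iii) produces a finite $\mathcal{T}$-free extension $D'$ with a $G$-action matching $\pi$ on $F\cdot C$. The point stabilizers $K_i$ of the chosen base points in the action on $D'$ have finite index in $G$ and contain $H_i$, and the finite-index normal subgroups $N_i=\bigcap_{g\in G} gK_ig^{-1}$ then satisfy the HL condition: any solution of $\Sigma(\vec N)$ would generate a copy of $T_\Sigma$ inside the $\mathcal{T}$-free $D'$.

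The main obstacle is the precise dictionary between non-solvability of left systems on finitely generated subgroups and $\mathcal{T}$-freeness for families of Gaifman cliques. In (i) $\Rightarrow$ (ii), one must compress the a priori infinite $\mathcal{T}$-freeness constraint into a single finite left system strong enough to force the quotient to remain $\mathcal{T}$-free; the Gaifman-clique assumption is precisely what makes each forbidden configuration localizable on a finite tuple of cosets. In (iii) $\Rightarrow$ (i), the dual task is to realize non-solvability of a left system as a forbidden Gaifman-clique configuration in a custom-built $\mathcal{L}$-structure, without introducing spurious forbidden configurations that would prevent the application of the extension property.
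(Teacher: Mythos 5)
First, a point of comparison: the paper does not actually prove Theorem~\ref{HLproperty}; it states it and cites \cite{EG2}, so there is no in-paper proof to measure your argument against. Judged on its own terms, your outline follows the right Herwig--Lascar/Rosendal template, and (ii)~$\Rightarrow$~(iii) is indeed trivial, but both substantive directions have genuine gaps. In (i)~$\Rightarrow$~(ii): the subgroups $H_i=\mathrm{Stab}_G(a_i)$ need not be finitely generated, whereas the HL-property is only assumed for finitely generated subgroups, so you must first replace each $H_i$ by a finitely generated subgroup containing the finitely many witnesses relevant to $C$, $F$ and $\mathcal{T}$ (and check the quotient construction survives this replacement). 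Moreover, ``combined, these produce a single left system'' cannot be right: $\mathcal{T}$-freeness of the quotient, injectivity on $C$, and the absence of new relations on $C$ are each the assertion that \emph{every} member of a finite family of left systems is unsolvable, and unsolvability of a single amalgamated system does not imply unsolvability of each thickened system (a union of systems on disjoint variables is unsolvable as soon as \emph{one} constituent is). One must apply the HL-property to each system separately and intersect the resulting normal subgroups; and for this family to be finite one must define the relations on $D'$ from the $G$-orbits of tuples in $C$, not from all of the possibly infinite $D$.

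The more serious gap is in (iii)~$\Rightarrow$~(i), which you yourself flag as ``the main obstacle'' but do not resolve. The natural structure $T_\Sigma$ encoding a left system has Gaifman graph equal to the incidence graph of $\Sigma$, which is not a clique; padding it with a dummy relation $Q$ that holds on all tuples of $D$ fails at the crucial step, because in the finite structure $D'$ produced by (iii) you control relations only on (the $\pi'(G)$-translates of) $C$, while the candidate image of $T_\Sigma$ arising from a solution of $\Sigma(\vec N)$ is spread across several distinct translates $\pi'(\gamma_i)(C)$; there is no way to verify $Q^{D'}$ on such a tuple, so no homomorphic image of $T_\Sigma$ is produced and no contradiction with $\mathcal{T}$-freeness is obtained. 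Your parenthetical ``augmented by any Gaifman cliques needed'' is precisely where the real work lies. Two further concrete errors in this direction: the action of $G$ on $\bigsqcup_i G/H_i$ need not be faithful as (iii) requires (fixable by adjoining the sort $G$ with left multiplication), and with $F$ consisting only of the constants of $\Sigma$ there is no reason that the stabilizer $K_i$ of the base point of $G/H_i$ in $D'$ contains $H_i$ --- you must put a finite generating set of each $H_i$ into $F$, which is exactly where the finite-generation hypothesis in the definition of the HL-property is used, and which your write-up omits.
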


In the following we also prove an analog of Rosendal's lemma \ref{l:extension}.

\begin{lem}
Let $\mathcal{L}$ be a finite relational language. Let $\Gamma$ be a group, $\Lambda\le \Gamma$ a subgroup. Assume that $M \subseteq N$ are $\mathcal{L}$-structures, and that $\Lambda \actson N$, $\Gamma \actson M$ are compatible actions by isomorphisms. Then there exists an $\mathcal{L}$-structure $P$ extending $N$, and an action $\Gamma \actson P$ by isomorphisms that is compatible with the $\Lambda$-action on $N$.

Moreover, if $\Gamma$ and $N$ are both finite then one can find a finite $P$ as above.
\end{lem}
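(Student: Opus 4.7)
The plan is to carry out the standard induced-action construction, in the spirit of Rosendal's proof of Lemma~\ref{l:extension} but adapted to arbitrary relational structures. Fix a transversal $\{g_i : i \in I\}$ for the left cosets of $\Lambda$ in $\Gamma$ with $g_1 = 1$, and set
$$P = M \;\cup\; \bigsqcup_{i \in I} g_i \cdot (N \setminus M),$$
where each $g_i \cdot (N \setminus M)$ is a disjoint labelled copy of $N \setminus M$, and the copy indexed by $i = 1$ is identified with $N \setminus M$ itself so that $N = M \cup (g_1 \cdot (N \setminus M))$ sits inside $P$ as a subset. The compatibility hypothesis forces $\Lambda$ to preserve $M$, hence also $N \setminus M$, so this decomposition is well-behaved under the $\Lambda$-action.

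Next, extend the given $\Gamma$-action on $M$ to all of $P$ as follows: given $\gamma \in \Gamma$ and $x = g_i \cdot n$ with $n \in N \setminus M$, write $\gamma g_i = g_j \lambda$ uniquely for some $j \in I$ and $\lambda \in \Lambda$, and set $\gamma \cdot x := g_j \cdot (\lambda \cdot n)$, using the original $\Lambda$-action on $N$. A routine check shows this is a group action, that restricted to $N$ it recovers the original $\Lambda$-action, and that restricted to $M$ it is the original $\Gamma$-action. To put a structure on $P$, for each $n$-ary $R \in \mathcal{L}$ declare
$$R^P(p_1,\dots,p_n) \iff \exists \gamma \in \Gamma,\ \exists (n_1,\dots,n_n) \in N^n \text{ with } p_k = \gamma \cdot n_k \text{ and } R^N(n_1,\dots,n_n).$$

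The main verification is well-definedness of $R^P$, which splits into two cases. If every $p_k$ lies in $M$, then every $\gamma \in \Gamma$ is a candidate witness, and consistency across witnesses reduces to the fact that $\Gamma$ acts on $M$ by isomorphisms. Otherwise some $p_k$ lies in $g_i \cdot (N \setminus M)$ for a single $i$, and then the set of $\gamma$ that pull the whole tuple back into $N$ is exactly the coset $g_i \Lambda$; consistency across this coset follows because $\Lambda$ acts on $N$ by isomorphisms. Once $R^P$ is shown to be well-defined, the fact that each $\gamma \in \Gamma$ is an $\mathcal{L}$-isomorphism of $P$ is built into the definition, and the inclusions $M \hookrightarrow P$ and $N \hookrightarrow P$ are substructure inclusions by construction. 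The moreover clause is immediate, since
$$|P| = |M| + [\Gamma : \Lambda] \cdot |N \setminus M|,$$
which is finite whenever $\Gamma$ and $N$ are. I expect the only substantive work to be the bookkeeping around the well-definedness of $R^P$; no deeper obstacle is anticipated, as the whole construction is just the $\mathcal{L}$-structure analog of the induced representation $\Gamma \times_{\Lambda} N$ with the $\Gamma$-orbit of $M$ collapsed to $M$.
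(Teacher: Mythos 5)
Your proposal is correct and is essentially the same construction as the paper's: the paper builds $P$ as the quotient $(N\times\Gamma)/\sim$ with $g\cdot[a,h]=[a,gh]$ and the relation $R^P$ defined by pulling a tuple back to a common copy of $N$, which is canonically isomorphic to your transversal presentation $M\cup\bigsqcup_i g_i\cdot(N\setminus M)$, with the same two-case well-definedness check (all coordinates in $M$ versus some coordinate forcing the witness into a single coset of $\Lambda$). The only difference is bookkeeping, and your finiteness count $|P|=|M|+[\Gamma:\Lambda]\cdot|N\setminus M|$ matches the paper's observation that $P$ is finite when $N$ and $\Gamma$ are.
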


\begin{proof}
Define an equivalence relation $\sim$ on $N\times \Gamma$ by $(a_1, g_1)\sim (a_2, g_2)$ iff 
$$\mbox{ ($g_2^{-1}g_1\in \Lambda$ and $g_2^{-1}g_1\cdot a_1=a_2$) or ($a_1, a_2\in M$ and $g_1\cdot a_1=g_2\cdot a_2$)}. $$
To see that $\sim$ is an equivalence relation, we only need to note that if $a_1, a_2\in M$, $g_1\cdot a_1=g_2\cdot a_2$, $g_3^{-1}g_2\in\Lambda$, and $g_3^{-1}g_2\cdot a_2=a_3$, then $a_3\in M$ and $g_2\cdot a_2=g_3\cdot a_3$, and thus $(a_1, g_1)\sim (a_3, g_3)$.

Let $P=N\times \Gamma/\sim$. Let $[a, g]$ denote the equivalence class $[(a, g)]_\sim$. For $R\in\mathcal{L}$ an $n$-ary relation symbol, define $R^P([a_1,g_1], \dots, [a_n, g_n])$ iff there are $b_1, \dots, b_n\in N$ and $g\in \Gamma$ such that $[a_1, g_1]=[b_1, g], \dots, [a_n, g_n]=[b_n, g]$, and $R^N(b_1, \dots, b_n)$. To see that this is well-defined, suppose $[a_1, g_1]=[b_1, g]=[c_1, h], \dots, [a_n, g_n]=[b_n, g]=[c_n, h]$. We need to show that $R^N(b_1, \dots, b_n)$ iff $R^N(c_1, \dots, c_n)$. In all cases we have $h^{-1}g\cdot b_1=c_1, \dots, h^{-1}g\cdot b_n=c_n$. Since both $\Gamma\actson M$ and $\Lambda\actson N$ are by isomorphisms, we have $R^N(b_1, \dots, b_n)$ iff $R^N(c_1, \dots, c_n)$.

Now it is easy to see that $a\mapsto [a, 1]$ is an isomorphic embedding of $N$ into $P$. Define $\Gamma\actson P$ by letting $g\cdot [a, h]=[a, gh]$. If $g\in \Lambda$ and $a\in N$, $g\cdot [a, 1]=[a, g]=[g\cdot a, 1]$. Thus this action is compatible with $\Lambda\actson N$.

It is also obvious that if $N$ and $\Gamma$ are finite then $P$ is finite.
\end{proof}

Now the proofs of Theorem~\ref{KDeltaFraisse} and the lemmas following it can be repeated to establish the following theorem.

\begin{thm}\label{Fraisse}
$\mathcal{K}$ is a Fra\"{i}ssé class. Furthermore, if $(N_\infty,H_\infty)$ is 
the Fra\"{i}ssé limit of $\mathcal{K}$, then $H_\infty\cong \mathbb{H}$, 
$H_\infty$ acts faithfully on $N_\infty$, and $H_\infty$ is dense in 
$\Aut(N_\infty)$.
\end{thm}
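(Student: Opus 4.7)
The plan is to parallel the proof of Theorem~\ref{KDeltaFraisse}, substituting the HL-property and its consequences (Theorem~\ref{HLproperty}, Proposition~\ref{amalgamHL}) for the RZ-property and its consequences (Theorem~\ref{RosendalTheorem}, Proposition~\ref{amalgamRZ}), and substituting the $\mathcal{L}$-structure extension lemma just proved above for Rosendal's metric extension lemma (Lemma~\ref{l:extension}). The hereditary property is immediate, and the joint embedding property is witnessed by the disjoint union of the structures together with the direct product of the groups; the disjoint union of two $\mathcal{T}$-free structures remains $\mathcal{T}$-free because each $T \in \mathcal{T}$, being a Gaifman clique, is connected in the Gaifman graph, so its image under any homomorphism must lie in a single component. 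The substantive step is amalgamation; once that is in place, the three remaining conclusions follow by direct adaptation of the three lemmas after Theorem~\ref{KDeltaFraisse}.

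For amalgamation, given $(M_0, \Lambda) \hookrightarrow (M_i, \Gamma_i)$ in $\mathcal{K}$ for $i = 1, 2$, I would first form the $\mathcal{T}$-free free amalgam $D_0 = M_1 \cup_{M_0} M_2$, which exists by Lemma~4.5 of \cite{S2} and on which $\Lambda$ acts by isomorphisms. I would then inductively build an increasing chain $D_0 \subseteq D_1 \subseteq D_2 \subseteq \cdots$ of finite $\mathcal{T}$-free structures, where $D_{2k+1}$ is obtained from $D_{2k}$ by applying the preceding extension lemma with $\Gamma = \Gamma_1$, extending the $\Lambda$-action on $D_{2k}$ to a $\Gamma_1$-action compatible with $\Gamma_1 \actson M_1$, and $D_{2k+2}$ is obtained analogously with $\Gamma_2$. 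Setting $D_\infty = \bigcup_n D_n$, the free product $\Gamma := \Gamma_1 *_\Lambda \Gamma_2$ acts by isomorphisms on $D_\infty$ with at most $|D_0|$ orbits. I would then augment the language by unary predicates $S_1, \ldots, S_k$ naming these $\Gamma$-orbits and apply Theorem~\ref{HLproperty}(iii) (valid since $\Gamma$ has the HL-property by Proposition~\ref{amalgamHL}), with $C = D_0$ and $F = \Gamma_1 \cup \Gamma_2$, to produce a finite $\mathcal{T}$-free $D' \supseteq D_0$ carrying a $\Gamma$-action that agrees with the original action on $D_0$ for every $\gamma \in F$. Taking $G$ to be the finite image of $\Gamma$ in $\Aut(D')$, the pair $(D', G) \in \mathcal{K}$ amalgamates $(M_1, \Gamma_1)$ and $(M_2, \Gamma_2)$ over $(M_0, \Lambda)$.

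The main obstacle will be maintaining $\mathcal{T}$-freeness along the chain $D_0 \subseteq D_1 \subseteq \cdots$, since the orbit-space construction in the preceding lemma does not automatically avoid forbidden substructures. I would verify this inductively by observing that in that construction every relation tuple of $P = D_{n+1}$ admits a representation with a single common second coordinate. Given a hypothetical homomorphism $\pi : T \to D_{n+1}$ with $T \in \mathcal{T}$, the Gaifman clique property of $T$ ensures that every pair of elements of $T$ lies in some common relation tuple, forcing the images of those two elements to be representable with a common second coordinate. Combining these pairwise alignments across $T$, and exploiting the second clause of the defining equivalence to freely readjust the second coordinates of any images whose first coordinates lie in $M_i$, one can choose a single $g$ so that $\pi(t) = [b_t, g]$ for every $t \in T$; the map $t \mapsto b_t$ is then a homomorphism $T \to D_n$, contradicting the inductive hypothesis.

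Once amalgamation is established, the three remaining assertions about $(N_\infty, H_\infty)$ follow by direct adaptation of the corresponding lemmas in Section~\ref{VC}. That $H_\infty \cong \mathbb{H}$ follows from verifying property~\textsc{(E)} of Proposition~\ref{propertyE} via the universality and ultrahomogeneity of the Fraïssé limit. Faithfulness of the action of $H_\infty$ on $N_\infty$ is witnessed by realizing any finite cyclic subgroup $\Lambda \leq H_\infty$ as acting regularly on a discrete finite $\mathcal{L}$-structure of size $|\Lambda|$, which is automatically $\mathcal{T}$-free because every Gaifman clique requires at least one nontrivial relation that no discrete structure can satisfy. Finally, density of $H_\infty$ in $\Aut(N_\infty)$ follows by approximating any $g \in \Aut(N_\infty)$ on a finite $A \subseteq N_\infty$: the cyclic subgroup $\langle g \rangle$ has the HL-property, so a suitable application of Theorem~\ref{HLproperty}(iii) to the $\langle g \rangle$-saturation of $A$ (which has only finitely many orbits) yields a finite $\mathcal{T}$-free $B \supseteq A$ together with an automorphism $h \in \Aut(B)$ agreeing with $g$ on $A$; then $(B, \langle h \rangle) \in \mathcal{K}$ embeds into $(N_\infty, H_\infty)$ by universality, producing the required element of $H_\infty$.
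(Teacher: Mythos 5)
Your proposal is correct and follows essentially the same route as the paper, which simply states that the proofs of Theorem~\ref{KDeltaFraisse} and the lemmas following it can be repeated with the HL-property, Proposition~\ref{amalgamHL}, Theorem~\ref{HLproperty}, and the $\mathcal{L}$-structure extension lemma substituted for their metric counterparts. You in fact supply more detail than the paper does --- notably the Gaifman-clique argument that $\mathcal{T}$-freeness persists along the chain $D_0\subseteq D_1\subseteq\cdots$ and the introduction of unary orbit predicates to meet the transitivity hypothesis of Theorem~\ref{HLproperty}; the one hypothesis you leave unaddressed is faithfulness of the $\Gamma$-action on $D_\infty$ (a point the paper also glosses over), which can be arranged, e.g., by adjoining a relation-free copy of $\Gamma$ on which $\Gamma$ acts by left translation.
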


\begin{cor}
For all $n\geq 3$, the automorphism group of the Henson graph $H_n$ contains $\mathbb{H}$ as a dense subgroup.
\end{cor}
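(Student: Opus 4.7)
The plan is to apply Theorem~\ref{Fraisse} with $\mathcal{L}$ the language of graphs (one binary relation symbol $E$) and $\mathcal{T}=\{K_n\}$. First I would verify that $K_n$ is a Gaifman clique in the sense required: for any two distinct vertices $a,b\in K_n$, the edge relation $E$ itself witnesses the condition since $E^{K_n}(a,b)$ holds directly. Thus the hypotheses of Theorem~\ref{Fraisse} are satisfied, and we obtain a Fraïssé limit $(N_\infty,H_\infty)$ with $N_\infty$ a countable $\{K_n\}$-free graph, $H_\infty\cong \mathbb{H}$, and $H_\infty$ dense in $\Aut(N_\infty)$.

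Next I would identify $N_\infty$ with the Henson graph $H_n$. For this, it suffices to check that $N_\infty$ is a countable ultrahomogeneous $K_n$-free graph whose age is precisely the class of all finite $K_n$-free graphs, since by Henson's theorem that class is a Fraïssé class whose unique Fraïssé limit is $H_n$. One inclusion is immediate from the definition of $\mathcal{K}$; for the other, given any finite $K_n$-free graph $M$, the pair $(M,\{1\})$ lies in $\mathcal{K}$, so by the universality of the Fraïssé limit $M$ embeds into $N_\infty$. Ultrahomogeneity of $N_\infty$ as a graph follows from ultrahomogeneity of $(N_\infty,H_\infty)$ by taking the trivial group action on any finite isomorphism of finite induced subgraphs. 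Hence $N_\infty\cong H_n$.

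Combining both steps, $\Aut(H_n)\cong \Aut(N_\infty)$ contains $H_\infty\cong \mathbb{H}$ as a dense subgroup, establishing the corollary. The only real point of care — and the closest thing to an obstacle — is the age computation and the verification that $N_\infty$ is ultrahomogeneous as a pure graph (rather than as an $(M,G)$-structure), but both reduce to trivially augmenting graph-level data with the trivial group action, so the argument is essentially immediate from Theorem~\ref{Fraisse}.
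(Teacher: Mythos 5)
Your proposal is correct and follows exactly the route the paper intends: the corollary is stated immediately after Theorem~\ref{Fraisse} with no further argument, precisely because one applies that theorem to the graph language with $\mathcal{T}=\{K_n\}$ (noting $K_n$ is a Gaifman clique, so homomorphism-freeness coincides with embedding-freeness) and identifies $N_\infty$ with $H_n$ via the Fraïssé characterization. Your two verification steps — the Gaifman clique condition and the reduction of the graph-level age/ultrahomogeneity to the trivial-group case — are exactly the routine details the paper leaves implicit.
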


\section{Many Dense Locally Finite Subgroups\label{sec:many}}

\subsection{Omnigenous locally finite groups}
We define a concept of omnigenous groups and show that all countable omnigenous locally finite groups are embeddable as a dense subgroup of $\Iso(\U_\Delta)$. We will need the following extension lemma.

\begin{lem} \label{1extension}Let $\Delta$ be any countable distance value set. Let $X$ be a finite $\Delta$-metric space. Let $\Lambda\leq \Gamma$ be finite groups and $\pi: \Lambda\to \Iso(X)$ be an isomorphic embedding. Then there is a finite $\Delta$-metric space $Y$ extending $X$ and an isomorphic embedding $\pi': \Gamma\to \Iso(Y)$ such that for any $\gamma\in \Lambda$ and $x\in X$, $\pi'(\gamma)(x)=\pi(\gamma)(x)$.
\end{lem}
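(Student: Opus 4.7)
The plan is to deduce this as a direct consequence of the amalgamation property for $\mcK_\Delta$ established in Theorem~\ref{KDeltaFraisse}. The hypothesis already gives us one input to the amalgamation: $(X, \pi(\Lambda)) \in \mcK_\Delta$, with $\Lambda$ identified with its faithful image $\pi(\Lambda) \leq \Iso(X)$. To bring $\Gamma$ into play, I will build a companion structure $(Y_0, \Gamma) \in \mcK_\Delta$ on which $\Gamma$ acts faithfully by mimicking the construction used earlier to show that $H_\Delta$ acts faithfully on $X_\Delta$: fix any $c \in \Delta$, take $Y_0 = \Gamma$ as a set, equip it with the $\Delta$-metric $d(g, h) = c$ for $g \neq h$, and let $\Gamma$ act by left multiplication. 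This action is isometric and faithful, so $(Y_0, \Gamma) \in \mcK_\Delta$.

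Next I will amalgamate $(X, \pi(\Lambda))$ and $(Y_0, \Gamma)$ over the common substructure $(\emptyset, \Lambda)$, where the embedding into the first factor is the given isomorphism $\pi : \Lambda \to \pi(\Lambda)$ and the embedding into the second factor is the inclusion $\Lambda \hookrightarrow \Gamma$. Since the underlying metric space of the common substructure is empty, the two induced $\Lambda$-actions on it coincide vacuously, and the amalgamation hypotheses of Theorem~\ref{KDeltaFraisse} are satisfied. This yields $(Y, G) \in \mcK_\Delta$ in which $X$ and $Y_0$ both sit as $\Delta$-metric subspaces and in which $G \leq \Iso(Y)$ is generated by the embedded copies of $\pi(\Lambda)$ and $\Gamma$, with their respective actions extending compatibly.

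It remains to identify $G$ with $\Gamma$ and read off $\pi'$. Tracing through the proof of amalgamation, $G$ is a quotient of the amalgamated free product $\pi(\Lambda) *_{\Lambda} \Gamma$; but amalgamation over an index-one subgroup collapses this free product to $\Gamma$ itself, so $G$ is a quotient of $\Gamma$. Conversely, the $G$-action on $Y$ restricts to the original faithful left-multiplication action of $\Gamma$ on $Y_0 \subseteq Y$, so the canonical surjection $\Gamma \twoheadrightarrow G$ is injective, giving an isomorphism $\pi' : \Gamma \to G \hookrightarrow \Iso(Y)$. Because $(X, \pi(\Lambda))$ embeds as a substructure of $(Y, G)$, the relation $\pi'(\gamma)(x) = \pi(\gamma)(x)$ holds for every $\gamma \in \Lambda$ and $x \in X$, as required. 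The only mildly delicate point is the bookkeeping that identifies the abstract group produced by amalgamation with the expected copy of $\Gamma$; this is why the faithful companion $(Y_0, \Gamma)$ is included from the outset, as it forces the quotient map $\Gamma \twoheadrightarrow G$ to be an isomorphism.
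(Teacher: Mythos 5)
Your argument is correct, and there is no circularity: Theorem~\ref{KDeltaFraisse} is proved before this lemma and does not rely on it. However, your route is genuinely different from the paper's. The paper gives a short, self-contained construction: it puts the pseudometric $\delta((x_1,g_1),(x_2,g_2))=d_X(\pi(g_2^{-1}g_1)(x_1),x_2)$ if $g_2^{-1}g_1\in\Lambda$ and $\diam(X)$ otherwise on $X\times\Gamma$, takes the quotient $Y=X\times\Gamma/\!\sim$, and lets $\Gamma$ act by left translation in the second coordinate --- essentially an induced action, proved in a few lines with no outside input. You instead invoke the amalgamation property of $\mcK_\Delta$, which is legitimate but pulls in much heavier machinery: the proof of Theorem~\ref{KDeltaFraisse} rests on Proposition~\ref{amalgamRZ} (hence the Ribes--Zalesski\u{\i} theorem on virtually free groups) and Rosendal's Theorem~\ref{RosendalTheorem}, all to establish a statement that is essentially the trivial base case (compare Lemma~\ref{l:extension} with $X=\emptyset$). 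Your bookkeeping is sound: amalgamating $(X,\pi(\Lambda))$ and $(\Gamma,\Gamma)$ over $(\emptyset,\Lambda)$ does yield an injective $\Gamma\to G\le\Iso(Y)$, and including the faithful companion $(Y_0,\Gamma)$ is a nice touch that guarantees injectivity of $\pi'$ (indeed one could even argue directly from the definition of embedding in the two-sorted class $\mcK_\Delta$, without tracing through the proof). What the direct construction buys, besides economy, is portability: the paper reuses the same proof verbatim for ultrametric spaces in Lemma~\ref{1extensionultra}, observing only that the pseudometric above is a pseudo-ultrametric, whereas your approach would first require an ultrametric analogue of Theorem~\ref{KDeltaFraisse}, which the paper does not establish.
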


\begin{proof} Define a pseudometric $\delta$ on $X\times \Gamma$ by
$$ \delta((x_1, g_1), (x_2, g_2))=\left\{\begin{array}{ll} d_X(\pi(g_2^{-1}g_1)(x_1), x_2), &\mbox{ if $g_2^{-1}g_1\in\Lambda$,} \\ \diam(X), & \mbox{ otherwise.}\end{array}\right. $$
Define $(x_1, g_1)\sim (x_2, g_2)$ iff $\delta((x_1, g_1), (x_2, g_2))=0$. Then $\sim$ is an equivalence relation on $X\times \Gamma$. Let $Y=X\times\Gamma/\sim$. Then $\delta$ gives rise to a metric $d_Y([x_1, g_1], [x_2,g_2])=\delta((x_1, g_1), (x_2, g_2))$. $Y$ is obviously a finite $\Delta$-metric space.

It is easy to see that the map $x\mapsto [x,1]$ is an isometric embedding from $X$ into $Y$. For any $\gamma\in \Gamma$, $x\in X$ and $g\in \Gamma$, let $\pi'(\gamma)([x, g])=[x, \gamma g]$. Then $\pi': \Gamma\to \Iso(Y)$ is an isomorphic embedding. We check that for any $\gamma\in\Lambda$ and $x\in X$, 
$$ \pi'(\gamma)(x)=\pi'(\gamma)([x,1])=[x,\gamma]=[\pi(\gamma)(x), 1]=\pi(\gamma)(x). $$
\end{proof}


\begin{defn} Let $G$ be a group. We say that $G$ is \emph{omnigenous} if for every finite subgroup $G_1\leq G$, finite groups $\Gamma_1\leq \Gamma_2$ and group isomorphism $\Psi_1: G_1\cong \Gamma_1$, there is a finite subgroup $G_2\leq G$ with $G_1\leq G_2$ and an onto homomorphism $\Psi_2: G_2\to \Gamma_2$ such that $\Psi_2\upharpoonright G_1=\Psi_1$.
\end{defn}

If we strengthen the requirement on $\Psi_2$ to be an isomorphism, then this becomes the property ({\sc E}) from Proposition~\ref{propertyE}. Thus $\mathbb{H}$ is omnigenous. 

\begin{thm}\label{omnigenous} Let $H$ be a countable omnigenous locally finite group. Then for any countable distance value set $\Delta$, $\Iso(\U_\Delta)$ contains $H$ as a dense subgroup.
\end{thm}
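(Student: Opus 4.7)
The plan is to build, by back-and-forth bookkeeping, an increasing chain of triples $(X_n, G_n, \iota_n)_{n \in \N}$ in which $X_n \subseteq \U_\Delta$ is a finite $\Delta$-metric subspace, $G_n \leq H$ is a finite subgroup, and $\iota_n \colon G_n \to \Iso(X_n)$ is a faithful group homomorphism, with the compatibility $\iota_{n+1}(g)|_{X_n} = \iota_n(g)$ for every $g \in G_n$. Enumerating $H = \{h_n\}$, $\U_\Delta = \{u_n\}$, and the countable set $\mathcal{P}(\U_\Delta) = \{p_n\}$, the construction will ensure that $\bigcup_n G_n = H$, $\bigcup_n X_n = \U_\Delta$, and that for each $n$ some $\iota_{n+1}(g)$ extends $p_n$. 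The limit $\iota \colon H \to \Iso(\U_\Delta)$ defined by $\iota(g)(x) := \iota_N(g)(x)$ for any $N$ with $g \in G_N, x \in X_N$ is then a well-defined group homomorphism, injective because each $\iota_n$ is faithful, and with dense image since any basic open set in the permutation-group topology on $\Iso(\U_\Delta)$ is of the form $\{g : g \supseteq p\}$ for some $p \in \mathcal{P}(\U_\Delta)$.

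Each inductive step consists of three successive extensions of the current data $(X_n, G_n, \iota_n)$. To force $h_n \in G_{n+1}$, set $\Gamma = \langle G_n, h_n \rangle$, which is finite by local finiteness of $H$, and apply Lemma~\ref{1extension} with $\Lambda = G_n$ to obtain a finite $\Delta$-metric extension of $X_n$ carrying a faithful $\Gamma$-action extending $\iota_n$; embed this extension into $\U_\Delta$ fixing $X_n$ using the ultrahomogeneity/extension property of $\U_\Delta$. To force $u_n \in X_{n+1}$, view $\tilde X = X_n \cup \{u_n\}$ as a subspace of $\U_\Delta$ and apply Rosendal's Lemma~\ref{l:extension} with $\Lambda = \{1\}$ acting trivially on $\tilde X$ and $\Gamma = G_n$ acting on $X_n$ via $\iota_n$, obtaining a finite $\Delta$-metric extension with a compatible $G_n$-action, which is then embedded into $\U_\Delta$ fixing $\tilde X$. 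For the density task at $p_n$, first enlarge $X_n$ by a task-two extension so that $\dom(p_n) \cup \rng(p_n) \subseteq X_n$, then invoke Solecki's Theorem~\ref{SoleckiTheorem} to produce a finite $\Delta$-metric extension $Y \supseteq X_n$ together with a strongly coherent S-extension $\phi \colon \mathcal{P}(X_n) \to \Iso(Y)$. By Lemma~\ref{lem:strongcoherence}, $\phi \circ \iota_n$ is a group embedding $G_n \hookrightarrow \Iso(Y)$ with image $\Gamma_1 := \phi(\iota_n(G_n))$, which lies in $\Gamma_2 := \langle \Gamma_1, \phi(p_n) \rangle \leq \Iso(Y)$ and satisfies $\phi(p_n) \supseteq p_n$. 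Omnigeneity of $H$, applied to $G_n \leq H$ together with the isomorphism $\phi \circ \iota_n \colon G_n \cong \Gamma_1 \leq \Gamma_2$, yields a finite subgroup $G' \geq G_n$ of $H$ and an onto homomorphism $\Psi \colon G' \to \Gamma_2$ extending $\phi \circ \iota_n$.

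The main obstacle is that $\Psi$ is only surjective and not in general injective: its kernel $K$ is a finite subgroup of $H$ that acts trivially on $Y$, so the action we have produced is not yet faithful on $G'$. To repair this, I will separately construct a finite $\Delta$-metric space $W$ on which $G'$ acts faithfully, in such a way that $K$ already moves points of $W$. Start from any faithful $K$-action on a finite $\Delta$-metric space — for instance, $K$ itself acting by left multiplication with $d(k_1, k_2) = c$ for $k_1 \neq k_2$ and some fixed $c \in \Delta$ — and apply Lemma~\ref{1extension} with $\Lambda = K, \Gamma = G'$ to extend it to a faithful $G'$-action on a finite $\Delta$-metric space $W$. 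Form the disjoint union $Y \sqcup W$ and set every cross-distance to a common value in $\Delta$ large enough to satisfy the triangle inequality (for example $\sup \Delta$ when $\Delta$ is bounded, or any sufficiently large element of $\Delta$ otherwise); this yields a finite $\Delta$-metric space on which the diagonal $G'$-action is by isometries. This combined action is faithful since $K$ acts faithfully on $W$, it extends $\iota_n$ on $X_n$, and it sends some $g_0 \in G'$ to an isometry extending $p_n$. A final application of the ultrahomogeneity of $\U_\Delta$ embeds $Y \sqcup W$ isometrically into $\U_\Delta$ fixing $X_n$, which produces the desired $(X_{n+1}, G_{n+1}, \iota_{n+1})$ and completes the inductive step.
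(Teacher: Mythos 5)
Your proposal is correct and follows essentially the same route as the paper's proof: Lemma~\ref{1extension} to absorb new group elements, Solecki's Theorem~\ref{SoleckiTheorem} together with Lemma~\ref{lem:strongcoherence} to realize the target partial isometry $p_n$ inside a finite group of isometries of a finite extension $Y$, omnigenuity to pull that finite group back into $H$, and a disjoint faithful $G'$-space glued on at constant distance to repair the non-injectivity of the surjection $\Psi$. The only cosmetic differences are that the paper covers all of $\U_\Delta$ automatically through the enumeration of partial isometries (rather than via a separate point-adding task using Lemma~\ref{l:extension}) and attaches $G_2$ with its left-regular action directly rather than routing through $\ker\Psi$ and Lemma~\ref{1extension}, which in fact produces the same space.
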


\begin{proof}
Let $q_0, q_1, \dots$ be an enumeration of all partial isometries of $\U_\Delta$. Fix also an enumeration of all elements of $H$. We will define by induction infinite sequences of following objects:
\begin{itemize}
\item finite subsets $D_n$ of $\U_\Delta$, for $n\geq 1$;
\item elements $h_1, \dots, h_{2n}\in H$, and $H_n=\langle h_1,\dots, h_{2n}\rangle\leq H$, for $n\geq 1$;
\item group embeddings $\pi_n:H_n\to \Iso(D_n)$, for $n\geq 1$, 
\end{itemize}
such that
\begin{enumerate}
\item[(i)] for each $n\geq 0$, $q_n\subseteq \pi_{n+1}(h_{2n+2})$; in particular $\dom(q_n)\cup\rng(q_n)\subseteq D_{n+1}$;
\item[(ii)] for each $n\geq 1$, $D_n\subseteq D_{n+1}$;
\item[(iii)] for each $n\geq 1$, $g\in H_n$, and $x\in D_n$,  $\pi_{n+1}(g)(x)=\pi_n(g)(x)$;
\item[(iv)] for each $n\geq 0$, $h_{2n+1}$ is the least element of $H\setminus \{h_1, \dots, h_{2n}\}$ in the fixed enumeration of elements of $H$.
\end{enumerate}
Granting such sequences, it follows from (i) that $\bigcup_{n=1}^\infty D_n=\U_\Delta$.  From (ii) and (iii), it follows that for any $g\in H_n$, we have
$$ \pi_n(g)\subseteq \pi_{n+1}(g) \subseteq \cdots \subseteq \pi_{n+m}(g)\subseteq \cdots $$
and the limit $\lim_{m\to\infty}\pi_{n+m}(g)$ exists and is a full isometry of $\U_\Delta$ extending $\pi_n(g)$. Let $\Gamma_n=\pi_n(H_n)$ and let $i_n: \Gamma_n\to \Gamma_{n+1}$ be the isomorphic embedding with $i_n(\pi_n(g))=\pi_{n+1}(g)$ for all $g\in H_n$. We have a direct system
$$ \Gamma_1\stackrel{i_1}{\longrightarrow}\Gamma_2\stackrel{i_2}{\longrightarrow}\cdots $$
giving a direct limit $\Gamma$ that is a dense locally finite subgroup of $\Iso(\U_\Delta)$. We may regard the group embeddings as inclusions, and the direct limit of the system as an increasing union $\Gamma=\bigcup_{n=1}^\infty\Gamma_n$. Moreover, we have $\bigcup_{n=1}^\infty H_n\cong \Gamma$. By (iv), $\bigcup_{n=1}^\infty H_n=H$, and thus $H\cong \Gamma$. 

Assume that $D_n$, $h_1, \dots, h_{2n}$, and $\pi_n$ have been defined. We proceed to define $D_{n+1}$, $h_{2n+1}$, $h_{2n+2}$, and $\pi_{n+1}: H_{n+1}=\langle H_n, h_{2n+1}, h_{2n+2}\rangle\to \Iso(D_{n+1})$.

First, let $h_{2n+1}$ be the least element of $H\setminus \{h_1, \dots, 
h_{2n}\}$ in the fixed enumeration of elements of $H$. We define a 
$\Delta$-metric space $X$ extending $D_n$, and an isomorphic embedding 
$\sigma_n: \langle H_n, h_{2n+1}\rangle\to \Iso(X)$ such that for all $g\in 
H_n$ and $x\in D_n$, we have $\sigma_n(g)(x)=\pi_n(g)(x)$. If $n=0$, let $a$ be 
the order of $h_1$ and $c\in \Delta$ be arbitrary, and define $X$ to be a set 
with $a$ many elements with $d_X(x,y)=c$ iff $x\neq y\in X$. Define 
$\sigma_1:\langle h_1\rangle\to \Iso(X)$ by letting $\sigma_1(h_1)$ be a cyclic 
permutation on $X$. If $n\geq 1$, we check if $\pi_n: H_n\to \Iso(D_n)$ can be 
extended to an isomorphic embedding $\sigma_n: \langle H_n, h_{2n+1}\rangle\to 
\Iso(D_n)$. If so, then we let $X=D_n$ and $\sigma_n$ be such an extension. 
Assume that $\pi_n$ cannot be extended to an isomorphic embedding from $\langle 
H_n, h_{2n+1}\rangle$ to $\Iso(D_n)$. We apply Lemma~\ref{1extension} to obtain 
a finite $\Delta$-metric space $X$ extending $D_n$ and an isomorphic embedding 
$\sigma_n:\langle H_n, h_{2n+1}\rangle\to \Iso(X)$ such that for any $g\in H_n$ 
and $x\in D_n$, $\sigma_n(g)(x)=\pi_n(g)(x)$. 

Using the universality and ultrahomogeneity of $\U_\Delta$, we may assume the above $X$ are defined as a subset of $\U_\Delta$. Next we further extend $X$ to define $D_{n+1}$. Apply Theorem~\ref{SoleckiTheorem} to obtain a strongly coherent $S$-extension $(Y, \phi)$ of $X\cup \dom(q_n)\cup\rng(q_n)$. Now $\phi(q_n)$ is an element of $\Iso(Y)$ extending $q_n$. By Lemma~\ref{lem:strongcoherence} $\phi$ gives rise to an isomorphic embedding from $\Iso(X)$ to $\Iso(Y)$, which we still denote by $\phi:\Iso(X)\to \Iso(Y)$. 

Let $G_1=\langle H_n, h_{2n+1}\rangle\leq H$. Let $\Psi_1=\phi\circ \sigma_n$ be the isomorphic embedding from $G_1$ into $\Iso(Y)$. Let $\Lambda_1=\Psi_1(G_1)$ and $\Lambda_2=\langle \Lambda_1, \phi(q_n)\rangle\leq \Iso(Y)$. Since $H$ is omnigenous, there is a finite $G_2\leq H$ and an onto homomorphism $\Psi_2: G_2\to \Lambda_2$ such that $\Psi_2\upharpoonright H_n=\Psi_1=\phi\circ\sigma_n$. Let $h_{2n+2}\in \Psi_2^{-1}(\{\phi(q_n)\})$. Thus $\Psi_2(h_{2n+2})=\phi(q_n)$. By redefining, we can assume $G_2=\langle G_1, h_{2n+2}\rangle$.

Let $b=\diam(Y)$ and let $G_2$ be given the discrete metric with constant value $b$. Then $G_2$ becomes a $\Delta$-metric space. We define a finite $\Delta$-metric space $Z=Y\cup G_2$ to be the disjoint union of $Y$ and $G_2$, with $d_Z(y, g)=b$ for all $y\in Y$ and $g\in G_2$. Appealing again to the universality and ultrahomogeneity of $\U_\Delta$, we may assume that all of these extensions took place inside $\U_\Delta$. We let $D_{n+1}=Z\subseteq\U_\Delta$.

We have $H_{n+1}=\langle H_n, h_{2n+1}, h_{2n+2}\rangle=\langle G_1, h_{2n+2}\rangle=G_2$. 
Define $\pi_{n+1}: H_{n+1}\to \Iso(D_{n+1})=\Iso(Z)$ by letting $\pi_{n+1}(g)\upharpoonright Y=\Psi_2(g)$ and $\pi_{n+1}(g)(h)=gh$ for all $h\in G_2$. Then for any $g\in H_n$ and $x\in D_n$, $\pi_{n+1}(g)(x)=\Psi_1(g)(x)=\sigma_n(g)(x)=\pi_n(g)(x)$. To complete the proof, we need to verify that $\pi_{n+1}$ thus defined is a group isomorphism. For this, we show that for all 
$$g_1, \dots, g_k\in \{h_1, \dots, h_{2n}, h_{2n+1}, h_{2n+2}\} \mbox{ and } \epsilon_1, \dots, \epsilon_k\in\{+1,-1\}, $$
we have
$$ g_1^{\epsilon_1}\cdots g_k^{\epsilon_k}=1\iff \pi_{n+1}(g_1)^{\epsilon_1}\cdots \pi_{n+1}(g_k)^{\epsilon_k}=1. $$
First, suppose $g_1^{\epsilon_1}\cdots g_k^{\epsilon_k}=1$. Observe that, as an element of $\Iso(Z)$, the action of $\pi_{n+1}(g_1)^{\epsilon_1}\cdots \pi_{n+1}(g_k)^{\epsilon_k}$ on the $Y$ part of $Z$ is given by
$\Psi_2(g_1)^{\epsilon_1}\cdots \Psi_2(g_k)^{\epsilon_k}$. Since $\Psi_2$ is a homomorphism, we have $\Psi_2(g_1)^{\epsilon_1}\cdots \Psi_2(g_k)^{\epsilon_k}=1$. On the other hand, on the $G_2$ part of $Z$, the action of $\pi_{n+1}(g_1)^{\epsilon_1}\cdots \pi_{n+1}(g_k)^{\epsilon_k}$ is the same as the left multiplication by $g_1^{\epsilon_1}\cdots g_k^{\epsilon_k}=1$. Since both these actions are identity, we have $\pi_{n+1}(g_1)^{\epsilon_1}\cdots \pi_{n+1}(g_k)^{\epsilon_k}=1$. Conversely, if $\pi_{n+1}(g_1)^{\epsilon_1}\cdots \pi_{n+1}(g_k)^{\epsilon_k}=1$, then its action on the $G_2$ part is the left multiplication by 
$g_1^{\epsilon_1}\cdots g_k^{\epsilon_k}$; thus $g_1^{\epsilon_1}\cdots g_k^{\epsilon_k}=1$.
\end{proof}

\subsection{A family of omnigenous locally finite groups\label{omnigenousfamily}}
In this subsection we construct an uncountable family of pairwise non-isomorphic, omnigenous, universal countable locally finite groups. 

Let $P$ be a set of prime numbers. If $P\neq \emptyset$, enumerate its elements as $p_0<p_1<p_2<\dots$. Note that for each $n\geq 0$, there are infinitely many elements of order $p_n$ in $\mathbb{H}$. Fix a subset $S=\{s_0, s_1, s_2, \dots\}\subseteq \mathbb{H}$ where each $s_n$ is of order $p_n$, such that $\mathbb{H}\setminus S$ still generates a universal countable locally finite group. This is easy to arrange since $\mathbb{H}\oplus \mathbb{H}$ is embedded as a subgroup of $\mathbb{H}$ and we may choose $S$ as a subset of the first copy of $\mathbb{H}$.

Let $X$ be the disjoint union of $\mathbb{H}$ with a copy of $\mathbb{Z}_{p_n}=\mathbb{Z}/p_n\mathbb{Z}$ for each $n\geq 0$. For clarity, we will denote the copy of $\mathbb{H}$ as a subset of $X$ as $Y$, and for each $n\geq 0$, the copy of $\mathbb{Z}_{p_n}$ as a subset of $X$ as $Z_n$. Thus $X=Y\cup\bigcup_{n\geq 0}Z_n$. 

For $T\subseteq\mathbb{H}$, we say that $T$ is \emph{of type} $P$ if $T=\{t_0, t_1, t_2,\dots\}$ where each $t_n$ is of order $p_n\in P$. For any $T\subseteq\mathbb{H}$ of type $P$, we define a map $\lambda_T: \mathbb{H}\to \Sym(X)$ as follows. For all $g\in \mathbb{H}$, $\lambda_T(g)$ acts on $Y=\mathbb{H}$ as the left multiplication by $g$. If $g\not\in T$, then $\lambda_T(g)$ acts on each $Z_n$ as identity. If $g\in T$ and the order of $g$ is $p_n$, then $\lambda_T(g)$ acts on $Z_n=\mathbb{Z}_{p_n}$ as $+1$, and acts on other $Z_m$, $m\neq n$, as identity. Let $G_T$ be the subgroup of $\Sym(X)$ generated by the set $\lambda_T(\mathbb{H})$. Note that for any subset $A\subseteq \mathbb{H}\setminus T$, $\lambda_T$ gives an isomorphism between $\langle A\rangle\leq \mathbb{H}$ and $\langle \lambda_T(A)\rangle\leq \Sym(X)$. For each $t\in T$, $\lambda_T(t)$ has the same order as $t$. 

Also note that for any $B\subseteq\mathbb{H}$, the map $\lambda_T(b)\mapsto b$ induces a homomorphism from $\langle \lambda_T(B)\rangle$ onto $\langle B\rangle$. To see this, we need to show that for all 
$g_1, \dots, g_l\in B$ and $\epsilon_1, \dots, \epsilon_l\in\{+1,-1\}$, if $\lambda_T(g_1)^{\epsilon_1}\cdots \lambda_T(g_l)^{\epsilon_l}=1$, then $g_1^{\epsilon_1}\cdots g_l^{\epsilon_l}=1$. However, this is obvious by observing the action of $\lambda_T(g_1)^{\epsilon_1}\cdots \lambda_T(g_l)^{\epsilon_l}$ on $Y=\mathbb{H}$. 

We will construct a omnigenous, universal countable locally finite group $H_P$ as a direct limit of a direct system
$$ H_0\stackrel{e_1}{\longrightarrow} H_1\stackrel{e_2}{\longrightarrow} H_2\stackrel{e_3}{\longrightarrow}\cdots\cdots$$
where $e_k: H_{k-1}\to H_k$ is an isomorphic embedding for all $k\geq 1$. In fact, each $H_k$ will be of the form $G_{T_k}$ for some $T_k\subseteq \mathbb{H}$ of type $P$. 

We define the $H_k, e_k$ by induction on $k$. For $k=0$, let $T_0=S$ and $H_0=G_{T_0}$. Since $\mathbb{H}\setminus S$ generates a universal countable locally finite group, $H_0$ is universal for all countable locally finite groups. Since $H_0$ will be embedded as a subgroup of $H_P$, $H_P$ is thus universal as well. In the rest of our construction we focus on the omnigenous property of $H_P$.

In general, suppose $H_k=G_{T_k}$ has been defined. Let $i_{k+1}: H_k\to \mathbb{H}$ be an isomorphic embedding. Let $T_{k+1}=i_{k+1}(\lambda_{T_k}(T_k))$. Then $T_{k+1}$ is of type $P$. Let $H_{k+1}=G_{T_{k+1}}$. Define a map $f: \lambda_{T_k}(\mathbb{H})\to H_{k+1}$ by $f(\gamma)=\lambda_{T_{k+1}}(i_{k+1}(\gamma))$ for all $\gamma\in \lambda_{T_k}(\mathbb{H})$. We claim that $f$ extends uniquely to an isomorphic embedding from $H_k$ into $H_{k+1}$.
For this, we show that for all 
$$\gamma_1, \dots, \gamma_l\in \lambda_{T_k}(\mathbb{H}) \mbox{ and } \epsilon_1, \dots, \epsilon_l\in\{+1,-1\}, $$
we have
$$ \gamma_1^{\epsilon_1}\cdots \gamma_l^{\epsilon_l}=1\iff f(\gamma_1)^{\epsilon_1}\cdots f(\gamma_l)^{\epsilon_l}=1. $$
Suppose $\gamma_i=\lambda_{T_k}(g_i)$ for $g_i\in \mathbb{H}$ for all $1\leq i\leq l$. First, assume $\gamma_1^{\epsilon_1}\cdots \gamma_l^{\epsilon_l}=1$. Then, by observing the action of this element on the $Y$ part, we get that $g_1^{\epsilon_1}\cdots g_l^{\epsilon_l}=1$. By observing the action of this element on the $Z_n$ parts, we conclude that, if $t\in T_k$ of order $p_n$ appears as $g_i$ for $1\leq i\leq l$, then $N_t=\sum_{g_i=t}\epsilon_i$ is a multiple of $p_n$. It follows that $i_{k+1}(\gamma_1)^{\epsilon_1}\cdots i_{k+1}(\gamma_k)^{\epsilon_k}=1$, and consequently $f(\gamma_1)^{\epsilon_1}\cdots f(\gamma_k)^{\epsilon_k}$ acts on the $Y$ part as identity. Moreover, for any $t\in T_k$ of order $p_n$, letting $t'=f(\lambda_{T_k}(t))\in T_{k+1}$, then
$$N_{t'}=\sum_{f(\gamma_i)=t'}\epsilon_i=\sum_{g_i=t}\epsilon_i $$
is a multiple of $p_n$. Thus $f(\gamma_1)^{\epsilon_1}\cdots 
f(\gamma_l)^{\epsilon_l}$ acts on the $Z_n$ parts also as identity. Therefore 
$f(\gamma_1)^{\epsilon_1}\cdots f(\gamma_l)^{\epsilon_l}=1$. Conversely, 
suppose $f(\gamma_1)^{\epsilon_1}\cdots f(\gamma_l)^{\epsilon_l}=1$. Then by 
observing the action of this element on the $Y$ part, we get that 
$i_{k+1}(\gamma_1)^{\epsilon_1}\cdots i_{k+1}(\gamma_l)^{\epsilon_l}=1$. Thus 
$\gamma_1^{\epsilon_1}\cdots \gamma_l^{\epsilon_l}=1$. We have thus established 
the claim. From the claim, let $e_{k+1}: H_k\to H_{k+1}$ be the unique 
isomorphic embedding extending $f$.

This finishes our definition of the direct system. As usual, we view all $e_k$ as inclusions, and $H_P$ as an increasing union of $H_k$. 

We verify that $H_P$ is omnigenous. For this, let $G_1\leq H_P$ be a finite 
subgroup. Let $k$ be sufficiently large that $G_1\leq H_k$. Let $\Gamma_1\leq 
\Gamma_2$ be finite and $\Psi_1: G_1\cong\Gamma_1$. Now consider 
$i_{k+1}(G_1)\leq \mathbb{H}$ and note that $i_{k+1}\circ \Psi_1^{-1}$ is an 
isomorphic embedding from $\Gamma_1$ into $\mathbb{H}$ with image 
$i_{k+1}(G_1)$. By property ({\sc E}) from Proposition~\ref{propertyE} for 
$\mathbb{H}$, there is an isomorphic embedding $j:\Gamma_2\to \mathbb{H}$ 
extending $i_{k+1}\circ\Psi_1^{-1}$. Let $G_2$ be the group generated by 
$\lambda_{T_{k+1}}(j(\Gamma_2))$. As noted before, there is a homomorphism 
$\psi$ from $G_2$ onto $j(\Gamma_2)$ such that $\psi\upharpoonright_{G_1}$ is 
an isomorphism. Let $\Psi_2: G_2\to \Gamma_2$ be $j^{-1}\circ \psi$. It is 
straightforward to check that $\Psi_2\upharpoonright e_{k+1}(G_1)=\Psi_1\circ 
e_{k+1}^{-1}$. This shows that $H_P$ is omnigenous.

The next lemma characterizes the isomorphism type of $H_P$ in terms of the set $P$.

\begin{lem}\label{HP} $P$ is exactly the set of all primes $p$ such that there are order-$p$ elements $\alpha, \beta\in H_P$ that are not conjugate in $H_P$.
\end{lem}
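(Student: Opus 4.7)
The plan is to prove each inclusion separately, exploiting the explicit action of each $H_k$ on the space $X = Y \sqcup \bigsqcup_n Z_n$.

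For the forward direction, suppose $p = p_n \in P$. I will produce a group homomorphism $\pi_n \colon H_P \to \mathbb{Z}_{p_n}$ that separates two specific order-$p_n$ elements; since the target is abelian, non-conjugacy in $H_P$ follows immediately. On each $H_k = G_{T_k}$, every generator $\lambda_{T_k}(g)$ acts on $Z_n$ either as the identity or as $+1$, so the $H_k$-action on $Z_n$ factors through a homomorphism $\pi_n^{(k)} \colon H_k \to \mathbb{Z}_{p_n}$. The crucial check is the compatibility $\pi_n^{(k+1)} \circ e_{k+1} = \pi_n^{(k)}$; this follows from the defining formulas $e_{k+1}(\lambda_{T_k}(g)) = \lambda_{T_{k+1}}(i_{k+1}(\lambda_{T_k}(g)))$ and $T_{k+1} = i_{k+1}(\lambda_{T_k}(T_k))$, together with the fact that both $i_{k+1}$ and the restriction of $\lambda_{T_k}$ to $T_k$ preserve orders. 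The $\pi_n^{(k)}$ then assemble into a homomorphism $\pi_n$ on the direct limit $H_P$. The two witnessing elements are $\alpha = \lambda_{T_0}(s_n)$, on which $\pi_n$ evaluates to $1$, and $\beta = \lambda_{T_0}(g)$ for any $g \in \mathbb{H} \setminus S$ of order $p_n$ (such $g$ exists since $\mathbb{H}$ has infinitely many elements of each prime order while $S$ contains only one of order $p_n$), on which $\pi_n$ evaluates to $0$.

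For the other direction, suppose $p \notin P$ and $\alpha, \beta \in H_P$ both have order $p$. Pick $k$ with $\alpha, \beta \in H_k$. Since $p$ is a prime coprime to every $p_n$, the $Z_n$-component of each of $\alpha, \beta$ must be trivial (its order divides both $p$ and $p_n$). Thus both elements lie in the subgroup $K_k \leq H_k$ of elements acting trivially on every $Z_n$. On $K_k$, the homomorphism $\rho_k \colon H_k \to \mathbb{H}$ given by $\lambda_{T_k}(g) \mapsto g$ (noted in Section~\ref{omnigenousfamily}) is injective, since any element of $\Sym(X)$ trivial on both $Y$ and on every $Z_n$ is the identity of $\Sym(X)$. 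So $\rho_k(\alpha)$ and $\rho_k(\beta)$ are two order-$p$ elements of Hall's group, and Proposition~\ref{cong} supplies $h \in \mathbb{H}$ with $h\rho_k(\alpha)h^{-1} = \rho_k(\beta)$. Setting $\gamma = \lambda_{T_k}(h) \in H_k$, a direct computation on $Y$ and on each $Z_n$ will show that $\gamma\alpha\gamma^{-1}$ belongs to $K_k$ and has the same $\rho_k$-image as $\beta$ (on $Y$ one gets $\rho_k(\beta)$ by the choice of $h$; on each $Z_n$ the contributions of $\gamma$ and $\gamma^{-1}$ cancel because $\alpha$ is trivial there). Injectivity then forces $\gamma\alpha\gamma^{-1} = \beta$, so $\alpha, \beta$ are conjugate in $H_k$ and hence in $H_P$.

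The main technical obstacle is the compatibility $\pi_n^{(k+1)} \circ e_{k+1} = \pi_n^{(k)}$ in the first direction, which requires carefully unpacking how $T_{k+1}$ is built from $T_k$ and verifying that orders are preserved throughout; once that bookkeeping is done, the remainder of the proof reduces to straightforward calculations using the explicit actions of the $H_k$'s on $X$.
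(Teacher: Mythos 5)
Your proof is correct and follows essentially the same route as the paper: the forward direction uses the translation action on $Z_n$ as an abelian conjugacy invariant separating $\lambda_{T_0}(s_n)$ from $\lambda_{T_0}(g)$ with $g\notin S$, and the reverse direction reduces order-$p$ elements (which must act trivially on every $Z_m$ since $\gcd(p,p_m)=1$) to elements of $\mathbb{H}$ and invokes Proposition~\ref{cong}. Packaging these as the homomorphisms $\pi_n$ and $\rho_k\upharpoonright K_k$ is only a cosmetic reorganization of the paper's direct computations.
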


\begin{proof} First let $p_n\in P$. Let $a\in \mathbb{H}$ be an element of order $p_n$ such that $a\neq s_n\in S=T_0$. We claim that $\alpha=\lambda_S(a)\in H_0$ and $\beta=\lambda_S(s_n)\in H_0$ are not conjugate in $H_P$. Toward a contradiction, assume $\alpha, \beta$ are conjugate in $H_P$. Then there is $k\geq 0$ such that they are conjugate in $H_k$. By the construction of $H_k$, we have $\alpha=\lambda_{T_k}(g)$ for some $g\in \mathbb{H}\setminus T_k$ and $\beta=\lambda_{T_k}(h)$ for some $h\in T_k$ of order $p_n$. Let $g_1, \dots, g_l\in \mathbb{H}$ and $\epsilon_1, \dots, \epsilon_l\in\{+1,-1\}$ such that 
$$\lambda_{T_k}(g_1)^{\epsilon_1}\cdots \lambda_{T_k}(g_l)^{\epsilon_l}\lambda_{T_k}(g)\lambda_{T_k}(g_l)^{-\epsilon_l}\cdots \lambda_{T_k}(g_1)^{-\epsilon_1}=\lambda_{T_k}(h). $$
The action of the element on the left hand side on $Z_n$ is identity, while the action of $\lambda_{T_k}(h)$ on $Z_n$ is $+1$,  a contradiction.

On the other hand, suppose $\alpha, \beta\in H_P$ both have order $p\not\in P$. Then there is $k\geq 0$ such that $\alpha=\lambda_{T_k}(g)$ and $\beta=\lambda_{T_k}(h)$ for $g, h\in \mathbb{H}\setminus T_k$. By Proposition~\ref{cong} $g, h$ are conjugate in $\mathbb{H}$, i.e., there is $g_1\in \mathbb{H}$ such that $g_1gg_1^{-1}=h$. Then we claim 
$\lambda_{T_k}(g_1)\lambda_{T_k}(g)\lambda_{T_k}(g_1)^{-1}=\lambda_{T_k}(h)$. This is because, the action of the element on the left hand side on $Y$ is by left multiplication of $g_1gg_1^{-1}$, while the action of $\lambda_{T_k}(h)$ on $Y$ is by left multiplication of $h$, which are the same; on the other hand, the action of the element on the left hand side on all $Z_n$ is identity regardless of whether $g_1\in T_k$, which is the same as the action of $\lambda_{T_k}(h)$ on $Z_n$. Thus the claim holds true, and $\alpha$ and $\beta$ are conjugate in $H_k$. 
\end{proof}

By Lemma~\ref{HP}, if $P\neq P'$ are distinct sets of primes, then $H_P$ and $H_{P'}$ are not isomorphic. Since all $H_P$ are omnigenous, by Theorem~\ref{omnigenous} we can embed $H_P$ into $\Iso(\U_\Delta)$ as a dense subgroup. We have thus proved the following theorem.

\begin{thm}
There are continuum many pairwise nonisomorphic countable universal locally finite groups each of which can be embedded into $\Iso(\U_\Delta)$ as a dense subgroup.
\end{thm}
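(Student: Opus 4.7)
The plan is to assemble the result directly from the constructions and lemmas already established in Subsection~\ref{omnigenousfamily}. For each subset $P$ of the set of primes, the construction in that subsection produces a countable group $H_P$ as the direct limit of the chain $H_0 \hookrightarrow H_1 \hookrightarrow \cdots$; by construction $H_0 = G_S$ is universal for countable locally finite groups (because $\mathbb{H}\setminus S$ was chosen to generate a universal such group), and since $H_0$ embeds into $H_P$, so is $H_P$. The construction has also already shown that each $H_P$ is omnigenous.

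First I would observe that the assignment $P \mapsto H_P$ has range of cardinality $2^{\aleph_0}$: the collection of all sets of primes has continuum cardinality, and by Lemma~\ref{HP} the set $P$ is recoverable from $H_P$ as the set of primes $p$ for which $H_P$ contains two non-conjugate elements of order $p$. This is an isomorphism invariant, so $P \neq P'$ forces $H_P \not\cong H_{P'}$.

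Next I would apply Theorem~\ref{omnigenous}: since each $H_P$ is a countable omnigenous locally finite group, it embeds as a dense subgroup of $\Iso(\U_\Delta)$ for every countable distance value set $\Delta$. Combining, we obtain continuum many pairwise non-isomorphic, universal countable locally finite groups, each of which is densely embeddable in $\Iso(\U_\Delta)$, which is exactly what is claimed.

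I do not anticipate any genuine obstacle here: the three nontrivial ingredients — the explicit construction of the family $\{H_P\}$, the conjugacy-class invariant of Lemma~\ref{HP} which separates distinct $P$, and the dense embedding theorem for omnigenous groups (Theorem~\ref{omnigenous}) — have all been proved earlier, so the proof reduces to citing them in sequence.
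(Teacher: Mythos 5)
Your proposal is correct and follows exactly the paper's own argument: the theorem is obtained by combining the construction of the family $\{H_P\}$ from Subsection~\ref{omnigenousfamily} (with universality inherited from $H_0=G_S$), the separation of isomorphism types via the conjugacy invariant of Lemma~\ref{HP}, and the dense embedding of omnigenous groups from Theorem~\ref{omnigenous}. No gaps.
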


When $P=\emptyset$, it is easy to see that $H_P$ has the property ({\sc E}) from Proposition~\ref{propertyE}, and hence is isomorphic to $\mathbb{H}$. Thus we obtain another proof of Theorem~\ref{DeltaMain}.

\subsection{Ultrametric spaces}
In this subsection we deal with ultrametric Urysohn spaces and their isometry groups. We first recall some basic facts about ultrametric Urysohn spaces (cf., e.g., \cite{GS} and \cite{NVT}).

Recall that an \emph{ultrametric} $d$ on a space $X$ is a metric such that $$d(x, y)\leq \max\{d(x, z), d(y, z)\}$$ for all $x, y, z\in X$. In any separable ultrametric space, the ultrametric can take only countably many values. Consequently, there is no separable ultrametric space that is universal for all separable ultrametric spaces. 

Given any countable set $R$ of positive real numbers, an \emph{$R$-ultrametric 
space} is an ultrametric space in which the ultrametric takes positive values 
only in $R$. The class of 
all finite $R$-ultrametric spaces is a Fraïssé class, and we let 
$\mathbb{K}^u_R$ denote its Fraïssé limit. $\mathbb{K}^u_R$ is a universal 
countable, ultrahomogeneous $R$-ultrametric space, and we call it the 
\emph{universal countable $R$-ultrametric Urysohn space}. We endow 
$\Iso(\mathbb{K}^u_R)$ with the permutation group topology. 

Consider the completion of $\mathbb{K}^u_R$ under the pointwise convergence topology, which we denote as $\U^u_R$ and call the \emph{$R$-ultrametric Urysohn space}. $\U^u_R$ is a Polish $R$-ultrametric space which is universal for all Polish $R$-ultrametric spaces and is itself ultrahomogeneous. We endow $\Iso(\U^u_R)$ with the pointwise convergence topology.

By the standard argument in the proof of Theorem~\ref{IsoU}, any dense subgroup of $\Iso(\mathbb{K}^u_R)$ gives rise to an isomorphic dense subgroup of $\Iso(\U^u_R)$. We prove the following theorem.

\begin{thm}
For any non-empty countable set $R$ of positive real numbers, the following hold:
\begin{enumerate}
\item[(i)] $\Iso(\mathbb{K}^u_R)$ and $\Iso(\U^u_R)$ contain $\mathbb{H}$ as a dense subgroup. 
\item[(ii)] Every countable omnigenous locally finite group can be embedded into $\Iso(\mathbb{K}^u_R)$ or $\Iso(\U^u_R)$ as a dense subgroup.
\item[(iii)] There are continuum many non-isomorphic universal countable 
locally finite groups that can be embedded into each of $\Iso(\mathbb{K}^u_R)$ 
and $\Iso(\U^u_R)$ as a dense subgroup.
\end{enumerate}
\end{thm}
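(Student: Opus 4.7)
The plan is to adapt the arguments from Sections~\ref{VC} and \ref{sec:many} to the ultrametric setting. The key structural observation is that finite $R$-ultrametric spaces form a particularly well-behaved Fraïssé class with strong amalgamation: for $X\subseteq Y_1$ and $X\subseteq Y_2$ with $X\neq\emptyset$, the formula $d(y_1,y_2)=\min_{x\in X}\max(d_1(y_1,x),d_2(x,y_2))$ yields an $R$-ultrametric (and when $X=\emptyset$ one fixes any sufficiently large $c\in R$). Equivalently, finite $R$-ultrametric spaces correspond to rooted finite trees with edge labels in $R$, and their isometry groups are iterated wreath products of symmetric groups along these trees.

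For (i), I would introduce the class $\mathcal{K}^u_R$ of all pairs $(X,G)$ where $X$ is a finite $R$-ultrametric space and $G$ is a finite group acting by isometries, and mimic the proof of Theorem~\ref{KDeltaFraisse}. The hereditary and joint embedding properties are immediate; for amalgamation one can apply the same tower strategy as in Theorem~\ref{KDeltaFraisse}, building an increasing sequence of $R$-ultrametric spaces carrying compatible actions of $\Gamma_1*_\Lambda\Gamma_2$ via an ultrametric version of Lemma~\ref{l:extension}, then collapsing to a finite amalgam using an ultrametric analogue of Rosendal's Theorem~\ref{RosendalTheorem} together with Proposition~\ref{amalgamRZ}. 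A cleaner alternative is to exploit the tree encoding: amalgamate $(Y_1,\Gamma_1)$ and $(Y_2,\Gamma_2)$ over $(X,\Lambda)$ by gluing their labeled trees along the subtree corresponding to $X$ and taking the natural induced action of $\Gamma_1*_\Lambda\Gamma_2$, which, because the trees are finite, factors through a finite quotient. Once $\mathcal{K}^u_R$ is known to be a Fraïssé class, the four lemmas following Theorem~\ref{KDeltaFraisse} show that its Fraïssé limit is a pair $(Y,H)$ with $Y$ isometric to $\mathbb{K}^u_R$ and $H\cong\mathbb{H}$ dense in $\Iso(\mathbb{K}^u_R)$; the completion argument from Theorem~\ref{IsoU} then transfers the conclusion to $\Iso(\U^u_R)$.

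For (ii), I would run the proof of Theorem~\ref{omnigenous} line by line with $\U_\Delta$ replaced by $\mathbb{K}^u_R$. The only step that genuinely needs adaptation is the ultrametric version of Lemma~\ref{1extension}: the same quotient construction $Y=(X\times\Gamma)/{\sim}$ with $\delta((x_1,g_1),(x_2,g_2))=d_X(\pi(g_2^{-1}g_1)(x_1),x_2)$ when $g_2^{-1}g_1\in\Lambda$ and $\delta=c$ (any fixed $c\in R$ dominating the distances occurring in $X$) otherwise produces a finite $R$-ultrametric space, as a short case analysis shows: any triple of points in which at least one pairwise coset ratio lies outside $\Lambda$ forces at least two of its pairwise distances to equal $c$, while the case where all three ratios lie in $\Lambda$ reduces to the ultrametric inequality in $X$. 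The strongly coherent S-extensions needed at each stage of the back-and-forth exist within the $R$-ultrametric class; this is the easy case of Theorem~\ref{SoleckiTheorem}, essentially immediate from the tree encoding. Finally, (iii) follows by applying (ii) to the continuum family of pairwise nonisomorphic omnigenous universal countable locally finite groups $H_P$ constructed in Section~\ref{omnigenousfamily}.

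The main obstacle is thus the amalgamation step for $\mathcal{K}^u_R$, or equivalently the ultrametric analogue of Rosendal's Theorem~\ref{RosendalTheorem}. I expect this to be routine via the tree encoding: isometries of a finite $R$-ultrametric space permute the maximal proper balls at each level of its associated tree, so amalgamation of finite group actions on finite $R$-ultrametric spaces reduces to amalgamation of finite group actions on finite labeled trees, which can be handled directly without invoking the full Ribes--Zalesski\u{\i} machinery.
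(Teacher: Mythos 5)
Your treatment of parts (ii) and (iii) is essentially the paper's own proof: the paper also obtains the ultrametric theorem by rerunning the proof of Theorem~\ref{omnigenous} for $\mathbb{K}^u_R$, after establishing exactly the two ingredients you identify, namely an ultrametric version of Lemma~\ref{1extension} (the paper's Lemma~\ref{1extensionultra}, proved by observing that the quotient pseudometric is a pseudo-ultrametric --- your case analysis for the triangle inequality is the right one) and the existence of finite strongly coherent S-extensions inside the class of finite $R$-ultrametric spaces (the paper's Lemma~\ref{strongcoherenceultra}); part (iii) then follows from the family $H_P$ of Subsection~\ref{omnigenousfamily}, as you say. One caution on the second ingredient: calling it ``essentially immediate from the tree encoding'' undersells the point. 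Mere existence of extending isometries is indeed easy from the tree picture, but \emph{strong coherence} ($\phi(p\circ q)=\phi(p)\circ\phi(q)$) requires the extensions at each level of the tree to be chosen consistently; the paper's proof is a genuine induction on the number of distance values in which one fixes a linear order on each ball, makes that order depend only on the ball and not on its center, and always extends by the order-preserving bijection of complements. You should expect to write that argument out.

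Where you genuinely diverge from the paper is part (i), and this is where the gap lies. The paper does \emph{not} redo the Fra\"iss\'e class of actions for ultrametric spaces; it gets (i) for free from (ii), since $\mathbb{H}$ is omnigenous (property ({\sc E}) of Proposition~\ref{propertyE} is a strengthening of omnigenity). Your proposed route through a class $\mcK^u_R$ of actions requires either an ultrametric analogue of Rosendal's Theorem~\ref{RosendalTheorem} (i.e., that the RZ-property yields finite approximations of actions \emph{within the class of ultrametric spaces}), which is not proved in the paper and which you do not prove, or your ``cleaner alternative'' via labeled trees. The latter, as stated, is wrong: the natural action of $\Gamma_1*_\Lambda\Gamma_2$ induced by gluing the two labeled trees along the subtree of $X$ lives on an \emph{infinite} iterated amalgam, and finiteness of the original trees does not make it ``factor through a finite quotient'' --- collapsing to a finite invariant structure containing isometric copies of $Y_1$ and $Y_2$ with their original actions is precisely the content of the RZ-property/Rosendal machinery in the metric case, and is the whole difficulty here. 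So your plan for (i) is incomplete as written. The fix is trivial, however: delete that paragraph and deduce (i) from your own (ii). (If you do want the stronger Fra\"iss\'e-limit statement analogous to Theorem~\ref{FraisseTheorem1} in the ultrametric setting, that is a separate project requiring the amalgamation step to be carried out honestly.)
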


Our plan is to repeat the proof in the preceding subsections for $\Iso(\mathbb{K}^u_R)$. We first need a lemma for $R$-ultrametric spaces that is analogous to Lemma~\ref{1extension}. It turns out that the proof is verbatim the same as that of Lemma~\ref{1extension}, only noting that the pseudometric defined there is indeed a pseudo-ultrametric. We state the lemma below without proof.

\begin{lem}\label{1extensionultra} Let $R$ be any nonempty countable set of positive numbers. Let $X$ be a finite $R$-ultrametric space. Let $\lambda\leq \Gamma$ be finite groups and $\pi: \Lambda\to \Iso(X)$ be an isomorphic embedding. Then there is a finite $R$-ultrametric space $Y$ extending $X$ and an isomorphic embedding $\pi':\Gamma\to \Iso(Y)$ such that for any $\gamma\in \Lambda$ and $x\in X$, $\pi'(\gamma)(x)=\pi(\gamma)(x)$.
\end{lem}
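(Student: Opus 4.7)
The plan is to mirror verbatim the construction used in the proof of Lemma~\ref{1extension}: on $X \times \Gamma$, define
\[
\delta((x_1,g_1),(x_2,g_2)) = \begin{cases} d_X(\pi(g_2^{-1}g_1)(x_1), x_2) & \text{if } g_2^{-1}g_1 \in \Lambda, \\ \diam(X) & \text{otherwise,}\end{cases}
\]
quotient $X \times \Gamma$ by the relation $(x_1,g_1) \sim (x_2,g_2) \iff \delta((x_1,g_1),(x_2,g_2)) = 0$ to obtain $Y$, and let $\pi'(\gamma)[x,g] = [x,\gamma g]$. The embedding $x \mapsto [x,1]$, the group embedding property of $\pi'$, and the fact that $\pi'$ extends $\pi$ on $X$ are established by exactly the same routine calculations as before. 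Thus the only new content is to verify that (i) the values of $\delta$ lie in $R \cup \{0\}$, and (ii) $\delta$ is a pseudo-\emph{ultra}metric.

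For (i), observe that the distances $d_X$ of a nonempty finite $R$-ultrametric space take values in $R \cup \{0\}$, and $\diam(X)$ is realized as some $d_X(x,y)$, hence lies in $R$ (the case $|X| \le 1$ is trivial, and we may throw in one auxiliary point at any fixed distance from $R$ if needed). So $\delta$ takes only values in $R \cup \{0\}$, and the induced metric $d_Y$ on $Y$ is an $R$-valued metric.

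For (ii), the key step is the strong triangle inequality
\[
\delta((x_1,g_1),(x_3,g_3)) \le \max\bigl\{\delta((x_1,g_1),(x_2,g_2)),\ \delta((x_2,g_2),(x_3,g_3))\bigr\}.
\]
Split by which of $g_2^{-1}g_1$ and $g_3^{-1}g_2$ lie in $\Lambda$. Since $\Lambda$ is a subgroup, if both lie in $\Lambda$ then so does $g_3^{-1}g_1 = (g_3^{-1}g_2)(g_2^{-1}g_1)$, and the inequality reduces, using that each $\pi(\cdot)$ is an isometry of $X$, to the ultrametric inequality for $d_X$ applied to the points $\pi(g_3^{-1}g_1)(x_1)$, $\pi(g_3^{-1}g_2)(x_2)$, $x_3$. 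If at least one of $g_2^{-1}g_1$, $g_3^{-1}g_2$ is outside $\Lambda$, then the right hand side equals $\diam(X)$, which dominates the left hand side trivially. Symmetry and the case $g_1 = g_3$ handle the remaining reflexivity/symmetry checks.

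The only potential obstacle is the subgroup closure observation used above (two in $\Lambda$ forces the third), but this is immediate from $\Lambda$ being a subgroup; there are no deeper issues, which is why the authors simply flag that the construction ``happens'' to be ultrametric. Once $\delta$ is a pseudo-ultrametric, $d_Y$ is an ultrametric on the finite set $Y$, taking values in $R \cup \{0\}$, so $Y$ is a finite $R$-ultrametric space, and the remainder of the argument (verifying $\pi'$ is a homomorphism, injective, and extends $\pi$) is word-for-word identical to the metric case.
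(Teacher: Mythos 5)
Your proposal is correct and matches the paper's approach exactly: the paper states this lemma without proof, remarking only that the argument is verbatim that of Lemma~\ref{1extension} together with the observation that the pseudometric $\delta$ defined there is in fact a pseudo-ultrametric. Your case analysis for the strong triangle inequality (both group elements in $\Lambda$ versus at least one outside, where the right-hand side becomes $\diam(X)$) correctly supplies the one detail the paper leaves implicit.
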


The next thing we need is a result analogous to Solecki's Theorem~\ref{SoleckiTheorem} for $R$-ultrametric spaces. This is an easy consequence of the techniques used to prove the metric case (cf., e.g. \cite{S2}), but we give a self-contained proof here.

\begin{lem}\label{strongcoherenceultra}
Let $R$ be any nonempty countable set of positive real numbers. Let $X$ be a finite $R$-ultrametric space. Then $X$ has a strongly coherent S-extension $(Y,\phi)$ where $Y$ is a finite $R$-ultrametric space. 
\end{lem}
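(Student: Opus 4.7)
The plan is to adapt Solecki's construction (Theorem \ref{SoleckiTheorem}) to the ultrametric setting, making essential use of the tree representation of finite $R$-ultrametric spaces. Recall that any finite $R$-ultrametric space $X$ is naturally encoded by a rooted tree whose leaves are the points of $X$, whose internal nodes at height $i$ correspond to the equivalence classes of the relation $x \sim_{r_i} y \iff d(x,y) \le r_i$ (with $r_1 < r_2 < \cdots < r_n$ the distinct positive distances in $X$), and in which two leaves have distance $r_i$ iff their least common ancestor lies at height $i$. Partial isometries of $X$ correspond precisely to partial tree isomorphisms preserving this stratification.

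First I would construct the ambient space $Y$ by embedding the tree $T_X$ of $X$ into a balanced tree $T_Y$ in which every node at height $i$ has exactly $N_i$ children, where the $N_i$ are chosen large enough in terms of $|X|$ and $|\mathcal{P}_X|$. Equipping the set of leaves of $T_Y$ with the ultrametric determined by the height of the least common ancestor gives a finite $R$-ultrametric space $Y$ containing $X$, whose isometry group is the iterated wreath product $\Iso(Y)=S_{N_0}\wr S_{N_1}\wr\cdots\wr S_{N_{n-1}}$ acting on the leaves.

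Next I would define $\phi\colon \mathcal{P}_X\to \Iso(Y)$ by specifying, at each height $i$ and for each $p\colon A\to B$ in $\mathcal{P}_X$, how $\phi(p)$ acts on the height-$(i+1)$ subtrees of $T_Y$. Such a $p$ induces a partial bijection between the subtrees meeting $A$ and those meeting $B$, and these partial bijections must be extended to full permutations in $S_{N_i}$ coherently across all $p\in\mathcal{P}_X$ so that composition is preserved.

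The main obstacle is achieving strong coherence, namely $\phi(p\circ q)=\phi(p)\circ\phi(q)$ whenever $p\circ q$ is defined, because naive extensions (completing each partial bijection by the identity on untouched elements) fail to preserve composition — even in the equidistant case $n=1$, easy three-cycle examples in $S_3$ illustrate this failure, reflecting the fact that the symmetric inverse monoid does not embed as a monoid into any group. The resolution, as in Solecki's original argument, is to take the $N_i$ sufficiently large so that each level-wise extension can be defined via a fixed groupoid-theoretic embedding of the partial-bijection semigroup of $X$ into a finite group (for instance a Cayley-style embedding on an auxiliary set, or an application of Lemma \ref{1extensionultra} to appropriate finite group actions). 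Once such coherent level-wise extensions are fixed, strong coherence on all of $\Iso(Y)$ follows by a routine recursive argument through the wreath product structure, giving the desired finite $R$-ultrametric $Y$ and strongly coherent S-extension $(Y,\phi)$.
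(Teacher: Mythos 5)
Your tree/level-wise decomposition of a finite $R$-ultrametric space is the right picture, and it matches the skeleton of the paper's argument (the paper runs an induction on the number of distance values, quotienting by the balls of smallest radius and forming a product $Y=Y_1\times X_2$ with the max metric, which is exactly your balanced tree unrolled one level at a time). The genuine gap is at the step you yourself flag as the main obstacle: you never actually produce the level-wise completions, and the fix you sketch cannot work as stated. You propose to embed ``the partial-bijection semigroup of $X$ into a finite group'' after enlarging each level by a large $N_i$ --- but, as you note two lines earlier, the symmetric inverse monoid does not embed into any group, and neither a Cayley-style trick nor Lemma~\ref{1extensionultra} (which extends \emph{group} actions, not partial isometries) supplies a map $\phi$ with $\phi(p\circ q)=\phi(p)\circ\phi(q)$. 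The missing observation is that strong coherence only concerns the \emph{groupoid} composition, i.e.\ $p\circ q$ is defined only when $\rng(q)=\dom(p)$; for that restricted composition there is a canonical completion that works with no enlargement at all. Namely, fix a linear order on the equidistant space $X$ and let $\phi(p)$ extend $p$ by the unique order-preserving bijection from $X\setminus\dom(p)$ onto $X\setminus\rng(p)$. Since order-preserving bijections between finite subsets of a chain are unique and compose, $\phi(p)\circ\phi(q)$ and $\phi(p\circ q)$ agree both on $\dom(q)$ and on its complement, so $(X,\phi)$ itself is a strongly coherent S-extension in the one-distance case. Your $S_3$ example shows the \emph{identity/transposition} completion fails, but it does not show that every canonical completion fails, and the order-preserving one succeeds.

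Once that base case is in place, the inductive step still needs one more piece of care that your ``routine recursive argument through the wreath product'' glosses over: the fiber-wise permutation assigned to $p$ on the ball $B_r(x)$ must depend only on the ball (and on its image ball), not on the chosen center $x$, which the paper arranges by fixing, for each ball, a linear order and an order-preserving identification with a largest ball $X_2$, and then applying the base-case completion inside $X_2$. With those two ingredients --- the order-preserving completion at each level and the center-independence of the fiber data --- your construction goes through; without them it does not.
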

\begin{proof}
Let $D(X)=\{d_X(x, y)\,:\, x\neq y\in X\}$. We prove this by induction on $|D(X)|$. 

First consider the case $|D(X)|=1$. In this case $\Iso(X)=\Sym(X)$. Fix a linear order $<$ on $X$. Given any partial isometry (permutation) $p$ of $X$, define $\phi(p)$ to be the extension of $p$ by the (unique) $<$-order-preserving bijection between $X\setminus\dom(p)$ and $X\setminus\rng(p)$. Then $(X, \phi)$ is easily seen to be a strongly coherent S-extension of $X$.  

Suppose $|D(X)|>1$ and let $r$ be the least element of $D(X)$. For each $x\in X$ let $$B_r(x)=\{y\in X\,:\, d_X(x,y)\leq r\}=\{x\}\cup\{y\in X\,:\, d_X(x,y)=r\}.$$ Define $X_1=\{B_r(x)\,:\, x\in X\}$ and $d_1$ on $X_1$ by 
$$d_1(B_r(x_1), B_r(x_2))=\left\{\begin{array}{ll} d_X(x_1, x_2), &\mbox{ if $d_X(x_1, x_2)>r$,}\\
0, &\mbox{ otherwise.}\end{array}\right. $$
Then $|D(X_1)|=|D(X)|-1$. By the inductive hypothesis applied to $R=D(X)\setminus\{r\}$, $X_1$ has a finite $R$-ultrametric strongly coherent S-extension $(Y_1,\phi_1)$, where $D(Y_1)\subseteq R$. 

Let $N=\max\{|B_r(x)|\,:\, x\in X\}$ and fix an $x_0\in X$ such that $|B_r(x_0)|=N$. Fix a linear order $<_x$ on each $B_r(x)$; however, make $<_x$ depend only on $B_r(x)$ but not on $x$. Let $X_2=B_r(x_0)$ and $d_2=d_X$ on $X_2\subseteq X$. For each of $B_r(x)$, let $e_x: B_r(x)\to X_2$ be the order-preserving embedding so that $e_x(B_r(x))$ is an initial segment in $X_2$. Every $B_r(x)$ is identified as a subset of $X_2$ via $e_x$. We will view $e_x$ as an inclusion. For each partial isometry $p$ of $B_r(x)$ (viewed also as a partial isometry of $X_2$), let $\phi_x(p)$ be the extension of $p$ by the order-preserving bijection between $X_2\setminus \dom(p)$ and $X_2\setminus \rng(p)$. 

Let $Y=Y_1\times X_2$ and define $d_Y((u_1, u_2), (v_1, v_2))=\max\{ d_1(u_1, v_1), d_2(u_2, v_2)\}$. Every $x\in X$ is identified with $(B_r(x),e_x(x))\in Y$. If $p$ is a partial isometry of $X$, then $p$ induces a partial isometry of $X_1$, which we denote by $p_1$. For every $x\in \dom(p)\subseteq X$, $p$ induces a partial isometry between $B_r(x)$ and $B_r(p(x))$, which is identified as a partial isometry of $X_2$ via $e_x$ and $e_{p(x)}$, which we denote by $p_x$. Note that $p_x$ depends on $B_r(x)$ but not on $x$. Define $\phi(p)\in \Iso(Y)$ by
$$ \phi(p)(u, v)=\left\{\begin{array}{ll}(\phi_1(p_1)(u), \phi_{x}(p_x)(u,v)),   &  \mbox{ if $x\in \dom(p)$ and $d_Y(x, (u,v))\leq r$,} \\
(\phi_1(p_1)(u), v), & \mbox{ if there is no such $x$.} \end{array}\right.
$$
Then it is straightforward to check that $(Y, \phi)$ is a strongly coherent S-extension of $X$.
\end{proof}

The rest of the proof in the preceding section works verbatim. In particular, we obtain  the  space $X$ using Lemma~\ref{1extensionultra} and $Y$ using Lemma~\ref{strongcoherenceultra}, and then the space $Z$ constructed is an ultrametric space.

\section{Properties of Dense Locally Finite Subgroups\label{sec:all}}
In this section we study some properties of all dense locally finite subgroups of $\Iso(\U_\Delta)$ from the point of view of model theory and combinatorial group theory.

\subsection{Discerning types and discerning structures}
We first define some concepts and fix some notation. Throughout this section we 
let $\sL$ be a countable relational language (with equality). Given an 
$\sL$-structure $M$ and $A \subseteq M$, we denote by $\sL_A$ the language 
$\sL$ expanded by a constant symbol for each element of $A$.

\begin{defn}
Fix an $\sL$-structure $M$ and $A \subseteq M$. A \emph{$1$-type over $A$} is a set $p$ of $\sL_A$-formulas, with (at most) one free variable $x$, for which there exists 
$m \in M$ such that
$$p=\{\varphi(x) \colon M \models \varphi(m)\}. $$
Such $m$ is called a \emph{realization} of $p$ in $M$. We say that $p$ is \emph{nontrivial} if $p$ has a realization not belonging to $A$. We denote by $S_1(A)$ the set of all $1$-types over $A$.
\end{defn}

Our terminology differs slightly from the usual definition in that these types are usually referred to as types realized in $M$. In our context there will always be an underlying structure $M$, and any $1$-type over $A$ is realized in $M$. Note that a $1$-type over $A$ is nontrivial iff all realizations of $p$ do not belong to $A$, because if $a\in A$ and $b\in M\setminus A$, then they cannot have the same $1$-type over $A$ (one satisfies $x=a$ and the other one does not).


\begin{defn}
Let $M$ be a countable $\sL$-structure. Given $A \subseteq M$ and $p$ a $1$-type over $A$, we denote by $[p]$ the set of all realizations of $p$ in $M$, i.e., $[p]=\{m\in M: \forall \varphi\in p\ M\models \varphi(m)\}$. 

We say that $p$ is \emph{algebraic over} $A$ if $[p]$ is finite.
\end{defn}

As an example, consider the structure $\U_\Delta$, where $\Delta$ is a countable distance value set. As illustrated in Subsection~\ref{subsectionDelta}, $\U_\Delta$ can be viewed as a relational structure in a language $\sL$ with a binary relation symbol for each value of $\Delta$. If $A \subset \U_\Delta$ is finite, the $1$-type over $A$ of $x \in \U_\Delta$ is entirely determined by the distance function $a \mapsto d(a,x)$ (by ultrahomogeneity). Thus we may identify $1$-types over $A$ with \emph{Kat\v{e}tov maps} over $A$, i.e.~maps $f \colon A \to \Delta \cup\{0\}$ such that
$$\forall a,b \in A\ \ |f(a)-f(b)| \le d(a,b) \le f(a)+f(b). $$
We denote by $E(A)$ the set of all Kat\v{e}tov maps over $A$. Note that if $f \in E(A)$ is such that $f(a)=0$ for some $a$, then $f=d(a,\cdot)$. Below we will sometimes identify $A$ with the subset of $E(A)$ made up of trivial $1$-types.


\begin{defn}
Let $M$ be a countable $\sL$-structure, $A$ a finite substructure and $p$ a nontrivial $1$-type over $A$. We say that $p$ is \emph{discerning} if for every non-identity $g \in \Aut(M)$ there exists $x \in [p]$ such that $g(x) \ne x$. 
\end{defn}


Note that, if $A$ is finite and $p$ is both discerning and algebraic over $A$, then $\{g\in \Aut(M) \colon \forall x \in [p] \ g(x)=x\}=\{1\}$, hence $\Aut(M)$ is discrete. Equivalently, if $\Aut(M)$ is non-discrete, $A$ is finite and $p \in S_1(A)$ is discerning, then $p$ cannot be algebraic, thus has infinitely many realizations. 

\begin{defn}
Let $M$ be a countable $\sL$-structure. We say that $M$ is \emph{discerning} if for any finite $A\subseteq M$, every nontrivial $1$-type over $A$ is discerning.
\end{defn}

Note that a countable infinite set, viewed as a first-order structure with only the equality symbol in its language, is not discerning, because of the existence of elements of $\Aut(M)$ with finite support. 

\begin{defn}
Let $M$ be a countable ultrahomogeneous $\sL$-structure. We say that $M$ has \emph{rich types} if it satisfies the following conditions:
\begin{itemize}
\item For any finite $A\subseteq M$, and any $g \in \Aut(M) \setminus \{1\}$, there exists $x \not \in A$ such that $g(x) \ne x$;
\item For any finite $A\subseteq M$, any $x \ne y \in M$ with $x \not \in A$, and any nontrivial $p \in S_1(A)$, there exist $z \in M$ which is a realization of $p$ and an $\sL$-formula $\varphi$ such that $M\models \varphi(x,z)$ and $M\models \neg \varphi(y,z)$.
\end{itemize}
\end{defn}

Observe that the first condition above forbids the existence of nontrivial elements of $\Aut(M)$ with finite support.

\begin{prop}
Let $M$ be a countable ultrahomogeneous $\sL$-structure. If $M$ has rich types, then $M$ is discerning.
\end{prop}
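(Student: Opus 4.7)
The goal is, given a finite $A \subseteq M$, a nontrivial $p \in S_1(A)$, and a non-identity $g \in \Aut(M)$, to produce some realization $z$ of $p$ with $g(z) \ne z$. The plan is to combine the two clauses of the rich types definition: the first produces a point $x_0$ outside $A$ that is moved by $g$, and the second then upgrades this single witness into a realization of $p$ that must also be moved by $g$, using a formula that distinguishes $x_0$ from $g(x_0)$.

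First, I would invoke the first richness condition with the given finite set $A$ and automorphism $g$ to obtain $x_0 \in M \setminus A$ with $g(x_0) \ne x_0$. Set $y_0 := g(x_0)$, so that $x_0 \ne y_0$ and $x_0 \notin A$; these are exactly the hypotheses required to feed $A$, $x_0$, $y_0$, and $p$ into the second richness condition.

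The second condition then yields a realization $z \in [p]$ and an $\sL$-formula $\varphi(u,v)$ with $M \models \varphi(x_0, z) \wedge \neg \varphi(y_0, z)$. To conclude, I would argue by contradiction: if $g(z) = z$, then applying the automorphism $g$ to $M \models \varphi(x_0, z)$ gives $M \models \varphi(g(x_0), g(z)) = \varphi(y_0, z)$, which contradicts $M \models \neg \varphi(y_0, z)$. Hence $g(z) \ne z$, and since $z \in [p]$ the type $p$ is discerning.

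The argument has no real obstacle; it is a clean two-line application of the two clauses of rich types, and the only ``step'' is the recognition that clause (2) is precisely designed to convert a single moved point into a moved realization of $p$ via the invariance of $\varphi$ under automorphisms.
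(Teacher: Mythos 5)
Your proof is correct and follows exactly the same route as the paper's: use the first richness clause to find a point outside $A$ moved by $g$, then the second clause with that point and its image to get a realization $z$ of $p$ and a formula $\varphi$ distinguishing them, and conclude $g(z)\ne z$ by invariance of $\varphi$ under $g$. No discrepancies.
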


\begin{proof}
Let $A\subseteq M$ be finite, $p$ a nontrivial type over $A$, and $g \in \Aut(M) \setminus \{1\}$. By assumption, $g$ cannot fix every element in the complement of $A$. So we pick $x \in M \setminus A$ such that $g(x) \ne x$. Apply the fact that $M$ has rich types to $p$, $x$, and $y =g(x)$. We thus obtain a realization $z$ of $p$ and an $\sL$-formula $\varphi$ such that $M\models \varphi(x, z)$ and $M\models \neg \varphi(y, z)$. Since $g\in \Aut(M)$, we have $M\models \varphi(y, g(z))$, and thus $g(z)\neq z$. This $z$ witnesses that $p$ is discerning.
\end{proof}

The random graph $\mathcal{R}$ has rich types. In particular, for $\Delta=\{1,2\}$, $\U_\Delta$ has rich types. In the above we already saw that $\U_{\{1\}}$ does not have rich types because it is not discerning. In general, $\U_\Delta$ can also fail to have rich types for the reason that it does not satisfy the second condition of the definition: it is possible that the type of $z$ over $A$ forces that $(x,z)$ and $(y,z)$ satisfy the same $\sL$-formulas. Nevertheless, this happens for fairly few types, so that one can still prove the following.

\begin{prop}\label{UDeltadiscerning}
Let $\Delta$ be a countable distance value set with $|\Delta| \ge 2$. Then $\U_\Delta$ is discerning.
\end{prop}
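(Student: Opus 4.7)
The plan is to show, for a finite $A\subseteq\U_\Delta$, a nontrivial $1$-type $p$ over $A$ with associated Katetov function $f:A\to\Delta$, and $g\in\Iso(\U_\Delta)\setminus\{1\}$, that some $x\in[p]$ is moved by $g$. The strategy is: pick $y\in\U_\Delta$ with $g(y)\neq y$ (and $y,g(y)\notin A$), and produce $x$ realizing $f$ with $d(x,y)\neq d(x,g(y))$. Then $g(x)=x$ would give $d(x,y)=d(g(x),g(y))=d(x,g(y))$, a contradiction, so this $x$ works.

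A preliminary step is to show $g$ has infinite support, which both lets us pick $y$ outside $A\cup g^{-1}(A)$ and is where the hypothesis $|\Delta|\geq 2$ first enters. If $g$ had finite support $F$, pick $a\in F$ with $b=g(a)\neq a$; every $z\notin F$ satisfies $d(z,a)=d(g(z),g(a))=d(z,b)$. But $|\Delta|\geq 2$ together with the closure property of $\Delta$ yields a Katetov map on $\{a,b\}$ (valued in $\Delta$) taking two distinct values at $a$ and $b$; a realization exists by Urysohn and, by iterating Urysohn, can be chosen outside any prescribed finite set, producing $z\notin F$ with $d(z,a)\neq d(z,b)$, a contradiction. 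Hence moved points are plentiful, and we may choose $y$ with $g(y)\neq y$ and $y,g(y)\notin A$. Moreover, by the Urysohn property, the existence of $x\in[p]$ with $d(x,y)\neq d(x,g(y))$ reduces to finding a Katetov extension $f^+:A\cup\{y,g(y)\}\to\Delta$ of $f$ with $f^+(y)\neq f^+(g(y))$, which amounts to finding $(s,t)\in\Delta^2$ with $s\neq t$ satisfying $|s-f(a)|\leq d(y,a)\leq s+f(a)$ and $|t-f(a)|\leq d(g(y),a)\leq t+f(a)$ for each $a\in A$, together with $|s-t|\leq d(y,g(y))\leq s+t$.

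Securing $s\neq t$ is the main obstacle. I would split into cases depending on whether $g$ fixes $A$ pointwise. If $g$ does not fix $A$ pointwise, pick $a\in A$ with $g(a)=b\neq a$; any $x\in[p]$ with $g(x)=x$ satisfies $d(x,b)=d(g(x),g(a))=d(x,a)=f(a)$, so it suffices to produce $x\in[p]$ with $d(x,b)\neq f(a)$. This is done by extending $f$ via Urysohn to a Katetov map on $A\cup\{b\}$ (or $A$ itself if $b\in A$) assigning $b$ a value in $\Delta$ different from $f(a)$, using $|\Delta|\geq 2$ and the closure of $\Delta$ to produce at least two admissible values, and handling the special sub-case $b\in A$ with $f(b)=f(a)$ by introducing an auxiliary moved point via Urysohn that plays the role of $y$ in the second case. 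If $g$ does fix $A$ pointwise, then $d(y,a)=d(g(y),a)$ for all $a\in A$, so the admissible sets for $s$ and $t$ coincide; we then choose a moved $y$ with a sufficiently generic $A$-profile---which exists because moved points form an infinite set and the Urysohn property lets us prescribe the $A$-profile of $y$ among infinitely many candidates---so that this common admissible set contains at least two elements of $\Delta$, and verify that the joint constraint $|s-t|\leq d(y,g(y))\leq s+t$ still admits an off-diagonal pair. The subtlest point, and the heart of the argument, is confirming that a moved $y$ with the required generic profile exists; this is where the interplay between the infinite-support fact and the flexibility afforded by $|\Delta|\geq 2$ and the closure of $\Delta$ does the essential work.
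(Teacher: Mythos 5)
Your high-level strategy is the same as the paper's: produce a moved point $y$ with $y,g(y)\notin A$, then realize the type $p$ by some $x$ with $d(x,y)\neq d(x,g(y))$, which forces $g(x)\neq x$; and your preliminary ``infinite support'' step matches the paper's verification of the first rich-types condition. But both branches of your case analysis rest on claims that are either false or not established. The false one: it is not true that a Kat\v{e}tov map $f$ over $A$ always admits at least two $\Delta$-valued extensions at a prescribed new point $b$. Take $\Delta=\{1,2,3\}$ (a legitimate distance value set), $A=\{c_0,c_1\}$ with $d(c_0,c_1)=2$, $f(c_0)=1$, $f(c_1)=3$, and $b$ with $d(b,c_0)=d(b,c_1)=1$: the admissible values $s$ must satisfy $|s-1|\le 1$ and $|s-3|\le 1$, so $s=2$ is forced. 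Adding a point $a$ to $A$ with $f(a)=2$, $d(a,c_0)=1$, $d(a,c_1)=3$, $d(a,b)=2$ shows the forced value can even equal $f(a)$ for the $a$ with $g(a)=b$, so your Case-A criterion ``find $x\in[p]$ with $d(x,b)\neq f(a)$'' can be unachievable for that witness pair. So the mechanism ``$|\Delta|\ge 2$ plus closure gives two admissible values'' does not work at an arbitrary point.

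The unproven one is exactly what you flag yourself: the existence of a moved $y$ whose profile over $A$ admits two distinct compatible values $s\neq t$ that are \emph{also} jointly compatible with $d(y,g(y))$. This is the entire content of the proof, and asserting that a ``sufficiently generic'' $y$ exists is not a proof. The paper resolves it by an explicit construction you would need to reproduce: starting from a moved $x\notin A\cup g^{-1}(A)\cup g^{-2}(A)$, set $B=A\cup g^{-1}(A)\cup\{x,g(x)\}$, let $R$ be the maximum and $r>0$ the minimum of all $f$-values and pairwise distances in $B$, use the closure property of $\Delta$ to find $t\in\Delta$ with $\max\{r,R-r\}\le t<R$ (e.g.\ $t=kr$ with $kr<R\le(k+1)r$), and realize the Kat\v{e}tov map over $B$ taking value $t$ at $g(x)$ and $R$ everywhere else. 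The resulting $y$ is moved (since $d(y,x)=R\neq t=d(y,g(x))$), satisfies $d(y,c)=d(g(y),c)=R$ for all $c\in A$ (the latter because $d(y,g^{-1}(c))=R$), and because $y$ and $g(y)$ sit at the \emph{maximal} distance $R$ from $A$ the extension of $f$ by $\hat f(y)=R$, $\hat f(g(y))=t$ is Kat\v{e}tov (here $t\ge R-r$ and $R-t\le d(y,g(y))\le R+t$ are what make it work). A realization $z$ of $\hat f$ then lies in $[p]$ and is moved. Without this kind of quantitative control on both the $A$-profile of $y$ and on $d(y,g(y))$, your argument does not close.
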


\begin{proof}
We first note that when $|\Delta|\geq 2$, $\U_\Delta$ does satisfy the first condition of the definition of rich types. In fact, let $A\subseteq \U_\Delta$ be finite and $g\in \Iso(\U_\Delta)\setminus \{1\}$. If $g$ fixes all elements of $A$, then there is $x\in \U_\Delta\setminus A$ with $g(x)\neq x$, since $g$ is not identity. Otherwise, let $a\in A$ be such that $g(a)\neq a$. If $g(a)\not\in A$, we can let $x=g(a)$, and then $g(x)\neq x$ since $d(a, x)=d(g(a), g(x))=d(x, g(x))$. Suppose $g(a)\in A$ and let $r=d(a, g(a))$. Since $|\Delta|\geq 2$, there is an $s\in\Delta\setminus\{r\}$. Suppose first $s<r$. Then let $k$ be the positive integer with $ks< r\leq (k+1)s$. Define a Kat\u{e}tov map $f$ over $A$ by letting 
$$ f(c)=\min\{ d(a, c)+ks,\ d(g(a), c)+r,\ \sup(\Delta)\} \mbox{ for all $c\in A$.}$$
In particular $f(a)=ks\neq r=f(g(a))$. Let $x\in \U_\Delta$ be such that $f(c)=d(x, c)$ for all $c\in A$. Then $x\not\in A$ since $d(x, c)=f(c)>0$ for every $c\in A$. Then $d(g(x), g(a))=d(x, a)=f(a)=ks\neq r=f(g(a))=d(x, g(a))$. Thus $g(x)\neq x$. Assume next $r<s$. Let $k$ be the positive integer with $kr<s\leq (k+1)r$. Define a Kat\u{e}tov map $f$ over $A$ by letting
$$ f(c)=\min\{ d(a,c)+kr,\ d(g(a), c)+s,\ \sup(\Delta)\} \mbox{ for all $c\in A$.} $$
Then by a similar argument we can find $x\not\in A$ with $g(x)\neq x$.

To prove $\U_\Delta$ is discerning, fix a finite set $A\subseteq \U_\Delta$ and nontrivial $f \in E(A)$. Then $f(c)>0$ for all $c\in A$. If $A=\emptyset$ the statement is obvious. So we assume $A\neq\emptyset$. Fix a non-identity $g \in \Iso(\U_\Delta)$ and $x\not\in A\cup g^{-1}(A)\cup g^{-2}(A)$ such that $x \ne g(x)$. Then $x, g(x)\not\in A\cup g^{-1}(A)$.

Let $B=A\cup g^{-1}(A)\cup\{x, g(x)\}$. Let $R=\max\{ f(c), d(a, b): c\in A, a\neq b\in B\}$ and $r=\min\{ f(c), d(a, b): c\in A, a\neq b\in B\}>0$. 

First assume $R>r$. Find $t\in\Delta$ such that $\frac{1}{2}R\leq \max\{r, R-r\}\leq t<R$. If $r\geq \frac{1}{2}R$ we can just let $t=r$. Otherwise, let $k$ be the positive integer with $kr<R\leq (k+1)r$ and let $t=kr$. Define a Kat\u{e}tov map $F$ over $B$ by letting
$$ F(b)=\left\{\begin{array}{ll} t, & \mbox{ if $b=g(x)$,}\\ R, & \mbox{ otherwise.}\end{array}\right. $$
Let $y\in \U_\Delta$ be such that $d(y, b)=F(b)$ for all $b\in B$. Since $d(y, x)=R$ and $d(y, g(x))=t$, we get that $g(y)\neq y$. Moreover, for any $c\in A$,  since $d(y, g^{-1}(c))=R$, we have $d(g(y), c)=R$. Finally, define a Kat\u{e}tov map $\hat{f}$ over $A\cup\{y, g(y)\}$ that is an extension of $f$ by letting $\hat{f}(y)=R$ and $\hat{f}(g(y))=t$. Let $z\in \U_\Delta$ be a realization of $\hat{f}$. Then $z$ is a realization of $f$, and $d(z, y)=R\neq t=d(z, g(y))$. Thus $g(z)\neq z$.
\end{proof}

\subsection{Mixed identities and MIF properties}\label{sec: infty mif}

In this subsection we consider mixed identities in a group and show that all 
dense subgroups of $\Iso(\U_\Delta)$ do not satisfy any nontrivial mixed 
identities, i.e., they are MIF (mixed identity free). For locally finite 
groups, we also define the more general notion of $\infty$-MIF and show that it 
coincides with omnigenity.

We refer the reader to Hull--Osin \cite{HO} (particularly Section 5 there) for 
an account of the study of mixed identities in group theory and for many 
existing results about the notion of MIF groups. The argument used to prove 
Theorem \ref{TheoremMIF} below is an adaptation to our context of arguments of 
Theorem 5.9 of \cite{HO} about \emph{highly transitive groups} 
(dense subgroups of the permutation group $S_\infty$ of an infinite countable 
set). 

\begin{defn}\label{def:MIF} Let $G$ be a group. Let $F_n$ be the free group 
generated by variables $x_1, \dots, x_n$. A \emph{nontrivial mixed identity} in 
$G$ is a word  $w(x_1, \dots, x_n) \in G \ast F_n \setminus G$ such that  
$w(g_1, \dots, g_n)=1$ for all $g_1, \dots, g_n\in G$. If there is no 
nontrivial mixed identity in $G$, we say that $G$ is \emph{mixed identity free} 
(\emph{MIF}).
\end{defn}

As noted in \cite{HO} (Remark 5.1), a group $G$ is MIF iff there is no nontrivial mixed identity $w(x)\in G*\langle x\rangle\setminus G$, as there is an isomorphism from $G*F_n$ into $G*\langle x\rangle$ that sends $x_1, \dots, x_n$ to $xgx, \dots, x^ngx^n$ for a $g\in G\setminus \{1\}$. In the sequel we only consider mixed identities with one free variable $x$. Such identities are of the form $g_1x^{m_1}g_2x^{m_2}\cdots g_{k}x^{m_{k}}$ for $g_1, \dots, g_k\in G$, $g_2, \dots, g_{k}\in G\setminus\{1\}$, and $m_1, \dots, m_{k}\in \mathbb{Z}\setminus\{0\}$. We denote such identities by $w(x; g_1, \dots, g_k)$.

\begin{prop}\label{subgroupMIF}
Let $G$ be a topological group. If $G$ is MIF then any dense subgroup of $G$ is MIF.
\end{prop}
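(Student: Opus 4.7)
The plan is to argue by contrapositive: I will assume some dense subgroup $H\le G$ is not MIF and show that $G$ itself is not MIF. By the remark following Definition \ref{def:MIF}, it suffices to work with mixed identities in a single variable, so I may pick a nontrivial mixed identity
\[w(x; h_1,\dots,h_k)=h_1 x^{m_1} h_2 x^{m_2}\cdots h_k x^{m_k}\in H * \langle x\rangle\setminus H,\]
with $h_1,\dots,h_k\in H$ and (some) nonzero integer exponents $m_i$, such that $w(h;h_1,\dots,h_k)=1$ for every $h\in H$.

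Since $H\subseteq G$, the same word $w$ can be viewed as an element of $G*\langle x\rangle$. The normal form of $w$ still contains at least one nonzero power of $x$, so $w\notin G$; thus $w$ is a candidate for witnessing that $G$ is not MIF. What remains is to upgrade the identity $w(h;\vec h)=1$ from $h\in H$ to every $h\in G$.

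For this I introduce the evaluation map $\varphi\colon G\to G$ defined by $\varphi(g)=w(g;h_1,\dots,h_k)$. Since the group operations (multiplication and inversion) of the topological group $G$ are continuous, the integer power maps $g\mapsto g^{m_i}$ are continuous, and so is $\varphi$, which is obtained by composing these with left and right multiplications by the fixed constants $h_i$. By hypothesis $\varphi$ vanishes on the dense set $H$, and $\{1\}$ is closed in the Hausdorff topological group $G$; hence $\varphi^{-1}(\{1\})$ is a closed subset of $G$ containing $H$, so it is all of $G$. Therefore $w(g;h_1,\dots,h_k)=1$ for all $g\in G$, meaning $w$ is a nontrivial mixed identity in $G$, and $G$ is not MIF, as desired.

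There is no real obstacle in this proof — it rests entirely on the continuity of word maps in a topological group combined with the fact that in a Hausdorff topological group $\{1\}$ is closed. The only minor point worth being explicit about is that nontriviality of the mixed identity transfers from $H*\langle x\rangle\setminus H$ to $G*\langle x\rangle\setminus G$, which is immediate from the normal form in a free product (the word $w$ has the same nonzero $x$-exponents in both settings, so it lies outside the $G$-factor).
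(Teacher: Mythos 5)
Your proof is correct and follows essentially the same route as the paper's: pass to the contrapositive, note that the evaluation map $g\mapsto w(g;h_1,\dots,h_k)$ is continuous, and conclude that the identity extends from the dense subgroup to the closed set $\{g\in G: w(g;h_1,\dots,h_k)=1\}$, hence to all of $G$. The extra remarks you make (reduction to one variable, and that nontriviality of $w$ transfers from $H*\langle x\rangle\setminus H$ to $G*\langle x\rangle\setminus G$ by the normal form) are fine but not needed beyond what the paper already does.
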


\begin{proof} 
Let $\Gamma$ be a dense subgroup of $G$, and assume that $\Gamma$ satisfies a nontrivial mixed identify $w(\gamma;\gamma_1,\ldots,\gamma_k)=1$ for some fixed $\gamma_1,\ldots,\gamma_k \in \Gamma$ and all $\gamma \in \Gamma$. 
Since $\{g \in G \colon w(g;\gamma_1,\ldots,\gamma_k)=1\}$ is closed, and contains $\Gamma$, we conclude that we have $w(g;\gamma_1,\ldots,\gamma_k)=1$ for all $g\in G$. 
\end{proof}

\begin{thm}\label{TheoremMIF} Let $\sL$ be a countable relational language and $M$ be a countable ultrahomogeneous $\sL$-structure. Suppose $M$ is discerning and $\Aut(M)$ is non-discrete. Then for every $w(x) \in \Aut(M) \ast \langle x\rangle\setminus \Aut(M)$ the set
$\{g \in G \colon w(g) \ne 1\}$ is dense. In particular, $\Aut(M)$ is MIF, and so is any dense subgroup of $\Aut(M)$.
\end{thm}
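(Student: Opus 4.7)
The plan is to establish density of $\{g \in \Aut(M) : w(g) \neq 1\}$ for every non-trivial $w(x) \in \Aut(M) * \langle x \rangle \setminus \Aut(M)$, which immediately yields MIF of $\Aut(M)$ and, via Proposition~\ref{subgroupMIF}, of every dense subgroup. A basic open neighborhood of $h \in \Aut(M)$ has the form $hK$, where $K = \Aut(M)_A$ is the pointwise stabilizer of some finite $A \subseteq M$. The substitution $x \mapsto hx$ induces an automorphism of $\Aut(M) * \langle x \rangle$ fixing $\Aut(M)$ pointwise, and therefore sends $w$ to a still non-trivial word $w'(x) := w(hx)$; it thus suffices to prove that for every non-trivial word $v(x) = g_1 x^{n_1} \cdots g_s x^{n_s}$ in reduced form (all $n_i \ne 0$ and $g_i \ne 1$ for $i \geq 2$) and every finite $A \subseteq M$, there exists some $k \in K$ with $v(k) \neq 1$.

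The key tool is the following consequence of the hypotheses: for any finite $B \subseteq M$, every non-trivial $1$-type $p$ over $B$ has infinitely many realizations in $M$. Indeed, if $[p]$ were finite, the pointwise stabilizer of the finite set $B \cup [p]$ would be an open subgroup of $\Aut(M)$ that contains no non-identity element (by discernment of $p$), contradicting non-discreteness.

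I construct $k$ as an extension of a finite partial isomorphism. Pick $y_0 \in M \setminus A$, set $z_s := y_0$, and symbolically trace the computation of $v(k)(y_0)$ from right to left: for $i = s, s-1, \ldots, 1$, build a $k$-chain of length $|n_i|$ starting at $z_i$ (with endpoint $z_i' := k^{n_i}(z_i)$), then set $z_{i-1} := g_i(z_i')$, so that $v(k)(y_0) = z_0$. Each new chain-point is chosen as a fresh realization of the type prescribed by the requirement that the partial map remain a partial isomorphism (this type is the pushforward via the current partial $k$ of the previous chain-point's type over the current domain). That prescribed type is non-trivial precisely because the previous chain-point lies outside the current domain, so it has infinitely many realizations by the key tool; this gives enough room to avoid the finitely many forbidden values, namely (i) previously placed points, (ii) any value whose image under $g_i^{\pm 1}$ would land in the current domain (preventing a clash at the next chain-start), and (iii) at the final step $i = 1$, the extra single avoidance $z_1' \neq g_1^{-1}(y_0)$ (read as $y_0$ when $g_1 = 1$). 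By ultrahomogeneity, the finite partial isomorphism thus built, which fixes $A$ pointwise, extends to $k \in K$, and then $v(k)(y_0) = g_1(z_1') \neq y_0$, so $v(k) \neq 1$.

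The main obstacle is the bookkeeping that maintains the invariant "the prescribed type for the next chain-point is non-trivial" throughout the construction; this reduces to ensuring that each chain-point is chosen outside the current finite domain, which in turn is possible because that type admits infinitely many realizations. Degenerate reduced forms, for instance $v(x) = x^n$, are handled by the direct observation that a chain of distinct points has distinct endpoints, so $k^n(y_0) \neq y_0$ with no further work.
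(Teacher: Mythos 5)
Your overall strategy is the same as the paper's: trace the orbit of a base point $y_0$ through the word from right to left, extend a finite partial automorphism one fresh point at a time, and conclude by ultrahomogeneity. But there is a genuine gap at the junction steps where $g_i$ is applied to the endpoint $z_i'$ of a chain. Your forbidden set for $z_i'$ consists of the previously placed points, the finite set $g_i^{\mp 1}(D)$ where $D$ is the current domain, and one extra value at the last step; none of these excludes the possibility $g_i(z_i')=z_i'$, since $z_i'$ itself is not yet in $D$ at the moment it is chosen. When consecutive exponents change sign (say $n_i>0$ and $n_{i-1}<0$), the point $z_i'$ has just been placed in the range of the partial map, so if $g_i(z_i')=z_i'$ the next chain starts at a point whose image under $k^{-1}$ is already forced; the freshness invariant breaks and you lose control of the trajectory. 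You cannot repair this by adding ``avoid the fixed points of $g_i$'' to the forbidden list, because that set need not be finite, and the only consequence of discernment you ever invoke is that nontrivial types have infinitely many realizations, which only lets you dodge finite sets.

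That this is a real gap and not a presentational one is shown by the pure set: for $M$ a countable set with only equality, every nontrivial $1$-type over a finite set has infinitely many realizations and $\Aut(M)=S_\infty$ is non-discrete, yet $S_\infty$ is not MIF. So no argument relying solely on your ``key tool'' can succeed; the full discerning hypothesis must be used. This is exactly what the paper's Lemma~\ref{type3} supplies: one passes to the refined type of the candidate endpoint over the enlarged parameter set $\dom(p)\cup\{a\}\cup g^{-1}(\dom(p))\cup\{g^{-1}(a)\}$, which is nontrivial and hence discerning, so it admits a realization $c$ with $g(c)\neq c$; and since $c$ has the same type as a point chosen outside $g^{-1}(\dom(p)\cup\{a\})$, the image $g(c)$ automatically lands outside the current domain as well. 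Inserting this step at each junction (your single-value avoidance at the final step is fine as it stands) repairs the argument and essentially recovers the paper's proof.
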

\begin{proof}
Let $g_1,...,g_k\in \Aut(M)$ with $g_2,...,g_k\neq 1$, and $m_1,...,m_{k}\in \mathbb{Z}\setminus \{0\}$. 
We have to prove that the set consisting of all elements $g\in \Aut(M)$ such that $g_1g^{m_1}\cdots g_kg^{m_k}\neq 1$ is dense. We will use the following lemmas.

\begin{lem}\label{type1} Let $A, B$ be finite subsets of $M$ and $t$ be a nontrivial 1-type over $A$. Then there is $b\not\in B$ which is a realization of $t$.
\end{lem}

\begin{proof} Since $\Aut(M)$ is non-discrete, $A$ is finite, and $t\in S_1(A)$ is discerning, $t$ has infinitely many realizations. Since $B$ is finite, there is a realization of $t$ outside of $B$.
\end{proof}

\begin{lem}\label{type2} Let $p$ be any partial automorphism of $M$ and let $a\not\in \dom(p)$. Then there is a partial automorphism $q$ extending $p$ such that $q(a)\not\in \dom(q)$.
\end{lem}
\begin{proof} Since $M$ is ultrahomogeneous, $p$ can be extended to a $\psi\in\Aut(M)$. Let $t$ be the 1-type of $\psi(a)$ over $\rng(p)$. Since $a\not\in\dom(p)$, $\psi(a)\not\in\rng(p)$, and therefore $t$ is nontrivial. By Lemma~\ref{type1}, $t$ has a realization $b\not\in \dom(p)\cup\{a\}$. Since $b\in[t]$, the map $p\cup\{(a, b)\}$ is a partial automorphism. Let $q=p\cup\{(a, b)\}$, then $q$ extends $p$. Moreover, $q(a)=b\not\in \dom(p)\cup\{a\}=\dom(q)$.
\end{proof}

\begin{lem}\label{type3}Let $g\in\Aut(M)$, $p$ be a partial automorphism of $M$, and let $a\not\in \dom(p)$. Then there is a partial automorphism $q$ extending $p$ such that $q(a), gq(a)\not\in \dom(q)$ and $gq(a)\neq q(a)$.
\end{lem}

\begin{proof} Again, since $M$ is ultrahomogeneous, $p$ can be extended to a $\psi\in\Aut(M)$. Let $t$ be the 1-type of $\psi(a)$ over $\rng(p)$. Since $a\not\in\dom(p)$, $\psi(a)\not\in\rng(p)$, and therefore $t$ is nontrivial. By Lemma~\ref{type1}, $t$ has a realization $b\not\in \dom(p)\cup\{a\}\cup g^{-1}(\dom(p))\cup \{g^{-1}(a)\}$. Let $s$ be the 1-type of $b$ over $\dom(p)\cup\{a\}\cup g^{-1}(\dom(p))\cup\{g^{-1}(a)\}$. Then $s$ is nontrivial, hence is discerning. Let $c$ be a realization of $s$ with $g(c)\neq c$. Then $g(c)\not\in \dom(p)\cup\{a\}$, because otherwise $c\in g^{-1}(\dom(p)\cup\{a\})$ and would satisfy the negation of some formula in $s$. Now $t\subseteq s$, so we have that $c$ is a realization of $t$. As before, it follows that the map $p\cup\{(a, c)\}$ is a partial automorphism. Let $q=p\cup\{(a, c)\}$, then $q$ extends $p$. Moreover, $q(a)=c, gq(a)=g(c)\not\in \dom(p)\cup\{a\}=\dom(q)$.
\end{proof}

Let $p$ be any partial automorphism of $M$ and fix $a\not\in \dom(p)\cup \rng(p)$. By applying Lemma~\ref{type2} repeatedly, to either $p$ if $m_k>0$ or $p^{-1}$ if $m_k<0$, we obtain a partial automorphism $q$ extending $p$ or $p^{-1}$ accordingly, such that
$$ a, q(a), q^2(a), \dots, q^{|m_k|-1}(a) $$
are pairwise distinct. Then, by applying Lemma~\ref{type3} once to $g_k$, $q$ and $q^{|m_k|-1}(a)$, we obtain a partial automorphism $r$ extending $q$, such that
$$ a, r(a), r^2(a), \dots, r^{|m_k|-1}(a), r^{|m_k|}(a), g_kr^{|m_k|}(a) $$
are pairwise distinct. Let $p_k=r$ if $m_k>0$ and $p_k=r^{-1}$ if $m_k<0$. Then we get a partial automorphism $p_k$ extending $p$ such that $a, g_kp_k^{m_k}(a)$ are distinct.

Repeating the argument $k-1$ more times, we successively obtain partial automorphisms 
$$ p\subseteq p_k \subseteq p_{k-1} \subseteq \cdots \subseteq p_1 $$
such that
$$ a,\ g_kp_k^{m_k}(a),\ g_{k-1}p_{k-1}^{m_{k-1}}g_kp_{k-1}^{m_k}(a),\ \dots,\ g_1p_1^{k_1}\cdots g_kp_1^{m_k}(a) $$
are all pairwise distinct. In the very last step of the construction, we apply Lemma~\ref{type3} as above if $g_1\neq 1$ and apply Lemma~\ref{type2} if $g_1=1$. In particular, we obtain that for any $g\in\Aut(M)$ extending $p_1$, we have that
$g_1g^{m_1}\cdots g_kg^{m_k}(a)\neq a$. Since $p$ was arbitrary, we conclude that the set of $g$ such that $g_1g^{m_1}\cdots g_kg^{m_k}\neq 1$ is dense.
\end{proof}

The following corollary follows immediately from Proposition~\ref{UDeltadiscerning}, Proposition~\ref{subgroupMIF}, and Theorem~\ref{TheoremMIF}.

\begin{cor} For any countable distance value set $\Delta$ of cardinality $\geq 2$, $\Iso(\U_\Delta)$ is MIF. Moreover, any dense subgroup of $\Iso(\U_\Delta)$ is MIF.
\end{cor}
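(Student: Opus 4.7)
The plan is to show that the corollary follows by directly combining the three cited results, and to this end the only thing that requires verification beyond quoting the statements is that $\Iso(\U_\Delta)$ is non-discrete whenever $|\Delta|\geq 2$, so that Theorem~\ref{TheoremMIF} applies. First, $\U_\Delta$ is a countable ultrahomogeneous $\sL$-structure by construction as a Fra\"{i}ss\'e limit (viewed in the relational language described in Subsection~\ref{subsectionDelta}). By Proposition~\ref{UDeltadiscerning} it is discerning. So the only missing hypothesis to check for Theorem~\ref{TheoremMIF} is non-discreteness of $\Aut(\U_\Delta)=\Iso(\U_\Delta)$.

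For non-discreteness, I would argue as follows. A basis of identity neighborhoods for the permutation-group topology is given by pointwise stabilizers of finite subsets $A\subseteq \U_\Delta$, so it suffices to show each such stabilizer is nontrivial. Given a finite $A$, pick any $a_0\in A$ (or any $a_0\in\U_\Delta$ if $A=\emptyset$) and any $s\in\Delta$, and consider the Kat\v{e}tov function $f$ on $A$ defined, for instance, by $f(a)=\min\{d(a,a_0)+s,\sup(\Delta)\}$. By the Urysohn property (Proposition~\ref{UrysohnProperty}) there is a realization $x\in\U_\Delta$ of $f$; iterating the argument after adjoining $x$ to $A$ (using a slightly different Kat\v{e}tov extension to avoid re-realizing $x$) one produces at least two distinct realizations $x\neq y$ of $f$ over $A$. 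Then ultrahomogeneity of $\U_\Delta$ provides an isometry of $\U_\Delta$ sending $x$ to $y$ while fixing $A$ pointwise, which is a nontrivial element of the pointwise stabilizer of $A$.

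With non-discreteness established, Theorem~\ref{TheoremMIF} applied to $M=\U_\Delta$ yields that $\Iso(\U_\Delta)$ is MIF. The final clause, that any dense subgroup of $\Iso(\U_\Delta)$ is MIF, is then immediate from Proposition~\ref{subgroupMIF}. The only step that required any genuine work was verifying non-discreteness; the rest is simply invoking results already in hand, and this is why the authors describe the corollary as following immediately from the three cited statements.
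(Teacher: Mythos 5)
Your proof is correct and follows essentially the same route as the paper, which simply cites Proposition~\ref{UDeltadiscerning}, Theorem~\ref{TheoremMIF}, and Proposition~\ref{subgroupMIF}; your explicit verification that $\Iso(\U_\Delta)$ is non-discrete (via a Kat\v{e}tov-function argument producing two realizations of the same $1$-type over any finite set) fills in a hypothesis of Theorem~\ref{TheoremMIF} that the paper leaves implicit, and that verification is sound.
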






\begin{example}
\begin{enumerate}
\item $\mathbb{H}$ is MIF. 
\item Let $\mbox{Alt}(\mathbb{N})$ be the group of all finitely supported even 
permutations on $\mathbb{N}$. Hull--Osin showed that $\mbox{Alt}(\mathbb{N})$ 
is not MIF (Theorem 5.9 of \cite{HO}). Since $\mbox{Alt}(\mathbb{N})$ is dense in 
$S_\infty$, $S_\infty$ is not MIF.
\item Let $P$ be a set of primes and $T\subseteq \mathbb{H}$ be a subset of type $P$. The group $G_T$ constructed in Subsection~\ref{omnigenousfamily} is MIF. To see this, assume $\gamma_1x^{m_1}\cdots \gamma_kx^{m_k}$ is a mixed identity in $G_T$. By observing its action on $Y=\mathbb{H}$, there are $g_1\in\mathbb{H}$ and $g_2 \dots, g_k\in \mathbb{H}\setminus \{1\}$ such that for any $g\in \mathbb{H}$, $g_1g^{m_1}\cdots g_kg^{m_k}=1$. This contradicts the fact that $\mathbb{H}$ is MIF.
\item In Subsection~\ref{HallGroup} we noted that the family of groups $\mathbb{H}\oplus A$, where $A$ is an abelian $p$-group, consists of continuum many non-isomorphic countable universal locally finite groups. It is easy to see that $\mathbb{H}\oplus A$, when $A$ is nontrivial, is not MIF: a nontrivial mixed identity in $\mathbb{H}\oplus A$ is $w(x)=xgx^{-1}g^{-1}$, where $g\in A\setminus\{1\}$. Thus none of the groups $\mathbb{H}\oplus A$ are embeddable as a dense subgroup of $\Iso(\mathbb{QU})$ when $A$ is nontrivial.
\end{enumerate}
\end{example}

We mention another corollary of our results.

\begin{cor}
The group $\Iso(\U)$ is MIF. Consequently, any dense subgroup of $\Iso(\U)$ is MIF.
\end{cor}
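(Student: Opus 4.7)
The second assertion is immediate from Proposition~\ref{subgroupMIF}, so I concentrate on showing $\Iso(\U)$ is MIF. My plan is to adapt the proof of Theorem~\ref{TheoremMIF} directly to $\U$: that argument uses only three features of the ambient structure---ultrahomogeneity, the discerning property, and non-discreteness of the automorphism group---and nothing in it secretly relies on countability. Ultrahomogeneity of $\U$ is classical and $\Iso(\U)$ is patently non-discrete in the pointwise convergence topology, so the substantive task is to verify that $\U$ is discerning, meaning: for any finite $A\subseteq\U$, any nontrivial Kat\u{e}tov map $f\colon A\to(0,+\infty)$, and any $g\in\Iso(\U)\setminus\{1\}$, there is a realization $x$ of $f$ in $\U$ with $g(x)\ne x$.

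To establish this I would follow the template of Proposition~\ref{UDeltadiscerning}. Given $A$, $f$, and $g\ne 1$, first pick $x_0\in\U$ with $g(x_0)\ne x_0$ and with $x_0,g(x_0)\notin A\cup g^{-1}(A)$; such $x_0$ exists because $g$ moves some point and by the Urysohn property of $\U$ one can arrange $x_0$ to lie outside any prescribed finite set. Next extend $f$ to a Kat\u{e}tov map $\hat f$ on $A\cup\{x_0,g(x_0)\}$ with $\hat f(x_0)\ne\hat f(g(x_0))$; since the value set is the entire positive real line, the Kat\u{e}tov-admissible intervals for $\hat f(x_0)$ and $\hat f(g(x_0))$ are nondegenerate and contain uncountably many reals, so it is trivial to force distinct values. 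Then realize $\hat f$ in $\U$: any realization $x\in\U$ lies in $[f]$ and satisfies $d(g(x),g(x_0))=d(x,x_0)=\hat f(x_0)\ne\hat f(g(x_0))=d(x,g(x_0))$, whence $g(x)\ne x$.

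With discerningness in hand, the three auxiliary lemmas (type1, type2, type3) from the proof of Theorem~\ref{TheoremMIF} transfer verbatim once ``partial automorphism'' is read as ``partial isometry between finite subsets of $\U$'' and ``1-type over $A$'' as ``Kat\u{e}tov map on $A$''. The infinite-realizations step of Lemma type1 uses only openness of pointwise stabilizers of finite sets together with non-discreteness, both of which hold for $\Iso(\U)$ with the pointwise convergence topology. The main body of Theorem~\ref{TheoremMIF}---iterating those lemmas $k$ times across the syllables of $w(x;g_1,\ldots,g_k)$---then produces, for any fixed parameters $g_1,\ldots,g_k\in\Iso(\U)$ with $g_2,\ldots,g_k\ne 1$ and any integers $m_1,\ldots,m_k\ne 0$, a dense set of $g\in\Iso(\U)$ on which $w$ does not vanish. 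This rules out every nontrivial mixed identity. The main obstacle I foresee is purely organizational: checking meticulously that no step in the proofs of Theorem~\ref{TheoremMIF} or Proposition~\ref{UDeltadiscerning} quietly invokes the countability of the underlying structure.
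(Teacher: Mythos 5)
Your strategy --- redoing Theorem~\ref{TheoremMIF} directly on the complete space $\U$ --- is genuinely different from the paper's, which instead reduces to the countable case in two lines: by Lemma 5.1 of \cite{BM2} the parameters $g_1,\dots,g_k$ preserve a countable dense subspace $X\subseteq\U$ isometric to some $\U_\Delta$, the already-established corollary for $\Iso(\U_\Delta)$ yields $g\in\Iso(X)$ with $w(g;g_1|_X,\dots,g_k|_X)\neq 1$, and one extends $g$ to $\U$. Your route could in principle work, but as written it has a genuine gap at the discerning step: the claim that the Kat\u{e}tov-admissible intervals for $\hat f(x_0)$ and $\hat f(g(x_0))$ are automatically nondegenerate is false. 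Take $A=\{a,b\}$ with $d(a,b)=2$ and $f(a)=f(b)=1$ (a nontrivial Kat\u{e}tov map), and suppose $x_0$ satisfies $d(x_0,a)=3$ and $d(x_0,b)=1$; then the constraints $|f(a)-d(x_0,a)|\le \hat f(x_0)\le f(b)+d(x_0,b)$ force $\hat f(x_0)=2$, and if $g$ fixes $a$ and $b$ then $g(x_0)$ has the same distances to $A$ and is forced to the same value, so no separation is achieved. This is precisely why Proposition~\ref{UDeltadiscerning} does not argue this way, but first manufactures an auxiliary realization $y$ of a constant-$R$ Kat\u{e}tov map over the larger set $B$ (with one controlled smaller value $t\geq R/2$ at $g(x)$), for which all triangle inequalities can be checked by hand. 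To salvage your argument you would have to either show that among the (open set of) points moved by $g$ one can always find an $x_0$ whose admissible interval is nondegenerate --- the degenerate locus is a finite union of closed ``equality'' sets and you would need to show it cannot cover a ball of moved points --- or simply transplant the construction of Proposition~\ref{UDeltadiscerning}, which does go through when the distance set is all of $(0,+\infty)$.

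A second, smaller but still real, error: pointwise stabilizers of finite subsets are \emph{not} open in $\Iso(\U)$ with the pointwise convergence topology (basic neighborhoods of $1$ only require finitely many points to move by less than $\varepsilon$), so your justification of the infinite-realizations step of Lemma~\ref{type1} fails as stated. The fact you need --- that a nontrivial Kat\u{e}tov map over a finite set has realizations outside any prescribed finite set --- is true and easy to prove directly from the Urysohn extension property, but the ``discerning plus non-discrete implies infinitely many realizations'' inference from the countable permutation-group setting does not transfer. Likewise, sets of the form $\{g\in\Iso(\U): g\supseteq p_1\}$ for finite partial isometries $p_1$ are no longer a basis of open sets, so the density conclusion of Theorem~\ref{TheoremMIF} needs rephrasing; for MIF itself this last point is harmless, since a single witness $g$ with $w(g)\neq 1$ suffices.
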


\begin{proof}
Consider a nontrivial word $w(g;g_1,\ldots,g_k)$ with  $g_1,\ldots,g_k \in \Iso(\U)$. Using Lemma 5.1 of \cite{BM2}, we obtain a distance value set $\Delta$ and a dense subspace $X$ of $\U$ which is isometric to $\U_\Delta$ and such that $g_i(X)=X$ for all $i \in \{1,\ldots,k\}$. Since $\Iso(\U_\Delta)$ is MIF, we can find $g \in \Iso(X)$ such that $w(g;{g_1}|_X,\ldots,{g_k}|_X) \ne 1$. Extending $g$ to an isometry of $\U$, we have $w(g;g_1,\ldots,g_k) \ne 1$.
\end{proof}

In the rest of this subsection we consider some notions stronger than MIF. For notational simplicity we will write a word $w(x_1, \dots, x_n)\in G*F_n\setminus G$ as $w(x_1,\dots, x_n; g_1, \dots, g_k)$ if the constants occurring in the normal form of $w(x_1,\dots, x_n)$ are among $g_1, \dots, g_k$.

\begin{defn} Let $G$ be a group and $k\geq 1$ be an integer. We say that $G$ is \emph{$k$-MIF} if for any $w_1(x_1, \dots, x_n), \dots, w_k(x_1, \dots, x_n)\in G*F_n\setminus G$ there are $h_1, \dots, h_n\in G$ such that $w_1(h_1, \dots, h_n), \dots, w_k(h_1, \dots, h_n)\neq 1$. 

If $G$ is a locally finite group, we say that $G$ is \emph{$\infty$-MIF} if for any $g_1, \dots, g_k\in G$ and any infinite sequence $w_1(x_1, \dots, x_n; g_1, \dots, g_k), w_2(x_1,\dots, x_n; g_1, \dots, g_k), \dots$ of elements of $G*F_n\setminus G$, whenever there is a finite group $\Gamma$ which is an overgroup of $\langle g_1, \dots, g_k\rangle$ in which there are $\gamma_1,\dots, \gamma_n\in \Gamma$ such that $w_i(\gamma_1,\dots, \gamma_n; g_1, \dots, g_k)\neq 1$ for all $i\geq 1$, there are $h_1, \dots, h_n\in G$ such that $w_i(h_1,\dots, h_n; g_1, \dots, g_k)\neq 1$ for all $i\geq 1$.
\end{defn}


By the remark following Definition~\ref{def:MIF}, for each integer $k\geq 1$, the definition of $k$-MIF is equivalent to the version where we consider words with only one variable. 

\begin{prop}\label{prop: inftymif} Let $G$ be any group. Then the following 
hold:
\begin{enumerate}
\item[(i)] $G$ is MIF iff $G$ is $k$-MIF for some $k\geq 1$ iff $G$ is $k$-MIF 
for all $k\geq 1$.
\item[(ii)] If $G$ is a locally finite group and $G$ is $\infty$-MIF, then $G$ 
is MIF.
\item[(iii)] If $G$ is a locally finite group, then $G$ is $\infty$-MIF iff $G$ 
is omnigenous.
\end{enumerate}
\end{prop}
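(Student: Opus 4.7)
For (i), the implications ``$k$-MIF for all $k$ $\Rightarrow$ $k$-MIF for some $k$ $\Rightarrow$ MIF'' are immediate: the first is trivial, and the second follows by setting $w_1=\cdots=w_k=w$ in the $k$-MIF condition. The nontrivial direction is MIF $\Rightarrow$ $k$-MIF for all $k$. My plan is to construct, from $w_1,\ldots,w_k\in G*F_n\setminus G$, a single word whose nonvanishing forces the simultaneous nonvanishing of all $w_i$. Introducing auxiliary variables $y_1,\ldots,y_{k-1}$, consider the nested commutator
\[
W(\vec x,\vec y) = [y_1 w_1(\vec x) y_1^{-1}, [y_2 w_2(\vec x) y_2^{-1}, \ldots, [y_{k-1} w_{k-1}(\vec x) y_{k-1}^{-1}, w_k(\vec x)]\ldots]]
\]
in $G*F_{n+k-1}$. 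The key point is that if $w_i(\vec h)=1$ for any $i$, the $i$-th slot becomes $[u_i\cdot 1\cdot u_i^{-1},\cdots]=[1,\cdots]=1$ and the vanishing propagates outward, so $W(\vec h,\vec u)=1$ for every $\vec u$; contrapositively, $W(\vec h,\vec u)\neq 1$ forces every $w_i(\vec h)\neq 1$. To apply MIF, I need $W\in G*F_{n+k-1}\setminus G$. Using the standard fact that two elements of a free product commute only when they are powers of a common element, together with the observation that $y_i$ appears only in the $i$-th block while $w_k$ avoids all $y_j$'s, one checks inductively from innermost outward that each commutator is nontrivial and involves an auxiliary variable, so $W\notin G$. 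Applying MIF in $n+k-1$ variables (equivalent to one-variable MIF by the reduction in Remark 5.1 of \cite{HO}) yields the desired $\vec h$.

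For (ii), let $w(x_1,\ldots,x_n;g_1,\ldots,g_k)\in G*F_n\setminus G$ be a nontrivial mixed identity. I would apply $\infty$-MIF to the constant sequence $w_i=w$; the hypothesis requires a finite overgroup $\Gamma$ of $\langle\vec g\rangle$ containing $\vec\gamma$ with $w(\vec\gamma;\vec g)\neq 1$. I embed $\langle\vec g\rangle$ into Hall's group $\mathbb{H}$, which contains every finite group; since $\mathbb{H}$ is MIF (by Theorem~\ref{DeltaMain} combined with Theorem~\ref{TheoremMIF}), there exist $\vec\gamma\in\mathbb{H}^n$ with $w(\vec\gamma;\vec g)\neq 1$, and $\Gamma:=\langle\vec g,\vec\gamma\rangle\subseteq\mathbb{H}$ is the finite witness. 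The conclusion of $\infty$-MIF then provides $\vec h\in G^n$ with $w(\vec h;\vec g)\neq 1$.

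For (iii), the direction $(\Leftarrow)$ is direct. Given $\infty$-MIF data $(\vec g,\{w_i\},\Gamma,\vec\gamma)$ with $w_i(\vec\gamma;\vec g)\neq 1$ for all $i$, set $G_1=\langle\vec g\rangle$, $\Gamma_2=\langle\vec g,\vec\gamma\rangle\leq\Gamma$, $\Gamma_1=G_1\subseteq\Gamma_2$, and $\Psi_1=\mathrm{id}\colon G_1\to\Gamma_1$. Omnigenity yields a finite $G_2\leq G$ containing $G_1$ with a surjection $\Psi_2\colon G_2\twoheadrightarrow\Gamma_2$ extending $\Psi_1$; for any lifts $h_l\in\Psi_2^{-1}(\gamma_l)$, the equality $\Psi_2(w_i(\vec h;\vec g))=w_i(\vec\gamma;\vec g)\neq 1$ forces $w_i(\vec h;\vec g)\neq 1$ in $G$.

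The converse $(\Rightarrow)$ is the crux. Starting from omnigenity data $(G_1,\Gamma_1\leq\Gamma_2,\Psi_1)$, enumerate the generators of $G_1$ as $g_1,\ldots,g_k$ and all elements of $\Gamma_2$ as $\gamma_1,\ldots,\gamma_n$. Let $\{w_i\}_{i\geq 1}$ enumerate all words $w\in G_1*F_n\setminus G_1$ with $w(\vec\gamma;\Psi_1(\vec g))\neq 1$ in $\Gamma_2$; the finite group $\Gamma_2$ with elements $\vec\gamma$ witnesses the $\infty$-MIF hypothesis, which supplies $h_1,\ldots,h_n\in G$ with $w_i(\vec h;\vec g)\neq 1$ for every $i$. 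Set $G_2=\langle\vec g,\vec h\rangle$, finite by local finiteness, and define $\Psi_2\colon G_2\to\Gamma_2$ by $g_j\mapsto\Psi_1(g_j)$, $h_l\mapsto\gamma_l$. Well-definedness amounts to ``$u(\vec\gamma;\Psi_1(\vec g))\neq 1 \Rightarrow u(\vec h;\vec g)\neq 1$'', exactly the avoidance we arranged; surjectivity holds because $\vec\gamma$ spans $\Gamma_2$; and $\Psi_2$ extends $\Psi_1$ by construction. The main obstacle throughout is (i)'s construction of a single word encoding the conjunction of finitely many non-vanishing conditions, which exploits precisely the algebraic fact that commutators annihilate identity entries.
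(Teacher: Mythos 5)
Your proposal is correct; parts (i) and (iii) follow essentially the paper's own argument, while part (ii) takes a genuinely different route. For (i) the paper uses the iterated commutator $[w_1,\, x^{-1}w_2x,\, \dots,\, x^{-k+1}w_kx^{k-1}]$ with a single auxiliary variable $x$ in place of your $k-1$ conjugating variables $y_1,\dots,y_{k-1}$; the two constructions are interchangeable, and in both cases the nontriviality of the resulting word rests on the fact that in a free product the centralizer of a nontrivial element of a free factor is contained in that factor (your phrase ``powers of a common element'' is not quite the right commutation criterion in a free product, but the inductive argument you sketch only needs this centralizer fact, so nothing breaks). Your (iii) coincides with the paper's proof in both directions, including the well-definedness/surjectivity check for $\Psi_2$. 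The real divergence is (ii): the paper produces the finite witness group $\Gamma$ by noting that $\langle g_1,\dots,g_k\rangle * \langle x\rangle$ is residually finite (finite $*\ \mathbb{Z}$ is virtually free) and passing to a finite quotient in which $w$ survives and $\langle g_1,\dots,g_k\rangle$ embeds; you instead embed $\langle g_1,\dots,g_k\rangle$ into Hall's group $\mathbb{H}$, invoke the earlier result that $\mathbb{H}$ is MIF to find $\gamma_1,\dots,\gamma_n$ with $w(\vec\gamma;\vec g)\neq 1$, and take $\Gamma=\langle \vec g,\vec\gamma\rangle$, finite by local finiteness of $\mathbb{H}$. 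Your route is legitimate and non-circular --- the MIF-ness of $\mathbb{H}$ is established earlier in the paper via the dense embedding into $\Iso(\U_\Delta)$ together with Theorem~\ref{TheoremMIF}, and does not depend on this proposition --- but it imports much heavier machinery than the paper's elementary residual-finiteness argument and relies on already knowing a locally finite MIF group containing all finite groups; the paper's version is self-contained and would survive even in a context where those earlier results were unavailable.
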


\begin{proof} 

MIF is exactly $1$-MIF, and it is clear that $k$-MIF implies $1$-MIF. The fact 
that $1$-MIF implies $k$-MIF is a direct consequence of Proposition 5.3 of 
\cite{HO}. We give a full proof here for the convenience of the reader. We 
first show that $1$-MIF implies $2$-MIF. Let $w_1(x_1, \dots, x_n), w_2(x_1, 
\dots, x_n)\in G*F_n\setminus G$. Let $w(x_1, \dots, x_n, x)\in G*F_n*\langle 
x\rangle$ be the word $[w_1, x^{-1}w_2x]=w_1^{-1}x^{-1}w_2^{-1}xw_1x^{-1}w_2x$. 
Then $w$ is nontrivial. Since $G$ is $1$-MIF, there are $h_1, \dots, h_n, h\in 
G$ such that $w(h_1,\dots, h_n, h)\neq 1$. It follows that $w_1(h_1, \dots, 
h_n), w_2(h_1, \dots, h_n)\neq 1$. Thus $G$ is $2$-MIF. In general, let $k\geq 
2$ and $w_1, \dots, w_k\in G*F_n\setminus G$. Then similarly define a 
nontrivial $w\in G*F_n*\langle x\rangle$ as 
$$ w=[w_1, x^{-1}w_2x, x^{-2}w_3x^2, \cdots, x^{-k+1}w_kx^{k-1}] $$
where the commutator is inductively defined by
$$ [u_1, u_2]=u_1^{-1}u_2^{-1}u_1u_2 \ \mbox{ and }\  [u_1, \dots, u_m]=[[u_1, 
\dots, u_{m-1}], u_m]. $$
Reasoning as before, we find $h_1,...,h_n, h\in G$ satisfying 
$w(h_1,...,h_n,h)\neq 1$, which implies that all the elements 
$w_1(h_1,...,h_n)$,..., $w_k(h_1,...,h_n)$ are non trivial. This finishes the 
proof that $1$-MIF implies $k$-MIF for every $k\geq 2$, so (i) is established.

To prove (ii), let $G$ be an $\infty$-MIF locally finite group. In view of (i), we only need to show that $G$ is MIF.
Let $w(x; g_1,...,g_k)$ be a 
nontrivial word in $G*\langle x\rangle$. Since $\langle g_1,...,g_k\rangle $ is finite and 
$\langle x\rangle \cong\mathbb{Z}$ is residually finite, the group $\langle g_1,...,g_k\rangle *\langle x\rangle$ is 
residually finite. We can thus find a finite group $\Gamma$ and a homomorphism 
$\rho :\langle g_1,...,g_k\rangle *\langle x\rangle\to\Gamma$ such that $\rho(w)\neq 1$ and 
$\rho(g)\neq 1$
 for every $g\in \langle g_1,...,g_k\rangle\setminus\{1\}$. Since $\rho\!\upharpoonright\!\langle g_1, \dots, g_n\rangle$ is an isomorphism, we 
 can then view $\Gamma$ as an overgroup of $\langle g_1,...,g_k\rangle$, in which $w(\rho(x); g_1, \dots, g_k)\neq 1$. Applying the 
 $\infty$-MIF property, we find $h\in G$ such that 
 $w(h;g_1,...,g_k)\neq 1$ as required.

To prove (iii), suppose $G$ is a locally finite group. First, assume $G$ is 
omnigenous. Let $g_1, \dots, g_k\in G$ and $w_1, w_2, \dots\in G*F_n\setminus 
G$ be given. Let $\Gamma$ be a finite overgroup of $\langle g_1, \dots, 
g_k\rangle$ and $\gamma_1, \dots, \gamma_n\in \Gamma$ are such that 
$w_i(\gamma_1,\dots, \gamma_n; g_1, \dots, g_k)\neq 1$ for all $i\geq 1$. Since 
$G$ is omnigenous, there are $h_1, \dots, h_n\in G$ such that the map given by 
$g_j\mapsto g_j$ for $j=1, \dots, k$ and $h_l\mapsto \gamma_l$ for $l=1,\dots, 
n$ generates a homomorphism from $\langle h_1,\dots, h_n, g_1, \dots, 
g_k\rangle$ onto $\langle \gamma_1, \dots, \gamma_n, g_1, \dots, g_k\rangle\leq 
\Gamma$. Thus, for any $i\geq 1$, $w_i(h_1, \dots, h_n; g_1, \dots, g_k)\neq 
1$, since otherwise by applying the homomorphism we would get $w_i(\gamma_1, 
\dots, \gamma_n; g_1, \dots, g_k)=1$.

Conversely, assume $G$ is $\infty$-MIF. Suppose $G_1=\{1\}\cup\{g_1, \dots, 
g_k\}$ is a finite subgroup of $G$ and $\Gamma$ is a finite overgroup of $G_1$ 
with 
additional generators $\gamma_1, \dots,\gamma_n\not\in G_1$, i.e., 
$\Gamma=\langle g_1, \dots, g_k,\gamma_1,\dots, \gamma_n\rangle$. Let $w_1, 
w_2, \dots$ enumerate all words $w\in G_1*F_n\setminus G_1$ such that 
$w(\gamma_1,\dots, \gamma_n; g_1, \dots, g_k)\neq 1$. Since $G$ is 
$\infty$-MIF, there are $h_1, \dots, h_n\in G$ such that for all $i\geq 1$, 
$w_i(h_1,\dots,h_n; g_1, \dots, g_k)\neq 1$. Then the map given by $g_j\mapsto 
g_j$ for $j=1,\dots, k$ and $h_l\mapsto \gamma_l$ for $l=1,\dots, n$ generates 
a homomorphism from $\langle g_1, \dots, g_n, h_1,\dots, h_n\rangle$ onto 
$\Gamma$. To see this, note that if $w(x_1, \dots, x_n; g_1, \dots, g_k)\in 
G_1*F_n\setminus G_1$ is such that $w(h_1,\dots, h_n; g_1,\dots, g_k)=1$, then 
$w(\gamma_1,\dots, \gamma_n; g_1, \dots, g_k)=1$ by our construction.
\end{proof}

We have thus established the following implications for countable locally finite groups $G$:
$$\begin{array}{c}\mbox{ $G$ is omnigenous} \\ \Updownarrow \\ \mbox{ $G$ is $\infty$-MIF} \\ \Downarrow \\ \mbox{ $G$ is embeddable as a dense subgroup of $\Iso(\U_\Delta)$ for $|\Delta|\geq 2$} \\ \Downarrow \\ \mbox{ $G$ is $k$-MIF for all $k\geq 1$} \\ \Updownarrow \\
\mbox{ $G$ is MIF } 
\end{array} $$
It follows from our results that $\mathbb{H}$ is $\infty$-MIF, along with all 
groups $H_P$ constructed in Subsection~\ref{omnigenousfamily}. We now show that 
being $\infty$-MIF is not equivalent to being MIF. To this end, we will use the 
following recent result of Jacobson, which relies on a construction due to 
Ol'shanskii, Osin and Sapir \cite{OOS}.

\begin{thm}[\cite{jacobsonMixedIdentityfreeElementary2019}]
	There is a MIF locally finite $p$-group.
\end{thm}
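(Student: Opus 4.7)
The plan is to construct the desired group $G$ as a countable direct limit of finite $p$-groups, chosen so as to defeat every potential nontrivial mixed identity; equivalently, one aims to show that Hall's universal locally finite $p$-group (the Fraïssé limit of the class of finite $p$-groups) is MIF. By the remark following Definition~\ref{def:MIF} it suffices to handle mixed identities in a single variable. Thus the whole problem reduces, modulo a standard bookkeeping, to the following embedding statement $(\star)$: for every finite $p$-group $H$, every tuple $(g_1,\dots,g_k) \in H^k$, and every nontrivial word $w(x;g_1,\dots,g_k) \in H * \langle x \rangle \setminus H$, there exist a finite $p$-group $P \supseteq H$ and an element $h \in P$ such that $w(h;g_1,\dots,g_k) \ne 1$.

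Granting $(\star)$, one builds $G = \bigcup_n G_n$ recursively. Start with any finite $p$-group $G_0$. Diagonally enumerate all triples $(n,\bar g, w)$ where $\bar g$ is a tuple in $G_n^k$ and $w(x;\bar g)$ is a nontrivial word; at stage $n+1$, pick the next enumerated triple to handle and apply $(\star)$ to embed $G_n$ into a finite $p$-group $G_{n+1}$ containing a witness element for $w(x;\bar g)$. The direct limit $G$ is then a countable locally finite $p$-group. Given any nontrivial word $w(x;g_1,\dots,g_k) \in G * \langle x \rangle \setminus G$, the constants $g_1,\dots,g_k$ live in some $G_n$, and the triple $(n,\bar g, w)$ is handled at some later stage; the witness produced there survives in $G$. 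Hence no such $w$ can vanish identically on $G$, so $G$ is MIF.

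The hard part is $(\star)$, because the category of finite $p$-groups is very rigid: HNN extensions, free products, and free amalgamations generally leave the variety of $p$-groups, so one cannot adjoin a witness element by the usual combinatorial group-theoretic means. This is precisely where the Ol'shanskii--Osin--Sapir construction enters. Their small-cancellation machinery over relations of Burnside-type groups permits one to embed a countable $p$-group into a larger $p$-group in which prescribed systems of equations and inequations can be simultaneously solved, all while remaining inside the variety of $p$-groups. The refinement needed for Jacobson's theorem is twofold: first, to show that this toolkit is strong enough to refute \emph{any} mixed identity with $p$-power order constants by some element, which is essentially an equation-solving lemma over Burnside-type quotients; and second, to control the construction well enough that, in each inductive step, one can pass to a \emph{finite} $p$-group $P$, typically by factoring out a suitable verbal subgroup of finite index that does not kill the required inequation. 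The first of these is the genuine obstacle; once it is in place, the bookkeeping in the previous paragraph runs without further difficulty.
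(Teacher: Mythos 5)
Your reduction to the single-variable case and the diagonal bookkeeping over stages are fine and match the skeleton of the paper's argument, but the proposal has a genuine gap at exactly the point you flag yourself: the embedding statement $(\star)$ is never proved. You write that establishing it "is the genuine obstacle" and gesture at the Ol'shanskii--Osin--Sapir machinery without carrying out any argument, so as it stands the proof is incomplete. Moreover, your diagnosis of why $(\star)$ should be hard is off: it is true that the free product $H * \mathbb{Z}/p^m\mathbb{Z}$ is not a $p$-group, but that is not an obstruction, because one does not need to stay inside the variety of $p$-groups at the intermediate step --- one only needs to be able to return to it by a quotient. The paper's proof of the key step is elementary: given a nontrivial word $w(x;g_1,\dots,g_k)=g_1x^{n_1}\cdots g_kx^{n_k}$ over a finite $p$-group $G_n$, choose $m$ with all $|n_i|<p^m$; then the generator of the second free factor of $H=G_n * \mathbb{Z}/p^m\mathbb{Z}$ witnesses $w\neq 1$ by the normal form in free products. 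By Higman's theorem \cite{Hig}, the free product of $p$-groups is residually $p$-finite, so there is a normal subgroup $N\trianglelefteq H$ of $p$-power index avoiding the iterated commutator $[w,h_1,\dots,h_l]$ (where $G_n=\{1,h_1,\dots,h_l\}$); this single condition simultaneously guarantees that $w\neq 1$ in $H/N$ and that every nonidentity element of $G_n$ survives, so $G_n$ embeds in the finite $p$-group $G_{n+1}=H/N$ together with a witness for $w$. No small-cancellation input is needed.

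A secondary inaccuracy: your parenthetical claim that the target group is "Hall's universal locally finite $p$-group (the Fraïssé limit of the class of finite $p$-groups)" should be dropped. Finite $p$-groups do not form a Fraïssé class (amalgamation fails), so there is no such canonical limit; the paper's construction produces \emph{some} MIF locally finite $p$-group by the recursion above, not a distinguished universal one.
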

Jacobson's examples are explicit, but let us observe that 
the above result can be shown directly as follows.
\begin{proof}
	We construct 
	an increasing sequence of finite $p$-groups $\{G_i\}_{i=1}^\infty$ such 
	that $G=\bigcup_{i=1}^\infty G_i$ is MIF. 
	Let $G_1$ be any finite $p$-group and assume that we have constructed the 
	sequence $\{G_i\}_{i=1}^n$ of finite 
	$p$-groups. For every $1\leq i\leq n$, let $W_i=\{w_{i1},w_{i2},\dots\}$ be the 
	set of all nontrivial mixed identities 
	of $G_i$ of the form $w(g;g_1,\dots,g_j)$ where $g_1,g_2,\dots,g_j\in G_i$. 
	Furthermore, let 
	$\sigma:\mathbb{N} \times \mathbb{N} \rightarrow \mathbb{N}$ be a bijection 
	such that for every $s,t\in \mathbb{N}$ we have $\max\{s,t\}\leq 
	\sigma(s,t)$. We find $G_{n+1}$ such that:
	\begin{enumerate}
		\item $G_{n+1}$ is a finite $p$-group,
		\item $G_{n}\leq G_{n+1}$, and
		\item $w_{\sigma^{-1}(n)}$ is not a nontrivial mixed identity of 
		$G_{n+1}$.
	\end{enumerate}
	Assume $w_{\sigma^{-1}(n)}=g_1g^{n_1}\cdots g_kg^{n_k}$ where 
	$g_1,\dots,g_k\in G_n$ (this is possible since $\sigma^{-1}(n)=(s,t)$ 
	where $s,t\leq n$). If $w_{\sigma^{-1}(n)}$ is not a nontrivial mixed 
	identity of $G_n$, then we can take $G_{n+1}=G_n$. 
	Otherwise, $w_{\sigma^{-1}(n)}\in W_n$. Let $m\in \mathbb{N}$ be such that 
	$n_1,n_2,\dots,n_k< p^m$. Then $w_{\sigma^{-1}(n)}$
	is not a mixed identity of the free product 
	$H=G_n*\mathbb{Z}/p^m\mathbb{Z}$. Since both $G_n$ and 
	$\mathbb{Z}/p^m\mathbb{Z}$ are 
	$p$-groups, by Higman \cite{Hig} $H$ is a residually $p$-finite group. Let 
	$w'_{\sigma^{-1}(n)}=[w_{\sigma^{-1}(n)},h_1,\dots,h_l]$ where 
	$G_n=\{1,h_1,h_2, \dots,h_l\}$. Since $H$ is residually $p$-finite there 
	exists $N\trianglelefteq H$ such that $[H:N]$ is equal to a power of $p$ 
	and $w'_{\sigma^{-1}(n)}\notin N$. Now $G_{n+1}=H/N$ is as desired. By 
	construction, it is clear that $G=\bigcup_{i=1}^\infty G_i$ is MIF.
\end{proof}

\begin{prop}\label{pgroupMIF}
	There is a MIF locally finite group which is not $\infty$-MIF.
\end{prop}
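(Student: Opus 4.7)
The plan is to take the MIF locally finite $p$-group $G$ produced by the theorem immediately above, and show that it cannot be $\infty$-MIF by showing that it fails to be omnigenous, then invoking Proposition~\ref{prop: inftymif}(iii).

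To see that $G$ is not omnigenous, fix any prime $q \neq p$. Consider the trivial subgroup $G_1 = \{1\} \leq G$, the finite groups $\Gamma_1 = \{1\} \leq \Gamma_2 = \mathbb{Z}/q\mathbb{Z}$, and the (unique) isomorphism $\Psi_1 \colon G_1 \to \Gamma_1$. If $G$ were omnigenous, there would exist a finite subgroup $G_2 \leq G$ together with a surjective homomorphism $\Psi_2 \colon G_2 \to \mathbb{Z}/q\mathbb{Z}$ extending $\Psi_1$. But $G$ is a $p$-group and locally finite, so $G_2$ is a finite $p$-group. Every homomorphic image of a finite $p$-group is again a finite $p$-group, which rules out the existence of a surjection onto $\mathbb{Z}/q\mathbb{Z}$, a contradiction. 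Hence $G$ is not omnigenous.

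By Proposition~\ref{prop: inftymif}(iii), $G$ is not $\infty$-MIF, while by Jacobson's theorem $G$ is MIF. This completes the proof. There is no real obstacle here: once one has the MIF locally finite $p$-group from Jacobson, the failure of $\infty$-MIF is immediate from the incompatibility of being a $p$-group with the omnigenous extension property, and the equivalence between omnigenity and $\infty$-MIF established earlier.
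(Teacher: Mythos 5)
Your proof is correct and follows essentially the same route as the paper: reduce to non-omnigenity via Proposition~\ref{prop: inftymif}(iii) and exploit that a finite $p$-group cannot surject onto a group whose order is divisible by a prime $q \neq p$. The only cosmetic difference is that you take $G_1=\{1\}$ and target $\mathbb{Z}/q\mathbb{Z}$, whereas the paper takes an arbitrary finite $G_1$ and targets $G_1\times\mathbb{Z}/q\mathbb{Z}$; both instantiations of the omnigenity definition work equally well.
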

\begin{proof}
	By Proposition \ref{prop: inftymif} it suffices to construct a MIF locally 
	finite 
	group which is not omnigenous. 
	Consider a MIF $p$-group $G$ as constructed above, let $G_1$ be any finite 
	subgroup of $G$. Then consider some $q\geq 2$ prime with $p$, and embed $G_1$ into $G_1\times 
	\Z/q\Z$. Since $G$ is a $p$-group, it is clear that no subgroup of $G$ 
	surjects onto $G_1\times \Z/q\Z$, so $G$ fails to be omnigenous as wanted.
\end{proof}


Note that a $p$-group can not be dense in $\Iso(\U_\Delta)$ since for every $q\in \mathbb{N}$ (in particular for $q\in \mathbb{N}$ where $(q,p)=1$) there exists $g\in \Iso(\U_\Delta)$ such that $g$ 
has an orbit of length $q$. Therefore, by Proposition \ref{pgroupMIF} there are MIF groups that are not dense in $\Iso(\U_\Delta)$.

Let us end this section with another class of potentially relevant examples in 
the above implications, namely some MIF highly transitive groups coming from 
Cantor dynamics. Recall that, if 
$\varphi$ is a minimal homeomorphism of a Cantor space $X$, its topological 
full group $[[\varphi]]$ is the group of all
homeomorphisms $g$ of $X$ such that 
there is a clopen partition $A_1,\ldots,A_n$ of $X$ and integers 
$i_1,\ldots,_n$ with the property that $g(x)=\varphi^{i_k}(x)$ for all $x \in 
A_k$. These groups are amenable (by a celebrated result of Juschenko-Monod 
\cite{JuMo}), and provide many examples of highly transitive countable amenable 
groups (their action on any $\varphi$-orbit is highly transitive and faithful). 
Now if one fixes some $x_0 \in X$ and consider the subgroup $\Gamma$ of all 
elements of $[[\varphi]]$ which map the positive semi-orbit of $x_0$ on itself, 
then $\Gamma$ is locally finite and also highly transitive. A basic example is 
the group of dyadic permutations, obtained for $\varphi$ being the odometer. 
All these groups are MIF; we would guess they cannot be embedded as dense 
subgroups of any $\Iso(\U_\Delta)$ besides $S_\infty$. If this 
were true, then in particular
the stabilizers of positive semi-orbits would never be $\infty$-MIF.

\section{A Characterization of Isomorphism\label{sec:cha}}

In this section we turn to a different problem, namely to study how the isomorphism type of $\Iso(\U_\Delta)$ is dependent on $\Delta$. 

\subsection{Topological simplicity of $\Iso(\U_\Delta)$}
The main theorem of this subsection is a result about topological simplicity for pointwise stabilizers in $\Iso(\U_\Delta)$, which we will use later. Tent--Ziegler \cite{TZ}, \cite{TZ2} have studied simplicity for many related automorphism groups, and the result here also follows from their techniques. We give a direct proof using techniques developed in the previous section. 


\begin{lem}\label{l:guirlande}
Let $\Delta$ be any countable distance set. Let $A\subseteq \U_\Delta$ be a finite set, $p$ a nontrivial $1$-type over $A$, $x, y\in [p]$, and $s \le 2 d(x,A)$ an element of $\Delta$. Then there exist an integer $m$ and elements $x_0=x, x_1\ldots, x_{m-1}, x_m=y \in [p]$ such that $d(x_{i-1},x_i)=s$ for all $i=1, \dots, m$.
\end{lem}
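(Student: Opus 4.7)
The plan is to argue by induction on $n := \lceil d(x,y)/s \rceil$, with the convention $n = 0$ when $x = y$. In the base case $n \le 2$, i.e., $r := d(x,y) \le 2s$, I handle the subcases directly: if $r = 0$ take $m = 0$; if $r = s$ take $m = 1$ and $x_1 = y$; in the remaining cases I find a midpoint $z \in [p]$ with $d(x,z) = d(z,y) = s$ by applying Proposition~\ref{UrysohnProperty} to the Kat\u{e}tov map $g \colon A \cup \{x,y\} \to \Delta$ extending $f$ by $g(x) = g(y) = s$. The $(a,b)$-inequalities are inherited from $f$; the $(a,x)$- and $(a,y)$-inequalities reduce to $s \le 2 f(a)$, which holds by the hypothesis $s \le 2 d(x,A) = 2 \min_{a \in A} f(a)$; and the $(x,y)$-inequality becomes $0 \le r \le 2s$, true by assumption. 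Taking $x_1 = z$ yields $m = 2$.

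For the inductive step $n \ge 3$ (so $r > 2s$), I set $t := (n-1)s$ and aim to produce an intermediate $z \in [p]$ with $d(x,z) = s$ and $d(z,y) = t$. Observe that $t$ lies in $[r - s, r)$ by the definition of $n$; iterating the distance-monoid axiom $\min(a+b, \sup \Delta) \in \Delta$ shows that every multiple $ks$ with $ks \le \sup \Delta$ belongs to $\Delta$, and since $t < r \le \sup \Delta$ no capping occurs, hence $t \in \Delta$. To obtain $z$ I apply Proposition~\ref{UrysohnProperty} to the Kat\u{e}tov map $g \colon A \cup \{x,y\} \to \Delta$ with $g|_A = f$, $g(x) = s$, $g(y) = t$. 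The new inequalities to verify are $s \le 2 f(a)$ (hypothesis), $t \le 2 f(a)$ (since $t < r \le 2 f(a)$ via the triangle inequality $r = d(x,y) \le d(x,a) + d(a,y) = 2 f(a)$), and $|s - t| \le r \le s + t$, both of which follow from $t \in [r-s, r)$ together with $n \ge 3$ (which gives $t = (n-1)s \ge 2s > s$, so $|s-t| = t - s < r$, and $s + t = ns \ge r$).

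Given such $z \in [p]$, we have $d(z, A) = d(x, A)$ because $d(z,a) = f(a) = d(x,a)$ for every $a \in A$, so the hypothesis $s \le 2 d(z, A)$ is preserved. Moreover $\lceil d(z,y)/s \rceil = n - 1 < n$, so the inductive hypothesis applied to $(z, y)$ yields a chain $z = z_0, z_1, \ldots, z_{m'} = y$ in $[p]$ with $d(z_{i-1}, z_i) = s$. Concatenating, $x_0 := x$, $x_1 := z$, $x_{i+1} := z_i$ for $i \ge 1$, gives the required chain of length $m = m' + 1$. The main obstacle in the argument is the single point of verifying that the target distance $t = (n-1)s$ is actually realizable in $\U_\Delta$, which is precisely where the distance-monoid closure property of $\Delta$ is used. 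Everything else reduces to routine verification of Kat\u{e}tov inequalities followed by an invocation of the Urysohn extension property.
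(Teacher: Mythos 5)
Your proof is correct. The core idea is the same as the paper's --- realize a chain of points of $[p]$ at consecutive distance $s$ via the Urysohn/Kat\v{e}tov extension property --- but the decomposition is genuinely different. The paper splits on whether $s \ge d(x,A)$ or $s < d(x,A)$ and, in the latter case, builds the \emph{entire} chain in one shot as a single metric amalgam over $A \cup \{x,y\}$, prescribing all pairwise distances $d(x_j,x_i)=(i-j)s$ for $i-j<m$; you instead induct on $\lceil d(x,y)/s\rceil$, adjoining one point at a time and only prescribing its distances to $A$, to $x$, and to $y$. Your route buys something concrete: the only auxiliary distance you must certify lies in $\Delta$ is $t=(n-1)s$, and your observation that $t< d(x,y)\le\sup(\Delta)$ (so the truncated-sum closure of $\Delta$ yields $t\in\Delta$ with no capping) settles this cleanly. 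The paper's one-shot construction, by contrast, tacitly requires every multiple $(i-j)s$ with $i-j\le m-1$ to be an admissible distance, and since $m$ is chosen so that $ms\ge 2d(x,A)$, these multiples can exceed $\sup(\Delta)$ (e.g.\ $\Delta=\{1,2\}$, $d(x,A)=2$, $s=1$ forces $m=4$ and the distance $3\notin\Delta$); fixing that requires capping those distances at $\sup(\Delta)$ and rechecking the triangle inequalities, a point the paper leaves implicit. So your argument is not only valid but in this respect tighter than the original.
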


\begin{proof} Since $x$ and $y$ have the same $1$-type over $A$, we have $d(x, y)\leq 2d(x, A)$. If $s\geq d(x, A)$, then consider the Kat\u{e}tov map $f: A\cup\{x, y\}\to \Delta$ defined by
$$ f(a)=\left\{\begin{array}{ll} d(x, a), & \mbox{ if $a\in A$,}\\ s, & \mbox{ if $a\in\{ x, y\}$.}\end{array}\right. $$
Let $x_1\in \U_\Delta$ be such that $f(x_1)=d(x_1, a)$ for all $a\in A\cup\{x,y\}$. Then $x_1\in[p]$ and $d(x, x_1)=d(x_1, y)=s$. If $s<d(x, A)$, then let $m$ be the least integer so that $ms\geq 2d(x, A)$. Define a metric space $A\cup\{x=x_0, x_1, \dots, x_{m-1}, x_m=y\}$ as an extension of $A\cup\{x, y\}$ by letting, for all $1\leq i<m$,
$d(x_i, a)=d(x, a)$ for all $a\in A$ (i.e. $x_i\in[p]$), and for all $1\leq i\leq m$ and $j<i$, $d(x_j, x_i)=(i-j)s$ if $i-j<m$. By the Urysohn property of $\U_\Delta$, we can find such $x_1, \dots, x_{m-1}\in \U_\Delta$.
\end{proof}

For a finite subset $A\subseteq \U_\Delta$, denote by $G_A$ the pointwise stabilizer of $A$, i.e., $G_A=\{g\in\Iso(\U_\Delta): \forall a\in A\ g(a)=a\}$.

\begin{lem}\label{l:inductivestep}
Let $\Delta$ be any countable distance value set with $|\Delta|\geq 2$. Let $A\subseteq \U_\Delta$ be a finite set, $p$ a nontrivial $1$-type over $A$, and distinct $x, y\in[p]$. Let $g$ be a nontrivial element of $G_A$. Then there exist $h_1,\ldots,h_n \in G_A$ such that $h_ngh_n^{-1}\cdots h_1gh_1^{-1}(x)=y$.
\end{lem}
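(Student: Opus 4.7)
The plan is to find a single pair $(u, g(u))$ of distinct points both realizing $p$, link $x$ to $y$ via a chain in $[p]$ whose consecutive distances all equal $d(u, g(u))$, and then for each step of this chain produce a conjugate of $g$ by an element of $G_A$ that realizes that one step.

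First I would use that $\U_\Delta$ is discerning (Proposition~\ref{UDeltadiscerning}, where the assumption $|\Delta|\ge 2$ is used) to find $u \in [p]$ with $g(u)\ne u$, and set $s=d(u,g(u))\in\Delta$. Because $g$ fixes $A$ pointwise and is an isometry, $g(u)$ automatically realizes $p$ as well, and the triangle inequality through any point of $A$ shows that $s\le 2d(x,A)$. Hence Lemma~\ref{l:guirlande} supplies a chain $x=x_0,x_1,\ldots,x_m=y$ in $[p]$ with each $d(x_{i-1},x_i)=s$. (If $A=\emptyset$ one just invokes the Urysohn property of $\U_\Delta$ directly to build such a chain.)

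For each $i\in\{1,\ldots,m\}$, the pair $(u,g(u))$ and the pair $(x_{i-1},x_i)$ both have members realizing $p$ and are at distance $s$, so the map sending $u\mapsto x_{i-1}$, $g(u)\mapsto x_i$, together with the identity on $A$, is a partial isometry of $\U_\Delta$; by ultrahomogeneity it extends to some $h_i \in G_A$. A direct computation gives $h_igh_i^{-1}(x_{i-1})=h_i(g(u))=x_i$, so the composition $h_mgh_m^{-1}\cdots h_1gh_1^{-1}$ sends $x=x_0$ successively to $x_1,x_2,\ldots,x_m=y$, as required.

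The main obstacle is the very first step: securing a point of $[p]$ that is actually moved by $g$. Without the discerning property, i.e.~if $|\Delta|=1$, this step fails, reflecting the fact that in $S_\infty$ pointwise stabilizers of finite sets are not topologically simple and the analogous statement breaks down. Everything afterwards is a routine assembly from Lemma~\ref{l:guirlande} and the ultrahomogeneity of $\U_\Delta$.
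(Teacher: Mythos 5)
Your proposal is correct and follows essentially the same route as the paper: use the discerning property of $\U_\Delta$ to find a realization $u$ of $p$ moved by $g$, verify $s=d(u,g(u))\le 2d(x,A)$, invoke Lemma~\ref{l:guirlande} to build the chain in $[p]$, and conjugate $g$ by elements of $G_A$ (obtained from ultrahomogeneity) to realize each step. The only cosmetic difference is that the paper first conjugates by a single $h_0$ sending $u$ to $x$ and then composes with further elements of $G_A$, whereas you directly extend the partial isometry $\id_A\cup\{u\mapsto x_{i-1},\ g(u)\mapsto x_i\}$ at each step; both amount to the same argument.
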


\begin{proof} 
Since $\U_\Delta$ is discerning by Proposition~\ref{UDeltadiscerning}, there is some $z \in [p]$ such that $d(g(z),z)=s>0$. Since $\U_\Delta$ is homogeneous, there is $h_0\in \Iso(\U_\Delta)$ with $h_0(z)=x$. Then $d(h_0gh_0^{-1}(x), x)=d(g(z), z)=s$. Note that $s \le 2 d(x,A)$ since $g$ fixes $A$ pointwise, so by Lemma \ref{l:guirlande} we may find $x=x_0,x_1, \ldots, x_{n-1}, x_n=y\in[p]$ such that $d(x_{i-1},x_i)=s$ for all $i=1, \dots, n$. By the ultrahomogeneity of $\U_\Delta$ we can find $g_1,\ldots,g_n \in G_A$ such that $g_i(x)=x_{i-1}$ and $g_i(h_0gh_0^{-1}(x))= x_i$. Letting $h_i=g_ih_0$, we thus have $h_igh_i^{-1}(x_{i-1})=x_i$ for all $i=1, \dots, n$ and we are done.
\end{proof}

\begin{thm}\label{t:topsimple}
Let $\Delta$ be any countable distance value set. Let $A \subset \U_\Delta$ be a finite set. Then the pointwise stabilizer $G_A$ is topologically simple.
\end{thm}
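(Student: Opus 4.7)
My plan is to establish that any closed normal subgroup $N$ of $G_A$ satisfying $N \ne \{1\}$ is dense in $G_A$; combined with closedness, this forces $N = G_A$. The case $|\Delta|=1$ will be handled separately, since $G_A$ is then isomorphic to the symmetric group $S_\infty$ of a countable set, whose topological simplicity is classical. For $|\Delta| \ge 2$, I would rely on Proposition \ref{UDeltadiscerning} together with the fact that in this regime no non-trivial isometry of $\U_\Delta$ has finite support.

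The key technical step is the extension claim: $N \cap G_{A'} \ne \{1\}$ for every finite $A' \supseteq A$. I would prove this by induction on $|A'\setminus A|$, the base case $A'=A$ being the hypothesis. For the inductive step, write $A' = A''\cup\{z\}$ with $z \notin A''$, pick $g \in N\cap G_{A''}\setminus\{1\}$, and assume $g(z) \ne z$ (otherwise $g$ itself works), setting $u = g^{-1}(z)$. I would exhibit a non-trivial element of $N \cap G_{A'}$ as a commutator $[h,g] = hgh^{-1}g^{-1}$ for a suitable $h \in G_{A''}$: normality gives $[h,g] \in N$, and a direct computation shows $[h,g]$ fixes $A'$ in either of the situations (i) $h$ fixes both $z$ and $u$, or (ii) $g(z)=u$ and $h(z)=u$, $h(u)=z$. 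The main obstacle is ensuring $[h,g]\ne 1$, i.e., finding $h$ that does not commute with $g$. Under (i), an orbit analysis under $G_{A''\cup\{z,u\}}$ combined with ultrahomogeneity shows that failure occurs only if $g$ centralizes $G_{A''\cup\{z,u\}}$, and then $g$ must permute $A''\cup\{z,u\}$ setwise, which together with $g \in G_{A''}$ and $g(z)\ne z$ forces $g(z)=u$, reducing to (ii). Under (ii), either $g^2 \ne 1$—in which case $g^2 \in N \cap G_{A'}$ is the desired element—or $g$ is a global involution; in the latter case, since $|\Delta|\ge 2$ forbids finite-support isometries, $g$ moves some $v$ outside $A''\cup\{z,u\}$, and using the Urysohn property one constructs $h \in G_{A''}$ swapping $z,u$ such that $h(g(v)) \ne g(h(v))$.

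Granting the extension claim, applying Lemma \ref{l:inductivestep} with $A'$ in place of $A$ to any non-trivial element of $N \cap G_{A'}$ shows that $N \cap G_{A'}$ acts transitively on each non-trivial $1$-type over $A'$, since the products of conjugates by elements of $G_{A'}$ remain in $N$ by normality. A standard induction on tuple length $m$ then shows that $N$ acts transitively on the $G_A$-orbit of every $m$-tuple of distinct elements in $\U_\Delta\setminus A$: inductively obtain $n' \in N$ sending $(x_1,\ldots,x_{m-1})$ to $(y_1,\ldots,y_{m-1})$, observe that $n'(x_m)$ and $y_m$ share their $1$-type over $A \cup \{y_1,\ldots,y_{m-1}\}$, and apply the transitivity of $N \cap G_{A \cup \{y_1,\ldots,y_{m-1}\}}$ to correct the last coordinate. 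Density of $N$ in $G_A$ follows, because for every $k \in G_A$ and every finite $F \subseteq \U_\Delta\setminus A$ the tuples $F$ and $k(F)$ have the same type over $A$; closedness of $N$ then yields $N = G_A$. The principal difficulty is the involutory sub-case of the extension claim.
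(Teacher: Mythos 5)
Your proposal is correct and follows the same overall strategy as the paper: handle $|\Delta|=1$ via $S_\infty$, and for $|\Delta|\ge 2$ reduce topological simplicity to density of the normal closure of a nontrivial element, obtain single-point transitivity from Lemma \ref{l:inductivestep} (via the discerning property), and induct on the number of prescribed points. The genuine difference is your explicit ``extension claim'' that $N\cap G_{A'}\neq\{1\}$ for every finite $A'\supseteq A$, proved by a commutator argument. The paper does not prove this: in its inductive step it applies Lemma \ref{l:inductivestep} relative to the enlarged set $A'=A\cup\{y_1,\dots,y_{n-1}\}$, but that lemma takes as input a nontrivial element of $G_{A'}$ and returns a product of its $G_{A'}$-conjugates, so to land in $N$ one needs precisely a nontrivial element of $N\cap G_{A'}$ — which the paper never exhibits. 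Your extension claim is therefore not an alternative route but the missing justification for the paper's own induction, and your version of the argument is the more complete one. One caveat on your execution: in the involutory sub-case of the extension claim, the direct Urysohn-property construction of an $h$ swapping $z,u$ with $h(g(v))\neq g(h(v))$ requires checking a Kat\v{e}tov condition (e.g.\ an inequality of the form $d(g(v),v)\le 2d(z,v)$) that is not automatic; it is cleaner to observe that if every $h\in G_{A''}$ swapping $z$ and $u$ commuted with $g$, then (composing two such $h$) all of $G_{A''\cup\{z,u\}}$ would centralize $g$, forcing $g$ to have finite support — impossible for a nontrivial isometry when $|\Delta|\ge 2$ by the first part of the proof of Proposition \ref{UDeltadiscerning}. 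The same centralizer-has-finite-support observation also disposes of your case (i) directly, without the reduction to case (ii).
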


\begin{proof} When $|\Delta|=1$, $\Iso(\U_\Delta)$ is isomorphic to $S_\infty$. It is well-known that $S_\infty$ is topologically simple, and in $S_\infty$ pointwise stabilizers of finite tuples are isomorphic to $S_\infty$. In the rest of this proof, we assume $|\Delta|\geq 2$. 
Let $g$ be a nontrivial element of $G_A$. We show that the normal subgroup of $G_A$ generated by $g$ is dense in $G_A$. Let $N$ be the set of all  products of conjugates of $g$ by elements of $G_A$, plus the identity. It suffices to show that $N$ is dense in $G_A$. 

For this we show that for any finite $A\subseteq \U_\Delta$ and any partial isometry $\psi$ such that $A\subseteq \dom(\psi)$ and $\psi(a)=a$ for all $a\in A$, there is an $h\in N$ extending $\psi$. Without loss of generality assume $\psi$ is not the identity. We prove this by induction on $|\dom(\psi)\setminus A|$. Suppose $\dom(\psi)=A\cup \{x_1,\dots x_n\}$, and let $y_i=\psi(x_i)$ for $i=1,\dots, n$. For $n=1$,  this is exactly the content of Lemma \ref{l:inductivestep}. Assume that $n>1$ and the statement has been proved for $n-1$. Using our inductive hypothesis, we find $f\in N$ such that $f(x_1)=y_1, \dots, f(x_{n-1})=y_{n-1}$. Let $\varphi=f\psi^{-1}$. Then $\dom(\varphi)=A\cup\{y_1,\dots, y_{n-1}, y_n\}$, $\varphi(a)=a$ for all $a\in A$, $\varphi(y_i)=y_i$ for $i=1, \dots, n-1$ and $\varphi(y_n)=f(x_n)$. Applying Lemma~\ref{l:inductivestep} to $A'=A\cup\{y_1,\dots, y_{n-1}\}$ and $\varphi$, we get $f'\in N$ such that $f'(a)=a$ for all $a\in A$, $f'(y_i)=y_i$ for $i=1, \dots, n-1$, and $f'(y_n)=f(x_n)$. Let $h={f'}^{-1}f$. Then $h\in N$, $h(a)=a$ for all $a\in A$, $h(x_i)=y_i$ for all $i=1,\dots, n$.
\end{proof}

\subsection{Open subgroups and reconstruction}

\begin{defn}
Let $\Delta$ be a distance value set. A \emph{$\Delta$-triangle} is a triple $(d_1,d_2,d_3)$ of elements of $\Delta$ which satisfies the triangle inequalities, i.e. for all distinct $i, j, k\in\{1, 2, 3\}$, $|d_j-d_k|\leq d_i\leq d_j+d_k$.
\end{defn}

\begin{defn}
Two distance value sets $\Delta, \Lambda$ are \emph{equivalent} if there exists a bijection $\theta \colon \Delta \to \Lambda$ such that, for any $(d_1,d_2,d_3) \in \Delta^3$, we have
$$(d_1,d_2,d_3) \text{ is a $\Delta$-triangle} \Leftrightarrow (\theta(d_1),\theta(d_2),\theta(d_3)) \text{ is a $\Lambda$-triangle}. $$
\end{defn}

The following is the main theorem of this subsection, whose remainder is 
devoted to its proof.
\begin{thm}\label{t:reconstruction}
Let $\Delta,\Lambda$ be two countable distance value sets. The following are equivalent:
\begin{enumerate}
\item\label{eq1} $\Delta$ and $\Lambda$ are equivalent.
\item\label{eq2} $\Iso(\U_\Delta)$ and $\Iso(\U_\Lambda)$ are isomorphic (as abstract topological groups).
\item\label{eq3} There exists a (continuous) group homomorphism $\varphi \colon \Iso(\U_\Delta) \to \Iso(\U_\Lambda)$ with dense image.
\end{enumerate}
\end{thm}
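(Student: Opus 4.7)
The plan is to prove the cycle $(1)\Rightarrow(2)\Rightarrow(3)\Rightarrow(1)$. The implication $(2)\Rightarrow(3)$ is immediate. For $(1)\Rightarrow(2)$, I would observe that a triangle-preserving bijection $\theta:\Delta\to\Lambda$ induces a bijection between the classes of finite $\Delta$-metric spaces and finite $\Lambda$-metric spaces via the relabeling $(X,d)\mapsto(X,\theta\circ d)$ (the triangle condition being exactly what is needed for this relabeling to land in valid $\Lambda$-metric spaces, as shown in Subsection~\ref{subsectionDelta}). This bijection between Fraïssé classes lifts to a bijection $\tau:\U_\Delta\to\U_\Lambda$ of their Fraïssé limits with $d_\Lambda(\tau(x),\tau(y))=\theta(d_\Delta(x,y))$, and conjugation by $\tau$ is the required topological group isomorphism.

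The substantial content is in $(3)\Rightarrow(1)$. Let $\varphi:G\to H$ be continuous with dense image, where $G=\Iso(\U_\Delta)$ and $H=\Iso(\U_\Lambda)$. First I would show that $\varphi$ is injective: by Theorem~\ref{t:topsimple} applied with $A=\emptyset$, the group $G$ is topologically simple, so the closed normal subgroup $\ker\varphi$ is either trivial or the whole $G$, and the latter is excluded by density of the image. Next, the formula $g\cdot y:=\varphi(g)(y)$ defines a transitive action of $G$ on $\U_\Lambda$: given $y,y'\in\U_\Lambda$, I pick $h\in H$ with $hy=y'$; since $hH_y$ is open and $\varphi(G)$ is dense, some $\varphi(g)$ lies in $hH_y$. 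The stabilizer $U_y:=\varphi^{-1}(H_y)$ is an open subgroup of $G$. I would then prove that the pointwise stabilizer $G_a$ is actually a maximal subgroup of $G$: if $G_a\subsetneq W\leq G$, I pick $w\in W\setminus G_a$, set $b=wa$ and $r=d(a,b)$; since $G_a\leq W$, the orbit $W\cdot a$ contains the entire sphere $\{x:d(a,x)=r\}$, and then (using that from each $b'$ in this sphere, $W$ contains an element mapping $a$ to $b'$) contains the sphere of radius $r$ around $b'$, so by ultrahomogeneity all $d$-spheres in $\Delta$ of radii $\leq 2r$; iterating, $W\cdot a=\U_\Delta$, forcing $W=G$. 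The same argument gives maximality of $H_y$ in $H$. Combining this with the density of $\varphi(U_y)$ in $H_y$ (a direct consequence of density of $\varphi(G)$ intersected with the open $H_y$), I would conclude that $U_y$ is maximal in $G$ as well.

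The main obstacle is the next step: upgrading maximality of $U_y$ to the specific form $U_y=G_a$ for some $a\in\U_\Delta$. Since $U_y$ is open, $U_y\supseteq G_A$ for some finite $A\subseteq\U_\Delta$, and the goal is to force $|A|=1$. An auxiliary fact is that $U_y$ is self-normalizing in $G$: $H_y$ is self-normalizing in $H$ (via maximality together with topological simplicity), and taking $\varphi^{-1}$ commutes with normalizers. I would rule out ``setwise'' scenarios (e.g.\ $U_y$ being the setwise stabilizer of a finite set of size at least $2$) by comparing orbit structures: the $U_y$-orbits on $\U_\Lambda$ via $\varphi$ coincide with the $H_y$-orbits, namely $\{y\}$ and spheres $\{x:d_\Lambda(y,x)=s\}$ for $s\in\Lambda$, and this single-fixed-point picture on $\U_\Lambda$ should transfer to a unique fixed point on $\U_\Delta$. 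Once $U_y=G_{\tau(y)}$ is established, the map $\tau:\U_\Lambda\to\U_\Delta$ is a bijection which is $G$-equivariant (since $U_{\varphi(g)y}=gU_yg^{-1}=G_{g\tau(y)}$). By ultrahomogeneity, $G$-orbits on $\U_\Delta^2$ are parameterized by $d_\Delta$, and likewise $G$-orbits on $\U_\Lambda^2$ via $\varphi$ are parameterized by $d_\Lambda$, so $\tau$ induces a bijection $\theta:\Delta\to\Lambda$ satisfying $d_\Lambda(\tau(x),\tau(y))=\theta(d_\Delta(x,y))$. A triple $(d_1,d_2,d_3)\in\Delta^3$ is a $\Delta$-triangle iff it is realized by three points of $\U_\Delta$; these points transport via $\tau^{-1}$ to three points of $\U_\Lambda$ realizing $(\theta(d_1),\theta(d_2),\theta(d_3))$, showing that $\theta$ preserves triangles and completing the proof.
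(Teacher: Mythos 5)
Your outer architecture is right, and the implications $(1)\Rightarrow(2)\Rightarrow(3)$ are fine (for $(2)\Rightarrow(3)$ the paper invokes automatic continuity, since $(2)$ is meant as an abstract isomorphism, but this is minor). The problem is that the step you yourself flag as ``the main obstacle'' --- upgrading maximality of $U_y$ to $U_y=G_a$ for a single point $a$ --- is exactly where the theorem's content lives, and your proposal does not close it. Knowing the $U_y$-orbit structure on $\U_\Lambda$ through $\varphi$ tells you nothing about its orbits on $\U_\Delta$; ``this single-fixed-point picture \ldots should transfer'' is not an argument. Self-normalization does not help either: the setwise stabilizer of a two-element subset of $\U_\Delta$ is also self-normalizing, so it cannot separate point stabilizers from setwise stabilizers of larger finite sets. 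The paper closes this gap with two ingredients you are missing: first, every proper open subgroup $V$ of $\Iso(\U_\Delta)$ has a \emph{finite} orbit on $\U_\Delta$ (Lemma~\ref{finiteorbit}, proved by combining Neumann's lemma with Slutsky's Theorem~\ref{Slutsky} that $\langle G_A,G_B\rangle$ is dense in $G_{A\cap B}$); second, if $V$ is maximal among proper closed subgroups it must equal the setwise stabilizer of that finite orbit $A$, and then the \emph{topological simplicity of $V$ itself} (Theorem~\ref{t:topsimple} applied to $G_A$) forces $|A|=1$, since otherwise $G_A$ would be a proper open normal subgroup of $V$. This yields the intrinsic characterization of point stabilizers (open, topologically simple, maximal among proper closed subgroups) that makes the bijection $\psi\colon\U_\Delta\to\U_\Lambda$ and then $\theta$ well defined.

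There is a second, subtler gap in your attempt to run $(3)\Rightarrow(1)$ directly with a merely dense-image $\varphi$: from $U_y\subsetneq W$ you can only conclude that $\varphi(W)$ is dense in $H$ (since $\varphi(U_y)$ is dense in, but not equal to, $H_y$), and density of $\varphi(W)$ does not give $W=G$. The paper sidesteps this by first proving $(3)\Rightarrow(2)$: it shows that any Hausdorff separable group topology on $\Iso(\U_\Delta)$ admitting a proper open subgroup coincides with the standard one, whence any nontrivial continuous homomorphism into a closed subgroup of $S_\infty$ is a topological isomorphism onto its (closed) image; density then forces surjectivity, and the stabilizer analysis is carried out for an honest isomorphism. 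Your sphere-chaining proof that $G_a$ is maximal is essentially sound (it is a variant of Lemma~\ref{l:guirlande}, and transitivity of $W\ni G_a$ on $\U_\Delta$ does give $W=G$ because $G_a\le W$), but it only treats the easy half of the characterization; the hard half is the one left open.
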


The implication $\eqref{eq1} \Rightarrow \eqref{eq2}$ is easy: starting from $\U_\Delta$, and a map $\theta$ witnessing that $\Delta$ and $\Lambda$ are equivalent, define a new distance $d_\theta$ on $\U_\Delta$ by setting $d_\theta(x,y)=\theta(d(x,y))$. Then $(\U_\Delta,d_\theta)$ is isometric to $\U_\Lambda$, and its isometry group has not changed under this operation.

The implication $\eqref{eq2} \Rightarrow \eqref{eq3}$ follows from the automatic continuity property for $\Iso(\U_\Delta)$, which in turn follows from the fact that it has ample generics (cf. Corollary 4.1 of \cite{S}).  

Next we prove $\eqref{eq3} \Rightarrow \eqref{eq2}$. We will need to use stabilizers of $\Iso(\U_\Delta)$ for elements $x\in \U_\Delta$, which we will denote as $G^\Delta_x$ for notational simplicity. More generally, for any tuple $\bar{x}\in \U_\Delta^n$, we also denote by $G^\Delta_{\bar{x}}$ the pointwise stabilizer of $\Iso(\U_\Delta)$ for $\bar{x}$.

We will be using the following theorem which is essentially due to Slutsky (cf. Theorems 4.12, 4.16 and Corollary 4.17 of \cite{S12}).

\begin{thm}[Slutsky \cite{S12}]\label{Slutsky} Let $\Delta$ be any countable distance value set. If $A, B$ are finite subsets of $\U_\Delta$, then $\langle G_A, G_B\rangle$ is dense in $G_{A\cap B}$.
\end{thm}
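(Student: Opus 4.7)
The plan is to exploit the Urysohn property of $\U_\Delta$ together with a back-and-forth argument. Set $C = A \cap B$. To prove that $\langle G_A, G_B\rangle$ is dense in $G_C$, it suffices to show that for any $g \in G_C$ and any finite tuple $\bar x = (x_1, \ldots, x_n)$ of points in $\U_\Delta$, there exists $h \in \langle G_A, G_B\rangle$ with $h(\bar x) = g(\bar x)$. Setting $\bar y = g(\bar x)$, this reduces to the following claim: whenever $\bar x, \bar y \in \U_\Delta^n$ realize the same $1$-type over $C$ (coordinate-wise), they lie in the same $\langle G_A, G_B\rangle$-orbit.

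I would establish this by constructing a chain $\bar x = \bar z^{(0)}, \bar z^{(1)}, \ldots, \bar z^{(k)} = \bar y$ such that each consecutive pair has the same type over $A$ or the same type over $B$; by the ultrahomogeneity of $\U_\Delta$, each step is then realized by an element of $G_A$ or $G_B$, and their composition yields the desired $h$. Producing the intermediate tuples $\bar z^{(i)}$ relies on the Urysohn property: one prescribes a Kat\u{e}tov-style distance function on $A \cup B \cup \bar z^{(i)}$ that extends the fixed distances on $A \cup B$, and appeals to Proposition \ref{UrysohnProperty} to realize it inside $\U_\Delta$.

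The key bridging lemma is the following. Given $\bar x, \bar y$ with the same type over $C$, I want to find an intermediate tuple $\bar z$ with $d(z_i, a) = d(x_i, a)$ for $a \in A$, $d(z_i, b) = d(y_i, b)$ for $b \in B$, and $d(z_i, z_j) = d(x_i, x_j)$. These data define a consistent Kat\u{e}tov extension precisely when the triangle inequalities close up; concretely, for each $i$ and each pair $(a,b) \in (A\setminus C) \times (B\setminus C)$ one needs $|d(x_i, a) - d(y_i, b)| \le d(a,b) \le d(x_i, a) + d(y_i, b)$, together with the analogous conditions coming from pairs of $z_i$'s. When all these hold, one $G_A$-step followed by one $G_B$-step realizes the move, giving $k=2$.

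The main obstacle is what to do when some triangle inequality fails, which can certainly happen, for instance when $d(x_i, a)$ is much larger than $d(a,b) + d(y_i,b)$ for some $a \in A \setminus C$, $b \in B \setminus C$. The remedy is to insert additional intermediate tuples that gradually relax the configuration. A natural strategy is to first move $\bar x$ by an element of $G_A$ to a tuple $\bar x'$ whose distances to $B \setminus C$ are as ``generic'' as possible over $C$, e.g.\ the Kat\u{e}tov-maximal values $\min\{\min_{c \in C}(d(x_i, c) + d(c, b)), \sup \Delta\}$; such $\bar x'$ then bridges to $\bar y$ via a single $G_B$-step. When $\sup \Delta$ is finite, pushing distances up is constrained and one may have to alternate several times, but the finiteness of $\bar x$, $A$, $B$ and the rigidity supplied by $\sup \Delta$ guarantees termination of the alternation in finitely many steps. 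The detailed bookkeeping is essentially that of Theorems 4.12 and 4.16 in Slutsky \cite{S12}.
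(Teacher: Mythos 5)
First, a point of comparison: the paper does not actually prove this statement; it is imported from Slutsky \cite{S12} (Theorems 4.12, 4.16 and Corollary 4.17), so there is no internal proof to measure yours against. Your reduction is the right one: it suffices to show that any two tuples $\bar x,\bar y$ realizing the same quantifier-free type over $C=A\cap B$ are joined by a chain $\bar z^{(0)},\dots,\bar z^{(k)}$ in which consecutive tuples have the same type over $A$ or over $B$ alternately, each link being realized by an element of $G_A$ or $G_B$ via ultrahomogeneity; and your two-step bridging lemma is correct exactly when the triangle inequalities you list close up. The gap is in the remedy for when they do not, and it is a real gap, not bookkeeping.

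Take $C=\{c\}$, $A=\{c,a\}$, $B=\{c,b\}$ with $d(a,c)=d(b,c)=100$, $d(a,b)=1$, and points $x,y$ with $d(x,c)=d(y,c)=100$, $d(x,a)=200$, $d(y,a)=1$; then $x$ and $y$ have the same type over $C$. Your proposed intermediate $x'$ with the Kat\u{e}tov-maximal value $d(x',b)=d(x,c)+d(c,b)=200$ does exist here, but it cannot be carried to $y$ by a single $G_B$-step, since such a step preserves $d(\cdot,b)$ while $d(y,b)\le d(y,a)+d(a,b)=2$. Worse, the maximal-over-$C$ prescription need not even be reachable by a $G_A$-move, because it can violate the triangle inequality through points of $A\setminus C$ whose distances that move must preserve: with $d(a,c)=d(b,c)=d(a,b)=1$, $d(x,c)=10$, $d(x,a)=9$, the value $d(x,c)+d(c,b)=11$ exceeds $d(x,a)+d(a,b)=10$. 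Finally, the termination mechanism is not ``rigidity supplied by $\sup\Delta$'': in the first example $\Delta$ may be all of $\mathbb{N}$, yet any admissible chain needs on the order of $200$ alternations, since a $G_A$-step cannot push $d(\cdot,b)$ below $d(\cdot,a)-d(a,b)$ and symmetrically, so each alternation moves the relevant distances by at most $d(a,b)=1$. The length of the chain must depend on the actual metric configuration, exactly as in Lemma \ref{l:guirlande} of the paper, where the number of interpolating points must satisfy $ms\ge 2d(x,A)$. What is missing from your proposal is precisely this gradual interpolation argument, which is the substance of Slutsky's proof; as written, your construction fails on elementary configurations.
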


\begin{lem}\label{finiteorbit}
Let $V$ be a proper open subgroup of $\Iso(\U_\Delta)$. Then there exists an element $x \in \U_\Delta$ with a finite $V$-orbit.
\end{lem}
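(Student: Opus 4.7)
The plan is to exploit the fact that the permutation group topology on $\Iso(\U_\Delta)$ has a basis of neighborhoods of the identity given by pointwise stabilizers $G_A$ of finite sets $A\subseteq \U_\Delta$. Since $V$ is open, it must contain some such $G_A$. I will then try to shrink $A$ as much as possible, using Slutsky's Theorem \ref{Slutsky} to descend through intersections, until minimality forces $V$ to stabilize a finite set setwise, which gives the desired finite orbit.

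Concretely, I first choose $A\subseteq \U_\Delta$ of \emph{minimum} cardinality such that $G_A\subseteq V$. Because $V$ is a proper subgroup, $A$ cannot be empty (otherwise $G_\emptyset=\Iso(\U_\Delta)\subseteq V$). I then claim that $g(A)=A$ for every $g\in V$. Granting the claim, the $V$-orbit of any $a\in A$ is contained in $A$, hence finite, and we are done.

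To prove the claim, I would argue by contradiction: suppose some $g\in V$ satisfies $g(A)\ne A$, and set $B=g(A)$. Since $|B|=|A|$ and $B\ne A$, necessarily $|A\cap B|<|A|$. Conjugation gives $gG_Ag^{-1}=G_{g(A)}=G_B\subseteq V$, so $\langle G_A,G_B\rangle\subseteq V$. Here I use the fact that an open subgroup of a topological group is also closed: since $V$ is open, so are all its cosets, and $V$ is the complement of the union of the nontrivial cosets. Consequently $V$ contains the closure of $\langle G_A,G_B\rangle$, which by Theorem \ref{Slutsky} equals $G_{A\cap B}$. But then $G_{A\cap B}\subseteq V$ with $|A\cap B|<|A|$, contradicting the minimal choice of $A$.

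I do not foresee a serious obstacle: the only delicate point is the invocation of Slutsky's theorem to realize $G_{A\cap B}$ as the closure of $\langle G_A,G_B\rangle$, combined with the standard observation that open subgroups in topological groups are automatically closed. The argument is essentially an orbit-minimization trick, and the real content is absorbed in Theorem \ref{Slutsky}, so the main task is to set up the minimality correctly and organize the cardinality bookkeeping.
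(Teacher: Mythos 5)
Your proof is correct. The paper's own argument has the same skeleton — find a finite $A$ with $G_A\subseteq V$, conjugate by an element of $V$, apply Slutsky's Theorem~\ref{Slutsky}, and use that open subgroups are closed — but it derives the contradiction differently: it assumes all $V$-orbits are infinite and invokes Neumann's lemma to produce $v\in V$ with $v(\bar a)\cap\bar a=\emptyset$, whence $\langle G_{\bar a},G_{v(\bar a)}\rangle$ is dense in $G_\emptyset=\Iso(\U_\Delta)$ and $V$ is everything. You replace Neumann's lemma by minimizing $|A|$: since any $g\in V$ with $g(A)\neq A$ forces $|A\cap g(A)|<|A|$ and $G_{A\cap g(A)}\subseteq V$, minimality gives setwise invariance of $A$ and hence a finite orbit directly. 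Your version buys a self-contained argument (no external combinatorial lemma) and a slightly stronger conclusion along the way (an explicit finite $V$-invariant set); the paper's version is shorter once Neumann's lemma is taken off the shelf. The one step worth stating carefully is the one you already flag: $\langle G_A,G_B\rangle\subseteq G_{A\cap B}$ is dense in $G_{A\cap B}$ by Theorem~\ref{Slutsky}, and $V$, being open hence closed, must then contain all of $G_{A\cap B}$ — that is exactly how the paper uses Slutsky as well, so there is no gap.
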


\begin{proof}
Assume that $V$ is open and has only infinite orbits. We claim that $V$ is dense, so that $V=\Iso(\U_S)$, since any open subgroup is also closed. Let $\bar a \in \U_\Delta^n$ be such that 
$G^\Delta_{\bar a} \subseteq V$. By Neumann's lemma (cf. Corollary 4.2.2 of \cite{HodgesBook}), there exists $v \in V$ such that $g(\bar a)\cap \bar a = \emptyset$. By Theorem~\ref{Slutsky}, $\langle G^\Delta_{\bar a}, G^\Delta_{g (\bar a)} \rangle$ is a dense subgroup of $\Iso(\U_\Delta)$. This group is contained in $V$, and we are done.
\end{proof}


\begin{lem}
Assume that $\tau$ is a Hausdorff, separable group topology on $\Iso(\U_\Delta)$ which admits a proper open subgroup. Then $\tau$ coincides with the usual Polish group topology of $\Iso(\U_\Delta)$.
\end{lem}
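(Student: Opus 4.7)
First I would show $\tau \leq \tau_0$ using automatic continuity. Since $(\Iso(\U_\Delta), \tau_0)$ is Polish with ample generics (as invoked earlier for the implication $(2)\Rightarrow(3)$ of Theorem \ref{t:reconstruction}), the Kechris--Rosendal automatic continuity theorem implies that every abstract group homomorphism from $(\Iso(\U_\Delta), \tau_0)$ to a separable Hausdorff topological group is $\tau_0$-continuous. Applied to the identity map into $(\Iso(\U_\Delta), \tau)$, this immediately gives $\tau \leq \tau_0$.

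For the converse $\tau_0 \leq \tau$, it suffices to prove that every pointwise stabilizer $G^\Delta_{\bar x}$ of a finite tuple is $\tau$-open, since these form a neighborhood basis of the identity in $\tau_0$. Let $V$ be the given proper $\tau$-open subgroup. By the above, $V$ is also $\tau_0$-open, so by Lemma \ref{finiteorbit} it has a finite orbit $F \subseteq \U_\Delta$. Let $N$ be the setwise stabilizer of $F$; then $V \subseteq N$ and $G^\Delta_F$ is normal in $N$ with $[N:G^\Delta_F] \le |F|!$. Replacing $V$ by the $\tau$-open superset $V \cdot G^\Delta_F \subseteq N$, we may assume $G^\Delta_F \leq V \leq N$, so $G^\Delta_F$ has finite index in the $\tau$-open (hence $\tau$-closed) subgroup $V$. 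Thus $G^\Delta_F$ is $\tau$-open in $V$ if and only if it is $\tau$-closed in $V$.

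To establish this $\tau$-closedness, I would use Theorem \ref{t:topsimple}: $G^\Delta_F$ is $\tau_0$-topologically simple, and since $\tau \leq \tau_0$ any $\tau$-closed normal subgroup of $G^\Delta_F$ is also $\tau_0$-closed, so $G^\Delta_F$ is $\tau$-topologically simple as well. The $\tau$-closure $K = \overline{G^\Delta_F}^\tau \subseteq V$ is a $\tau$-closed finite-index subgroup containing $G^\Delta_F$, and one argues $K = G^\Delta_F$ by leveraging topological simplicity together with the rigidity of the $\Iso(\U_\Delta)$-action on $\U_\Delta$ (roughly, the conjugation action of $K/G^\Delta_F$ on $G^\Delta_F$ by outer automorphisms is constrained by the fact that $G^\Delta_F$ has $\tau$-dense orbit in its $\tau$-closure). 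Once $G^\Delta_F$ is shown to be $\tau$-open, ultrahomogeneity of $\U_\Delta$ makes every conjugate $G^\Delta_{gF} = g G^\Delta_F g^{-1}$ $\tau$-open, and finite intersections yield $\tau$-open subgroups inside any $G^\Delta_{\bar x}$, completing the proof. The main obstacle is precisely the step $K = G^\Delta_F$: dense proper finite-index subgroups can occur in general Hausdorff topological groups, so one must genuinely exploit the structural rigidity of pointwise stabilizers provided by topological simplicity in order to rule this out.
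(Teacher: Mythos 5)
Your first half is exactly the paper's argument: automatic continuity (from ample generics) makes the identity map $(\Iso(\U_\Delta),\tau_0)\to(\Iso(\U_\Delta),\tau)$ continuous, so $\tau\leq\tau_0$, and then $V$ is $\tau_0$-open and Lemma~\ref{finiteorbit} yields a finite $V$-orbit. The problem is in the second half. Reducing to ``$G^\Delta_F$ is a finite-index subgroup of the $\tau$-open group $V$, hence $\tau$-open iff $\tau$-closed in $V$'' is a correct reformulation, but the crucial step $K=\overline{G^\Delta_F}^{\tau}=G^\Delta_F$ is precisely what you never prove, and, as you yourself concede, a proper finite-index normal subgroup can perfectly well be dense in a Hausdorff topological group. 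Theorem~\ref{t:topsimple} concerns closed normal subgroups \emph{of} $G^\Delta_F$; it says nothing about whether $G^\Delta_F$ is closed \emph{in} a finite-index overgroup in which it might sit densely, and the parenthetical appeal to ``rigidity of the conjugation action of $K/G^\Delta_F$'' is not an argument. So the proof is genuinely incomplete at its central point.

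The paper avoids this issue entirely with a conjugation trick you are missing. Let $A$ be the finite $V$-orbit of some $a\in\U_\Delta$. By universality and ultrahomogeneity of $\U_\Delta$ (using strong amalgamation) one finds $g\in\Iso(\U_\Delta)$ with $g(A)\cap A=\{a\}$. Since $V$ stabilizes $A$ setwise and $gVg^{-1}$ stabilizes $g(A)$ setwise, the $\tau$-open subgroup $V\cap gVg^{-1}$ stabilizes the singleton $A\cap g(A)=\{a\}$, i.e.\ is contained in $G^\Delta_a$. Hence $G^\Delta_a$ is $\tau$-open; by transitivity of the action every point stabilizer is $\tau$-open, and these form a neighborhood subbasis at $1$ for $\tau_0$, so $\tau_0\leq\tau$. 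No closedness of $G^\Delta_F$, no normality considerations, and no use of Theorem~\ref{t:topsimple} are needed. I recommend replacing your second half by this argument.
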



\begin{proof} Consider the identity embedding from $\Iso(\U_\Delta)$ into $(\Iso(\U_\Delta), \tau)$. By automatic continuity, this embedding is continuous. Let $V$ be a proper $\tau$-open subgroup. Then $V$ as the pullback is open for the usual Polish group topology. By the previous lemma, there is an $a\in \U_\Delta$ with a finite $V$-orbit $A$. Using universality and ultrahomogeneity of $\U_\Delta$, we may find a $g \in \Iso(U_\Delta)$ such that 
$g(A) \cap A=\{a\}$ is a singleton. Then $V \cap gVg^{-1}$ is $\tau$-open, and contained in $G^\Delta_a$. Thus $G^\Delta_a$ is $\tau$-open. Since $\Iso(\U_\Delta)$ acts transitively on $\U_\Delta$, any point stabilizer is thus $\tau$-open. Since the point stabilizers form a nbhd subbasis for $1$ in the usual Polish group topology, we have that $\tau$ is finer than the usual topology, hence they are the same.
\end{proof}

\begin{prop}
Assume that $\varphi \colon \Iso(\U_\Delta) \to S_\infty$ is a nontrivial group homomorphism. Then $\varphi$ is a topological group isomorphism onto its image.
\end{prop}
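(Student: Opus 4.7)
The plan is to combine automatic continuity, the topological simplicity established in Theorem \ref{t:topsimple}, and the preceding lemma characterizing the Polish topology of $\Iso(\U_\Delta)$ by the existence of a proper open subgroup. First, I would invoke automatic continuity for $\Iso(\U_\Delta)$ (which follows from the existence of ample generics, exactly as in the proof of the preceding lemma) to conclude that $\varphi$ is continuous when both sides carry their usual Polish group topologies. Then, since $\ker\varphi$ is a closed normal subgroup of $\Iso(\U_\Delta)$ and the latter is topologically simple by Theorem \ref{t:topsimple} applied with $A=\emptyset$, the kernel is either all of $\Iso(\U_\Delta)$ or trivial; nontriviality of $\varphi$ rules out the first option, so $\varphi$ is injective.

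Next, I would define the pullback topology $\tau$ on $\Iso(\U_\Delta)$, whose open sets are the preimages under $\varphi$ of open subsets of $S_\infty$. Equivalently, $\tau$ is the unique topology making $\varphi$ a homeomorphism onto its image $\varphi(\Iso(\U_\Delta))\subseteq S_\infty$ equipped with the subspace topology. Since $\varphi$ is an injective group homomorphism into a Hausdorff topological group, $\tau$ is a Hausdorff group topology on $\Iso(\U_\Delta)$. Moreover, continuity of $\varphi$ for the original Polish topology means the identity map from the original topology to $\tau$ is continuous, so $\tau$ is coarser than the original; in particular $\tau$ is separable.

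To apply the preceding lemma it remains to produce a proper $\tau$-open subgroup of $\Iso(\U_\Delta)$. Since $\varphi$ is nontrivial, pick $g\in\Iso(\U_\Delta)$ with $\varphi(g)\neq 1$ and then $n\in\mathbb{N}$ with $\varphi(g)(n)\neq n$. The point stabilizer $V_n=\mathrm{Stab}_{S_\infty}(n)$ is open in $S_\infty$, hence $\varphi^{-1}(V_n)$ is a $\tau$-open subgroup of $\Iso(\U_\Delta)$; it is a proper subgroup because $g\notin\varphi^{-1}(V_n)$. The preceding lemma then yields that $\tau$ coincides with the original Polish group topology on $\Iso(\U_\Delta)$, and by the defining property of $\tau$ this says precisely that $\varphi$ is a topological group isomorphism onto its image.

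The main (really, the only) obstacle is checking that the pullback topology verifies the hypotheses of the preceding lemma; once Hausdorffness, separability, and the existence of a proper open subgroup are in place, the conclusion is essentially immediate. Topological simplicity takes care of injectivity, and automatic continuity takes care of the continuity of $\varphi$, so these three inputs together produce the result with no further computation.
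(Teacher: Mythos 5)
Your proof is correct and follows essentially the same route as the paper: automatic continuity, then injectivity via topological simplicity, then the pullback topology argument using the preceding lemma, with the proper open subgroup obtained by pulling back a point stabilizer of $S_\infty$. The extra details you supply (Hausdorffness from injectivity, separability from coarseness, the explicit construction of the proper $\tau$-open subgroup) are exactly the routine verifications the paper leaves implicit.
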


\begin{proof}
We know that $\varphi$ is continuous by automatic continuity. By 
Theorem~\ref{t:topsimple}, $\Iso(\U_\Delta)$ is topologically simple, so 
$\varphi$ is also injective. Let $\tau$ be the topology on $\Iso(\U_\Delta)$ 
pulled back via $\varphi$. Then $\tau$ is Hausdorff, separable and admits a 
proper open subgroup by the nature of the topology of $S_\infty$. By the 
previous lemma, $\tau$ coincides with the usual Polish topology on 
$\Iso(\U_\Delta)$. This means that $\varphi$  is a topological group 
isomorphism onto its image.
\end{proof}

Now the implication $\eqref{eq3} \Rightarrow \eqref{eq2}$ follows immediately from the previous proposition. In fact, let $\varphi: \Iso(\U_\Delta)\to \Iso(\U_\Lambda)$ be a nontrivial group homomorphism with a dense image. Since $\Iso(\U_\Lambda)$ is a closed subgroup of $S_\infty$, $\varphi$ satisfies the assumption of the previous proposition. It follows that $\varphi$ is a topological group isomorphism onto its image, which is closed in $\Iso(\U_\Lambda)$. Thus $\varphi$ is onto.

\begin{lem}
Let $V$ be a proper open subgroup of $\Iso(\U_\Delta)$. The following are equivalent:
\begin{enumerate}
\item[(i)] There exists a (unique) $x \in \U_\Delta$ such that $V=G^\Delta_x$.
\item[(ii)] $V$ is topologically simple, and is maximal among proper closed subgroups of $\Iso(\U_\Delta)$.
\end{enumerate}
\end{lem}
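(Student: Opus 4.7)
The plan is to prove the two implications separately, with $(ii)\Rightarrow(i)$ being the substantive direction; $(i)\Rightarrow(ii)$ essentially just packages together Theorem~\ref{t:topsimple} and Slutsky's Theorem~\ref{Slutsky}.

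For $(i)\Rightarrow(ii)$, set $V=G^\Delta_x$. Topological simplicity is Theorem~\ref{t:topsimple} applied to the singleton $\{x\}$. For maximality, I would take a closed subgroup $W$ with $G^\Delta_x\subsetneq W$, pick $g\in W$ with $y:=g(x)\ne x$, and observe that $G^\Delta_y=gG^\Delta_xg^{-1}\subseteq W$. Since $\{x\}\cap\{y\}=\emptyset$, Theorem~\ref{Slutsky} yields that $\langle G^\Delta_x,G^\Delta_y\rangle$ is dense in $G_\emptyset=\Iso(\U_\Delta)$, and the closedness of $W$ then gives $W=\Iso(\U_\Delta)$. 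Uniqueness of the point $x$ in (i) is immediate from ultrahomogeneity: if $x\ne y$, one produces an isometry fixing $x$ but moving $y$, so $G^\Delta_x\ne G^\Delta_y$.

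For $(ii)\Rightarrow(i)$, the strategy is to produce a fixed point of $V$ and then invoke maximality. Since $V$ is a proper open subgroup, Lemma~\ref{finiteorbit} furnishes some $x\in\U_\Delta$ with finite $V$-orbit $A:=V\cdot x$. Let $N:=V\cap G_A$, the pointwise stabilizer of $A$ inside $V$. Then $N$ is closed in $V$, and it is normal in $V$ because $V$ permutes $A$ setwise. By topological simplicity of $V$, either $N=\{1\}$ or $N=V$. The option $N=\{1\}$ would yield an injective homomorphism $V\hookrightarrow\Sym(A)$, making $V$ finite and contradicting the openness of $V$ (any open subgroup of $\Iso(\U_\Delta)$ contains the pointwise stabilizer of some finite tuple, and such stabilizers are always infinite by the Urysohn property). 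Hence $N=V$, so $V\subseteq G_A\subseteq G^\Delta_x$. Since $G^\Delta_x$ is a proper closed subgroup (by transitivity of $\Iso(\U_\Delta)$ on $\U_\Delta$), the maximality hypothesis on $V$ now forces $V=G^\Delta_x$.

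The genuine obstacle is in this last direction, namely promoting the finite $V$-orbit produced by Lemma~\ref{finiteorbit} to an honest fixed point: it is exactly at this step that one must jointly exploit the topological simplicity of $V$ and its openness, as above. Everything else is bookkeeping built on the earlier results.
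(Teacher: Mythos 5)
Your proof is correct and follows essentially the same route as the paper: (i)$\Rightarrow$(ii) via Theorem~\ref{t:topsimple} together with Slutsky's Theorem~\ref{Slutsky}, and (ii)$\Rightarrow$(i) via Lemma~\ref{finiteorbit} followed by a combined use of simplicity and maximality. The only (harmless) difference is the order of that last combination: the paper first uses maximality to identify $V$ with the setwise stabilizer of the finite orbit $A$ and then uses topological simplicity to force $|A|=1$, whereas you apply simplicity first to the closed normal subgroup $V\cap G_A$ (ruling out triviality via openness) and invoke maximality afterwards.
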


\begin{proof} First suppose $V=G^\Delta_x$ for some $x\in \U_\Delta$. Then by Theorem~\ref{t:topsimple}, $V$ is topologically simple. Assume $H$ is a proper closed overgroup of $G^\Delta_x$. Let $g\in H\setminus G^{\Delta}_x$. Then $g(x)\neq x$ and $G^\Delta_{g(x)}=gG^\Delta_x g^{-1}\leq H$. By Theorem~\ref{Slutsky}, $\langle G^\Delta_x, G^\Delta_{g(x)}\rangle$, which is a subgroup of $H$, is dense in $\Iso(\U_\Delta)$. Since $H$ is closed, $H=\Iso(\U_\Delta)$.

For the converse, assume that $V$ is an open subgroup which is maximal among proper closed subgroups of $G$. By Lemma~\ref{finiteorbit}, there is $a\in \U_\Delta$ with a finite $V$-orbit, which we denote as $A$. Now $\{g\in\Iso(\U_\Delta): g(A)=A\}$ is a closed subgroup of $\Iso(\U_\Delta)$ containing $V$. By maximality we know that $V=\{g \in G_S \colon g(A)=A\}$. We claim that $A=\{a\}$, and so $V=G^\Delta_a$. Otherwise, $A\neq \{a\}$, then $G^\Delta_A\leq V$ is a proper normal open subgroup of $V$, thus $V$ is not topologically simple. 

For uniqueness, if $V=G^\Delta_x=G^\Delta_y$ for $x\neq y$, then by Theorem~\ref{Slutsky} $\langle G^\Delta_x, G^\Delta_y\rangle$ is dense in $\Iso(\U_\Delta)$, which, together with the openness (and therefore closedness) of $V$, would imply that $V$ is not proper.
\end{proof}

Note that in the characterization above, we may replace the condition ``$V$ is topologically simple'' by ``$V$ has a comeagre conjugacy class" or ``$V$ has a dense conjugacy class", since these two properties are true for stabilizers of singletons (which have ample generics) while the other maximal open subgroups admit a proper clopen normal subgroup (hence do not have a dense conjugacy class).

We can now prove the last remaining implication $\eqref{eq2}\Rightarrow \eqref{eq1}$ of Theorem \ref{t:reconstruction}. Assume that $\varphi \colon \Iso(\U_\Delta) \to \Iso(\U_\Lambda)$ is an isomorphism. By the previous lemma, we know that for any $x \in \U_\Delta$ there exists a unique $y \in \U_\Lambda$ such that $\varphi(G^\Delta_x)= G^\Lambda_y $, and vice versa. We write $y=\psi(x)$; since $\varphi$ is an isomorphism, $\psi$ is a bijection from $\U_\Delta$ to $\U_\Lambda$.

Note that, for any $(x,y), (x',y') \in \U_\Delta$ the groups $G^\Delta_{\{x,y\}}=G^\Delta_x \cap G^\Delta_y$ and $G^\Delta_{\{x',y'\}}=G^\Delta_{x'} \cap G^\Delta_{y'}$ are conjugate iff $d(x,y)=d(x',y')$. Since $\varphi$ is a group isomorphism which maps point stabilizers to point stabilizers, this implies that 
$$\forall x,y \in \U_\Delta \quad d(x,y)=d(x',y') \Leftrightarrow d(\psi(x),\psi(y))= d(\psi(x'),\psi(y')).$$

Given $d \in \Delta$, we can then define $\theta(d) \in \Lambda$ by finding $x,y \in \U_\Delta$ such that $d(x,y)=d$, and then setting $\theta(d)=d(\psi(x),\psi(y))$. Then $\theta$ is a bijection from $\Delta$ to $\Lambda$. 
It remains to prove that $\theta$ is an equivalence between $S$ and $T$. Pick $(d_1,d_2,d_3) \in \Delta^3$. Then $(d_1,d_2,d_3)$ is a $\Delta$-triangle iff there exists $x,y,z \in \U_\Delta$ such that $d(x,y)=d_1$, $d(y, z)=d_2$ and $d(z,x)=d_3$. Then $(\psi(x),\psi(y),\psi(z)) \in \U_\Lambda^3$, so that $(\theta(d_1),\theta(d_2),\theta(d_3))=(d(\psi(x),\psi(y)),d(\psi(y),\psi(z)),d(\psi(z),\psi(x))$ is a $\Lambda$-triangle. By symmetry, $(d_1,d_2,d_3)$ is a $\Delta$-triangle if $(\theta(d_1),\theta(d_2),\theta(d_3))$ is a $\Lambda$-triangle.

We have thus completed the proof of Theorem~\ref{t:reconstruction}.

\section{Open Problems\label{sec:open}}

The first general problem is to characterize all dense countable locally finite 
subgroups of $\Iso(\U_\Delta)$.

When $|\Delta|=1$, $\Iso(\U_\Delta)$ is just $S_\infty$, so we are asking which 
locally finite groups are highly transitive. Note that the Hull-Osin 
dichotomy  states that such groups are either MIF or contain a normal 
subgroup isomorphic to $\mbox{Alt}(\mathbb{N})$. 

For $|\Delta|\geq 2$, the likely answer to the characterization problem for 
locally finite groups is a condition strictly in between $\infty$-MIF and MIF, 
but we do not know that the following question has a negative answer.

\begin{ques} For $|\Delta|\geq 2$, are all dense locally finite subgroups of $\Iso(\U_\Delta)$ $\infty$-MIF?
\end{ques}

The second general problem is to explore the possibility of the other extreme, namely to characterize the isomorphism type of $\Iso(\U_\Delta)$ by the isomorphism types of their countable dense subgroups.

\begin{ques} If $S, T$ are countable distance value sets that are not equivalent, is there always a dense countable (locally finite) subgroup of one of $\Iso(\U_S)$ and $\Iso(\U_T)$ that cannot be embedded into the other as a dense subgroup?
\end{ques}

\end{document}